  \crefname{section}{Section}{Sections}
  \crefname{figure}{Figure}{Figures}
  \crefname{theorem}{Theorem}{Theorems}
  \crefname{lemma}{Lemma}{Lemmas}
  \crefname{proposition}{Proposition}{Propositions}  
  \crefname{corollary}{Corollary}{Corollaries}
  \crefname{definition}{Definition}{Definitions}
  \crefname{example}{Example}{Examples}
  \crefname{remark}{Remark}{Remarks}
  \crefname{equation}{}{}
\newtheorem{theorem}{Theorem}[section]
\newtheorem{lemma}[theorem]{Lemma}
\newtheorem{proposition}[theorem]{Proposition}
\newtheorem{corollary}[theorem]{Corollary}
\theoremstyle{definition}
\newtheorem{remark}[theorem]{Remark}
\newtheorem{example}[theorem]{Example}
\DeclarePairedDelimiter\abs{\lvert}{\rvert}
\DeclarePairedDelimiter\norm{\lVert}{\rVert}
\newcommand*{\defeq}{\mathrel{\mathop:}=}
\newcommand*{\dbtilde}[1]{\tilde{\raisebox{0pt}[0.9\height]{$\tilde{#1}$}}}
\newcommand*{\mmiddle}[1]{\mathrel{}\middle#1\mathrel{}}
\renewcommand*{\Re}{\operatorname{Re}}
\renewcommand*{\Im}{\operatorname{Im}}
\newcommand*{\diam}{\operatorname{diam}}
\newcommand*{\dist}{\operatorname{dist}}
\newcommand*{\crad}{\operatorname{crad}}
\newcommand*{\hcap}{\operatorname{hcap}}
\renewcommand*{\fill}{\operatorname{fill}}
\newcommand*{\Cont}{\operatorname{Cont}}
\newcommand*{\bbC}{\mathbf{C}}
\newcommand*{\bbD}{\mathbf{D}}
\newcommand*{\bbE}{\mathbf{E}}
\newcommand*{\bbH}{\mathbf{H}}
\newcommand*{\bbN}{\mathbf{N}}
\newcommand*{\bbP}{\mathbf{P}}
\newcommand*{\bbR}{\mathbf{R}}
\newcommand*{\bbZ}{\mathbf{Z}}
\newcommand*{\hatC}{\widehat{\bbC}}
\newcommand*{\barD}{\overline{\bbD}}
\newcommand*{\sle}[1]{SLE$_{#1}$}
\newcommand*{\slek}{\sle{\kappa}}
\newcommand*{\logp}{\log^*}
\newcommand{\C}{\mathbf{C}}
\newcommand{\D}{\mathbf{D}}
\newcommand{\R}{\mathbf{R}}
\renewcommand{\P}{\mathbf{P}}
\newcommand{\eps}{\varepsilon}
\newcommand{\cG}{\mathcal{G}}
\newcommand{\cF}{\mathcal{F}}
\newcommand{\eqb}{\begin{equation}}
	\newcommand{\eqe}{\end{equation}}
\newcommand{\eqbn}{\begin{equation*}}
	\newcommand{\eqen}{\end{equation*}}
\newcommand{\BB}{\mathbf}
\newcommand{\ol}{\overline}
\newcommand{\ul}{\underline}
\newcommand{\op}{\operatorname}
\newcommand{\rta}{\rightarrow}
\newcommand{\wt}{\widetilde}
\newcommand{\wh}{\widehat} 
\newcommand{\mcl}{\mathcal}
\newcommand{\bdy}{\partial}
\renewcommand{\u}{{\boldsymbol{u}}}
\title{Regularity of the Schramm-Loewner evolution:\\ Up-to-constant variation and modulus of continuity}
\author{
Nina Holden\thanks{Courant Institute of Mathematical Sciences, New York University. Email: \texttt{nina.holden@nyu.edu}}
\and
Yizheng Yuan\thanks{TU Berlin, Germany \& U Cambridge, United Kingdom.
Email: \texttt{yy547@cam.ac.uk}}
}
\begin{document}

\maketitle

\begin{abstract}
We find optimal (up to constant) bounds for the following measures for the regularity of the Schramm-Loewner evolution (SLE): variation regularity, modulus of continuity, and law of the iterated logarithm. For the latter two we consider the SLE with its natural parametrisation. More precisely, denoting by $d\in(0,2]$ the dimension of the curve, we show the following.

1. The optimal $\psi$-variation is $\psi(x)=x^d(\log\log x^{-1})^{-(d-1)}$ in the sense that $\eta$ is a.s.\ of finite $\psi$-variation for this $\psi$ and not for any function decaying more slowly as $x \downarrow 0$.

2. The optimal modulus of continuity is $\omega(s) = c\,s^{1/d}(\log s^{-1})^{1-1/d}$, i.e.\ for some random $c>0$ we have $|\eta(t)-\eta(s)| \le \omega(t-s)$ a.s., while this does not hold for any function $\omega$ decaying faster as $s \downarrow 0$.

3. $\limsup_{t\downarrow 0} |\eta(t)|\,\big(t^{1/d}(\log\log t^{-1})^{1-1/d}\big)^{-1}$ is a.s.\ equal to a deterministic constant in $(0,\infty)$.

We also show that the natural parametrisation of SLE is given by the fine mesh limit of the $\psi$-variation. As part of our proof, we show that every stochastic process whose increments satisfy a particular moment condition attains a certain variation regularity.
\end{abstract}

\section{Introduction}
\label{sec:intro}

The Schramm-Loewner evolution (SLE) was first introduced by Schramm \cite{sch-sle} in 1999 as a candidate for the scaling limit of curves in statistical physics models at criticality. Soon afterwards it was proven that the SLE indeed describes the limiting behavior of a range of statistical physics models, including the uniform spanning tree, 
the loop-erased random walk \cite{lsw-lerw-ust}, 
percolation \cite{smi-cardy}, 
the Ising model \cite{smi-ising,cs-ising}, and 
the discrete Gaussian free field \cite{ss-dgff}. Schramm argued in his work that SLE is the unique one-parameter family of processes satisfying two natural properties called conformal invariance and the domain Markov property, and he denoted the parameter by $\kappa>0$.

In this paper we will study the regularity of SLE. We first present some measures of regularity for general fractal curves in Section \ref{sec:quantify} along with previous results for SLE. Then we state our main results of Section \ref{se:results}, we discuss consequences for discrete models in Section \ref{se:discrete}, and finally we give an outline of the paper in Section \ref{se:outline}.

\subsection{How to quantify the regularity of fractal curves, and previous SLE results}
\label{sec:quantify}

Let $I\subset\R$ be an interval and let $\eta\colon I\to\bbC$ be a fractal curve in the plane. A natural question is how one can quantify the regularity or fractality of $\eta$. 

One approach is to view the curve as a subset of $\bbC$ by considering the set $\eta(I)$. One can study the dimension of this set, e.g.\ the Hausdorff or Minkowski dimension. It follows from \cite{bef-sle-dimension,rs-sle} that in the case of SLE, the two latter dimensions agree a.s.\ and are given by $(1+\kappa/8)\wedge 2$. One can also ask about the exact gauge function that gives a non-trivial Hausdorff measure or Minkowski content for SLE; this is known to be an exact power-law for the Minkowski content  \cite{lr-minkowski-content} while it is unknown for the Hausdorff measure \cite{rez-hausdorff}. The $(1+\kappa/8)\wedge 2$-dimensional Minkowski content of SLE has been proven to define a parametrization of the curve known as the natural parametrization \cite{lr-minkowski-content,lv-lerw-nat,ls-natural-parametrisation}.

For conformally invariant curves in $\bbC$ like SLE it is also natural to study the regularity of a uniformizing conformal map from the complement of the curve to some reference domain. See e.g.\ \cite{bs-aims-sle,gms-sle-multifractal,vl-tip-multifractal,abv-boundary-collisions,sch-mf-boundary,ghm-dim-conformal,kms-sle48x} for results on this for SLE. 

One can also quantify the regularity of $\eta$ by viewing it as a parametrized curve rather than a subset of $\bbC$. The \emph{modulus of continuity} is natural in this regard. We say that $\eta\colon I \to\bbC$ admits $\omega\colon[0,\infty]\to[0,\infty]$ as a modulus of continuity if $|\eta(t)-\eta(s)|\leq \omega(|t-s|)$ for any $s,t\in I$. If this holds for $\omega(t)=c\,t^\alpha$ for some $c>0$ and $\alpha>0$, then we say that the curve is $\alpha$-H\"older continuous.

Note that the modulus of continuity of a curve depends strongly on the parametrization of the curve. For SLE there are two commonly considered parametrizations: the capacity parametrization (see e.g.\ \cite{law-conformal-book}) and the natural parametrization. 
The optimal H\"older  exponent of SLE was computed by Lawler and Viklund \cite{vl-sle-hoelder} for the capacity parametrization, and logarithmic refinements were studied in \cite{yuan-psivarx,kms-sle48x}. For SLE with its natural parametrization the optimal H\"older exponent is equal to one over the dimension of the curve (see \eqref{eq:d} below). 
This was proven by Zhan for $\kappa\leq 4$ \cite{zhan-hoelder} and by Gwynne, Holden, and Miller for space-filling SLE with $\kappa>4$ \cite{ghm-kpz-sle}. For non-space-filling SLE$_\kappa$ for $\kappa\in(4,8)$ it was known that the curve is H\"older continuous for any exponent strictly smaller than one over the dimension, but the matching upper bound on the optimal exponent was unknown; see \cite[Remark 4.3]{zhan-hoelder}.

The regularity of $\eta$ at a fixed time $t_0\in I$ can be quantified via a \emph{law of the iterated logarithm} which describes the magnitude of the fluctuations of $\eta$ as it approaches $t_0$. One can also consider the set of exceptional times where the fluctuations are different from a typical point, e.g. so-called fast or slow points.

Finally, another important notion of regularity for a curve is the \emph{variation}. For an increasing homeomorphism $\psi\colon(0,\infty)\to(0,\infty)$ we say that $\eta\colon I\to\bbC$ has finite $\psi$-variation if
\[
\sup_{t_0<\dots<t_r} \sum_i \psi(|\eta(t_{i+1})-\eta(t_i)|) < \infty,
\]
where we take the supremum over all finite sequences $t_0<t_1<\dots<t_r$, $t_i\in I$ for $i=1,\dots,r$.
Equivalently, a continuous curve $\eta$ has finite $\psi$-variation if and only if there exists a reparametrization $\wt\eta$ of $\eta$ that admits modulus of continuity $\psi^{-1}$, i.e.,
\[ \abs{\wt\eta(t)-\wt\eta(s)} \le \psi^{-1}(\abs{t-s}) . \]
This measure of regularity is invariant under reparametrizations of the curve, and is therefore particularly natural in the setting of SLE where people use multiple different parametrizations or simply consider the curve to be defined only modulo reparametrization of time. The case $\psi(x) = x^p$ (usually called $p$-variation) plays an important role in the theories of Young integration and rough paths. A curve is of finite $p$-variation if and only if it admits a $1/p$-Hölder continuous reparametrisation. When $\eta$ has finite $\psi$-variation, the following quantity is finite and, for convex $\psi$, can be proven to define a semi-norm
\[ [\eta]_{\psi\text{-var},I} = \inf\left\{ M>0 \mmiddle| \sup_{\substack{t_0<\dots<t_r\\ t_i \in I}} \sum_i \psi\left(\frac{\abs{\eta(t_{i+1})-\eta(t_i)}}{M}\right) \le 1 \right\} . \]

The optimal $p$-variation exponent of SLE was computed to be equal to its dimension $(1+\kappa/8)\wedge 2$ in \cite{bef-sle-dimension,ft-sle-regularity,wer-sle-pvar}, and a non-optimal logarithmic refinement of the upper bound was established by the second author \cite{yuan-psivarx}. Recall the general result that the $p$-variation exponent cannot be smaller than the Hausdorff dimension of the curve.

\subsection{Main results}
\label{se:results}

Unless otherwise mentioned, we assume throughout the section that $\eta$ is either
\begin{enumerate}[label=(\roman*)]
    \item \label{it:non-spfill} a two-sided whole-plane SLE$_\kappa$, $\kappa\le 8$, from $\infty$ to $\infty$ passing through $0$ with its natural parametrization, 
    or
    \item \label{it:spfill} a whole-plane space-filling SLE$_\kappa$, $\kappa>4$, from $\infty$ to $\infty$ with its natural parametrization (i.e., parametrized by Lebesgue area measure).
\end{enumerate}
Furthermore, we let $d$ denote the dimension of the curve, namely
\eqb
d = 1+\frac{\kappa}{8} \quad\text{in case \ref{it:non-spfill}\qquad and\qquad }
d=2 \quad\text{in case \ref{it:spfill}.}
\label{eq:d}
\eqe
The SLE variants considered in \ref{it:non-spfill} and \ref{it:spfill} are particularly natural since it can be argued that they describe the local limit in law of an arbitrary variant of SLE with its natural parametrization zoomed in at a typical time. The reason we consider these two SLE variants is that they are self-similar processes of index $1/d$ with stationary increments, in the sense that for every $t_0 \in \bbR$ and $\lambda > 0$, the process $t \mapsto \eta(t_0+\lambda t)-\eta(t_0)$ has the same law as $t \mapsto \lambda^{1/d}\eta(t)$ (see \cite[Corollary 4.7 and Remark 4.9]{zhan-loop} for case \ref{it:non-spfill}, and \cite[Lemma 2.3]{hs-mating-eucl} for case \ref{it:spfill}). We give a more thorough introduction to these curves in Section \ref{se:prelim}.

Throughout the paper, we write $\logp(x) = \log(x) \vee 1$.

\begin{theorem}[variation regularity]\label{thm:main_var}
Let $\psi(x) = x^d(\logp\logp\frac{1}{x})^{-(d-1)}$. There exists a deterministic constant $c_0 \in {(0,\infty)}$ such that almost surely
\eqb 
\lim_{\delta\downarrow 0} \sup_{\abs{t_{i+1}-t_i}<\delta} \sum_i \psi(\abs{\eta(t_{i+1})-\eta(t_i)}) = c_0\abs{I} 
\label{eq:main_var}
\eqe
for any bounded interval $I \subseteq \bbR$, where the supremum is taken over finite sequences $t_0 < ... < t_r$ with $t_i \in I$ and $\abs{t_{i+1}-t_i}<\delta$.

Moreover, for any bounded interval $I \subseteq \bbR$, there exists $c>0$ depending on the length of $I$ such that
\[
\bbP( [\eta]_{\psi\text{-var};I} > u ) \le c^{-1}\exp(-c u^{d/(d-1)}) 
.
\]
\end{theorem}

Recall that the previous works \cite{bef-sle-dimension,ft-sle-regularity} have identified $d$ as the optimal $p$-variation exponent. Our result gives the optimal function $\psi$ up to a non-explicit deterministic factor.
In other words, the best modulus of continuity among all parametrisations of $\eta$ is $\omega(t) = c\,t^{1/d}(\logp\logp t^{-1})^{1-1/d}$, and in particular, there is no reparametrisation that is $1/d$-Hölder continuous. The latter result has been stated as an open problem in e.g.\ \cite[Remark~1.6]{zhan-hoelder}.

\begin{theorem}[modulus of continuity]\label{thm:main_moc}
There exists a deterministic constant $c_1 \in {(0,\infty)}$ such that almost surely
\eqb 
\lim_{\delta\downarrow 0} \sup_{s,t \in I, \abs{t-s}<\delta} \frac{\abs{\eta(t)-\eta(s)}}{\abs{t-s}^{1/d}(\log \abs{t-s}^{-1})^{1-1/d}} = c_1  
\label{eq:main-moc}
\eqe
for any non-trivial bounded interval $I \subseteq \bbR$.

Moreover, for any bounded interval $I \subseteq \bbR$ there exists $c>0$ depending on the length of $I$ such that
\[
\bbP \left( \sup_{s,t \in I} \frac{\abs{\eta(t)-\eta(s)}}{\abs{t-s}^{1/d}(\logp \abs{t-s}^{-1})^{1-1/d}} > u \right) \le c^{-1}\exp(-c u^{d/(d-1)}) .
\]
\end{theorem}
The optimal H\"{o}lder exponent $1/d$ has been identified previously in \cite{zhan-hoelder,ghm-kpz-sle} (except in case \ref{it:non-spfill} for $\kappa\in(4,8)$, where only the upper bound was established). We prove that $\omega(t) = t^{1/d}(\log t^{-1})^{1-1/d}$ is the optimal (up to constant) modulus of continuity.

\begin{theorem}[law of the iterated logarithm and maximal growth rate]\label{thm:main_lil}
There exist deterministic constants $c_2,c_3 \in {(0,\infty)}$ such that for any $t_0 \in \bbR$, almost surely
\begin{align*}
\limsup_{t \downarrow 0} \frac{\abs{\eta(t_0+t)-\eta(t_0)}}{t^{1/d}(\log\log t^{-1})^{1-1/d}} = c_2,\\
\limsup_{t \to \infty} \frac{\abs{\eta(t)}}{t^{1/d}(\log\log t)^{1-1/d}} = c_3.
\end{align*}
\end{theorem}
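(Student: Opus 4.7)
The plan is a Borel-Cantelli argument at geometric time-scales: the upper bound comes from Theorem~\ref{thm:main_moc}, the lower bound requires a matching lower-tail estimate plus an asymptotic independence input, constancy of the limsup follows from a $0$-$1$ law, and the statement at infinity follows by the same scheme (or by time-inversion). By stationary increments of $\eta$ it suffices to prove the first statement at $t_0=0$; write $\phi(t) = t^{1/d}(\log\log t^{-1})^{1-1/d}$.

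\textbf{Upper bound.} Set $t_n = 2^{-n}$. By self-similarity, $\tilde\eta(s) := t_n^{-1/d}\eta(t_n s)$ has the law of $\eta$, and $\phi(t_n s)/t_n^{1/d}\sim (\log n)^{1-1/d}$ uniformly in $s\in[1/2,1]$. Hence
\[ \P\!\left(\sup_{t\in[t_{n+1},t_n]}\frac{|\eta(t)|}{\phi(t)}>K\right) \le \P\!\left(\sup_{s\in[1/2,1]}|\tilde\eta(s)| > K'(\log n)^{1-1/d}\right) . \]
Theorem~\ref{thm:main_moc} applied on $I=[0,1]$ bounds $\sup_{[0,1]}|\eta|$ by a constant multiple of the seminorm appearing there, whose tail is $\le c^{-1}\exp(-cu^{d/(d-1)})$; consequently the right-hand side is $\le Cn^{-cK^{d/(d-1)}}$, summable for $K$ large. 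The first Borel-Cantelli lemma then yields $\limsup_{t\downarrow 0}|\eta(t)|/\phi(t)<\infty$ almost surely.

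\textbf{Lower bound.} This is the main difficulty. Apply the second Borel-Cantelli or Kochen-Stone lemma to $E_n=\{|\eta(t_{n})-\eta(t_{n+1})|\ge c\phi(t_n)\}$. Two inputs are needed: (i) a matching \emph{lower}-tail estimate $\P(|\eta(1)|\ge u)\ge c\exp(-Cu^{d/(d-1)})$, which by self-similarity gives $\P(E_n)\gtrsim n^{-Cc^{d/(d-1)}}$, divergent for $c$ small enough; and (ii) sufficient asymptotic independence of the $E_n$ across $n$, obtainable from the conformal (domain) Markov property of SLE together with Koebe-type distortion estimates controlling how $\eta|_{[0,t_n]}$ influences $\eta|_{[t_n,t_{n-1}]}$ on the relevant scale. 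The main obstacle is (i): producing the sharp Weibull exponent $d/(d-1)$ in a large-deviation regime, which I expect to require a concatenation/forcing construction — partition $[0,1]$ into $\sim u^{d/(d-1)}$ subintervals and use the conformal Markov property to pay cost $O(1)$ per subinterval to force the increments of $\eta$ to align.

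\textbf{Constancy and infinity statement.} The limsup $L:=\limsup_{t\downarrow 0}|\eta(t)|/\phi(t)$ is measurable with respect to the germ $\sigma$-algebra $\bigcap_{\epsilon>0}\sigma(\eta|_{[0,\epsilon]})$. Under the Lamperti change of variables $Y_u:=e^{-u/d}\eta(e^u)$, which is stationary in $u$ by self-similarity, $L$ becomes a tail random variable as $u\to-\infty$; triviality of this tail $\sigma$-algebra — equivalently, ergodicity of the rescaling action on whole-plane (resp.\ space-filling) SLE, which should follow from the conformal Markov property plus mixing — forces $L$ to be a.s.\ constant. The infinity statement follows from the analogous argument applied at geometric scales $t_n=2^n$ (or equivalently via time-inversion, using the conformal involution $z\mapsto 1/z$ to transport the near-$0$ behaviour of $\eta$ to its near-$\infty$ behaviour).
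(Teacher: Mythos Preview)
Your overall architecture matches the paper's: Borel--Cantelli at geometric scales for both directions, a Weibull lower-tail input for the lower bound built by concatenation under the domain Markov property, and a $0$--$1$ law for constancy. The upper bound is essentially the paper's \cref{thm:lil_upper}.

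The substantive gap is your step (ii). You write that asymptotic independence of the events $E_n$ is ``obtainable from the conformal (domain) Markov property together with Koebe-type distortion estimates''. This is where the paper and your proposal diverge, and where your sketch would fail as stated. The domain Markov property does \emph{not} give a uniform conditional lower bound: conditionally on $\eta|_{[0,t_{n+1}]}$, the probability of a large increment on $[t_{n+1},t_n]$ can be arbitrarily small if the tip geometry of $\eta[0,t_{n+1}]$ is bad (no Koebe distortion saves you near the fractal tip). The paper therefore does \emph{not} prove any second-moment/Kochen--Stone decorrelation. Instead it (a) replaces deterministic times $t_n$ by hitting times $\tau_{r_k}$ of radii $r_k=e^{-k^2}$, so that the conditional law is governed by the domain $\hatC\setminus\eta[0,\tau_{r_k}]$; (b) introduces a notion of a ``nice'' domain (\cref{sec:diam-lower-nonspf}) under which the conditional Weibull lower bound \emph{does} hold (\cref{pr:cont_ltail_cond}); (c) shows niceness occurs with probability uniformly bounded below (\cref{le:initial_nice_tip}); and (d) replaces Borel--Cantelli/Kochen--Stone by the elementary \cref{le:conditional_prob}, which only needs $\bbP(B_k)\ge p$ and a lower bound on $\bbP(A_k\mid B_k\cap\bigcap_{j<k}(A_j\cap B_j)^c)$. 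This is the genuine idea that your step (ii) is missing.

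Two smaller points. For constancy, your Lamperti/ergodicity sketch is plausible but the paper takes a more concrete route (\cref{pr:01law}): for whole-plane \slek$(2)$ in capacity parametrisation it uses backward martingale convergence on the driving process to show $\hat\cF_{-\infty+}$ is trivial, and for space-filling \slek{} it appeals to tail triviality of the whole-plane GFF; a separate argument identifies the shift-invariant $\sigma$-algebra $\cG$ with the germ $\sigma$-algebra of the time-reversal. For the $t\to\infty$ statement, your time-inversion via $z\mapsto 1/z$ does not work as written --- this conformal map does not preserve the natural parametrisation --- and the paper simply reruns the same argument with $r_k=e^{k^2}$.
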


\begin{remark}
We expect that with some extra care one can show that the moment bounds in \cref{thm:main_moc,thm:main_var} are uniform in $\kappa$ as long as we stay away from the degenerate values, i.e.\ away from $4$ in case \ref{it:spfill} and possibly away from $0$ in case \ref{it:non-spfill}. A uniformity statement of this type is proven in \cite{am-sle8x} and used to construct \sle{8} as a continuous curve. 
\end{remark}

\begin{remark}
In view of \cref{thm:main_lil,thm:main_moc}, it is natural to study exceptional times where the law of the iterated logarithm fails. For $a>0$ and a time $t_0$ call the time $a$-fast if
\[ \limsup_{t \downarrow 0} \frac{\abs{\eta(t_0+t)-\eta(t_0)}}{t^{1/d}(\log t^{-1})^{1-1/d}} \ge a . \]
With the methods of this paper, one can show that the Hausdorff dimension of the set of $a$-fast times for SLE is bounded between $1/d-c_1 a^{d/(d-1)}$ and $1-c_2 a^{d/(d-1)}$ with (non-explicit) deterministic constants $c_1,c_2>0$. The reason we get $1/d$ instead of 1 in our lower bound is that in our argument we only consider the radial direction of the curve instead of $d$ dimensions. We conjecture that there is a deterministic constant $c_0 >0$ such that the dimension is exactly $1-c_0 a^{d/(d-1)}$. For comparison, the Hausdorff dimension of the set of $a$-fast times for Brownian motion is proven in \cite{ot-bm-fast} to be $1-a^2/2$.
\end{remark}

In the case of space-filling SLE, we show a stronger formulation of the upper bound in \cref{thm:main_moc}.
\begin{theorem}\label{thm:ball_filling}
Consider space-filling \slek{} as in case \ref{it:spfill}. There exist $\delta > 0$ and $u_0 > 0$ such that the following is true. For $r,u>0$ and $I$ a bounded interval let $E_{r,u,I}$ denote the event that for any $s,t \in I$ with $\abs{\eta(s)-\eta(t)} \le ur$, the set $\eta[s,t]$ contains $\delta u^2 \log(u\abs{\eta(s)-\eta(t)}^{-1})$ disjoint balls of radius $u^{-2} \abs{\eta(s)-\eta(t)} /\log(u\abs{\eta(s)-\eta(t)}^{-1})$. Then for any bounded interval $I \subseteq \bbR$ there exist $c_1,c_2 > 0$ such that
\[ \bbP(E_{r,u,I}^c) \le c_1 r^{c_2 u^2} \]
for any $u \ge u_0$ and $r \in {(0,1)}$.
\end{theorem}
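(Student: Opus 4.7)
The approach is to combine a single-pair estimate with Gaussian-in-$u^2$ tail with a multi-scale union bound. Using the continuity of $\eta$, the event $E_{r, u, I}^c$ is implied by the existence of a pair $(s, t)$ from a discrete grid in $I^2$ such that $\abs{\eta(s) - \eta(t)}$ lies in a dyadic interval $[R/2, R]$ for some $R \in (0, 2ur]$ and $\eta[s, t]$ contains fewer than $N = \lceil \delta u^2 L \rceil$ disjoint balls of radius $\rho = R/(u^2 L)$, where $L = \log(u/R)$. There are $O(\log(1/r) + \log u)$ dyadic choices of $R$ and at most polynomially many grid pairs at each scale, so the key ingredient is a single-pair bound of the form
\[
\bbP\bigl(\abs{\eta(t) - \eta(s)} \in [R/2, R] \text{ and } \eta[s, t] \text{ lacks } N \text{ disjoint balls of radius } \rho\bigr) \le C R^{c u^2}.
\]
Summing this bound over all candidate pairs and scales, and absorbing the polynomial prefactors into the exponent by taking $u_0$ sufficiently large, then yields $\bbP(E_{r, u, I}^c) \le c_1 r^{c_2 u^2}$.

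For the single-pair estimate I would partition $[s, t]$ into $N$ equal-length sub-intervals $[s_i, s_{i+1}]$. Since space-filling \slek{} is almost surely injective, the sub-segments $\eta[s_i, s_{i+1}]$ are pairwise spatially disjoint, so extracting one Euclidean ball of radius $\rho$ from each sub-segment automatically produces $N$ \emph{disjoint} balls inside $\eta[s, t]$. By the self-similarity and stationarity of whole-plane space-filling \slek{}, each rescaled sub-segment is distributed like a standard unit-scale segment, reducing the ball-existence in one sub-segment to the lower tail of the inradius of $\eta[0, 1]$. The required decay then has two sources: (a) a large-deviation estimate showing that having $\abs{\eta(t) - \eta(s)} \in [R/2, R]$ already enforces $t - s \gtrsim R^2/(u^2 L)$ up to a rare event of the same flavor, ensuring sub-interval time-length at least $\gtrsim R^2/(u^2 L N)$; and (b) a chaining over the $\log(u^2 L)$ dyadic scales between $\rho$ and $R$ that upgrades polynomial-per-scale deviation bounds into a global Gaussian-in-$u^2$ tail, using the Markovian structure of SLE to decouple the sub-segments.

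The principal obstacle is establishing the Gaussian-in-$u^2$ decay in the single-pair bound: the sub-segments are \emph{not} independent, so their joint ``thinness'' has to be controlled by a fine-scale coupling argument rather than a naive product bound. This step will have to exploit the specific conformal structure of space-filling \slek{}, presumably via either the mating-of-trees Brownian encoding (extracting Gaussian tails from the encoding Brownian motion) or the imaginary-geometry flow-line description (extracting Gaussian tails from the underlying GFF), in both cases the Gaussianity of the encoding being the ultimate source of the $e^{-c u^2}$ factor. A secondary subtlety is arranging the time discretization and grid of dyadic scales so that the summation over candidate pairs contributes only polynomial prefactors, which can then be absorbed using $u \ge u_0$.
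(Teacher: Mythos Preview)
Your union-bound skeleton (discretize in time, sum over dyadic spatial scales, absorb polynomial prefactors by taking $u_0$ large) matches the paper's exactly. The substantive divergence is in the single-pair estimate, where the paper takes a much cleaner route that avoids the correlation obstacle you flag.

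You propose to partition $[s,t]$ into $N$ \emph{equal-time} sub-intervals and extract one ball from each. This does not align with SLE's Markov structure: conditioned on $\eta|_{[0,s_i]}$, the law of $\eta|_{[s_i,s_{i+1}]}$ depends on the entire complementary domain, and there is no clean product bound on the joint thinness of the pieces. You correctly identify this as the main gap, but the proposed fixes (mating-of-trees, imaginary geometry) are a detour; neither directly yields the required ball-counting statement, and you would still have to reconstruct the Markov iteration that the paper uses in a more natural form.

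The paper instead works at \emph{spatial} stopping times. For a fixed starting time $s$, let $\tau_{s,\rho} = \inf\{t>s : |\eta(t)-\eta(s)| \ge \rho\}$. The domain Markov property is available at each $\tau_{s,\rho}$, and the key input (Remark~\ref{rm:ball_filling}, a byproduct of the iteration in \cref{se:ss_upper,se:sle_upper}) says: conditionally on the past, each successive annulus crossing from radius $\rho$ to $\rho+6$ picks up a ball of fixed radius with conditional probability $>1-p$. Iterating $\asymp u^2 n$ times via the combinatorial argument of \cref{le:ss_iter} gives directly
\[
\bbP\bigl(\eta[s,\tau_{s,\,u2^{-n/2}n^{1/2}}] \text{ fails to contain } \delta u^2 n \text{ disjoint balls of radius } u^{-1}2^{-n/2}n^{-1/2}\bigr) \lesssim \exp(-c_2 u^2 n).
\]
The Gaussian-in-$u^2$ tail thus comes from counting annulus crossings, not from any Brownian or GFF encoding. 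The union bound is then over grid starting times $s = j\pi\delta 2^{-n}$ and over $n \ge n_0$ with $2^{-n_0/2}n_0^{1/2} \asymp r$; for a general pair $(s,t)$ one locates a grid point between them (using that on the good event the area, hence the time, accumulated in one annulus crossing is at least $\pi\delta 2^{-n}$).

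In short: replace your equal-time partition by the radial-stopping-time iteration already built in \cref{se:ss_upper}, and the acknowledged gap disappears.
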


A key input to the proofs is the following precise estimate for the lower tail of the Minkowski content of SLE segments:
\eqb 
\bbP(\Cont(\eta[0,\tau_r]) < t) \approx \exp\left(-c\, r^{d/(d-1)} t^{-1/(d-1)} \right),
\label{eq:mink-tail}
\eqe
where 
$\Cont$ denotes Minkowski content of dimension $d$ (with $d$ as \eqref{eq:d}), $\tau_r=\inf\{t\geq 0\,:\, |\eta(t)|=r \}$ denotes the hitting time of radius $r$, we write $\eta[0,\tau_r]$ instead of $\eta([0,\tau_r])$ to simplify notation, and we use $\approx$ to indicate that the left side of \eqref{eq:mink-tail} is bounded above and below by the right side of \eqref{eq:mink-tail} for different choices of $c$. Furthermore, building on the domain Markov property of SLE we prove ``conditional'' variants of \eqref{eq:mink-tail} where we condition on part of the past curve. The conditional variant of the upper bound holds for all possible realizations of the past curve segment while the conditional variant of the lower bound requires that the tip of the past curve is sufficiently nice. See \cref{pr:diam_tail_upper,pr:cont_tail_lower,pr:area_tail_lower} for precise statements, and see \cref{eq:cont_utail_cond,pr:cont_ltail_cond,pr:area_ltail_cond} for conditional variants. Note that since we parametrise $\eta$ by its Minkowski content, \eqref{eq:mink-tail} can be equivalently formulated as
\[ \bbP(\diam(\eta[0,t]) > r) \approx \exp\left(-c\, r^{d/(d-1)} t^{-1/(d-1)} \right) . \]

The upper bounds in \cref{thm:main_lil,thm:main_moc,thm:main_var} hold for arbitrary stochastic processes whose increments satisfy a suitable moment condition. Several related results are available in the existing literature. More precisely, fairly general results have been worked out for the modulus of continuity (see e.g.\ \cite{Bed07}), while less has been done for the variation regularity, but one instance is \cite[Appendix A]{fv-rp-book}. In \cref{sec:variation-upper-general} we review these results and generalise \cite[Appendix A]{fv-rp-book}. We prove that a general stochastic process satisfying the condition \eqref{eq:Xinc} attains a certain variation regularity. We then prove that the SLE variants \ref{it:non-spfill} and \ref{it:spfill} do satisfy the required condition. In the latter step we use only the scaling property of the SLE along with a Markov-type property satisfied by the increments, namely a conditional variant of the upper bound in \eqref{eq:mink-tail}.

To prove the lower bounds, we need to argue that the increments of the process in disjoint time intervals are sufficiently decorrelated. The decorrelation results we prove differ from those obtained in earlier works in a fundamental way. The earlier works prove independence of SLE segments within \emph{disjoint regions}. In our approach, we need to consider segments in \emph{neighbouring intervals}, and this adds a fundamental challenge as conformal maps are in general irregular near the boundary. We obtain bounds on the conditional probabilities given \emph{every instance} of an initial segment. We introduce the concept of a $(p_{\mathrm{N}},r_{\mathrm{N}},c_{\mathrm{N}})$-nice domain to deal with this issue.

Given sufficient decorrelation, our proof follows a similar strategy as for Markov processes with uniform bounds on the transition probabilities. We give the proof for Markov processes in \cref{se:markov} since this is helpful for the reader to have in mind when looking at the SLE case later. For SLE, we rely on the conditional variant of the lower bound in \eqref{eq:mink-tail}, which is based on the domain Markov property. Here extra care is needed due to the fact that this estimate only holds when the past curve is nice.

The theorems so far concern the whole-plane SLE variants \ref{it:non-spfill} and \ref{it:spfill}. They transfer to other SLE variants by conformal invariance and absolute continuity as long as we stay away from the domain boundary and potential force points, i.e., the statements of \cref{thm:main_lil,thm:main_moc,thm:main_var} hold true on closed curve segments that do not touch force points or domain boundaries. We expect that all the statements of \cref{thm:main_lil,thm:main_moc,thm:main_var} hold for entire SLE curves in bounded domains whose boundaries are not too fractal. For chordal SLE, we prove in this paper that it does have finite $\psi$-variation on boundary-intersecting segments away from the initial and terminal points when $\psi$ is as in \cref{thm:main_var}.

\begin{theorem}\label{thm:var_chordal}
Let $\psi(x) = x^d(\logp\logp\frac{1}{x})^{-(d-1)}$. Let $\eta$ be a chordal \slek{} in $(\bbH,0,\infty)$ for $\kappa > 0$, where we consider either the space-filling or non-space-filling variant for $\kappa\in(4,8)$. Then almost surely the $\psi$-variation of $\eta\big|_{[s,t]}$ is finite for every $0<s<t<\infty$.
\end{theorem}

Furthermore, we prove that (when restricted to certain subsets) the $\psi$-variation constant has a fast decaying tail, see \cref{pr:bdy-fin-var} for detailed statements.

The main challenge in proving the  previous theorem for chordal SLE is that the arguments for two-sided whole-plane SLE (in particular translation invariance) are not valid for chordal SLE. To obtain \cref{thm:var_chordal} we combine whole-plane SLE arguments and arguments from the chordal Loewner equation, which a priori are very different, and we have not seen these arguments combined in this way in earlier literature. The regularity of chordal SLE has been studied previously in e.g.\ \cite{vl-sle-hoelder,ft-sle-regularity,yuan-psivarx,wer-sle-pvar,kms-sle48x}, but the techniques in these papers do not seem to be sufficient to obtain our sharp logarithmic results.

Finally, we prove a new characterisation of the natural parametrisation of SLE in terms of $\psi$-variation.

\begin{theorem}\label{thm:var_cont}
Let $\psi(x) = x^d(\logp\logp\frac{1}{x})^{-(d-1)}$. Let $\eta$ be either one of the whole-plane SLE variants \ref{it:non-spfill}, \ref{it:spfill} or a chordal (either space-filling or non-space-filling) \slek{} in a domain with analytic boundary. Then almost surely
\[ c_0\Cont_d(\eta[s,t]) = \lim_{\delta\downarrow 0} \sup_{\substack{s \le t_0 < ... < t_r \le t\\ \abs{t_{i+1}-t_i}<\delta}} \sum_i \psi(\abs{\eta(t_{i+1})-\eta(t_i)}) \]
for all $0<s<t<\infty$, where $c_0>0$ is the constant in \cref{thm:main_var}.
\end{theorem}
The last result for the SLE variants \ref{it:non-spfill} and \ref{it:spfill} is a restatement of (the first part of) \cref{thm:main_var}. We will prove the theorem for chordal SLE in \cref{se:chordal}.

\subsection{Comment on discrete models}
\label{se:discrete}

There are a number of discrete models (e.g.\ the loop-erased random walk, Ising model interfaces, Gaussian free field contours, percolation interfaces, and the uniform spanning tree) that have been proven to converge to SLE in the scaling limit. Our results shed light on the regularity of these curves as well.

Lower bounds on the regularity of the discrete curves follow immediately by the observation that if $\eta^n \to \eta$ pointwise on $[0,1]$, then
\[
[\eta]_{\psi\text{-var},[0,1]} \le \liminf_n [\eta^n]_{\psi\text{-var},[0,1]} 
\]
and
\[
\sup_{s,t\in[0,1]} \frac{\abs{\eta(s)-\eta(t)}}{\omega(\abs{s-t})} \le \liminf_n \sup_{s,t\in[0,1]} \frac{\abs{\eta^n(s)-\eta^n(t)}}{\omega(\abs{s-t})}.
\]
As a consequence, if the random curves $\eta^n$ converge in law to $\eta$ with respect to the uniform topology on $[0,1]$, then
\eqb
\lim_{n\to\infty} \bbP\left( \sup_{\abs{t_{i+1}-t_i}<\delta} \sum_i \psi(\abs{\eta^n(t_{i+1})-\eta^n(t_i)}) \le c_0-\varepsilon \right) = 0
\label{eq:var-discrete}
\eqe
and
\eqb
\lim_{n\to\infty} \bbP\left( \sup_{\abs{s-t}<\delta} \frac{\abs{\eta^n(s)-\eta^n(t)}}{\omega(\abs{s-t})} \le c_0'-\varepsilon \right) = 0
\label{eq:moc-discrete}
\eqe
for any $\delta,\varepsilon > 0$ where $\psi(x) = x^d(\logp\logp\frac{1}{x})^{-(d-1)}$, $\omega(t) = t^{1/d}(\logp\frac{1}{t})^{1-1/d}$, and $c_0,c_0' > 0$ are as in Theorems \ref{thm:main_var} and \ref{thm:main_moc}, respectively. In fact, since variation regularity is invariant under reparametrization of the curve, in order to get \eqref{eq:var-discrete} it is sufficient that $\eta^n$ converge in law to $\eta$ as a curve modulo reparametrization of time. Furthermore, a convergence result in this topology also gives a non-trivial (but most likely non-optimal) lower bound of $\psi^{-1}$ on the modulus of continuity of the discrete curve, i.e., \eqref{eq:moc-discrete} holds for some constant $c'_0$ with $\psi^{-1}$ instead of $\omega$; see Section \ref{sec:quantify}. 

Let us now comment on which discrete models that can be treated via the observations in the previous paragraph. First recall that our lower bounds on regularity hold for chordal variants of the SLE$_\kappa$, not only the bi-infinite variants \ref{it:non-spfill} and \ref{it:spfill}; therefore regular chordal SLE$_\kappa$ (space-filling or non-space-filling in case $\kappa\in(4,8)$) can play the role of $\eta$ in the previous paragraph. Convergence to chordal SLE$_\kappa$ has been established  for all the discrete models mentioned in the first paragraph of this subsection if we view them as curves modulo reparametrization of time, and therefore the discrete curve will satisfy \eqref{eq:var-discrete}. Convergence in law for the uniform topology where each edge is traced in $n^{-d}$ units of time has been established for the loop-erased random walk \cite{lv-lerw-nat}, percolation \cite{li-et-al-sharp-arm-percolation,hls-sle6}, and the uniform spanning tree \cite{hs-mating-eucl}, and therefore \eqref{eq:moc-discrete} holds for these discrete models.

A natural question is whether the matching upper bounds hold. While convergence of a discrete curve to a particular continuum curve in the uniform topology does not imply upper bounds on the regularity of the discrete curve, it turns out that our argument for proving the regularity upper bound for SLE can be adapted to prove regularity upper bounds of discrete models. This is the case since our upper bound proofs use rather few 
inputs that can be proven to be satisfied also by the discrete models. We need to prove an analogue of \eqref{eq:mink-tail} that holds uniformly in $n$. By \cref{pr:ss_tail} below, it suffices to show for every $t$ that if $\tau_{t,r} = \inf\{ s>t \mid \abs{\eta^n(s)-\eta^n(t)} \ge r \}$, then
\[
\bbP\left( \operatorname{length}(\eta^n[\tau_{t,r},\tau_{t,r+\lambda}]) \le \ell\lambda^d \mmiddle| \eta^n[t,\tau_{t,r}] \right) < p
\]
for some $p<1$ and $\ell>0$ that do not depend on $n$. This can be obtained for certain discrete models, in particular the two-sided infinite LERW constructed in \cite{law-twosided-lerw}. 
We plan to carry this out in future work.

We note that a uniform upper bound for the regularity of $\eta^n$ is directly related to the strongest topology in which the convergence $\eta^n \to \eta$ holds. Namely, suppose that $\eta^n \to \eta$ in law with respect to the uniform topology on $[0,1]$ and that additionally
\[
\lim_{M\to\infty} \sup_n \bbP( [\eta^n]_{\psi\text{-var}} > M ) = 0.
\]
Then the convergence of $\eta^n \to \eta$ in law holds if we view the paths as random variables in the topological space induced by the variation norm $\norm{\cdot}_{\wt\psi\text{-var}} = \norm{\cdot}_\infty + [\cdot]_{\wt\psi\text{-var}}$ (cf.\ \cref{sec:quantify}) for every $\wt\psi$ with $\lim_{x\searrow 0}\wt\psi(x)/\psi(x) = 0$. 
Similarly, if
\[
\lim_{M\to\infty} \sup_n \bbP( [\eta^n]_{\omega\text{-Höl}} > M ) = 0 ,
\]
then the convergence of $\eta^n \to \eta$ in law holds with respect to $\norm{\cdot}_{\wt\omega\text{-Höl}}$ for every $\wt\omega$ with $\lim_{t\searrow 0}\omega(t)/\wt\omega(t) = 0$.

\subsection{Outline}
\label{se:outline}

We give in \cref{se:prelim} some basic definitions and results on conformal maps and SLE, including the precise definition of the SLE variants that we work with.
In \cref{se:upper} we prove our main theorems except for the lower bounds which will be proved in \cref{sec:sle-lower-bounds}. To illustrate the basic idea of the proof, we show in \cref{se:markov} the analogous results for Markov processes. In \cref{se:chordal} we use the whole-plane results to prove the results on $\psi$-variation of chordal SLE.

\medskip
\textbf{Notation:} We will frequently use the notation $a \lesssim b$ indicating $a \le cb$ for some finite constant $c$ that may vary from line to line, and the constant $c$ may depend on the given parameters unless specified otherwise.  Furthermore, we write $a \asymp b$ meaning $a \lesssim b$ and $b \lesssim a$.

\medskip

{\bf Acknowledgements}.
We thank an anonymous referee for comments which helped us to improve the manuscript. We also thank Peter Friz, Ewain Gwynne, and Terry Lyons for helpful discussions and comments.  N.H.\ was supported by grant 175505 of the Swiss National
Science Foundation (PI: Wendelin Werner) and was part of SwissMAP during the initial stage of this work. Later she was supported by grant DMS-2246820 of the National Science Foundation. Y.Y.\@ acknowledges partial support from European Research Council through Starting Grant 804166 (SPRS; PI: Jason Miller), and Consolidator Grant 683164 (PI: Peter Friz) during the initial stage of this work at TU Berlin. 

\section{Preliminaries}
\label{se:prelim}

\subsection{Conformal maps}
\label{se:prelim_conformal}

We will always denote by $\bbH$ the upper complex half-plane $\{ z \in \bbC \mid \Im z > 0 \}$, and by $\bbD$ the unit disk $\{ z \in \bbC \mid \abs{z} < 1\}$. For $z_0 \in \bbC$ and $r > 0$, we denote by $B(z_0,r)$ the open ball of radius $r$ about $z$, i.e. $\{ z \in \bbC \mid \abs{z-z_0} < r\}$.

For a bounded, relatively closed set $A \subseteq \bbH$, we define its half-plane capacity to be $\lim_{y \to \infty} y\,\bbE[\Im B^{iy}_{\tau_{A \cup \bbR}}]$ where $B^{iy}$ denotes Brownian motion started at $iy$ and $\tau_{A \cup \bbR}$ the hitting time of $A \cup \bbR$.

For a simply connected domain $D \subseteq \hatC$ and a prime end $x \in \partial D$, fix a conformal map $f\colon D \to \bbH$ with $f(x) = \infty$. For a relatively closed set $A \subseteq D$ with positive distance to $x$, we define the {\it{capacity of $A$ in $D$ relative to $x$}} to be the half-plane capacity of $f(A)$.\footnote{In case $\partial D$ has an analytic segment in the neighbourhood of $x$, there is a more intrinsic definition given in \cite{dv-relcap}. Their definition differs to ours by a fixed factor depending on the normalisation of $f$. In particular, we can pick $f$ such that both definitions agree. For our purposes, the choice of normalisation will not matter.}

Standard results for conformal maps include Koebe's distortion and $1/4$-theorem. See e.g. \cite[Theorems 14.7.8 and 14.7.9]{con-complex2-book} for proofs.

\begin{lemma}[Koebe's distortion theorem]
Let $R>0$ and $f\colon B(z,R) \to \bbC$ be a univalent function. Then
\[ \frac{1-r}{(1+r)^3} \le \frac{\abs{f'(w)}}{\abs{f'(z)}} \le \frac{1+r}{(1-r)^3} \]
for all $w \in B(z,R)$ where $r = \frac{\abs{w-z}}{R} < 1$.
\end{lemma}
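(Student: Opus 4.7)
The plan is to reduce to the standard normalised class $\mathcal{S}$ of univalent functions on $\bbD$ via $F(\zeta) \defeq (f(z+R\zeta)-f(z))/(Rf'(z))$, which lies in $\mathcal{S}$ (i.e.\ univalent with $F(0)=0$, $F'(0)=1$) and satisfies $|F'(\zeta)| = |f'(w)|/|f'(z)|$ for $\zeta = (w-z)/R$; it then suffices to prove the bound for $F \in \mathcal{S}$. The classical strategy is to bound the second Taylor coefficient, apply this after composing with a disk automorphism to obtain a pointwise differential inequality for $F''/F'$, and then integrate radially.

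For $F(\zeta) = \zeta + a_2\zeta^2 + \dots \in \mathcal{S}$, Bieberbach's estimate $|a_2|\le 2$ comes from the square-root trick: the odd function $h(\zeta) = \zeta\sqrt{F(\zeta^2)/\zeta^2}$ is univalent on $\bbD$, so $g(\zeta) \defeq 1/h(1/\zeta) = \zeta - (a_2/2)\zeta^{-1} + \dots$ is univalent on $\{|\zeta|>1\}$ and omits a set of positive area; Gronwall's area theorem then yields $|a_2/2| \le 1$.

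Next, for fixed $w \in \bbD$ I would apply Bieberbach to
\[ G(\zeta) \defeq \frac{F\bigl(\tfrac{\zeta+w}{1+\bar w \zeta}\bigr) - F(w)}{(1-|w|^2)\, F'(w)} \in \mathcal{S} . \]
Computing the second Taylor coefficient of $G$ at $0$ in terms of $F''(w)/F'(w)$ and $|w|^2$, and invoking $|a_2(G)| \le 2$, gives the pointwise estimate
\[ \left| \frac{w F''(w)}{F'(w)} - \frac{2|w|^2}{1-|w|^2}\right| \le \frac{4|w|}{1-|w|^2} . \]
Taking real parts and using $\partial_t \log|F'(te^{i\theta})| = t^{-1}\Re(te^{i\theta} F''(te^{i\theta})/F'(te^{i\theta}))$, integration over $t \in [0,r]$ sandwiches $\log|F'(re^{i\theta})|$ between $\int_0^r (2t\pm 4)/(1-t^2)\,dt$; explicit evaluation of these integrals via partial fractions yields the claimed bounds $(1-r)/(1+r)^3 \le |F'(\zeta)| \le (1+r)/(1-r)^3$.

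The main obstacle is the algebraic bookkeeping in extracting the second Taylor coefficient of $G$: one must expand the composition with the Möbius factor $(\zeta+w)/(1+\bar w \zeta)$ to second order at $\zeta=0$ and rearrange the outcome into the symmetric form above, which is where all the geometric content of the theorem gets encoded. The square-root trick also requires a brief branch-selection argument, but it is standard; the remaining steps (radial integration and the initial rescaling from $B(z,R)$ to $\bbD$) are routine.
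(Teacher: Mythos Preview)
Your proof is correct and is exactly the classical argument (Bieberbach via the area theorem, the M\"obius-automorphism trick to extract the differential inequality for $F''/F'$, then radial integration). The paper does not give its own proof of this lemma but simply cites \cite[Theorems 14.7.8 and 14.7.9]{con-complex2-book}; the proof there follows the same route you outline.
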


The most common application in our work is that for any  $w\in B(z,r)$, $r < R$, we have the bounds
\[ c\abs{f'(z)} \le \abs{f'(w)} \le c^{-1}\abs{f'(z)} \]
with $c>0$ depending on $r$.

Another useful application is the following estimate on the upper half-plane.
\begin{lemma}
There exists $c>1$ such that for every univalent function $f\colon \bbH \to \bbC$ the following estimate holds
\begin{equation}\label{eq:koebe_hp}
    \abs{f'(x+iy)} \le c(1+\abs{x}/y)^4 \abs{f'(iy)} .
\end{equation}
\end{lemma}

\begin{proof}
This is an immediate consequence of Koebe's distortion theorem. Indeed, pick a conformal map $\varphi\colon \bbH \to \bbD$ such that $\varphi(iy) = 0$. A concrete choice is $\varphi(z) = \frac{z-iy}{z+iy}$. Then one can verify (either by a direct calculation or by considering the hyperbolic distance between $iy$ and $x+iy$) that $\dist(\varphi(x+iy),\partial\bbD) \asymp (x/y)^{-2}$ as $x/y \to \infty$. The statement then follows from Koebe's distortion theorem and the chain rule.
\end{proof}

\begin{lemma}[Koebe's $1/4$ theorem]
Let $R>0$ and $f\colon B(z,R) \to D \subseteq \bbC$ be a conformal map. Then
\[ \dist(f(z),\partial D) \ge \frac{R}{4}\abs{f'(z)} . \]
\end{lemma}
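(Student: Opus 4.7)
The plan is to follow the classical derivation via the area theorem and Bieberbach's coefficient bound. First I would reduce to the normalised situation: by replacing $f$ with $\tilde f(\zeta) = (f(z+R\zeta)-f(z))/(R f'(z))$, which is a conformal (injective holomorphic) map $\bbD \to \bbC$ with $\tilde f(0)=0$ and $\tilde f'(0)=1$, the image rescales as $\tilde f(\bbD) = (D - f(z))/(R f'(z))$, so $\dist(f(z),\partial D) = R\abs{f'(z)}\cdot \dist(0,\partial \tilde f(\bbD))$. Hence the lemma reduces to the following claim: for any univalent $f\colon \bbD \to \bbC$ with $f(\zeta) = \zeta + a_2 \zeta^2 + a_3 \zeta^3 + \cdots$, the image $f(\bbD)$ contains $B(0,1/4)$.

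The key analytic input is the Gronwall area theorem: if $g(\zeta) = \zeta + \sum_{n\ge 0} b_n \zeta^{-n}$ is univalent on $\{\abs{\zeta}>1\}$, then $\sum_{n\ge 1} n\abs{b_n}^2 \le 1$. I would prove this by applying Green's theorem to the image of the annulus $\{1<\abs{\zeta}<r\}$ under $g$, inserting the Laurent expansion to compute the boundary integrals explicitly, and then letting $r \downarrow 1$ using that the area of the omitted set is non-negative. From this I derive Bieberbach's inequality $\abs{a_2}\le 2$ via the odd square-root trick: the function $\varphi(\zeta) = \sqrt{f(\zeta^2)}$ is univalent and odd on $\bbD$ (well-defined because $f(\zeta^2)$ vanishes only at $0$, to order two), so $\psi(\zeta) \defeq 1/\varphi(1/\zeta) = \zeta - (a_2/2)\zeta^{-1} + O(\zeta^{-3})$ is univalent on $\{\abs{\zeta}>1\}$, and the area theorem applied to $\psi$ yields $\abs{a_2/2}^2 \le 1$.

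Finally, suppose towards contradiction that some $w \in \bbC$ with $\abs{w}<1/4$ is omitted by $f$, i.e.\ $w \notin f(\bbD)$. Define
\[ F(\zeta) \defeq \frac{w f(\zeta)}{w - f(\zeta)} . \]
Since $f(\zeta)\ne w$ on $\bbD$, this is the composition of $f$ with a M\"obius transformation, hence a univalent map $\bbD \to \bbC$. A short expansion gives $F(\zeta) = \zeta + (a_2 + 1/w)\zeta^2 + O(\zeta^3)$, so Bieberbach applied to $F$ yields $\abs{a_2 + 1/w} \le 2$. Combined with $\abs{a_2} \le 2$ via the triangle inequality this forces $\abs{1/w} \le 4$, contradicting $\abs{w}<1/4$ and completing the proof.

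The main obstacle is establishing the area theorem rigorously, in particular justifying the Green's-theorem computation termwise in the Laurent series and controlling the boundary behaviour as $r \downarrow 1$; once that is in hand, the square-root trick, the M\"obius post-composition, and the reduction step are all routine algebraic manipulations.
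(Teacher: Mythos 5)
Your proof is correct and is the standard derivation of Koebe's $1/4$ theorem via the area theorem and Bieberbach's bound $\abs{a_2}\le 2$; the paper does not prove this lemma itself but cites it as a classical result (Conway, Theorems 14.7.8--14.7.9), where essentially this same argument appears. All the steps — the normalisation, the square-root trick, and the M\"obius post-composition with the omitted value — check out.
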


For a simply connected domain $D \subseteq \bbC$ and $z \in D$, the \textit{conformal radius of $z$ in $D$} is defined as $\abs{f'(0)}$ where $f\colon \bbD \to D$ is a (unique up to rotations of $\bbD$) conformal map with $f(0) = z$. We have the standard estimates
\[ \dist(z,\partial D) \le \crad(z,D) \le 4\dist(z,\partial D) \]
which follow from the Schwarz lemma and Koebe's $1/4$ theorem.

Throughout the paper, we will often consider domains of the following type. Let $(D,a)$ be such that
\eqb\begin{split}
&D\subset\wh{\C} \text{ is a simply connected domain with } \infty\in D,\,
0\not\in D,\\
\text{and}\quad
&a\in\partial D \text{ with }
\abs{a} = \sup\{ \abs{z}\mid z \in \wh\bbC\setminus D\} > 0.
\label{eq:Da}
\end{split}\eqe
Typical examples of such domains are $D = \hatC \setminus \fill(\eta[0,\tau_r])$, $a = \eta(\tau_r)$ where $\eta$ is some continuous path starting from the origin,  
$\tau_r=\inf\{t\geq 0\,:\, |\eta(t)|=r \}$ denotes the hitting time of $\partial B(0,r)$,
and fill($\eta[0,\tau_r]$) denotes the union of $\eta[0,\tau_r]$ and the set of points disconnected from $\infty$ by this set.

To $(D,a)$ as in \eqref{eq:Da}, we associate a conformal map $f\colon D \to \bbD$ as follows. Let $z_D \in \partial B(0,\abs{a}+2)$ be the point closest to $a$, and $f\colon D \to \bbD$ the conformal map with $f(z_D)=0$ and $f(a)=1$. The following property is shown within the proof of \cite[Lemma 3.1]{ghm-kpz-sle}.

\begin{lemma}\label{le:ghm}
There exists $\varepsilon_0 > 0$ such that the following is true. Let $(D,a)$ be as in \eqref{eq:Da} with $\abs{a} \ge 1$, and $f\colon D \to \bbD$ the associated conformal map. Let $V$ be the union of $B(z_D,3) \cap D$ with all points that it separates from $\infty$ in $D$. There exists a path $\alpha$ from $0$ to $1$ in $\bbD$ whose $\varepsilon_0$-neighbourhood is contained in $f(V)$. Moreover, $\alpha$ can be picked as a simple nearest-neighbour path in $\varepsilon_0 \bbZ^2$.
\end{lemma}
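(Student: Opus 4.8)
The plan is to transport to $\bbD$, via Koebe's theorem, a tube of definite width joining $z_D$ to $a$ inside $V$; the only genuinely non-trivial point will be a scale-free lower bound on $1-\abs{f(\cdot)}$ along this tube. After a rotation we may assume $a=\abs{a}=:R\ge 1$ and $z_D=R+2$. Since $\hatC\setminus D\subseteq\overline{B(0,R)}$, the half-open real segment $(R,R+2]$ lies in $D$; moreover, for $x\in[R+t,R+2]$ with $0<t\le 1$ we have $t\le\dist(x,\partial D)\le\abs{x-a}\le 2$ and, as $x\in B(z_D,2)$, also $\dist\bigl(x,\partial B(z_D,3)\bigr)\ge 1$, whence $B\bigl(x,\min(1,\dist(x,\partial D))\bigr)\subseteq B(z_D,3)\cap D\subseteq V$.

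Next I would fix a universal constant $s_0\in(0,\tfrac12)$ and put $\beta_0=[R+s_0,R+2]$, so that $B(w,s_0)\subseteq V$ for every $w\in\beta_0$ by the previous paragraph. Koebe's $1/4$-theorem applied to $f$ on $B(w,s_0)$ gives $B\bigl(f(w),\tfrac14 s_0\abs{f'(w)}\bigr)\subseteq f(V)$, and together with $\dist(w,\partial D)\le\crad(w,D)=(1-\abs{f(w)}^2)/\abs{f'(w)}\le 4\dist(w,\partial D)\le 8$ this yields $\abs{f'(w)}\ge\tfrac18(1-\abs{f(w)})$. Hence $B\bigl(f(w),c_1 s_0(1-\abs{f(w)})\bigr)\subseteq f(V)$ for a universal $c_1>0$ and all $w\in\beta_0$, and the problem is reduced to the bound $1-\abs{f(w)}\ge c_2$ for a universal $c_2>0$ and all $w\in\beta_0$.

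This reduced bound is the crux, and I would prove it with the hyperbolic metric. Let $\Omega$ be the open $s_0/2$-neighbourhood of $\beta_0$: it is convex, hence simply connected, of ``width'' $s_0$ and ``length'' $\le 2$, it is contained in $D$, it contains $z_D$ and $\beta_0$, and $\dist(\zeta,\partial\Omega)\ge s_0/2$ for $\zeta\in\beta_0$. By the standard bound $\rho_\Omega(\zeta)\le 2/\dist(\zeta,\partial\Omega)$ for the hyperbolic density of a simply connected domain, the hyperbolic length of $\beta_0$ in $\Omega$ is $\le 8/s_0$, so by conformal invariance and domain monotonicity of the hyperbolic metric $d_\bbD(0,f(w))=d_D(z_D,w)\le d_\Omega(z_D,w)\le 8/s_0$ for $w\in\beta_0$; thus $\abs{f(w)}\le\tanh(4/s_0)<1$ and $1-\abs{f(w)}\ge 1-\tanh(4/s_0)=:c_2>0$ is a universal constant. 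This is exactly the step that uses the geometry: $\dist(\cdot,\partial D)\le 2$ on $\beta_0$ because the boundary point $a$ lies within distance $2$, so the tube has bounded aspect ratio and the bound does not see the size of $D$. Setting $\varepsilon_1:=c_1 s_0 c_2$, the $\varepsilon_1$-neighbourhood of the curve $f(\beta_0)$, which joins $0=f(z_D)$ to $f(R+s_0)$, then lies in $f(V)$.

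It remains to run the curve into $1=f(a)$ and to discretise, which I regard as routine. The right half-disk $H=B(R,2s_0)\cap\{\Re z>R\}$ satisfies $H\subseteq\{\abs{z}>R\}\cap B(z_D,3)\subseteq V$ and contains the sub-segment $(R,R+s_0]$, so $f(H)\subseteq f(V)$ is a neighbourhood inside $\bbD$ of the continuation $f\bigl((R,R+s_0]\bigr)$ of $f(\beta_0)$ towards $1$, away from $1$ itself. Concatenating and then replacing the resulting curve by a simple nearest-neighbour path in $\varepsilon_0\Z^2$, with $\varepsilon_0:=\varepsilon_1/c$ for a large universal $c$, that stays within $\varepsilon_1/2$ of it, runs from $0$ to the point of $\varepsilon_0\Z^2$ nearest $1$, and has its loops deleted, yields the asserted path (``from $0$ to $1$'' being understood up to the $O(\varepsilon_0)$ discretisation error at the endpoint, which is unavoidable as $f(V)\subseteq\bbD$). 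The main obstacle, then, is the scale-free lower bound $1-\abs{f(w)}\ge c_2$ above — equivalently, a uniform harmonic-measure (Green's function) lower bound along $\beta_0$; the endpoint and lattice approximation are routine, the only care being to keep $s_0$ small enough that $H$ is a genuine half-disk neighbourhood of $a$ inside $V$.
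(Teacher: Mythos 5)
Your argument for the ``interior'' portion of the path is correct and rather elegant: the chain $B(w,s_0)\subseteq V$ for $w\in\beta_0=[R+s_0,R+2]$, Koebe's $1/4$-theorem giving $B(f(w),\tfrac{1}{32}s_0(1-\abs{f(w)}))\subseteq f(V)$, and the hyperbolic-length bound $d_{\bbD}(0,f(w))=d_D(z_D,w)\le d_\Omega(z_D,w)\le 8/s_0$ yielding the scale-free estimate $1-\abs{f(w)}\ge 1-\tanh(4/s_0)$ all check out. This gives a uniform tube around $f(\beta_0)$, i.e.\ around the part of the path from $0$ to $f(R+s_0)$, and that point is at distance $\ge c_2$ from $\partial\bbD$.

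The gap is in the step you call routine: getting from $f(R+s_0)$ into $1$. The assertion that $f(H)$ is ``a neighbourhood inside $\bbD$'' of $f\bigl((R,R+s_0]\bigr)$ is not justified and, as stated, is false in general. For $x=R+t$ with $t$ small, $\dist(x,\partial H)=t\to 0$, and the nearby piece of $\partial H$ (the diameter $\{\Re z=R\}\cap B(R,2s_0)$ minus the point $a$) lies in $D$, so it maps into the \emph{interior} of $\bbD$; hence $\partial f(H)\cap\bbD$ comes arbitrarily close to $f(x)$ and $f(H)$ contains no uniform ball around $f(x)$ as $x\downarrow R$. What the lemma actually requires at the endpoint — and what its application in \cref{le:dist_1} extracts from it — is the uniform half-disk inclusion $B(1,\varepsilon_0)\cap\bbD\subseteq f(V)$, equivalently $f^{-1}(B(1,\varepsilon_0)\cap\bbD)\subseteq V$. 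This is the genuinely hard, scale-free part of the statement: one must rule out that some $w\in D$ which can be joined to $\infty$ inside $D\setminus B(z_D,3)$ nevertheless has $f(w)$ arbitrarily close to $1$. Enlarging $H$ to $B(a,2s_0)\cap D\subseteq B(z_D,3)\cap D\subseteq V$ does not resolve this, since $f^{-1}$ of a point near $1$ need not land in that set. A correct completion needs a quantitative localisation of the prime end $a$, e.g.\ an extremal-length/Beurling-type argument showing that the crosscuts $f^{-1}(\partial B(1,\rho)\cap\bbD)$, $\rho\in(\varepsilon_0,1/2)$, which all separate such a $w$ from $z_D$ (and from $\infty$, which in turn requires knowing $f(\infty)$ is uniformly far from $1$, itself a nontrivial input along the lines of \cref{le:inf_loc,le:inf_dist}), cannot have the uniformly large modulus forced by $\varepsilon_0\to 0$. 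Your weakened reading of ``from $0$ to $1$'' (stopping at the lattice point nearest $1$) would also not suffice for the way the lemma is used later, precisely because the half-disk at $1$ is the part that matters there.
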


\subsection{(Ordinary) SLE}
\label{se:prelim_sle}

In this and the next section we discuss the SLE variants that we use in the paper. All SLE variants are probability measures on curves (modulo reparametrisation) either in a simply connected domain $D \subseteq \hatC$ or in the full plane $\hatC$.

Fix $\kappa > 0$. Let $D \subseteq \hatC$ be a simply connected domain, and $a,b \in \partial D$ two distinct prime ends. Moreover, we may possibly have additional force points $u^1,...,u^n \in \overline{D}$ with weights $\rho_1,...,\rho_n \in \bbR$. The chordal \slek{}$(\rho_1,...,\rho_n)$ in $D$ from $a$ to $b$ with these force points is a probability measure on curves in $\overline{D}$ stating at $a$ with the following domain Markov property: For any stopping time $\tau$, conditionally on $\eta\big|_{[0,\tau]}$, the law of $\eta\big|_{[\tau,\infty)}$ in an \slek{}$(\rho_1,...,\rho_n)$ in the connected component of $D \setminus \eta[0,\tau]$ containing $b$ from $\eta(\tau)$ to $b$ with the same force points. (There is a subtlety when $\eta$ has swallowed some force points, but in this paper we will not encounter that scenario.)

Moreover, the SLE measures are conformally invariant in the sense that if $f\colon D \to f(D)$ is a conformal map, then the push-forward of \slek{}$(\rho_1,...,\rho_n)$ in $D$ is \slek{}$(\rho_1,...,\rho_n)$ in $f(D)$ from $f(a)$ to $f(b)$ with force points $f(u^1),...,f(u^n)$.

Similarly, radial SLE is characterised by the same properties except that $b \in D$ instead of $\partial D$. For both chordal and radial SLE, it is sometimes convenient to consider $b$ as an additional force point with weight $\rho_{n+1} = \kappa-6-\rho_1-...-\rho_n$. By doing so, we have the following simple transformation rule (see \cite[Theorem 3]{sw-sle-coordinate-change}): For any conformal map $f\colon D \to f(D)$, the pushforward of \slek{}$(\rho_1,...,\rho_{n+1})$ in $D$ starting from $a$ (stopped before swallowing any force point) is \slek{}$(\rho_1,...,\rho_{n+1})$ in $f(D)$ starting from $f(a)$. (Note that in this rule, the target point $b$ is not necessarily $u^{n+1}$ but can be any $u^j$. The law does not depend on the choice of a target point.)

In case $D = \bbH$ or $D = \bbD$, the laws of SLE can be spelled out explicitly (cf.\@ \cite{sw-sle-coordinate-change}). Namely, we can describe the law of the conformal maps $g_t\colon \bbH \setminus \fill(\eta[0,t]) \to \bbH$ with $g_t(z) = z + O(1/z)$, $z \to \infty$. If $\eta$ is parametrised by half-plane capacity, i.e. $\hcap(\fill(\eta[0,t])) = 2t$, then the families $(g_t(z))_{t \ge 0}$ satisfy
\[ \partial_t g_t(z) = \frac{2}{g_t(z)-W_t}, \quad g_0(z) = z, \]
with processes $(W_t,U^1_t,...,U^n_t)$ satisfying the following system of SDEs
\[ \begin{alignedat}{2}
dW_t &= \sqrt{\kappa}\,dB_t + \sum_j \Re\frac{\rho_j}{W_t-U^j_t}\,dt , \quad && W_0 = a,\\
dU^j_t &= \frac{2}{U^j_t-W_t}\,dt , \quad && U^j_0 = u^j .
\end{alignedat} \]
By Girsanov's theorem, for fixed $\kappa \ge 0$, all such \slek{} variants (with different values of $\rho_j$) are absolutely continuous with respect to each other (before any force point is swallowed). The Radon-Nikodym derivatives can be spelled out explicitly, see \cite[Theorem 6]{sw-sle-coordinate-change}.
One particular consequence is the following.
\begin{lemma}\label{le:sle_abs_cont}
Let $D \subseteq \bbC$ be a simply connected domain, $U \subseteq D$ a bounded subdomain, and $a \in \partial D \cap \overline{U}$. Let $\varepsilon > 0$ and $\rho_1,...,\rho_n \in \bbR$. Then there exists $c>0$ such that the following is true:

Let $u^1,...,u^k \in U$ and let $u^{k+1},...,u^n$, and $b$ be outside the $\varepsilon$-neighbourhood of $U$. Consider the law $\nu$ of \slek{} in $D$ from $a$ to $b$ with force points $(u^1,...,u^k)$ and weights $(\rho_1,...,\rho_k)$, and the law $\wt\nu$ of \slek{} in $D$ from $a$ to $b$ with force points $(u^1,...,u^n)$ and weights $(\rho_1,...,\rho_n)$. Then, as laws on curves stopped upon exiting $U$, these two \slek{} measures are absolutely continuous, and their Radon-Nikodym derivative is bounded within $[c,c^{-1}]$.
\end{lemma}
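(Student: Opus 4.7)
The plan is to reduce to the case $D = \bbH$ by conformal invariance, write the Radon--Nikodym derivative as an explicit Girsanov-type martingale, and then bound each factor at time $\tau_U$ using Koebe's distortion theorem together with the assumption that the additional force points and the target are at distance at least $\varepsilon$ from $U$. The main subtlety will be pinning down the correct RN derivative formula and handling the target point $b$ (treated as an additional force point with weight $\kappa-6-\sum_\ell \rho_\ell$, as described in Section \ref{se:prelim_sle}); the geometric bounds themselves will be fairly standard.

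By the conformal invariance of SLE$_\kappa(\rho)$, I would choose a conformal map $\phi\colon D \to \bbH$ (or $D \to \bbD$ in the radial case when $b \in D$) sending $a$ to $0$ and $b$ to $\infty$ (respectively $0$). Since $U$ is bounded in $D$ and $b$ lies at distance $\ge \varepsilon$ from $U$, the image $\phi(U)$ is a bounded subdomain of $\bbH$ at positive distance from $\phi(b)$, and the additional force points $u^{k+1},\dots,u^n$ map to points at distance $\ge \varepsilon'$ from $\phi(U)$, where $\varepsilon' > 0$ depends only on $(D,U,\varepsilon)$. It therefore suffices to prove the lemma in the half-plane setting. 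Applying \cite[Theorem 6]{sw-sle-coordinate-change} (equivalently, Girsanov's theorem applied to the SDE system from Section \ref{se:prelim_sle}), the density $d\wt\nu/d\nu$ restricted to $\cF_t$ is a martingale of the form
\[ M_t = C \prod_j \bigl|g_t'(u^j)\bigr|^{\alpha_j} \prod_j \bigl|W_t - g_t(u^j)\bigr|^{\beta_j} \prod_{i<j} \bigl|g_t(u^i)-g_t(u^j)\bigr|^{\gamma_{ij}}, \]
where the products run over indices (and pairs) involving at least one of the extra force points $u^{k+1},\dots,u^n,b$, the exponents $\alpha_j,\beta_j,\gamma_{ij}$ depend only on $\kappa$ and the $\rho_\ell$, and $C$ is a deterministic constant.

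To bound $M_{\tau_U}$, observe that by definition of $\tau_U$ the hull $K_t = \fill(\eta[0,t])$ lies in $\overline{\phi(U)}$ for all $t \le \tau_U$. For each extra force point $u^j$, the ball $B(u^j,\varepsilon'/2)$ is disjoint from $K_t$, so $g_t$ is univalent on this ball (extending across $\bbR$ by Schwarz reflection if $u^j \in \bbR$), and Koebe's distortion theorem yields $|g_t'(u^j)| \in [c_1,c_1^{-1}]$ for a constant depending only on $\varepsilon'$ and $\diam(\phi(U))$. The same estimate applied to $g_t^{-1}$ near a fixed reference point shows that $g_t(\overline{\phi(U)} \setminus K_t)$, and hence $W_t$, stays in a bounded region of $\bbH$; combined with a lower bound of the form $|g_t(u^j) - g_t(u^i)| \ge c_2|u^j-u^i|$ derived from Koebe's $1/4$ theorem, all the differences $|W_t-g_t(u^j)|$ and $|g_t(u^i)-g_t(u^j)|$ are uniformly bounded above and bounded away from zero. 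Since the exponents are fixed and there are only finitely many factors, this yields $M_{\tau_U} \in [c,c^{-1}]$ almost surely; the same argument applied to $d\nu/d\wt\nu$ gives the reverse inequality and closes the proof.
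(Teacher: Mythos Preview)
The paper does not actually give a proof of this lemma: it is stated immediately after the sentence ``One particular consequence is the following,'' referring to Girsanov's theorem and \cite[Theorem~6]{sw-sle-coordinate-change}, and is then used freely as a black box. Your write-up is exactly the argument the paper has in mind --- pass to $\bbH$, use the explicit Schramm--Wilson martingale as the Radon--Nikodym derivative, and bound each factor via distortion estimates since the extra force points stay at uniform positive distance from the hull until time~$\tau_U$.

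Two points worth tightening. First, your map $\phi$ sends $b\mapsto\infty$ and hence depends on $b$, so the claim that $\phi(U)$ is bounded and that the images of the extra force points lie at distance $\ge\varepsilon'$ from $\phi(U)$ with $\varepsilon'$ depending only on $(D,U,\varepsilon)$ needs a word of justification; the cleanest fix is to choose $\phi$ once and for all independently of $b$ and treat $b$ as an additional force point of weight $\kappa-6-\sum_j\rho_j$ (which you already mention). Second, Koebe distortion alone does not immediately give the lower bound on $|g_t(u^j)-W_t|$, since $g_t^{-1}$ can be badly behaved near $W_t$; the standard route is either to integrate the Loewner ODE for $\log g_t'(u^j)$ and $\log(g_t(u^i)-g_t(u^j))$ using that $\hcap(K_{\tau_U})$ is bounded and $|g_s(u^j)-W_s|$ stays bounded below (which one can see from $|g_s(z)-z|\le 3\diam K_s$ together with $\dist(u^j,K_s)\ge\varepsilon'$), or to argue via harmonic measure. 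None of this changes the structure of your argument, but the bounds do not follow from a single invocation of Koebe's $1/4$ theorem as written.
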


Whole-plane \slek{} and whole-plane \slek{}$(\rho)$ are probability measures on curves in $\hatC$ running from $a$ to $b$ where $a,b$ are two distinct points on $\hatC$. They are characterised by an analogous domain Markov property. For any non-trivial stopping time $\tau$, conditionally on $\eta\big|_{(-\infty,\tau]}$, the law of $\eta\big|_{[\tau,\infty)}$ is a radial \slek{} (resp. \slek{}$(\rho)$) in the connected component of $\hatC \setminus \eta((-\infty,\tau])$ containing $b$ from $\eta(\tau)$ to $b$ (with a force point at $a$).

Two-sided whole-plane \slek{}, $\kappa \le 8$, is a probability measure on closed curves in $\hatC$ from $a$ to $a$ passing through some $b \in \hatC$. It is defined as follows:
\begin{itemize}
    \item The segment $\eta\big|_{(-\infty,0]}$ is a whole-plane \slek{}$(2)$ from $a$ to $b$ with force point at $a$.
    \item Conditionally on $\eta\big|_{(-\infty,0]}$, the segment $\eta\big|_{[0,\infty)}$ is a chordal \slek{} in $\hatC \setminus \eta(-\infty,0]$ from $b$ to $a$.
\end{itemize}

We will use the following facts about two-sided whole-plane \slek{} from $\infty$ to $\infty$ passing through $0$ (cf.\@ \cite{zhan-loop}). Both whole-plane \slek{}$(2)$ and two-sided whole-plane \slek{} are reversible. In particular, the restriction $\eta\big|_{[0,\infty)}$ is a whole-plane \slek{}$(2)$ from $0$ to $\infty$ with force point at $0$. Moreover, if $\eta$ is parametrised by Minkowski content, then $\eta$ is a self-similar process of index $1/d$ with stationary increments, in the sense that for every $t_0 \in \bbR$ and $\lambda > 0$, the process $t \mapsto \eta(t_0+\lambda t)-\eta(t_0)$ has the same law as $t \mapsto \lambda^{1/d}\eta(t)$.

In the remainder of this paper, we denote by $\nu_{D;a\to b}$ (chordal or radial) \slek{} in $D$ from $a$ to $b$, and by $\nu_{D;a\to b;u}$ \slek{}$(2)$ with a force point at $u$. We denote by $\nu_{\hatC;a\to b;a}$ whole-plane \slek{}$(2)$.

The Minkowski content measures the size of fractal sets. It has been shown in \cite{lr-minkowski-content} that \slek{} curves\footnote{Strictly speaking, their result applies to \slek{} in $\bbH$ without force points, but it transfers to other \slek{} variants by conformal invariance and absolute continuity, at least on segments away from force points and fractal boundaries. The result is also true for two-sided whole-plane \slek{}, see \cite[Lemma 2.12]{zhan-loop}.} possess non-trivial $d$-dimensional Minkowski content where $d = (1+\frac{\kappa}{8})\wedge 2$. Moreover, the Minkowski content of $\eta[0,t]$ is continuous and strictly increasing in $t$, hence $\eta$ can be parametrised by its Minkowski content. 
This is called the \emph{natural parametrization} of the curve.
Finally, it is shown there that the Minkowski content is additive over SLE curve segments, i.e. $\Cont(\eta[s,u])=\Cont(\eta[s,t])+\Cont(\eta[t,u])$ for $s\le t\le u$.

By \cite[Lemma 2.6]{zhan-loop}, the Minkowski content satisfies the following transformation rule. If $f\colon U \to f(U) \subseteq \bbC$ is a conformal map and $\mu$ is the Minkowski content measure of some $S \subseteq U$, i.e. $\mu(K) = \Cont(K \cap S)$ for every compact $K \subseteq U$, then
\begin{equation}\label{eq:cont_transf}
\Cont(f(K \cap S)) = \int_K \abs{f'(z)}^d \,\mu(dz)
\end{equation}
for every compact $K \subseteq U$.

\subsection{Space-filling SLE}
\label{se:prelim_spf}

In this section we introduce the whole-plane space-filling SLE$_\kappa$ which is defined via the theory of imaginary geometry. Whole-plane space-filling SLE$_{\kappa}$ from $\infty$ to $\infty$ for $\kappa > 4$ was originally defined in~\cite[Section~1.4.1]{dms-mating-trees}, building on the chordal definition in~\cite[Sections 1.2.3 and 4.3]{ms-ig4}. See also \cite[Section 3.6.3]{ghs-mot-survey} for a survey. For $\kappa \geq 8$, whole-plane space-filling SLE$_{\kappa}$ from $\infty$ to $\infty$ is a curve from $\infty$ to $\infty$ obtained via the local limit of a regular chordal SLE$_{\kappa}$ at a typical point. For $\kappa \in (4,8)$, space-filling SLE$_{\kappa}$ from $\infty$ to $\infty$ traces points in the same order as a curve that locally looks like (non-space-filling) SLE$_{\kappa}$, but fills in the ``bubbles'' that it disconnects from its target point with a continuous space-filling loop.

To construct whole-plane space-filling SLE$_{\kappa}$ from $\infty$ to $\infty$, fix a deterministic countable dense subset $\mcl C\subset \BB C$. Let $h$ be a whole-plane GFF, viewed modulo a global additive multiple of $2\pi \chi$ 
where $\chi \defeq \frac{2}{\sqrt\kappa} - \frac{\sqrt\kappa}{2}$; see \cite[Section 2.2]{ms-ig4} for the definition of this variant of the GFF. It is shown in~\cite{ms-ig4} that for each $z\in\mcl C$, one can make sense of the flow lines $\eta_z^{\mathrm{L}}$ and $\eta_z^{\mathrm{R}}$ of $e^{ih/\chi}$ of angles $\pi/2$ and $-\pi/2$, respectively, started from $z$. These flow lines are SLE$_{16/\kappa}(2-16/\kappa)$ curves~\cite[Theorem~1.1]{ms-ig4}. The flow lines $\eta_z^{\mathrm{L}}$ and $\eta_w^{\mathrm{L}}$ (resp.\ $\eta_z^{\mathrm{R}}$ and $\eta_w^{\mathrm{R}}$) started at distinct $z,w\in\mcl C$ eventually merge, such that the collection of flow lines $\eta_z^{\mathrm{L}}$ (resp.\ $\eta_z^{\mathrm{R}}$) for $z\in\mcl C$ form the branches of a tree rooted at $\infty$. 

We define a total ordering on $\mcl C$ by saying that $w\in \mcl C$ comes after $z\in \mcl C$ if and only if $\eta^{\mathrm{L}}_w$ merges into $\eta^{\mathrm{L}}_z$ on its right side (equivalently, $\eta^{\mathrm{R}}_w$ merges into $\eta^{\mathrm{R}}_z$ on its left side). It can be argued that there is a unique space-filling curve $\eta \colon \BB R\rta\BB C$ that visits the points in $\mcl C$ in order, is continuous when parameterized by Lebesgue measure (i.e. $\eta[0,t]$ and $\eta[-t,0]$ both have Lebesgue measure $t$ for any $t>0$), satisfies $\eta(0)=0$, and is such that $(\eta)^{-1}(\mcl C)$ is a dense set of times (see \cite[Section 4.3]{ms-ig4} and \cite{dms-mating-trees}). The curve $\eta$ does not depend on the choice of $\mcl C$ and is defined to be whole-plane space-filling SLE$_{\kappa}$ from $\infty$ to $\infty$. For each fixed $z\in\BB C$, it is a.s.\ the case that the left and right outer boundaries of $\eta$ stopped at the first time it hits $z$ are given by the flow lines $\eta_z^{\mathrm{L}}$ and $\eta_z^{\mathrm{R}}$. Since the flow lines have zero Lebesgue measure and $(\eta)^{-1}(\mcl C)$ is dense, it follows that
almost surely for all $s<t$ the Lebesgue measure of $\eta[s,t]$ is exactly $\abs{t-s}$. 

We remark that for $\kappa=8$, the whole-plane space-filling SLE$_8$ as defined here is equal in law to the two-sided whole-plane SLE$_8$ defined in the previous subsection.

In our proofs in the next subsection we will consider $\eta|_{[0,\infty)}$ and we will now describe this curve slightly more explicitly. The two flow lines $\eta^{\mathrm{L}}_0$ and $\eta^{\mathrm{R}}_0$ divide $\bbC$ into two (for $\kappa\geq 8$) or countably infinite (for $\kappa\in(4,8)$) connected components, such that the boundary of each connected component can be written as the union of a segment of $\eta^{\mathrm{L}}_0$ and a segment of $\eta^{\mathrm{R}}_0$. The curve $\eta|_{[0,\infty)}$ will visit precisely the connected components that lie to the right of $\eta^{\mathrm{L}}_0$ (i.e. $\eta^{\mathrm{L}}_0$ traces its boundary in clockwise direction), and $\eta|_{[0,\infty)}$ restricted to each such component has the law of a chordal space-filling SLE$_\kappa$ connecting the two points of intersection of $\eta^{\mathrm{L}}_0$ and $\eta^{\mathrm{R}}_0$ on its boundary. For $\kappa\geq 8$ the chordal space-filling SLE$_\kappa$ is just a regular chordal SLE$_\kappa$, while for $\kappa\in(4,8)$ the curve can be constructed by starting with a regular chordal (non-space-filling) SLE$_\kappa$ and filling in the components disconnected from the target point by a space-filling SLE$_\kappa$-type loop. The SLE$_\kappa$-type loop can be obtained via an iterative construction where one first samples a regular chordal (non-space-filling) SLE$_\kappa(\kappa-6)$ and then samples curves with this law iteratively in each complementary component of the curve.

The boundary data of $h$ along an angle $\theta\in\{-\pi/2,\pi/2 \}$ flow line is given by $\chi$ times the winding of the curve plus $-\pi/\sqrt{\kappa}-\theta\chi$ (resp.\ $\pi/\sqrt{\kappa}-\theta\chi$) on the left (resp.\ right) side, where the winding is relative to a segment of the curve going straight upwards. We refer to \cite[Section 1]{ms-ig4} for the precise description of this conditional law and in particular to \cite[Figures 1.9 and 1.10]{ms-ig4} for the boundary data and the concept of winding.

For $t\geq 0$, let $\fill(\eta[0,t])$ be the hull generated by $\eta[0,t]$, i.e.\ the union of $\eta[0,t]$ and the set of points which it disconnects from $\infty$. For $\kappa\geq 8$ we have  $\fill(\eta[0,t])=\eta[0,t]$ while for $\kappa\in(4,8)$ we have that $\eta[0,t]$ is strictly contained in $\fill(\eta[0,t])$. However, in the latter case it still holds a.s.\ that $\eta(\tau_r)$ lies on the boundary of $\fill(\eta[0,\tau_r])$ for $\tau_r=\inf\{t\geq 0\,:\,|\eta(t)|\geq r \}$ and fixed $r>0$, and that $\eta|_{[\tau_r,\infty)}$ stays in $\hatC \setminus \fill(\eta[0,\tau_r])$. See Figure \ref{fig:spfill-sle}.

\begin{figure}
    \centering
    \includegraphics{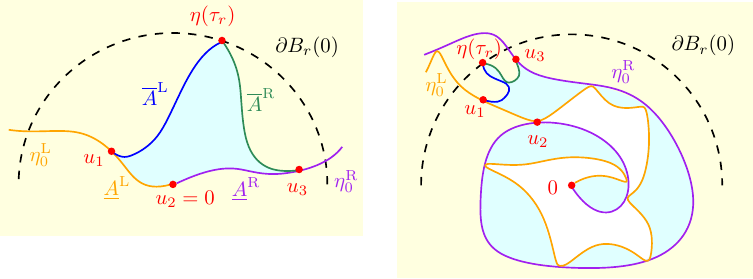}
    \caption{An illustration of $\eta[0,\tau_r]$ (light blue) for $\kappa\geq 8$ (left) and $\kappa\in(4,8)$ (right), where $\eta$ is a whole-plane space-filling SLE$_\kappa$. The complement of $\fill(\eta[0,\tau_r])$ is shown in light yellow. The arcs $\ul A^{\op{L}},\ul A^{\op{R}},\ol A^{\op{L}},\ol A^{\op{R}}$ are the subarcs of $\partial \fill(\eta[0,\tau_r])$ shown in orange, purple, blue and green, respectively, separated by the points $\eta(\tau_r),u_1,u_2,u_3$.  (Note that $\ul A^{\op{L}}$ (resp.\ $\ul A^{\op{R}}$) is only a segment of the full orange (resp.\ purple) curve, namely the segment on the boundary of $\fill(\eta[0,\tau_r])$.)}
    \label{fig:spfill-sle}
\end{figure}

Fix $r>0$. The set $\bdy \fill(\eta[0,\tau_r])$ can be divided into four distinguished arcs, which we denote as follows.
\begin{itemize}
	\item $\ul A^{\mathrm{L}}$ (resp.\ $\ul A^{\mathrm{R}}$) is the arc of $\bdy \fill(\eta[0,\tau_r])$ traced by $\eta_0^{\mathrm{L}}$ (resp.\ $\eta_0^{\mathrm{R}}$). 
	\item $\ol A^{\mathrm{L}}$ (resp.\ $\ol A^{\mathrm{R}}$) is the arc of $\bdy \fill(\eta[0,\tau_r]) $ not traced by $\eta_0^{\mathrm{L}}$ or $\eta_0^{\mathrm{R}}$ which is adjacent to $\ul A^{\mathrm{L}}$ (resp.\ $\ul A^{\mathrm{R}}$). 
\end{itemize}  
Define the $\sigma$-algebra $\mcl F_r$ by
$
\mcl F_r \defeq \sigma\left( \eta|_{[0,\tau_r]} ,\, h|_{\eta[0,\tau_r]} \right) .
$
The following is \cite[Lemma 3.2]{ghm-kpz-sle}.
\begin{lemma}\label{le:localset}
	The set $\eta[0,\tau_r]$ is a local set for $h$ in the sense of~\cite[Lemma 3.9]{ss-gff-contour}. 
	In particular, the boundary data for the conditional law of $h|_{\BB C\setminus \fill(\eta[0,\tau_r]) }$ given $\mcl F_r$ on each of the arcs $\ul A^{\mathrm{L}}$, $\ul A^{\mathrm{R}}$, $\ol A^{\mathrm{L}}$, and $\ol A^{\mathrm{R}}$ coincides with the boundary data of the corresponding flow line of $h$, and the conditional law of $h|_{\BB C\setminus \fill(\eta[0,\tau_r]) }$ is that of a Dirichlet GFF in $\BB C\setminus \fill(\eta[0,\tau_r])$ with the given boundary data. 
\end{lemma}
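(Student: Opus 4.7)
The plan is to reduce the claim to the local-set property of flow lines of the whole-plane GFF established in \cite{ms-ig4}, together with the description of the boundary of $\fill(\eta[0,\tau_r])$ in terms of flow lines that comes out of the construction of whole-plane space-filling SLE in Section~\ref{se:prelim_spf}.

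First I would verify that $\eta[0,\tau_r]$ is a local set for $h$ in the sense of \cite[Lemma 3.9]{ss-gff-contour}. Each flow line $\eta_z^{\mathrm{L}}, \eta_z^{\mathrm{R}}$, $z\in\mcl C$, is a local set by \cite{ms-ig4}, and finite unions as well as monotone limits of local sets remain local. By the construction of $\eta$ as the Peano curve of the flow-line tree $\{\eta_z^{\mathrm{L}}\}_{z\in\mcl C}$, both the induced order in which $\eta$ visits the points of $\mcl C$ before $\tau_r$ and the hull $\fill(\eta[0,\tau_r])$ itself are measurable with respect to the flow lines contained in $\fill(\eta[0,\tau_r])$. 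Exhibiting $\eta[0,\tau_r]$ as the closure of such a countable increasing union of local sets then gives its locality.

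Second I would identify the boundary data on each of the four arcs. By construction $\ul A^{\mathrm{L}}$ and $\ul A^{\mathrm{R}}$ are subarcs of $\eta_0^{\mathrm{L}}$ and $\eta_0^{\mathrm{R}}$. The remaining arcs $\ol A^{\mathrm{L}}, \ol A^{\mathrm{R}}$ are, by the merging rules for imaginary-geometry flow lines, traced by the flow lines $\eta_w^{\mathrm{L}}$ respectively $\eta_w^{\mathrm{R}}$ for $w\in\mcl C$ approaching the tip $\eta(\tau_r)$ from the appropriate side. Along each angle-$\theta$ flow line the conditional boundary data of $h$ is the standard imaginary-geometry expression $\chi\cdot(\text{winding}) - \pi/\sqrt\kappa - \theta\chi$ on the left side and $\chi\cdot(\text{winding}) + \pi/\sqrt\kappa - \theta\chi$ on the right, as recalled in Section~\ref{se:prelim_spf}. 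Combining this with the local-set definition yields that, conditional on $\mcl F_r$, the restriction $h|_{\bbC\setminus\fill(\eta[0,\tau_r])}$ is a Dirichlet GFF with the claimed boundary data.

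The main technical obstacle is the regime $\kappa\in(4,8)$, where $\eta[0,\tau_r]$ is strictly contained in $\fill(\eta[0,\tau_r])$ and the outer boundary is assembled from countably many flow-line arcs coming from points of $\mcl C$ near the tip rather than from a single flow line from $\eta(\tau_r)$ itself (which need not lie in $\mcl C$). One must argue that $\ol A^{\mathrm{L}}$ and $\ol A^{\mathrm{R}}$ are well-defined monotone limits of such flow lines, that their boundary data is the natural merged one, and that the field on $\bbC\setminus\fill(\eta[0,\tau_r])$ does not retain any information from the swallowed but unfilled bubbles beyond what is encoded in $\mcl F_r$. This is handled in \cite[Lemma 3.2]{ghm-kpz-sle} by approximating the outer boundary from outside with finite unions of flow lines from nearby points in $\mcl C$ and passing to the increasing limit of local sets; since the statement we seek coincides with theirs, we conclude by invoking their result.
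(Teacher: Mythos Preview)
Your proposal is correct and aligns with the paper's approach: the paper does not prove this lemma at all but simply states that it is \cite[Lemma 3.2]{ghm-kpz-sle}, which is exactly the reference you invoke at the end. Your sketch of the underlying ideas (locality of flow lines, closure under unions and limits, identification of the four boundary arcs with flow-line segments and their standard imaginary-geometry boundary data) is accurate and goes beyond what the paper itself provides, but it is ultimately unnecessary here since both you and the paper defer to the cited result.
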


Let $(D,a)$ be as in \eqref{eq:Da}, and let $\u=(u_1,u_2,u_3)$ be distinct points on $\partial D\setminus \{a \}$ such that $a,u_1,u_2,u_3$ are ordered counterclockwise. Let $\wt h$ be a GFF in $D$ with the law in Lemma \ref{le:localset} if $D=\BB C\setminus \fill(\eta[0,\tau_r])$ and we let $a,u_1,u_2,u_3$ describe the points of intersection of the boundary arcs $\ul A^{\mathrm{L}}$, $\ul A^{\mathrm{R}}$, $\ol A^{\mathrm{L}}$, and $\ol A^{\mathrm{R}}$. Then since the conditional law of $\eta$ given $\cF_r$ depends only on $(D,a,\u)$, we can define a measure
$\wh\nu_{D;a\to\infty;\u}$ on curves from $a$ to $\infty$ in $D$ that describes this conditional law. 

Consider the pair $(\BB C\setminus \fill(\eta[0,\tau_r]),\eta(\tau_r))$ and let $f\colon\BB C\setminus \fill(\eta[0,\tau_r])\to\D$ be the conformal map described right below \eqref{eq:Da}, which in particular satisfies $f(\eta(\tau_r))=1$. Define $\wh h$ by
\eqb
\wh h \defeq h \circ f^{-1} - \chi \op{arg} (f^{-1})',
\label{eq:ig-coord-ch}
\eqe
Then $\wh h$ is a GFF on $\BB D$ with Dirichlet boundary data determined by $f(u_1),f(u_2),f(u_3)$ plus an $\arg$ singularity at $f(\infty)$, where we pick an arbitrary choice of branch cut for $\arg$ function; picking a different branch cut has the effect of adding a multiple of $4\pi\chi$ in the region between the two branch cuts.
In particular, the law of $\wh h$ (modulo $2\pi\chi$) depends only on the location of the points $f(u_1),f(u_2),f(u_3),f(\infty)$. On each of the four arcs $f(\ol A^{\mathrm{L}}),f(\ol A^{\mathrm{R}}),f(\ul A^{\mathrm{L}}),f(\ul A^{\mathrm{R}})$ on $\partial\D$ the boundary data will be given by a constant (depending on which arc we consider) plus $\chi$ times the winding of $\partial\D$ viewed as a curve. 
The image of $\eta|_{\R\setminus[0,\tau_r]}$ under $f$ can be constructed via the flow lines of $\wh h$ exactly as above. The boundary data of $\wh h$ along its flow line is given by $\pm \lambda'$ plus $\chi$ times the winding of the flow line, except that there is a jump of $4\pi\chi$ when crossing the branch cut in clockwise direction (i.e. $-2\chi$-flow line boundary conditions in the terminology of \cite{ms-ig4}).

Similarly as in the paragraph right after Lemma \ref{le:localset} we can define a measure $\wh\nu_{\BB D,1\to z_\infty,\wh{\u}}$ on curves  in $\BB D$ from $1$ to $z_\infty\in\BB D$ that describes the conditional law of $f\circ\eta|_{[\tau_r,\infty)}$ given $\cF_r$, where $z_\infty=f(\infty)$ and $\wh{\u}=(f(u_1),f(u_2),f(u_3))$. This conditional law can be explicitly defined in terms of the flow lines of $\wh h$. The flow lines started from $f(u_1)$ and $f(u_3)$ of angle $\pi/2$ and $-\pi/2$, respectively, will end at $f(\infty)$, and these two flow lines divide $\bbD$ into two (for $\kappa\geq 8$) or countably infinite (for $\kappa\in(4,8)$) connected components. The curve $f\circ\eta|_{[\tau_r,\infty)}$ visits precisely the complementary components that lie to the right of the flow line started from $f(u_1)$ and has the law of a chordal space-filling SLE$_\kappa$ in each such component.

The following lemma quantifies the transience of the SLE variants stated at the beginning of \cref{se:results}.

\begin{lemma}\label{le:sle_transience}
Let $\eta$ be a two-sided whole-plane \slek{}, $\kappa\le 8$, or whole-plane space-filling \slek{}, $\kappa>4$, as in \cref{it:non-spfill,it:spfill}. There exists $\beta > 0$ such that
\[
\bbP( \eta[t,\infty) \cap B(0,1) \neq \varnothing ) \lesssim t^{-\beta}
\quad\text{as } t \to \infty .
\]
\end{lemma}

\begin{proof}
The case of space-filling SLE is \cite[Proposition~6.2]{hs-mating-eucl}. The case of two-sided whole-plane \slek{}, $\kappa \le 4$, follows from \cite[Theorem~1.3]{fl-sle-transience} and (the statement for negative moments in) \cite[Lemma~1.7]{zhan-hoelder}.

It remains to prove the claim for two-sided whole-plane \sle{\kappa'}, $\kappa' > 4$. Again, by (the statement for negative moments in) \cite[Lemma~1.7]{zhan-hoelder}, it suffices to show that the probability that the SLE re-enters $B(0,1)$ after exiting $B(0,r)$ is bounded by $O(r^{-\beta})$ as $r\to\infty$.

Define $\eta'_+ = \eta'\big|_{[0,\infty)}$ and let $\eta'_-$ denote the time-reversal of $\eta'\big|_{(-\infty,0]}$. We will now argue that, for any pair of stopping times $\tau_+$ and $\tau_-$ for $\eta'_+$ and $\eta'_-$, respectively, the conditional law of $\eta'_+\big|_{[\tau_+,\infty)}$ given $\eta'_+\big|_{[0,\tau_+]}$ and $\eta'_-\big|_{[0,\tau_-]}$ is a radial SLE$_{\kappa'}(2)$ targeted at $\infty$ with a force point at $\eta'_-(\tau_-)$. By symmetry, the analogous statement will also be true for $\eta'_-\big|_{[\tau_-,\infty)}$. 

First we observe that we can couple $\eta'_+,\eta'_-$ with $h_\alpha = h-\alpha\arg(\cdot)$ where $h$ is a whole-plane GFF modulo $2\pi(\chi+\alpha)$ and $\alpha = -\frac{\sqrt{\kappa'}}{2}$ such that they are the $\alpha$-counterflow lines of $h_\alpha$ with angles $0$ resp.\ $\pi+\alpha\pi/\chi$ in the sense of \cite[Theorem~1.6]{ms-ig4}. This can be seen by observing that the angle $0$ counterflow line has the same marginal law as $\eta'_+$ by \cite[Theorem~1.6]{ms-ig4}, and then the angle $\pi+\alpha\pi/\chi$ counterflow line has the law of a chordal SLE$_{\kappa'}$ in the complement of $\eta'_+$ by \cite[Theorems~1.15 and 1.11]{ms-ig4}. The claim then follows from the fact that for each pair $\tau_+$, $\tau_-$ the set $\eta'_+[0,\tau_+] \cup \eta'_-\big[0,\tau_-]$ is local for $h$, the boundary values of $h_\alpha$ along these sets (in particular they do not exhibit singularities at their intersection points, cf.\ \cite[Section~6.1]{ms-ig1}), and the martingale characterisation of radial SLE (cf.\ \cite[last paragraph in the proof of Proposition~3.1]{ms-ig4}).

For $k\in\bbN$, let $\tau_{+,k}$ resp.\ $\tau_{-,k}$ be the first hitting times of $\partial B(0,8^k)$, and $f_k\colon \hatC\setminus\fill(\eta'_+[0,\tau_{+,k}] \cup \eta'_-\big[0,\tau_{-,k}]) \to \hatC\setminus \overline{B}(0,r_k)$ the conformal map with $f_k(\infty)=\infty$ and $f_k'(\infty)=1$. Note that $r_k \in [8^k/4,8^k]$. By \cite[Lemmas~2.4 and 2.5]{mw-cut-double}, there exists $p>0$ (not depending on $k$ and $\eta'_+\big|_{[0,\tau_{+,k}]}$, $\eta'_-\big|_{[0,\tau_{-,k}]}$) such that with conditional probability at least $p$, the curves $f_k(\eta_+)$, $f_k(\eta_-)$ disconnect $B(0,1.5r_k)$ from $\infty$ before hitting $\partial B(0,2r_k)$. By Koebe's $1/4$-theorem, $f_k^{-1}(\partial B(0,2r_k)) \subseteq \overline{B}(0,8r_k) \subseteq \overline{B}(0,8^{k+1})$. Furthermore, by Koebe's distortion theorem, $f_k^{-1}(\partial B(0,1.5r_k)) \subseteq \hatC\setminus B(0,r_k/6) \subseteq \hatC\setminus B(0,8^k/24)$. We conclude that for every $k$, conditionally on $\eta'_+\big|_{[0,\tau_{+,k}]}$, $\eta'_-\big|_{[0,\tau_{-,k}]}$, the curves $\eta'_+\big|_{[0,\tau_{+,k+1}]}$, $\eta'_-\big|_{[0,\tau_{-,k+1}]}$ will disconnect $B(0,8^k/24) \supseteq B(0,1)$ from $\infty$ with probability at least $p$. If this happens, $\eta'_+$, $\eta'_-$ will not re-enter $B(0,1)$ after exiting $B(0,8^{k+1})$. This proves the claim since the probability that this occurs for some $k=1,...,K$ is $1-O((1-p)^K)$.
\end{proof}

\section{Upper bounds}
\label{se:upper}
In this section we prove the upper bounds in our main results (\cref{thm:main_lil,thm:main_moc,thm:main_var}). The upper bounds hold for general stochastic processes whose increments satisfy a suitable moment condition, and we state these general results in \cref{sec:variation-upper-general}. In \cref{se:ss_upper,se:sle_upper} we will prove that SLE satisfies these conditions, which in particular implies the upper bound in \cref{eq:mink-tail}. We phrase part of the argument in \cref{se:ss_upper} in a more general setting for processes with a suitable scaling property and Markov-type increment condition. 
In \cref{se:01laws} we prove zero-one laws for some quantities related to SLE which will imply that the constants $c_0,c_1$ in \cref{thm:main_lil,thm:main_moc,thm:main_var} are deterministic. Using the earlier results in the section we get that these constants are finite (but we do not know at this point whether they are positive; this will be proved in Section \ref{sec:sle-lower-bounds}). In \cref{se:ball_filling}, we prove \cref{thm:ball_filling}.

\subsection{Regularity upper bounds under increment moment conditions}
\label{sec:variation-upper-general}

In this section, we let $\Phi,\varphi\colon {[0,\infty)} \to {[0,\infty)}$ be convex self-homeomorphisms with $\Phi(1) = \varphi(1) = 1$. 
Suppose that there exist $R>1$ and $n_0 \ge 1$, $n_0 \in \bbN$ such that
\begin{align}
&\Phi(x)\Phi(y) \le \Phi(R^2 xy) \quad\text{for } x,y \ge 1,\label{eq:Phi_mult}\\
&\frac{\varphi(R^k)}{\varphi(R^{k+1})} \le \frac{\varphi(R^{k-1})}{\varphi(R^k)} \quad\text{for } k \ge 1, k \in \bbN ,\nonumber\\
\text{and}\quad &\sum_{k=0}^\infty \frac{\varphi(R^k)}{\Phi(R^{k+n_0})} < \infty .\nonumber
\end{align}
We give examples of appropriate functions $\Phi,\varphi$ in \cref{ex:expo,ex:poly} below.

Let $\alpha \in {(0,1]}$, and let $(X_t)_{t \in [0,1]}$ be a separable process with values in a separable Banach space\footnote{The result in \cite{Bed07} is stated for real-valued processes, but it is not required in their proof.} that satisfies
\begin{equation}\label{eq:Xinc}
\bbE \Phi\left(\frac{\abs{X_s-X_t}}{\abs{s-t}^\alpha}\right) \le 1 \quad\text{for all } s,t \in [0,1] .
\end{equation}
We will give general results on the modulus of continuity, law of the iterated logarithm, and variation regularity for such processes.

\subsubsection{Modulus of continuity}

We review the following general result which is a special case of \cite[Corollary 1]{Bed07}. (Recall that a separable process that is uniformly continuous on a suitable countable dense subset is necessarily continuous.)

\begin{theorem}[{\cite[Corollary 1]{Bed07}}]\label{thm:moc_upper}
There exists a finite constant $K$ (depending on $\Phi$, $\varphi$, $\alpha$) such that every separable process as in \eqref{eq:Xinc} satisfies
\[ \bbE \sup_{s,t \in [0,1]} \Phi\left(\frac{\abs{X_s-X_t}}{2K\tau(\abs{t-s})}\right) \le 1 \]
where
\[ \tau(t) = \int_0^{t^\alpha} \varphi^{-1}\left(\frac{1}{2u^{1/\alpha}}\right) du . \]
\end{theorem}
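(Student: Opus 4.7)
The plan is to prove this via a standard dyadic chaining argument adapted to the Orlicz-type moment condition \eqref{eq:Xinc}, following the Garsia-Rodemich-Rumsey philosophy but keeping track of the particular geometric scale $R$.

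First, I would fix the partitions $D_n = \{k R^{-n} : 0 \le k \le R^n\}$ (assuming $R \in \bbN$ for simplicity; otherwise use $\lceil R^n\rceil$) and define the scale-$n$ maximal increment
\[ N_n = \max_{0 \le k < R^n} \abs{X_{(k+1)R^{-n}} - X_{k R^{-n}}}. \]
Since $\Phi$ is increasing and nonnegative, $\Phi(N_n R^{n\alpha}) \le \sum_k \Phi(\abs{X_{(k+1)R^{-n}}-X_{kR^{-n}}} R^{n\alpha})$, and taking expectations with \eqref{eq:Xinc} yields $\bbE\Phi(N_n R^{n\alpha}) \le R^n$. Next, for any $s,t$ in the countable dense set $D = \bigcup_n D_n$, let $n_0$ be determined by $R^{-(n_0+1)} \le \abs{s-t} < R^{-n_0}$ and chain through the nested nearest-point approximations $s^{(n)}, t^{(n)} \in D_n$; a standard telescoping gives
\[ \abs{X_s-X_t} \le 2\sum_{n \ge n_0} N_n. \]

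The second step is to convert the $\Phi$-moment bound $\bbE\Phi(N_n R^{n\alpha}) \le R^n$ into a bound expressed via $\varphi^{-1}$. Here the structural assumptions kick in: the condition $\sum_k \varphi(R^k)/\Phi(R^{k+n_0}) < \infty$ essentially says $\Phi^{-1}(R^n) \lesssim R^{n_0}\varphi^{-1}(R^n)$ (this is extracted by taking inverses of $\Phi(R^{k+n_0}) \ge c^{-1}\varphi(R^k)$), and together with the monotone ratio assumption on $\varphi(R^k)/\varphi(R^{k+1})$, a Riemann-sum comparison with the substitution $v = u^{1/\alpha}$ in the definition of $\tau$ gives
\[ \sum_{n \ge n_0} R^{-n\alpha}\varphi^{-1}(R^n) \;\asymp\; \tau(R^{-n_0}) \;\asymp\; \tau(\abs{s-t}). \]
Combining these with the chaining inequality, one obtains $\abs{X_s-X_t}/\tau(\abs{s-t}) \lesssim \sum_{n \ge n_0} \Phi^{-1}(R^n W_n) / \varphi^{-1}(R^n)$ where the $W_n \ge 0$ are random variables with $\bbE W_n \le 1$ (each coming from $\bbE\Phi(N_n R^{n\alpha})/R^n \le 1$).

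The final and technically most delicate step is to apply $\Phi$ to the supremum and exploit the super-multiplicativity $\Phi(x)\Phi(y) \le \Phi(R^2 xy)$ together with convexity of $\Phi$ to combine these per-scale bounds into a single estimate $\bbE \Phi(\sup_{s,t \in D} \abs{X_s-X_t}/(2K\tau(\abs{s-t}))) \le 1$. The super-multiplicativity lets one factor $\Phi^{-1}(R^n W_n) \le R^2 \Phi^{-1}(R^n) \Phi^{-1}(W_n)$, after which the deterministic piece $\Phi^{-1}(R^n) \lesssim R^{n_0}\varphi^{-1}(R^n)$ balances the normalisation by $\tau(\abs{s-t})$, while the random piece is controlled by summing $\bbE \Phi(W_n)\cdot \varphi(R^n)/\Phi(R^{n+n_0})$-weighted contributions, whose total sum is finite by the third hypothesis and determines the constant $K$. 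I expect this balancing to be the main obstacle, since it is exactly where all three structural assumptions on $(\Phi,\varphi,R,n_0)$ must be used simultaneously; everything else reduces to familiar dyadic-chaining bookkeeping. Finally, separability of $X$ lets one pass from $D$ to all $s,t \in [0,1]$, since the bound on the dense set forces uniform continuity.
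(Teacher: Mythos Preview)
The paper does not prove this theorem; it quotes it as a special case of \cite[Corollary 1]{Bed07} and uses it as a black box. There is therefore no in-paper argument to compare your proposal against.

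On its own merits, your outline has the right architecture and invokes the three structural hypotheses in the right places, but the final step is not actually carried out. Two concrete issues. First, you overload $n_0$: it is both the fixed integer in the hypotheses and the scale determined by $\abs{s-t}$; call the latter $m$. Second, after the factorisation $\Phi^{-1}(R^nW_n)\le R^2\Phi^{-1}(R^n)\,\Phi^{-1}(W_n\vee 1)$ and the comparison $\Phi^{-1}(R^n)\lesssim\varphi^{-1}(R^n)$, the normalised chaining sum is bounded by $\sum_{n\ge m}\mu_n^{(m)}\,\Phi^{-1}(W_n\vee 1)$ with convex weights $\mu_n^{(m)}=R^{-n\alpha}\varphi^{-1}(R^n)/\tau(R^{-m})$ that depend on the starting scale $m$. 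To control $\bbE\sup_{s,t}\Phi(\cdot)$ you need a bound uniform in $m$, but the naive route of replacing $\mu_n^{(m)}$ by $\sup_{m\le n}\mu_n^{(m)}$ fails: by the monotone-ratio hypothesis the first term of the tail sum dominates, so $\mu_n^{(n)}\asymp 1$ and these cannot be replaced by summable weights. Your closing sentence names the correct summable sequence $\varphi(R^n)/\Phi(R^{n+n_0})$, but does not explain how it produces a bound uniform in $m$; this uniformity is precisely the content of Bednorz's argument and is left unresolved in your sketch.
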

The above theorem in particular says that $X$ a.s.\ admits a modulus of continuity given by a (possibly random) constant multiple of $\tau$ (as long as $\tau$ is finite).

The general result in \cite{Bed07} allows for stochastic processes indexed by a compact metric space $T$, and they give a modulus of continuity by a function $\tau(s,t)$ that may depend on both variables $s,t$ (also called ``minorising metric'' in the literature).

Note that for any $t_0 > s_0 > 0$, an application of \cref{thm:moc_upper} to the process $t \mapsto (t_0-s_0)^{-\alpha}X_{s_0+(t_0-s_0) t}$ yields
\begin{equation}\label{eq:Xsup_inc}
\bbE \sup_{s,t \in [s_0,t_0]} \Phi\left(\frac{\abs{X_s-X_t}}{2K\tau(1)\abs{s_0-t_0}^{\alpha}} \right) 
\le \bbE \sup_{s,t \in [s_0,t_0]} \Phi\left(\frac{\abs{X_s-X_t}}{2K\tau\bigl(\abs{s_0-t_0}^{-1}\abs{t-s}\bigr)\abs{s_0-t_0}^{\alpha}} \right) 
\le 1 .
\end{equation}

\subsubsection{Law of the iterated logarithm}

We get the following theorem via Theorem \ref{thm:moc_upper} and a union bound. In fact, we only use \eqref{eq:Xsup_inc} and not the full statement of \cref{thm:moc_upper}.
\begin{theorem}\label{thm:lil_upper}
For every non-decreasing function $h\colon {(0,\infty)} \to {(0,\infty)}$ with $\int_1^\infty \frac{du}{h(u)} < \infty$, every separable process as in \eqref{eq:Xinc} satisfies
\[
\limsup_{t \downarrow 0} \frac{\abs{X_t-X_0}}{t^\alpha \Phi^{-1}(h(\log\frac{1}{t}))} \le 2K\tau(1) 
\]
with the same $K$ and $\tau$ as in \cref{thm:moc_upper}.
\end{theorem}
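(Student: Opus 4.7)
The plan is to combine the supremum-in-moment bound \eqref{eq:Xsup_inc} with a Borel--Cantelli argument along a geometric decomposition. Fix $\rho > 1$. Applying \eqref{eq:Xsup_inc} to the interval $[s_0,t_0]=[0,\rho^{-k}]$ and pulling the supremum inside $\Phi$ (which is valid since $\Phi$ is continuous and non-decreasing) yields
\[ \bbE\,\Phi\!\left( \sup_{s,t \in [0,\rho^{-k}]} \frac{\abs{X_s-X_t}}{2K\tau(1)\rho^{-k\alpha}}\right) \le 1 . \]
Since $\sup_{t \in [0,\rho^{-k}]} \abs{X_t-X_0} \le \sup_{s,t \in [0,\rho^{-k}]} \abs{X_s-X_t}$, Markov's inequality applied with threshold $\Phi^{-1}(h(k\log\rho))$ then gives
\[ \bbP\!\left( \sup_{t \in [0,\rho^{-k}]} \abs{X_t - X_0} > 2K\tau(1)\rho^{-k\alpha}\Phi^{-1}(h(k\log\rho)) \right) \le \frac{1}{h(k\log\rho)} . \]

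Because $h$ is non-decreasing and $\int_1^\infty du/h(u) < \infty$, the integral test yields $\sum_{k\ge 1} 1/h(k\log\rho) < \infty$ for every $\rho > 1$. Borel--Cantelli therefore implies that, almost surely, for all but finitely many $k$,
\[ \sup_{t \in [0,\rho^{-k}]} \abs{X_t - X_0} \le 2K\tau(1)\rho^{-k\alpha} \Phi^{-1}(h(k\log\rho)) . \]
For $t \in (\rho^{-(k+1)}, \rho^{-k}]$ we have $t^\alpha \ge \rho^{-(k+1)\alpha}$ and $\Phi^{-1}(h(\log(1/t))) \ge \Phi^{-1}(h(k\log\rho))$ by monotonicity of $h$. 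Dividing the above inequality by $t^\alpha \Phi^{-1}(h(\log(1/t)))$ and letting $t\downarrow 0$ (i.e.\ $k\to\infty$) produces
\[ \limsup_{t\downarrow 0} \frac{\abs{X_t-X_0}}{t^\alpha \Phi^{-1}(h(\log(1/t)))} \le 2K\tau(1)\rho^\alpha \quad\text{almost surely.} \]

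Finally, intersecting the above almost-sure events along a sequence $\rho_n \downarrow 1$ removes the $\rho^\alpha$ factor and yields the claimed constant $2K\tau(1)$. The only subtlety is the loss of the factor $\rho^\alpha$ that arises from evaluating the denominator at the left endpoint of the interval $(\rho^{-(k+1)}, \rho^{-k}]$; this is precisely why the geometric decomposition is run at an arbitrary ratio $\rho > 1$ rather than at the fixed ratio $R$ built into the hypotheses on $\Phi$ and $\varphi$. Correspondingly, the integrability hypothesis on $h$ enters only via the integral test, and does so uniformly for all $\rho > 1$, which is what permits the final limit.
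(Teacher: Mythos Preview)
Your proof is correct and follows essentially the same strategy as the paper's: apply \eqref{eq:Xsup_inc} on a geometric sequence of intervals, use Markov's inequality and Borel--Cantelli, and then send the geometric ratio to $1$. The one cosmetic difference is that you choose the threshold $\Phi^{-1}(h(k\log\rho))$ (aligned with $\log(1/t)$) rather than the paper's $\Phi^{-1}(h(n))$; this lets you compare directly to $h(\log(1/t))$ and thereby avoids the paper's final step of replacing $h$ by an auxiliary $\tilde h$ with $\limsup \tilde h(Cu)/h(u)<1$.
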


\begin{proof}
Assume without loss of generality that $X_0 = 0$. Fix $q < 1$ and consider the events
\[ A_n = \{ \norm{X}_{\infty;[0,q^n]} \le 2K\tau(1) q^{n\alpha} \Phi^{-1}(h(n)) \} . \]
By \eqref{eq:Xsup_inc}, we have $\bbP(A_n^c) \le \frac{1}{h(n)}$, and therefore $\sum_n \bbP(A_n^c) < \infty$. By the Borel-Cantelli lemma, with probability $1$ all but finitely many $A_n$ occur.

If $A_n$ occur, then for every $t \in [q^{n+1},q^n]$ we have
\[
\abs{X(t)}
\le \norm{X}_{\infty;[0,q^{n}]} 
\le 2K\tau(1) q^{n\alpha} \Phi^{-1}(h(n)) 
\le 2K\tau(1) q^{-\alpha} t^{\alpha}\Phi^{-1}(h(\frac{\log\frac{1}{t}}{\log\frac{1}{q}})) .
\]
The result follows by observing that for every $C>0$ and $h$ with $\int \frac{1}{h} < \infty$ we can find $\tilde h$ with $\int \frac{1}{\tilde h} < \infty$ such that $\limsup_{u \to \infty} \frac{\tilde h(Cu)}{h(u)} < 1$, applying the estimate above with $C = \frac{1}{\log\frac{1}{q}}$ and $\tilde h$, and then sending $q \uparrow 1$.
\end{proof}

\subsubsection{Variation}

We now establish variation regularity of processes satisfying \eqref{eq:Xsup_inc}.
Such results can be found e.g. in \cite{tay-bm-variation} and \cite[Appendix A.4]{fv-rp-book} for some exponential $\Phi$ as described in \cref{ex:expo}. We generalise their result to the setup of \cref{sec:variation-upper-general}. Moreover, we find our proof conceptually clearer since we construct a natural way of parametrising $X$ to obtain optimal modulus of continuity $\sigma$. (Note that this is the best modulus that we can expect, cf.\@ \cref{pr:var_lil}.)

\begin{theorem}\label{thm:psivar_upper}
Suppose that $\Phi$ also satisfies $\int_2^\infty \frac{\log y}{\Phi(y^\alpha)}\,dy < \infty$.
Let $h\colon {(0,\infty)} \to {(0,\infty)}$ be a non-decreasing continuous function with $\int_1^\infty \frac{du}{h(u)} < \infty$, and such that
\[ \sigma(t) = t^\alpha \Phi^{-1}(h(\logp\frac{1}{t})) \]
is increasing with $\sigma(0^+)=0$. Then there exist $c>0$ (depending on $\Phi$, $\alpha$) and $C>0$ (depending on $\Phi$, $\alpha$, $h$) such that every separable process as in \eqref{eq:Xinc} satisfies
\[ \bbP( [X]_{\psi\text{-var}} > M ) \le \frac{C}{\Phi(cM)} \]
with $\psi=\sigma^{-1}$.
\end{theorem}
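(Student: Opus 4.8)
The plan is to deduce the $\psi$-variation bound from the modulus of continuity estimate \eqref{eq:Xsup_inc} by a chaining/dyadic decomposition argument, constructing along the way a good reparametrisation of $X$. First I would set up a dyadic scheme: for each $n$, partition $[0,1]$ into $2^n$ intervals $I^n_j = [j2^{-n},(j+1)2^{-n}]$, and let $Y^n_j = \sup_{s,t\in I^n_j}\abs{X_s-X_t}$ be the oscillation of $X$ over $I^n_j$. By \eqref{eq:Xsup_inc}, each $Y^n_j$ satisfies $\bbE\Phi\bigl(Y^n_j/(2K\tau(1)2^{-n\alpha})\bigr)\le 1$. The strategy is to show that, with high probability (with the stated tail), $Y^n_j$ is simultaneously bounded by $C\sigma(2^{-n})$ up to a slowly varying correction for \emph{most} $j$, and controlled in an integrable-over-scales way for the exceptional $j$; summing $\psi(Y^n_j)=\sigma^{-1}(Y^n_j)$ over the dyadic tree then telescopes because each increment of the curve is covered by one interval at every scale coarser than its ``resolution scale''.

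The cleanest route is the reparametrisation idea the authors advertise: define a (random) time change $\phi$ on $[0,1]$ so that $X\circ\phi$ admits modulus of continuity a constant multiple of $\sigma$, and then invoke the general equivalence (stated in the introduction) that finite $\psi$-variation with $\psi=\sigma^{-1}$ is the same as admitting a reparametrisation with modulus of continuity $\sigma^{-1}\,{}^{-1}=\sigma$... more precisely, $[X]_{\psi\text{-var}}\le M$ follows once we exhibit a reparametrisation $\wt X$ with $\abs{\wt X(t)-\wt X(s)}\le M\psi^{-1}(\abs{t-s})=M\sigma(\abs{t-s})$. To build $\phi$, I would let the new ``time'' assigned to a dyadic interval $I^n_j$ be proportional to $\max(Y^n_j,\,\lambda_n)$ for a deterministic sequence $\lambda_n$ chosen so that $\sum_n 2^n \Phi(\lambda_n/(2K\tau(1)2^{-n\alpha}))^{-1}<\infty$ — this is where the hypothesis $\int_2^\infty \frac{\log y}{\Phi(y^\alpha)}\,dy<\infty$ enters, guaranteeing that the total new time is finite a.s.\ with the right tail, while simultaneously the condition $\int_1^\infty du/h(u)<\infty$ controls the logarithmic correction coming from $h(\logp\frac1t)$ in $\sigma$. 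One then checks that along this time change, the oscillation of $X$ over a new-time interval of length $\ell$ is at most $\sigma(\ell)$ up to a constant, using $\sigma(t)=t^\alpha\Phi^{-1}(h(\logp\frac1t))$ and the doubling/monotonicity assumptions on $\Phi,\varphi$ from \eqref{eq:Phi_mult} and the displayed $\varphi$-conditions.

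For the quantitative tail $\bbP([X]_{\psi\text{-var}}>M)\le C/\Phi(cM)$, I would track constants through the above: the event that $[X]_{\psi\text{-var}}>M$ is contained in the union over $n,j$ of events $\{Y^n_j > c' M \sigma(2^{-n})\}$ (plus the event that the exceptional-interval contribution exceeds a fixed fraction of $M$), and each such event has probability at most $\Phi\bigl(c'M\sigma(2^{-n})/(2K\tau(1)2^{-n\alpha})\bigr)^{-1}=\Phi\bigl(c'M\Phi^{-1}(h(\logp 2^n))/(2K\tau(1))\bigr)^{-1}$ by Markov's inequality applied to \eqref{eq:Xsup_inc}. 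Using the multiplicativity \eqref{eq:Phi_mult} to factor $\Phi(\text{const}\cdot M\cdot\Phi^{-1}(h(n)))\gtrsim \Phi(cM)\cdot h(n)$ (roughly), the sum over $n$ of $2^n$ (number of intervals) times $1/(\Phi(cM)h(n))$ — wait, one must be careful that $h(n)$ beats $2^n$; this is handled by choosing the cutoff between ``typical'' and ``exceptional'' intervals at scale $n$ so that typical intervals contribute a convergent geometric-type series and exceptional ones contribute via the $\int \log y/\Phi(y^\alpha)$ bound. The main obstacle I anticipate is exactly this bookkeeping: simultaneously making the total reparametrised time finite (needs $\int \log y/\Phi(y^\alpha)<\infty$), making the modulus of continuity of the reparametrised process come out as $\sigma$ and not something larger (needs the interplay of $h$ with the $\varphi$-summability hypothesis), and extracting the clean tail $C/\Phi(cM)$ rather than a messier expression — all three depend on choosing the deterministic thresholds $\lambda_n$ and the typical/exceptional split correctly, and verifying that the constants $c,C$ depend only on the advertised quantities ($\Phi,\alpha$ for $c$; also $h$ for $C$).
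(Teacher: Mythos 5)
Your overall strategy --- dyadic oscillations $Y^n_j$, a reparametrisation whose modulus of continuity is a constant multiple of $\sigma$, and the multiplicativity \eqref{eq:Phi_mult} to extract the factor $\Phi(cM)$ --- is the same as the paper's, and you correctly identify where each hypothesis should enter. But there is a genuine gap at exactly the point you flag yourself. The union bound you propose contributes, at scale $n$, a term of order $2^n\bigl(\Phi(cM)\,h(n)\bigr)^{-1}$, and $\sum_n 2^n/h(n)$ diverges for the choices of $h$ the theorem is designed for (e.g.\ $h(u)=u^{1+\varepsilon}$; in the SLE application via \cref{ex:expo} one has $h(u)\asymp u^{c}$). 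Your proposed repair --- deterministic thresholds $\lambda_n$ and a typical/exceptional split --- is never actually constructed: you do not say what the exceptional intervals contribute to $[X]_{\psi\text{-var}}$, why that contribution is at most a fixed fraction of $M$ off an event of probability $C/\Phi(cM)$, or how the clock built from $\max(Y^n_j,\lambda_n)$ is made consistent across scales (the new length assigned to a parent dyadic interval must equal the sum of those assigned to its children, which your definition does not guarantee). As written, the argument does not close.

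The device the paper uses to eliminate the factor $2^n$ is a pointwise-in-$t$ first-moment computation rather than a union bound. For each dyadic interval one defines a speed $s_{k,j}=\sup\{s\le 1:\osc_{I_{k,j}}X\le M\sigma(2^{-k}/s)\}$, sets $s(t)=\inf_k s_{k,j(t)}$, and reparametrises by $T(t)=\int_0^t du/s(u)$; one checks that $X\circ T^{-1}$ has modulus $(2^\alpha+1)M\sigma$, whence $[X]_{\psi\text{-var}}\le (2^\alpha+1)M\,T(1)^\alpha$. The tail is then obtained from Markov's inequality applied to $\bbE\int_0^1 s(t)^{-1}\mathbf{1}_{s(t)<1}\,dt$: after Fubini, each fixed $t$ sees only \emph{one} interval per scale, so the sum over scales of $1/h(\logp(2^k/y))$ produces only a factor $\log y + C$ (the $\log y$ from the $\sim\log_2 y$ coarse scales where $\logp(2^k/y)=1$, the constant from $\int_1^\infty du/h(u)<\infty$), and the final integral $\int_2^\infty (\log y + C)/\Phi(y^\alpha)\,dy$ converges by the extra hypothesis on $\Phi$. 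To complete your version you would need to replace the union bound by an analogous expectation bound (or supply the missing exceptional-scale bookkeeping in full).
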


To prove this, we construct a parametrisation of $X$ that has modulus $\sigma$.
Fix some $M > 0$. Consider the intervals $I_{k,j} = [j2^{-k}, (j+1)2^{-k}]$, and let
\begin{align*}
s_{k,j} &= \sup\{s \le 1 \mid \sup_{u,v \in I_{k,j}} \abs{X_u-X_v} \le M\sigma(2^{-k}/s) \} , \\
s(t) &= \inf\{ s_{k,j} \mid t \in I_{k,j} \} .
\end{align*}
The idea is to slow down the path $X$ at time $t$ to speed $s(t)$. More precisely, define
\[
T(t) = \int_0^t \frac{dt}{s(t)}.
\]
Intuitively, this describes the elapsed time after reparametrisation.

\begin{lemma}
If $T(1) < \infty$, then
\[
\abs{X_{t_1}-X_{t_2}} \le (2^\alpha+1)M \sigma(\abs{T(t_1)-T(t_2)}) .
\]
In other words, $X \circ T^{-1}$ has modulus $(2^\alpha+1)M\sigma$.
\end{lemma}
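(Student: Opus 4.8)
The goal is to show that the reparametrised path $X \circ T^{-1}$ has modulus of continuity $(2^\alpha+1)M\sigma$, i.e.\ that whenever $t_1 < t_2$ lie in $[0,1]$, the increment $\abs{X_{t_1}-X_{t_2}}$ is controlled by $\sigma$ evaluated at the elapsed reparametrised time $\abs{T(t_1)-T(t_2)}$. The natural strategy is a \emph{dyadic decomposition} of the interval $[t_1,t_2]$: find the coarsest dyadic scale $k_0$ such that $t_2-t_1 \ge 2^{-k_0}$ (so $2^{-k_0-1} \le t_2-t_1$), pick a single ``central'' dyadic interval $I_{k_0,j_0}$ at that scale that is contained in $[t_1,t_2]$, and then write $X_{t_1}-X_{t_2}$ as a telescoping sum: walk from $t_1$ down to an endpoint of $I_{k_0,j_0}$ through a nested sequence of dyadic intervals $I_{k,j_k}$ for $k > k_0$ each sharing an endpoint with the next, and similarly walk from the other endpoint of $I_{k_0,j_0}$ up to $t_2$. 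This is the standard chaining scheme used to pass from control on dyadic increments to a global modulus.

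\textbf{Key steps.} First I would record the fundamental comparison coming from the definition of $s_{k,j}$ and $s(t)$: for any dyadic interval $I_{k,j}$, and any $u,v \in I_{k,j}$, we have $\abs{X_u - X_v} \le M\sigma(2^{-k}/s)$ for all $s < s_{k,j}$, hence (by continuity/monotonicity of $\sigma$) $\abs{X_u-X_v} \le M\sigma(2^{-k}/s_{k,j})$; and since $s(t) \le s_{k,j}$ for any $t \in I_{k,j}$, we get $2^{-k}/s_{k,j} \le 2^{-k}/\bigl(\sup_{t \in I_{k,j}} s(t)\bigr)$, but more usefully we want to bound $2^{-k}/s_{k,j}$ by an integral of $1/s$ over $I_{k,j}$, which is exactly $T$ measured across that interval. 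The crucial observation is that $\int_{I_{k,j}} \frac{dt}{s(t)} \ge \frac{2^{-k}}{s_{k,j}}$ because $s(t) \le s_{k,j}$ on $I_{k,j}$; thus $2^{-k}/s_{k,j} \le T(\text{right end of } I_{k,j}) - T(\text{left end of } I_{k,j}) =: \Delta T_{k,j}$, and so $\abs{X_u-X_v} \le M\sigma(\Delta T_{k,j})$ for $u,v \in I_{k,j}$, since $\sigma$ is increasing. Second, in the telescoping sum the relevant dyadic intervals in each ``arm'' have disjoint interiors and all lie inside $[t_1,t_2]$, so $\sum_k \Delta T_{k,j_k} \le \abs{T(t_1)-T(t_2)}$ on each arm, and likewise $\Delta T_{k_0,j_0} \le \abs{T(t_1)-T(t_2)}$. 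Third, I need the subadditivity-type estimate $\sum_k \sigma(a_k) \le C\,\sigma(\sum_k a_k)$ for the summed terms — but here the cleaner route, and the one matching the constant $(2^\alpha+1)$, is to note that the total increment is bounded by $M\sigma(\Delta T_{k_0,j_0})$ plus the two arms, and for the arms one uses that $\sigma(t) = t^\alpha \Phi^{-1}(h(\logp \frac1t))$ with $\Phi^{-1}\circ h \circ \logp(\cdot)$ slowly varying means $\sigma$ is ``almost'' $\alpha$-H\"older, and the geometric decay $\Delta T_{k,j_k} \le 2^{-(k-k_0)} \cdot (\text{something})$ lets the arm sums be dominated by a constant times $\sigma$ of the largest term. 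Concretely: each arm sum $\sum_{k>k_0} M\sigma(\Delta T_{k,j_k})$ should be bounded by $2^\alpha M \sigma(\abs{T(t_1)-T(t_2)})$ using the concavity/regular-variation of $\sigma$ together with $\Delta T_{k,j_k}$ summing to at most $\abs{T(t_1)-T(t_2)}$; combining one central term ($+1$) and the arm ($+2^\alpha$) gives the claimed constant.

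\textbf{Main obstacle.} I expect the real work to be the third step — turning the bound ``$\sigma$ of each piece, pieces summing to $\le \abs{T(t_1)-T(t_2)}$'' into ``constant times $\sigma$ of $\abs{T(t_1)-T(t_2)}$''. This requires exploiting the specific form $\sigma(t)=t^\alpha \Phi^{-1}(h(\logp\frac1t))$: the power $t^\alpha$ gives the geometric summability ($\sum 2^{-(k-k_0)\alpha}$ converges), while the logarithmic correction $\Phi^{-1}(h(\logp\frac1t))$ must be shown to not spoil this, i.e.\ that it grows slowly enough (sublinearly in $\log\frac1t$) that $\Phi^{-1}(h(\logp\frac{1}{\Delta T_{k,j_k}})) \lesssim \Phi^{-1}(h(\logp\frac{1}{\abs{T(t_1)-T(t_2)}})) \cdot (\text{mild factor absorbed by the geometric series})$. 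The hypotheses $\int^\infty du/h(u) < \infty$ and the convexity of $\Phi$ are what make $\Phi^{-1}\circ h$ slowly varying enough; one also uses here that $\sigma$ was \emph{assumed} increasing with $\sigma(0^+)=0$, which is precisely the regularity needed for these manipulations to go through. A secondary technical point is being careful that the telescoping chain is well-defined (choosing at each scale a dyadic subinterval sharing the correct endpoint and nested inside the previous one), and that the endpoints $t_1,t_2$ — which need not be dyadic — are reached in the limit $k\to\infty$ using continuity of $X$ and of $T$; this is routine but must be stated. Finally, the case analysis when $t_1,t_2$ are very close (so that no dyadic interval of scale $\ge$ some bound fits inside) collapses to a single application of the fundamental comparison on one small dyadic interval containing both, which is the $k_0 \to \infty$ degenerate version of the same argument.
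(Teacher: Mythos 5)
Your proposal records the right basic facts but the chaining scheme you build on them does not close; the step you yourself flag as the ``main obstacle'' is in fact a fatal one. In a dyadic telescoping, each arm contributes $\sum_{k>k_0} M\sigma(\Delta T_{k,j_k})$, and you need this to be bounded by a constant times $\sigma\bigl(\sum_k \Delta T_{k,j_k}\bigr)$. But $\sigma(t)=t^\alpha\Phi^{-1}(h(\logp\frac1t))$ with $\alpha\le 1$ is subadditive (concave up to the slowly varying factor), so the inequality $\sum_k\sigma(a_k)\le C\,\sigma(\sum_k a_k)$ is false in general: taking $a_k=A/n$ for $n$ terms makes the left side of order $n^{1-\alpha}\sigma(A)$. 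Your proposed rescue --- geometric decay $\Delta T_{k,j_k}\lesssim 2^{-(k-k_0)}$ --- is not available: $\Delta T_{k,j_k}=\int_{I_{k,j_k}}dt/s(t)$, and $1/s$ can blow up near $t_1$ (this is precisely the situation the reparametrisation is designed for: $T$ slows the clock where $X$ oscillates fast), so the $T$-mass of the nested intervals hugging $t_1$ need not decay at all and the arm sum can diverge even though $T(1)<\infty$. No slow-variation property of $\Phi^{-1}\circ h\circ\logp$ repairs this, since the failure already occurs for $\sigma(t)=t^\alpha$.

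The paper's proof avoids chaining entirely, and this is the idea you are missing. Because $s_{k,j}$ is defined through the oscillation of $X$ over the \emph{whole} interval $I_{k,j}$ (a supremum over all $u,v\in I_{k,j}$, not just dyadic endpoints), a single scale suffices: choose $k$ with $\abs{t_1-t_2}\in[2^{-k-1},2^{-k}]$, so that $t_1,t_2$ lie in one or two adjacent intervals $I_{k,j}$, and bound $\abs{X_{t_1}-X_{t_2}}$ by at most two applications of $\abs{X_u-X_v}\le M\sigma(2^{-k}/s_{k,j})$. The factor $2^\alpha$ comes only from $2^{-k}\le 2\abs{t_1-t_2}$ together with $\sigma(2x)\le 2^\alpha\sigma(x)$, after which $\abs{t_1-t_2}/s_{k,j}\le\abs{T(t_1)-T(t_2)}$ exactly as in the comparison you correctly set up at the start. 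No sum over scales ever appears, which is why a clean universal constant comes out.
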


\begin{proof}
Pick $k$ such that $\abs{t_1-t_2} \in [2^{-k-1},2^{-k}]$. Then $t_1$ and $t_2$ lie in the same or in adjacent $I_{k,j}$'s. In the former case we have
\[ \begin{split}
\abs{X_{t_1}-X_{t_2}} 
&\le M\sigma(2^{-k}/s_{k,j}) \\
&\le M 2^\alpha \sigma(\abs{t_1-t_2}/s_{k,j}) \\
&\le M 2^\alpha \sigma(\abs{T(t_1)-T(t_2)})
\end{split} \]
since $s(t) \le s_{k,j}$ for all $t \in [t_1,t_2]$.
In the latter case (say the point $j2^{-k}$ lies between them) we have
\[
\abs{X_{t_1}-X_{t_2}} \le \abs{X_{t_1}-X_{j2^{-k}}}+\abs{X_{t_2}-X_{j2^{-k}}} 
\]
and proceed similarly. Note that we have $\abs{t_1-j2^{-k}} \le 2^{-k-1}$ or $\abs{t_2-j2^{-k}} \le 2^{-k-1}$, so for that term we get the same estimate without the factor $2^\alpha$.
\end{proof}

We claim that
\begin{equation}\label{eq:psivar_norm}
[X]_{\psi\text{-var}} \le M(2^\alpha+1)T(1)^\alpha .
\end{equation}
Indeed, for any partition $0 = t_0 < t_1 < ... < t_m = 1$ we have, by the lemma above,
\[ \begin{split}
\sum_i \psi\left(\frac{\abs{X_{t_i}-X_{t_{i-1}}}}{\wt M}\right)
&\le \sum_i \psi\left(\frac{(2^\alpha+1)M}{\wt M}\sigma(\abs{T(t_i)-T(t_{i-1})})\right) \\
&\le \sum_i \left(\frac{(2^\alpha+1)M}{\wt M}\right)^{1/\alpha} \abs{T(t_i)-T(t_{i-1})} \\
&= \left(\frac{(2^\alpha+1)M}{\wt M}\right)^{1/\alpha} T(1) .
\end{split} \]

\begin{proof}[Proof of \cref{thm:psivar_upper}]
By \eqref{eq:psivar_norm}, we have
\[ \begin{split}
\bbP( [X]_{\psi\text{-var}} > 6M ) 
&\le \bbP( T(1)^\alpha > 2 ) \\
&\le \bbP\left( \int_0^1 \frac{1}{s(t)} 1_{s(t)<1}\,dt > 1 \right) .
\end{split} \]
We now estimate
\[ \bbE \int_0^1 \frac{1}{s(t)} 1_{s(t)<1}\,dt = \int_0^1 \bbE \frac{1}{s(t)} 1_{s(t)<1}\,dt \]
and then
\[ \begin{split}
\bbE\left[ \frac{1}{s(t)}1_{s(t)<1} \right]
&= \int_0^\infty \bbP\left( \frac{1}{s(t)}1_{s(t)<1} > y \right) dy \\
&= \int_0^\infty \bbP\left( s(t) < \frac{1}{y} \wedge 1 \right) dy .
\end{split} \]
For $y \ge 1$, we get from \eqref{eq:Xsup_inc}
\[ \begin{split}
\bbP( s_{k,j} < 1/y ) 
&= \bbP( \sup_{u,v \in I_{k,j}} \abs{X_u-X_v} > M\sigma(2^{-k}y) ) \\
&= \bbP\left( \sup_{u,v \in I_{k,j}} \frac{\abs{X_u-X_v}}{2^{-k\alpha}} > M y^\alpha \Phi^{-1}(h(\logp(2^{k}/y))) \right) \\
&\le \frac{1}{\Phi( \frac{M y^\alpha \Phi^{-1}(h(\logp(2^{k}/y)))}{2K\tau(1)} )} \\
&\le \frac{1}{\Phi(\frac{M}{2K\tau(1)R^4})\Phi(y^\alpha) h(\logp (2^k/y))}
\end{split} \]
where we have applied \eqref{eq:Phi_mult} in the last step. 
Hence,
\[ \begin{split}
\bbP( s(t) < 1/y ) \le \sum_{k \in \bbN} \bbP( s_{k,j} < 1/y ) 
&\le \frac{1}{\Phi(\frac{M}{2K\tau(1)R^4})\Phi(y^\alpha)} \sum_{k \in \bbN} \frac{1}{h(\logp (2^k/y))} \\
&\lesssim \frac{1}{\Phi(\frac{M}{2K\tau(1)R^4})\Phi(y^\alpha)} \left( \log y + \sum_{2^k > y} \frac{1}{h(k\log 2 - \log y)} \right) \\
&\le \frac{1}{\Phi(\frac{M}{2K\tau(1)R^4})\Phi(y^\alpha)} (\log y + C) ,
\end{split} \]
and finally
\[ 
\int_0^\infty \bbP\left( s(t) < \frac{1}{y} \wedge 1 \right) dy
\le \frac{1}{\Phi(\frac{M}{2K\tau(1)R^4})} \left( C + \int_2^\infty \frac{\log y + C}{\Phi(y^\alpha)}\,dy \right)
\lesssim \frac{1}{\Phi(\frac{M}{2K\tau(1)R^4})}
\]
by the extra assumption on $\Phi$. Thus we have shown
\[ 
\bbP( [X]_{\psi\text{-var}} > 6M ) 
\le \bbE \int_0^1 \frac{1}{s(t)} 1_{s(t)<1}\,dt
\lesssim \frac{1}{\Phi(\frac{M}{2K\tau(1)R^4})} .
\]
\end{proof}

\subsubsection{Examples}

\begin{example}\label{ex:expo}
Suppose that $\beta > 0$ and
\[ \Phi(x) \asymp \exp(cx^\beta) \quad\text{as } x \to \infty . \]
Then the results of \cref{sec:variation-upper-general} apply with $\varphi = \Phi$ and e.g.\ $h(u) = u^2$.
Hence, the modulus of continuity is given by
\[ \tau(t) \asymp t^\alpha (\logp\frac{1}{t})^{1/\beta} , \]
the law of the iterated logarithm reads
\[ \sigma(t) \asymp t^\alpha (\log\log\frac{1}{t})^{1/\beta} , \]
and the variation regularity is
\[ \psi(x) \asymp x^{1/\alpha} (\logp\logp\frac{1}{x})^{-1/(\alpha\beta)} . \]
Hence, we recover the results of \cite[Section A.4]{fv-rp-book} which consider the case $\beta=2$, and generalize these results to arbitrary $\beta>0$. 

Note that Brownian motion satisfies this with $\alpha = 1/2$, $\beta = 2$. For SLE, we will use this result with $\alpha = 1/d$, $\beta = \frac{d}{d-1}$, where $d$ is the dimension of the process as in \eqref{eq:d}.
\end{example}

\begin{example}\label{ex:poly}
Suppose
\[ \Phi(x) = x^p, \quad p>\frac{1}{\alpha} . \]
Define $h$ by e.g.\ $h(u)=u^{1+\varepsilon}$ or $h(u)=u(\logp u)(\logp\logp u)\cdots(\logp\cdots\logp u)^{1+\varepsilon}$, and set $\varphi(x) = \frac{x^p}{h(\logp x)}$. Then the modulus of continuity is given by
\[ \tau(t) \asymp t^{\alpha-1/p}\, h(\logp\frac{1}{t})^{1/p} , \]
the law of the iterated logarithm reads
\[ \sigma(t) = t^\alpha\, h(\log\frac{1}{t})^{1/p} , \]
and the variation regularity is
\[ \psi(x) \asymp x^{1/\alpha}\, h(\logp\frac{1}{x})^{-1/(p\alpha)} . \]
Our result on the modulus of continuity is a sharper version of Kolmogorov's continuity theorem, which yields exponents arbitrarily close to $\alpha-1/p$.
\end{example}

\subsection{Diameter upper bound given Markov-type increments with a scaling property}
\label{se:ss_upper}

In this and the next subsection we show that SLE satisfies the conditions of \cref{ex:expo}. The argument in this subsection concerns general processes whose increments satisfy a suitable scaling and Markov-type property. (We remark here that the argument does \emph{not} apply to stable processes since they violate \eqref{eq:ss_markov} due to large jumps.) 

For this subsection, let $\eta$ denote any right-continuous stochastic process. Denote by $(\cF_t)_{t \ge 0}$ the filtration generated by $\eta$, and $\tau_r = \inf\{ t \ge 0 \mid \abs{\eta(t)} \ge r\}$. Let $d>1$, and suppose there exist $p<1$ and $\ell>0$ such that almost surely
\begin{equation}\label{eq:ss_markov}
    \bbP( \tau_{r+6\lambda} \le \tau_r+\ell\lambda^d \mid \cF_{\tau_r} ) < p
\end{equation}
for any $r>0$ and $\lambda > 0$. For such processes, we show the following statement.

\begin{proposition}\label{pr:ss_tail}
There exist $\ell >0$ and $c_1,c_2>0$ such that
\[ \bbP( \tau_{r+\lambda r'} \le \tau_r+\ell \lambda^d \mid \cF_{\tau_r} ) < c_1\exp(-c_2 (r')^{d/(d-1)}) \]
for all $r,r'>0$ and $\lambda > 0$.
\end{proposition}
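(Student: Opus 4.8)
The plan is to bootstrap the one-step estimate \eqref{eq:ss_markov} into exponential decay by chaining many independent-ish attempts. First I would split the annulus between radius $r$ and radius $r+r'$ into roughly $\lfloor r'/6 \rfloor$ concentric shells of width $6$, at radii $r = \rho_0 < \rho_1 < \dots < \rho_N$ with $\rho_{j+1} = \rho_j + 6$ and $N = \lfloor r'/6 \rfloor$. The event $\{\tau_{r+r'} \le \tau_r + l\}$ for a suitable total time budget forces the curve to cross each of these $N$ shells; I would like to say that each crossing costs ``time at least $l_0$ with probability at least $1-p$'' conditionally on the past, and then a union/iteration bound gives probability at most $\binom{N}{k} p^{N-k} (\text{something})$ of doing all $N$ crossings within total time $N l_0 / 2$, say. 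Concretely, set $l = \tfrac{l_0}{2} N$ where $l_0$ is the $l$ from \eqref{eq:ss_markov}; then $\{\tau_{\rho_N} \le \tau_{\rho_0} + l\}$ implies that among the $N$ consecutive increments $\tau_{\rho_{j+1}} - \tau_{\rho_j}$, at least half of them are $\le l_0$ (else the sum exceeds $\tfrac{l_0}{2}N = l$). By the conditional bound \eqref{eq:ss_markov} applied successively at each $\rho_j$ (using the strong Markov / filtration structure so that each factor is $<p$ regardless of the past), the probability that a prescribed set $S \subseteq \{0,\dots,N-1\}$ of increments are all $\le l_0$ is at most $p^{|S|}$. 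Summing over all $S$ with $|S| \ge N/2$ gives $\bbP(\tau_{\rho_N} \le \tau_{\rho_0} + l \mid \cF_{\tau_r}) \le \sum_{k \ge N/2} \binom{N}{k} p^k \le 2^N p^{N/2} = (4p)^{N/2}$, which is exponentially small in $N$, hence in $r'$, provided $p < 1/4$.

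The catch is that \eqref{eq:ss_markov} is only assumed with some fixed $p<1$, not $p<1/4$. To fix this I would first amplify the one-step estimate: iterating \eqref{eq:ss_markov} across $m$ consecutive width-$6$ shells (so total width $6m$) and using the Markov property at each intermediate radius, one gets $\bbP(\tau_{r+6m} \le \tau_r + ml_0 \mid \cF_{\tau_r}) \le p^m$; choosing $m$ large enough that $p^m < 1/4$ and then rescaling the shell width to $6m$ (with corresponding time unit $l_0$, or better $ml_0$) reduces to the case $p<1/4$ at the cost of changing the constants $c_1, c_2$. Equivalently, one can just carry $p^m$ through the combinatorial sum: with shells of width $6m$ and $N' = \lfloor r'/(6m)\rfloor$ of them, the bound becomes $\sum_{k\ge N'/2}\binom{N'}{k}(p^m)^k \le (4p^m)^{N'/2}$, exponentially small once $m$ is fixed with $4p^m < 1$. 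Since $N' \asymp r'/(6m)$, this yields $\bbP \le c_1 \exp(-c_2 r')$ for constants $c_1,c_2>0$.

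It remains to upgrade the exponent from $r'$ to $(r')^{d/(d-1)}$, and this is where self-similarity enters and is the main point. The idea: rather than using a fixed shell width $6$, exploit that the natural ``unit of resistance'' scales. Apply \eqref{eq:ss_markov} at \emph{scale} $\rho$: by self-similarity, $t\mapsto \eta(\rho^d t)$ is distributed as $\rho\,\eta(t)$, so the event that the curve crosses from radius $\rho$ to radius $\rho + 6\rho$ (a multiplicative shell) within time comparable to $\rho^d$ has conditional probability $<p$ — more precisely, rescaling \eqref{eq:ss_markov} gives $\bbP(\tau_{7\rho} \le \tau_\rho + l_0 \rho^d \mid \cF_{\tau_\rho}) < p$. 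Now I would place geometrically spaced radii $\rho_j = r(7)^{j}$... but that only reaches $r+r'$ in $O(\log(r'/r))$ steps, which is too few; the correct trick for the $d/(d-1)$ exponent is to use additive shells of width $6$ but track the scale-dependent time cost: crossing the shell $[\rho,\rho+6]$ costs conditional time $\ge l_0$ with probability $\ge 1-p$ \emph{at every} $\rho$ (this is exactly \eqref{eq:ss_markov}, no rescaling needed), but the sharp statement replaces ``at least half of $N$ increments exceed $l_0$'' by a large-deviation estimate on $\sum_{j=1}^{N}(\tau_{\rho_{j+1}}-\tau_{\rho_j})$ where now we allow the \emph{total} time budget $l$ to be a \emph{constant} (not growing with $r'$). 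If $\tau_{r+r'} \le \tau_r + l$ with $l$ fixed, then the $N \asymp r'$ nonnegative increments sum to $\le l$, so all but at most $l/l_0$ of them are $< l_0$; the probability of a given such configuration is $\le p^{N - l/l_0}$, and summing over $\le \binom{N}{\lfloor l/l_0\rfloor}$ choices gives $\bbP \le \binom{N}{l/l_0} p^{N - l/l_0} \le (CN)^{l/l_0} p^N$. That is exponential in $N \asymp r'$, giving $\exp(-cr')$ again — still not $(r')^{d/(d-1)}$. The genuine source of the exponent is that we should optimize over the time budget: we actually want to bound $\bbP(\tau_{r+r'} \le \tau_r + t \mid \cF_{\tau_r})$ as a function of \emph{both} $r'$ and a time $t$, and the statement as phrased (with an unnamed $l$) is the special case. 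Reading the conclusion again, the clean route is: by self-similarity reduce to $r=0$ up to conditioning, apply the amplified one-step bound at dyadic scales $\rho_k = 2^k$ between scale $1$ and scale $r'$, noting that crossing $[\rho_k, 2\rho_k]$ within time $\le t$ has probability $\le p$ as soon as $t \le l_0 \rho_k^d$ (rescaled \eqref{eq:ss_markov}), hence among the $\log_2 r'$ dyadic scales, all those with $\rho_k^d \ge t/l_0$ must each be crossed quickly, contributing a factor $p$ each; the number of such scales is $\asymp \log_2 r' - \tfrac1d\log_2(t/l_0)$. Optimizing the resulting bound $p^{\#\text{scales}}$ against a covering of the remaining small scales reproduces $\exp(-c\,(r'/t^{1/d})^{?})$; matching the claimed $\exp(-c(r')^{d/(d-1)})$ for fixed $l$ pins down the bookkeeping. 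I expect the combinatorial sum together with getting the exponent $d/(d-1)$ to fall out of the scale-by-scale iteration — not the probabilistic input — to be the main obstacle, and I would handle it by carefully choosing, at each distance scale, the number of width-$6$ sub-shells grouped together so that the per-group failure probability is summable and the total exponent telescopes to $(r')^{d/(d-1)}$.
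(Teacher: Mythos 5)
Your first two steps are exactly the paper's argument: the amplification of \eqref{eq:ss_markov} across $m$ consecutive width-$6$ shells to make the per-group failure probability smaller than $1/4$, and the combinatorial chaining (``at least half of the $N$ increments must be short, each prescribed configuration costs $p^{|S|}$, there are at most $2^N$ configurations'') yield precisely the intermediate estimate
\[ \bbP\bigl( \tau_{R+R'} \le \tau_R + l_1 R' \mmiddle| \cF_{\tau_R} \bigr) \le c_1\exp(-c_2 R') \quad\text{for all } R,R'>0, \]
i.e.\ exponential decay in the width when the time budget grows \emph{linearly} in the width. This is correct and is the paper's Lemma \ref{le:ss_iter}.

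The gap is in the last step, where the exponent $d/(d-1)$ has to appear, and here your proposal does not close. The dyadic/multi-scale scheme you sketch produces only $O(\log_2 r')$ scales each contributing a factor $p$, hence a bound of order $p^{O(\log r')}$ — polynomial, not stretched-exponential — and you explicitly leave the ``bookkeeping'' unresolved. No per-scale regrouping of width-$6$ shells is needed. The correct (and one-line) step is a \emph{single global rescaling} applied to the estimate you already proved: set $\lambda = c\,(r')^{1/(d-1)}$ and use self-similarity, under which $t\mapsto\lambda\eta(\lambda^{-d}t)$ has the law of $\eta$, so
\[ \bbP\bigl( \tau_{r+r'} \le \tau_r + l \mmiddle| \cF_{\tau_r} \bigr) = \bbP\bigl( \tau_{\lambda r+\lambda r'} \le \tau_{\lambda r} + \lambda^d l \mmiddle| \cF_{\tau_{\lambda r}} \bigr) . \]
The point of this choice of $\lambda$ is that $\lambda^d \asymp \lambda r' \asymp (r')^{d/(d-1)}$, so the fixed time budget $l$ becomes, after rescaling, a budget $\lambda^d l$ that is \emph{linear} in the new width $\lambda r'$; your intermediate lemma then applies directly with $R'=\lambda r'$ and gives $c_1\exp(-c_2\lambda r') = c_1\exp\bigl(-\tilde c_2 (r')^{d/(d-1)}\bigr)$. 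So the probabilistic and combinatorial content of your proposal is right, but the mechanism producing the exponent $d/(d-1)$ — one application of scaling to trade a constant time budget for a linear one — is missing and is replaced by a scheme that would not give the claimed bound.
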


The number $6$ in $\tau_{r+6}$ in \eqref{eq:ss_markov} is not significant, and can be replaced by any $s>0$. Of course, in all the statements, the constants may depend on $s$.

\begin{lemma}\label{le:ss_large_inc}
For any $\varepsilon > 0$ there exists $c>0$ such that
\[ \bbP( \tau_{r+c\lambda} \le \tau_r+\ell\lambda^d \mid \cF_{\tau_r} ) < \varepsilon \]
for any $r>0$ and $\lambda > 0$.
\end{lemma}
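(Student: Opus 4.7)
The plan is to iterate the assumption \eqref{eq:ss_markov} a bounded number of times. Fix $N \in \bbN$ with $p^N < \varepsilon$, set $c = 6N$, and write $r_k = r + 6k$ for $k = 0,1,\ldots,N$. The key observation is the containment
\[ \{\tau_{r+c} \le \tau_r + l\} \subseteq \bigcap_{k=1}^N \{\tau_{r_k} \le \tau_{r_{k-1}} + l\}, \]
which holds simply because on the left-hand event the nonnegative increments $\tau_{r_k} - \tau_{r_{k-1}}$ telescope to $\tau_{r+c} - \tau_r \le l$, so each one is individually at most $l$.

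From here I would peel off the conditional probabilities one layer at a time. Conditioning on $\cF_{\tau_{r_{N-1}}}$, the first $N-1$ indicators on the right-hand side become measurable, and the assumption \eqref{eq:ss_markov} (applied with $r$ replaced by $r_{N-1}$) gives
\[ \bbP\bigl(\tau_{r_N} \le \tau_{r_{N-1}} + l \mmiddle| \cF_{\tau_{r_{N-1}}}\bigr) < p. \]
Pulling this factor outside the conditional expectation and then repeating the same step with $r_{N-2}, r_{N-3}, \ldots, r_0$ in succession yields
\[ \bbP\Bigl(\bigcap_{k=1}^N \{\tau_{r_k} \le \tau_{r_{k-1}} + l\} \mmiddle| \cF_{\tau_r}\Bigr) \le p^N < \varepsilon, \]
which is exactly the desired bound.

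Notice that no self-similarity is actually required: the argument uses only the Markov-type hypothesis \eqref{eq:ss_markov}. There is no real obstacle — the whole content lies in recognizing the telescoping containment above. Self-similarity will of course play its role in the sharper \cref{pr:ss_tail}, where the exponent $d/(d-1)$ cannot come out of pure iteration and must be produced by combining iteration with the scaling relation between radius and time.
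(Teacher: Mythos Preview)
Your proof is correct and matches the paper's approach exactly: the paper simply writes ``This follows by applying \eqref{eq:ss_markov} iteratively,'' and your argument spells out precisely that iteration via the telescoping containment and the tower property. Your closing remark that self-similarity is not used here is also in line with the paper, which makes the same observation just after \cref{le:ss_iter}.
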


\begin{proof}
    This follows by applying \eqref{eq:ss_markov} iteratively.
\end{proof}

\begin{lemma}\label{le:ss_iter}
There exist $\ell>0$ and $c_1,c_2>0$ such that
\[ \bbP( \tau_{r+\lambda r'} \le \tau_r+\ell \lambda^d r' \mid \cF_{\tau_r} ) < c_1\exp(-c_2 r') \]
for all $r,r'>0$ and $\lambda > 0$.
\end{lemma}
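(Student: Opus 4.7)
The plan is to discretize the radial range $[r, r+r']$ into $\Theta(r')$ chunks of a fixed width $c$ and then chain together the conditional bound from \cref{le:ss_large_inc}. Heuristically, a short total traversal time forces a definite fraction of the individual chunks to be traversed below the base time scale $l$; iterating the Markov-type conditional bound along such a chain makes this event geometrically rare in the number of chunks.

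Concretely, fix $\varepsilon = 1/16$ and use \cref{le:ss_large_inc} to pick $c > 0$ with $\bbP(\tau_{\rho+c} \le \tau_\rho + l \mid \cF_{\tau_\rho}) < \varepsilon$ for every $\rho > 0$, where $l$ is the constant from \eqref{eq:ss_markov}. Set $n = \lfloor r'/c \rfloor$, $\sigma_i = \tau_{r+ic}$, and $T_i = \sigma_i - \sigma_{i-1}$ for $i = 1,\ldots,n$; take $l^* = l/(4c)$ and assume $n \ge 3$. On the event $\{\tau_{r+r'} \le \tau_r + l^* r'\}$ one has $\sum_{i=1}^n T_i \le l^*(n+1)c = l(n+1)/4 \le ln/3$, so at most $n/3$ of the $T_i$ can exceed $l$, yielding the inclusion
\[ \{\tau_{r+r'} \le \tau_r + l^* r'\} \subseteq \bigl\{ \lvert \{i : T_i \le l\} \rvert \ge n/2 \bigr\} . \]

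To bound the right-hand event, iterate the conditional bound via the tower property: for any $S = \{i_1 < \cdots < i_k\} \subseteq \{1,\ldots,n\}$, since $T_{i_1},\ldots,T_{i_{k-1}}$ are $\cF_{\sigma_{i_k-1}}$-measurable,
\[ \bbP\Bigl( \bigcap_{j=1}^k \{T_{i_j} \le l\} \,\Big|\, \cF_{\tau_r} \Bigr) \le \varepsilon \cdot \bbP\Bigl( \bigcap_{j=1}^{k-1} \{T_{i_j} \le l\} \,\Big|\, \cF_{\tau_r} \Bigr) \le \cdots \le \varepsilon^k . \]
A union bound over subsets of size at least $n/2$ then gives
\[ \bbP\bigl( \lvert \{i : T_i \le l\} \rvert \ge n/2 \,\big|\, \cF_{\tau_r} \bigr) \le \sum_{k \ge n/2} \binom{n}{k} \varepsilon^k \le 2^n \varepsilon^{n/2} = (4\varepsilon)^{n/2} \le 4^{-n/2} , \]
which decays exponentially in $n \ge r'/c - 1$, and hence in $r'$; the range $r' < 3c$ is absorbed into the constant $c_1$. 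The only real balancing point, and hence the main ``obstacle'' such as it is, is that $\varepsilon$, the half-fraction threshold $n/2$, and the multiplier $l^* = l/(4c)$ must be tuned in concert so that the combinatorial factor $2^n$ in the union bound is defeated by $\varepsilon^{n/2}$; after this tuning, the rest is a routine computation.
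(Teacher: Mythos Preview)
Your proof is correct and takes essentially the same approach as the paper's: discretize $[r,r+r']$ into $\Theta(r')$ chunks of width $c$, observe that a short total traversal forces at least half the chunks to be fast, then bound each fixed subset via iterated application of \cref{le:ss_large_inc} and finish with a union bound over $\binom{n}{n/2} \le 2^n$ subsets, choosing $\varepsilon$ small enough that $2^n\varepsilon^{n/2}$ decays. The only differences are cosmetic (the paper takes $\varepsilon < 1/4$ rather than $\varepsilon = 1/16$, and rescales $r'$ by $c$ at the outset rather than setting $l^* = l/(4c)$).
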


\begin{proof}
Pick $\varepsilon < 1/4$ and let $c$ be the constant from \cref{le:ss_large_inc}. It suffices to show
\[ \bbP( \tau_{r+c\lambda r'} \le \tau_r+\frac{\ell}{2}\lambda^d r' \mid \cF_{\tau_r} ) < c_1\exp(-c_2 r') \]
for $r' \in 2\bbN$.

On the event that $\tau_{r+c\lambda r'} \le \tau_r+\frac{\ell}{2}\lambda^d r'$ there must exist integers $k_1,...,k_{r'/2}$ such that $\tau_{r+c\lambda k_i} \le \tau_{r+c\lambda (k_i-1)}+\ell\lambda^d$ for $i=1,...,r'/2$. For each such choice of $k_1,...,k_{r'/2}$, we can apply \cref{le:ss_large_inc} iteratively (each time conditionally on $\cF_{\tau_{r+c\lambda (k_i-1)}}$), so that
\[
\bbP( \tau_{r+c\lambda k_i} \le \tau_{r+c\lambda (k_i-1)}+\ell\lambda^d \text{ for all } i \mid \cF_{\tau_r})
\le \varepsilon^{r'/2} .
\]
Since the number of such choices of $k_1,...,k_{r'/2}$ is $\binom{r'}{r'/2} \le 2^{r'}$, we get
\[
\bbP( \tau_{r+c\lambda r'} \le \tau_r+\frac{\ell}{2}\lambda^d r' \mid \cF_{\tau_r})
\le 2^{r'} \varepsilon^{r'/2} .
\]
The claim follows since we picked $\varepsilon < 1/4$.
\end{proof}

\begin{proof}[Proof of \cref{pr:ss_tail}]
This follows by applying \cref{le:ss_iter} with $r' = (r'')^{d/(d-1)}$ and $\lambda = \lambda''(r'')^{-1/(d-1)}$.
\end{proof}

\subsection{Markov-type increment bound for SLE}
\label{se:sle_upper}

In this subsection we verify \eqref{eq:ss_markov} for SLE. Once we have done this, \cref{pr:ss_tail} and the self-similarity of $\eta$ immediately imply the following result. The proposition is a strengthening of \cite[Lemma 1.7]{zhan-hoelder} which proved that $\bbE[\diam(\eta[0,1])^a]<\infty$ for any $a>0$.

\begin{proposition}\label{pr:diam_tail_upper}
Let $\eta$ be a two-sided whole-plane \slek{} or a whole-plane space-filling \slek{} as specified in \cref{se:results}. There exists $c > 0$ such that
\[ \bbE\exp\left(c\left(\frac{\diam(\eta[s,t])}{\abs{s-t}^{1/d}}\right)^{d/(d-1)}\right) < \infty \]
for all $s<t$.
\end{proposition}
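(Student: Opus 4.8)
### Proof plan for Proposition \ref{pr:diam_tail_upper}

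The plan is to deduce this from \cref{pr:ss_tail} once we verify the Markov-type increment condition \eqref{eq:ss_markov} for the relevant SLE variants; the self-similarity of $\eta$ with index $1/d$ has already been recorded in \cref{se:results,se:prelim_sle}. By self-similarity and stationarity of increments it suffices to treat a single increment, say $\diam(\eta[0,1])$, i.e.\ to show $\bbE\exp(c\,\diam(\eta[0,1])^{d/(d-1)})<\infty$ for some $c>0$. Writing $D_r$ for the event $\{\tau_r < 1\}$ (with $\tau_r$ the hitting time of $\partial B(0,r)$ by $\eta|_{[0,\infty)}$), we have $\{\diam(\eta[0,1]) \ge 2r\} \subseteq D_r$, so it is enough to control $\bbP(\tau_r < 1)$ by a stretched-exponential in $r^{d/(d-1)}$; integrating $\bbP(\diam(\eta[0,1])^{d/(d-1)} > u)\,e^{cu}$ then gives the claim for $c$ small. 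But $\bbP(\tau_r < 1) \le \bbP(\tau_{r} \le \tau_{1} + 1)$ (using $\tau_1 \ge 0$), which is exactly the type of bound delivered by \cref{pr:ss_tail} with the roles of $r,r'$ there set to $1$ and $r-1$.

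So the real content is checking \eqref{eq:ss_markov}: there exist $p<1$ and $l>0$ with $\bbP(\tau_{r+6} \le \tau_r + l \mid \cF_{\tau_r}) < p$ for all $r>0$, for $\eta$ a two-sided whole-plane \slek{} (case \ref{it:non-spfill}) or a whole-plane space-filling \slek{} (case \ref{it:spfill}), parametrised by Minkowski content (resp.\ Lebesgue area). The idea is: conditionally on $\cF_{\tau_r}$, the future curve $\eta|_{[\tau_r,\infty)}$ is (a variant of) an SLE in the domain $D = \hatC \setminus \fill(\eta[0,\tau_r])$ started from $a = \eta(\tau_r)$, and $(D,a)$ is of the form \eqref{eq:Da} after rescaling (note $|a| = r$). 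To reach $\partial B(0,r+6)$ in natural time at most $l$, the curve must in particular travel Euclidean distance at least $6$ away from $a$; but in order to get out to radius $r+6$ one of the things it must do is traverse the ``collar'' region near $a$, and with positive probability (uniformly in the shape of $D$, by \cref{le:ghm} and the conformal invariance / absolute continuity lemmas \cref{le:sle_abs_cont,le:localset}) the curve accumulates at least a fixed amount $\epsilon_0>0$ of Minkowski content (resp.\ Lebesgue area) before it can exit the ball $B(z_D, 3)$, equivalently before it exits the set $V$ of \cref{le:ghm}. Quantitatively: map $D$ to $\bbD$ by the map $f$ associated to $(D,a)$; \cref{le:ghm} gives a lattice path $\alpha$ from $0$ to $1$ in $\bbD$ whose $\epsilon_0$-neighbourhood lies in $f(V)$; the conditional law of $f\circ\eta|_{[\tau_r,\infty)}$ is an SLE-type curve in $\bbD$ from $1$ whose law, restricted up to exiting a neighbourhood of $1$, is comparable (bounded Radon--Nikodym derivative) to a fixed reference SLE by \cref{le:sle_abs_cont} (in case \ref{it:non-spfill}) or by the imaginary-geometry description of \cref{se:prelim_spf} together with \cref{le:localset} (in case \ref{it:spfill}); for that reference law there is a fixed positive probability that the curve spends ``long enough'' inside the $\epsilon_0$-neighbourhood of $\alpha$ to accumulate content $\ge \epsilon_0$, and then by \eqref{eq:cont_transf} and Koebe distortion this transfers back to a fixed amount of content of $\eta$ inside $V \subseteq B(0,r+6)$. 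Choosing $l < \epsilon_0$, on this positive-probability event $\tau_{r+6} > \tau_r + l$, which is \eqref{eq:ss_markov}.

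In case \ref{it:spfill} there is an additional subtlety that for $\kappa\in(4,8)$ the curve $\eta$ is not equal to $\fill(\eta[0,\cdot])$, but the facts recorded in \cref{se:prelim_spf} (that $\eta(\tau_r)\in\partial\fill(\eta[0,\tau_r])$, that $\eta|_{[\tau_r,\infty)}$ stays in $\hatC\setminus\fill(\eta[0,\tau_r])$, and the explicit flow-line description of the conditional law via $\wh h$) are exactly what is needed to run the same argument; and there ``content'' is just Lebesgue area, so a curve segment that fills a fixed fraction of a fixed ball automatically has area bounded below. The main obstacle is obtaining the positive lower bound in \eqref{eq:ss_markov} \emph{uniformly over all realizations of the past}, i.e.\ over all domains $(D,a)$ of the form \eqref{eq:Da}: this is precisely where \cref{le:ghm} (which gives a quantitative ``corridor'' in the uniformizing coordinate independent of the shape of $D$) and the absolute-continuity statements are essential, and it is the step that requires the most care. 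Everything else — the passage from \eqref{eq:ss_markov} to \cref{pr:ss_tail} via \cref{le:ss_iter}, and from \cref{pr:ss_tail} to the stated exponential moment bound via a routine integration — is immediate from the results already established.
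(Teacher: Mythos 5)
Your overall strategy is the one the paper follows: reduce to a single increment by self-similarity and stationarity of increments, pass from the diameter to the hitting time $\tau_r$, invoke \cref{pr:ss_tail}, and reduce everything to the Markov-type bound \eqref{eq:ss_markov}, which (since $\eta$ is parametrised by Minkowski content, resp.\ area) amounts to a uniform positive lower bound on the probability of accumulating a fixed amount of content before crossing from radius $r$ to $r+6$. The corridor from \cref{le:ghm} combined with absolute continuity is indeed how the paper obtains uniformity in the shape of the past hull, and in the space-filling case the paper simply quotes \cite[Lemma 3.1]{ghm-kpz-sle} for $r>1$ rather than re-deriving it.

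There is, however, one concrete gap in case \ref{it:non-spfill}. After mapping out $\eta[0,\tau_r]$, the continuation is a radial \slek{}$(2)$ in $\bbD$ started at $1$ whose force point $f(u)$ lies on $\partial\bbD$ and can be arbitrarily close to $1$. In that regime \cref{le:sle_abs_cont} does \emph{not} give a Radon--Nikodym derivative bounded uniformly over all past realizations: the lemma requires the force points being discarded to stay at distance $\varepsilon$ from the region the curve explores, and the density between \slek{}$(2)$ with force point at the tip and a force-point-free reference SLE is genuinely unbounded. The paper closes this by a separate argument: run the curve for a small fixed capacity time $t$; the evolved force point is a Bessel process of positive index, which by monotonicity in the starting point and a.s.\ positivity at fixed times is bounded away from the tip with probability uniformly bounded below, after which the bounded-density comparison applies to the remaining curve. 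Your proposal needs this (or an equivalent) step before the claimed uniform comparison is legitimate. A second, minor point: the constant $l$ furnished by \cref{pr:ss_tail} need not exceed $1$, so the passage from $\bbP(\tau_r<1)$ to that proposition requires one more application of scaling; this is routine.
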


In particular, the conditions from \cref{sec:variation-upper-general,ex:expo} are satisfied with $\alpha=1/d$, $\beta=\frac{d}{d-1}$. This proves the upper bounds in \cref{thm:main_lil,thm:main_moc,thm:main_var}.

In fact, we will prove a stronger result than Proposition \ref{pr:diam_tail_upper} below, namely that there exist finite constants $c_1,c_2>0$ such that for any $(D,a)$ as in \eqref{eq:Da} and $u \in \partial D \setminus \{a\}$ we have
\begin{equation}
\nu_{D;a\to\infty;u}(\Cont(\eta[0,\tau_{\abs{a}+r}]) < \ell) \le c_1 \exp\left(-c_2 l^{-1/(d-1)} r^{d/(d-1)}\right)
\label{eq:cont_utail_cond}
\end{equation}
for all $r>0$. The same is true for $\wh\nu_{D;a\to\infty;\u}$ defined in \cref{se:prelim_spf}.

Since we parametrise $\eta$ by its Minkowski content, we can phrase the condition \eqref{eq:ss_markov} as follows. (Recall that the Minkowski content is additive over SLE curve segments.) As before, we write $\tau_r = \inf\{ t \ge 0 \mid \abs{\eta(t)} = r\}$.

\begin{lemma}\label{le:sle_markov_inc}
There exists $l>0$ and $p<1$ such that for any $r>0$ we have
\[ 
\P\left(\Cont(\eta[\tau_r,\tau_{r+6}]) < \ell \mmiddle| \eta\big|_{[0,\tau_r]} \right) < p . 
\]
\end{lemma}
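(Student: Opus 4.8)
\textbf{Proof plan for \cref{le:sle_markov_inc}.}

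The plan is to prove the statement by a two-step reduction: first to a uniform lower bound on the conditional law of $\Cont(\eta[\tau_r,\tau_{r+6}])$ that holds for \emph{all} realizations of the past curve $\eta|_{[0,\tau_r]}$ (and not merely almost surely), and then to deduce that bound from the domain Markov property together with a deterministic geometric input, namely \cref{le:ghm}. By scale invariance of $\eta$ we may assume $r=1$; more precisely, applying the self-similarity $t\mapsto\lambda^{1/d}\eta(\lambda t)$ with $\lambda$ chosen so that $\tau_r$ is mapped to $\tau_1$ reduces the estimate to a statement about the growth of the curve from radius $1$ to radius $7$ (the shift by $6$ rescales, but crucially the \emph{ratio} of radii is scale-invariant, so one gets a comparable but $r$-dependent window; alternatively one works directly with $\tau_{r+6}$ and tracks that the estimate is uniform in $r\ge 1$, handling small $r$ separately by the same argument with a fixed reference scale). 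The key point is that conditionally on $\mathcal F_{\tau_r}$, by the domain Markov property the future curve $\eta|_{[\tau_r,\infty)}$ is (a whole-plane or chordal SLE$_\kappa$-type curve, depending on the case) in the domain $D=\widehat{\mathbb C}\setminus\fill(\eta[0,\tau_r])$ started from $a=\eta(\tau_r)$, and $(D,a)$ is exactly of the form \eqref{eq:Da} with $|a|=r$.

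\textbf{Main steps.} First, I would fix $(D,a)$ as in \eqref{eq:Da} with $|a|=r\ge 1$ and let $f\colon D\to\mathbb D$ be the associated conformal map from \cref{se:prelim_conformal}. By \cref{le:ghm} there is a path $\alpha$ from $0$ to $1$ in $\mathbb D$ whose $\varepsilon_0$-neighborhood lies in $f(V)$, where $V$ is the union of $B(z_D,3)\cap D$ with the points it separates from $\infty$; note $V\subseteq B(0,|a|+5)$, so if the curve travels across $V$ it certainly reaches radius $\ge |a|+1$ (and, choosing the constant in \cref{le:ghm} or iterating, radius $|a|+6$; here one may need to enlarge the ball $B(z_D,3)$ to $B(z_D,c)$ for a larger constant $c$, which \cref{le:ghm} and the remark after \cref{pr:ss_tail} permit). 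Second, I would invoke the absolute continuity / SLE-existence-type input: the conditional law of $\eta|_{[\tau_r,\infty)}$ is one of the measures $\nu_{D;a\to\infty;u}$ or $\wh\nu_{D;a\to\infty;\u}$, and by conformal invariance its $f$-image is an SLE-type curve in $\mathbb D$ from $1$ to an interior point. There is a uniform positive probability $1-p$ that this image curve, before hitting the target, traces a path that forces the original curve $\eta$ to fill a macroscopic piece of $V$ — concretely, an event of the form ``$f\circ\eta$ gets within $\varepsilon_0/2$ of every point of the nearest-neighbor lattice path $\alpha$'', which has probability bounded below uniformly over the allowed locations of the target point and force points, since the relevant SLE law in $\mathbb D$ is absolutely continuous with bounded Radon--Nikodym derivative with respect to a fixed reference SLE law on curves stopped on exiting a fixed compact subset of $\mathbb D$ (\cref{le:sle_abs_cont}, resp.\ its space-filling analogue via \cref{le:localset}), and for that fixed law the event has fixed positive probability by standard SLE estimates. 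Third, on this event the curve $\eta$ covers a region containing a ball of definite radius inside $V$ (by Koebe's $1/4$ theorem applied to $f^{-1}$ on $B(\alpha(s),\varepsilon_0)$), so $\Cont(\eta[\tau_r,\tau_{r+6}])\ge l_0$ for a deterministic $l_0>0$ depending only on $\kappa$; taking $l<l_0$ and $p$ as the complementary probability gives the claim.

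\textbf{Main obstacle.} The delicate point is obtaining the positive-probability lower bound \emph{uniformly over all $(D,a)$ of the form \eqref{eq:Da}} and, in the space-filling case $\kappa\in(4,8)$, uniformly over the boundary data / force point configuration $\u$ encoding the conditional law $\wh\nu_{D;a\to\infty;\u}$. \cref{le:ghm} gives the needed \emph{deterministic} uniformity of the geometry (a path $\alpha$ with an $\varepsilon_0$-collar independent of $D$), so the issue is purely probabilistic: one must argue that the SLE-type curve in $\mathbb D$ started at $1$ has probability bounded below of staying near $\alpha$ until it has traced all of it, regardless of where the interior target point $f(\infty)$ and the force points $f(u^j)$ sit on $\partial\mathbb D$ or inside $\mathbb D$. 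For the non-space-filling cases this follows from \cref{le:sle_abs_cont} once one checks the target point is at definite distance from the compact neighborhood of $\alpha$ — which holds because $f(\infty)$ is an interior point while $\alpha$ runs between boundary-touching points $0$ and $1$, though one should verify $f(\infty)$ cannot come arbitrarily close to $\alpha$; if it could, a short separate argument (e.g.\ a further conditioning, or noting that the curve is space-filling near its target and hence automatically has large content there) is needed. For the space-filling case one replaces \cref{le:sle_abs_cont} by the imaginary-geometry description in \cref{se:prelim_spf}, using that the law of $\wh h$ modulo $2\pi\chi$ depends only on the four marked points and that the flow-line construction is, on a fixed compact subset of $\mathbb D$, absolutely continuous with bounded derivative with respect to a reference GFF — this is essentially the content of \cref{le:localset} and the coordinate change \eqref{eq:ig-coord-ch}, and is the analogue of the bound used in \cite{ghm-kpz-sle}. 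I expect this uniform-over-boundary-data positivity to be where the real work lies; everything else is a routine combination of Koebe distortion, \cref{le:ghm}, and additivity of Minkowski content.
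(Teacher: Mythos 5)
Your overall strategy — reduce via the domain Markov property to a statement uniform over $(D,a)$ as in \eqref{eq:Da}, use \cref{le:ghm} to get a uniform path $\alpha$ with an $\varepsilon_0$-collar, and transfer a fixed positive-probability event through absolute continuity — is exactly the paper's route (and, in the space-filling case, the paper simply quotes \cite[Lemma 3.1]{ghm-kpz-sle}). However, your third step contains a genuine error. You claim that on the event that $f\circ\eta$ traces within $\varepsilon_0/2$ of $\alpha$, the curve ``covers a region containing a ball of definite radius'', so that $\Cont(\eta[\tau_r,\tau_{r+6}])\ge l_0$ \emph{deterministically}. This is false in case \ref{it:non-spfill}: for $\kappa\le 8$ the curve has dimension $1+\kappa/8<2$ and covers no ball, and its Minkowski content on the relevant segment is a random variable with no deterministic positive lower bound — the lower-tail estimate \eqref{eq:mink-tail}, which this very lemma is designed to feed into, quantifies precisely how small it can be. (Even for space-filling SLE the filled region between the two hitting times can a priori be a thin tube of small area, which is why \cite[Lemma 3.1]{ghm-kpz-sle} is a probabilistic statement.) The correct fix, and what the paper does, is to intersect the ``follows $\alpha$'' event with the event $\{\Cont(\eta[0,\sigma_{\alpha,\varepsilon_0}]\cap B(0,\varepsilon_0/2))>l\}$ under a \emph{single fixed reference law} (chordal \slek{} in $\bbH$ with no force points): since the content of any nontrivial subsegment is a.s.\ positive there, the intersection has positive probability for $l$ small, and only then does one transfer back by bounded Radon--Nikodym derivatives and the transformation rule \eqref{eq:cont_transf}. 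This requires organizing the absolute-continuity reductions so that both events are evaluated under the same fixed measure.

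The second gap is the force point. You assert that \cref{le:sle_abs_cont} gives a Radon--Nikodym bound uniform over the location of the force point $u$, but that lemma's constant degenerates when the dropped force point approaches the seed, and after mapping to $\bbD$ the point $f(u)$ can be arbitrarily close to $1$ (e.g.\ the force point at $0$ for the whole-plane \slek{}$(2)$ when $r$ is small, or whenever the past curve returns near a previously visited boundary point). The paper handles exactly this case separately: run the curve for a small capacity time $t$; the image of the force point under the mapping-out function is a Bessel process of positive index, so by monotonicity in the starting point it is at definite distance $c_1$ from the tip at time $t$ with probability bounded below uniformly in $u$; one then applies the generic case to $f_t\circ\eta\big|_{[t,\infty)}$. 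Without this (or an equivalent) argument the claimed uniformity in the conditioning is not established. Two smaller points: your opening scaling reduction to $r=1$ does not make sense for the conditional statement (the past curve is arbitrary and does not rescale to a reference object) — your ``alternatively'' clause is the correct version; and your worry about $f(\infty)$ approaching $\alpha$ is vacuous, since $\infty\notin V$ forces $f(\infty)\notin B(\alpha,\varepsilon_0)$ by \cref{le:ghm}, after which the target point is just another force point (of weight $\kappa-8$) that \cref{le:sle_abs_cont} legitimately removes.
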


\begin{proof}[Proof of \cref{le:sle_markov_inc} in case of space-filling \slek{}]
In case of $r > 1$, the statement (even with $\tau_{r+1}$ instead of $\tau_{r+6}$) is precisely \cite[Lemma 3.1]{ghm-kpz-sle}. In case $r \le 1$, conditioning on $\eta[0,\tau_2]$ we get
\[ 
\P\left(\Cont(\eta[\tau_2,\tau_3]) < \ell \mmiddle| \eta\big|_{[0,\tau_2]} \right) < p . 
\]
This implies \cref{le:sle_markov_inc} for arbitrary $r > 0$.
\end{proof}

\begin{remark}\label{rm:ball_filling}
In the case of space-filling \slek{}, the proof shows that there exist $\ell_0,\delta>0$ such that 
\[ \wh\nu_{D;a\to\infty;\u}\left( \eta[0,\tau_{\abs{a}+r}] \text{ contains $\delta r$ disjoint balls of radius $\ell_0$} \right) > 1-c_1\exp(-c_2 r) . \]
By scaling, we get
\eqb \wh\nu_{D;a\to\infty;\u}\left( \eta[0,\tau_{\abs{a}+r}] \text{ contains $\delta \ell^{-1} r^2$ disjoint balls of radius $\ell_0 \ell r^{-1}$} \right) > 1-c_1\exp(-c_2 \ell^{-1} r^2) . 
\label{eq:balls}
\eqe
Notice that on this event we have
\[ \Cont(\eta[0,\tau_{\abs{a}+r}]) \gtrsim \ell , \]
so \eqref{eq:balls} is a stronger version of \cref{pr:diam_tail_upper} and \eqref{eq:cont_utail_cond}.
\end{remark}

In the remainder of the section, we prove \cref{le:sle_markov_inc} for two-sided whole-plane \slek{}. Recall from \cref{se:prelim_sle} that the restriction $\eta\big|_{[0,\infty)}$ is a whole-plane \slek{}$(2)$ from $0$ to $\infty$ with force point at $0$. Therefore the statement is equivalent when we consider whole-plane \slek{}$(2)$.

\begin{lemma}
There exists $\ell>0$ and $p<1$ such that for any $(D,a)$ as in \eqref{eq:Da} and $u \in \partial D \setminus \{a\}$ we have
\[ \nu_{D;a\to\infty;u}(\Cont(\eta[0,\tau_{\abs{a}+6}]) < \ell) < p . \]
\end{lemma}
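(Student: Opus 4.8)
Here is the plan. We follow the strategy used for the space-filling case in \cite[Lemma~3.1]{ghm-kpz-sle}, adding one ingredient to handle the fact that for non-space-filling SLE the Minkowski content is not simply Lebesgue area. First reduce to $\abs{a}\ge 1$, for which we will in fact prove the stronger statement
\[ \nu_{D;a\to\infty;u}\bigl(\Cont(\eta[0,\tau_{\abs{a}+5}]) < l\bigr) < p . \]
Granting this: if $\abs{a}\ge 1$ the assertion follows since $\tau_{\abs{a}+5}\le\tau_{\abs{a}+6}$ and the content of an initial segment is monotone; if $\abs{a}<1$, condition on $\eta|_{[0,\tau_1]}$, and by the domain Markov property of radial \slek{}$(2)$ the curve $\eta|_{[\tau_1,\infty)}$ then has a law of the form $\nu_{D';a'\to\infty;u'}$ with $(D',a',u')$ as in \eqref{eq:Da} and $\abs{a'}=1$ (note $D'\supseteq\{\abs{z}>1\}$; a swallowed force point only changes the law by a bounded Radon--Nikodym factor, \cref{le:sle_abs_cont}), so by additivity of Minkowski content and $\tau_6\le\tau_{\abs{a}+6}$ we get $\Cont(\eta[0,\tau_{\abs{a}+6}])\ge\Cont(\eta[\tau_1,\tau_6])$, whose conditional probability of being $<l$ is at most $p$ by the $\abs{a'}=1$ case.

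From now on assume $\abs{a}\ge 1$. Let $f\colon D\to\bbD$ be the conformal map associated to $(D,a)$ just below \eqref{eq:Da}, so $f(z_D)=0$, $f(a)=1$ with $\abs{z_D}=\abs{a}+2$; by the SLE coordinate change rule (\cref{se:prelim_sle}), $\tilde\eta\defeq f\circ\eta$ is a radial \slek{}$(2)$ in $\bbD$ from $1$ to $z_\infty\defeq f(\infty)$ with force point $w\defeq f(u)\in\partial\bbD$. Apply \cref{le:ghm}: there is a simple nearest-neighbour path $\alpha$ in $\varepsilon_0\bbZ^2$ running from $0$ to $1$ whose $\varepsilon_0$-neighbourhood lies in $f(V)$, with $V$ as in that lemma. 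Being simple and essentially contained in the bounded set $\bbD$, $\alpha$ has at most $C\varepsilon_0^{-2}$ vertices, so the corridor $T\defeq\{z:\dist(z,\alpha)<\varepsilon_0/2\}$ and the target ball $W\defeq B(0,\varepsilon_0/2)\subseteq T$ are uniformly tame: their only dependence on $(D,a,u)$ is through the universal constant $\varepsilon_0$, and $\overline T\subseteq f(V)$. Finally, $f^{-1}(\overline T)\subseteq V\subseteq\overline{B(0,\abs{a}+5)}$, since $V\subseteq B(z_D,3)$ together with the points it encloses against $\partial D\subseteq\overline{B(0,\abs{a})}$.

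The heart of the argument is a uniform lower bound for the event $G$ that $\tilde\eta$ reaches $\overline W$ before leaving $T$ and that, writing $\rho$ for the hitting time of $\overline W$,
\[ \Cont\bigl(\tilde\eta[0,\rho]\cap\overline{B(0,1-\varepsilon_0/2)}\bigr)\ge l_0 . \]
We claim there are universal $q_0,l_0>0$ with $\nu_{\bbD;1\to z_\infty;w}(G)\ge q_0$ for all $z_\infty\in\bbD$, $w\in\partial\bbD$. That $\tilde\eta$ traverses $T$ into $W$ with probability bounded below, uniformly in $z_\infty,w$, is exactly the content of the argument in \cite[Lemma~3.1]{ghm-kpz-sle}: by absolute continuity between \slek{}$(2)$ variants with different force and target points (\cref{le:sle_abs_cont} with the radial coordinate change), the passage through the fixed tame corridor $T$ is comparable to that of a plain radial \slek{}, which stays in $T$ and reaches $W$ with positive probability; the possibly awkward positions of $z_\infty$ and $w$ (neither of which need be bounded away from $1$) are handled by splitting the traversal of $T$ into a bounded number of short steps, each avoiding the force point and target, together with a compactness argument over $(z_\infty,w)$ covering the degenerate limits $z_\infty\to\partial\bbD$, $w\to 1$. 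On this event the arc $\tilde\eta[0,\rho]\cap\overline{B(0,1-\varepsilon_0/2)}$ contains a crossing of the annulus between $\partial W$ and $\partial B(0,1-\varepsilon_0/2)$ and, away from $\partial\bbD$ and from force points, is a nondegenerate SLE-type curve, hence has strictly positive $d$-dimensional Minkowski content a.s.\ (cf.\ the discussion of \cite{lr-minkowski-content} in \cref{se:prelim_sle}); one more use of the same absolute-continuity comparison gives $\Cont(\cdot)\ge l_0$ on this event with probability bounded below, for a small universal $l_0$. This step — making the ``follow the corridor and accumulate content'' probability uniform despite the unknown locations of $z_\infty$ and $w$ — is the part I expect to require the most care; the rest is bookkeeping.

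It remains to transfer the estimate back to $D$. On $G$, writing $\rho'$ for the time corresponding to $\rho$, we have $\eta[0,\rho']=f^{-1}(\tilde\eta[0,\rho])\subseteq f^{-1}(\overline T)\subseteq V\subseteq\overline{B(0,\abs{a}+5)}$, so for $\kappa<8$ (the non-space-filling regime) the curve a.s.\ reaches radius $\abs{a}+5$ only after $\rho'$, whence $\Cont(\eta[0,\tau_{\abs{a}+5}])\ge\Cont(\eta[0,\rho'])$. By Koebe's distortion theorem applied to the univalent map $f^{-1}\colon\bbD\to D$ at $0=f(z_D)$, using $\abs{(f^{-1})'(0)}=\crad(z_D,D)\ge\dist(z_D,\partial D)\ge 2$, we get $\abs{(f^{-1})'}\ge c$ on $\overline{B(0,1-\varepsilon_0/2)}$ for a universal $c>0$. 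Hence by the transformation rule \eqref{eq:cont_transf},
\[ \Cont(\eta[0,\rho']) \ge \Cont\Bigl(f^{-1}\bigl(\tilde\eta[0,\rho]\cap\overline{B(0,1-\varepsilon_0/2)}\bigr)\Bigr) \ge c^d\,\Cont\bigl(\tilde\eta[0,\rho]\cap\overline{B(0,1-\varepsilon_0/2)}\bigr) \ge c^d l_0 \]
on $G$. Taking $l\defeq c^d l_0$ and $p\defeq 1-q_0$ finishes the case $\abs{a}\ge 1$, and hence, by the reduction above, the lemma.
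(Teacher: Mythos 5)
Your overall strategy coincides with the paper's: reduce to $\abs{a}\ge 1$, map to $\bbD$, use \cref{le:ghm} to obtain a uniformly tame corridor from $1$ to a neighbourhood of $0$, show that the image curve follows the corridor while accruing Minkowski content with uniformly positive probability, and pull the content back to $D$ via Koebe distortion and the transformation rule \eqref{eq:cont_transf}. The reduction to $\abs{a}\ge1$, the corridor construction, the containment $f^{-1}(\overline T)\subseteq V\subseteq\overline{B(0,\abs{a}+5)}$, and the transfer step are all sound (the paper restricts the content to a small ball around $0$ rather than to $\overline{B(0,1-\varepsilon_0/2)}$, but your distortion estimate at $0$ works just as well).

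The gap sits exactly where you flag that the most care is needed: uniformity in the force point $w=f(u)\in\partial\bbD$, which may coincide with, or be arbitrarily close to, the starting point $1$. This degenerate configuration is unavoidable in the intended application, since the whole-plane \slek{}$(2)$ has its force point at the starting point, so after conditioning on an initial segment the updated force point is a prime end adjacent to the tip. Neither of your two proposed fixes closes this. Splitting the traversal of $T$ into short steps does not help, because the very first step begins at the force point, where \cref{le:sle_abs_cont} gives no uniform Radon--Nikodym bound. A compactness argument over $(z_\infty,w)$ would require (lower semi)continuity of $w\mapsto\nu_{\bbD;1\to z_\infty;w}(G)$ up to $w=1$; but continuity of the \slek{}$(2)$ law in the force point at the degenerate position is essentially the fact to be proved, and moreover $G$ is defined through a Minkowski-content functional, which is not continuous on curve space, so semicontinuity of $\nu(G)$ is not automatic. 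The paper resolves this quantitatively: run the curve for a small fixed capacity time $t$; the image of the force point under the mapping-out map is a Bessel-type process of positive index started at $f(u)$, so by monotonicity in the starting point, and since a Bessel process at a fixed positive time is a.s.\ positive, with probability bounded below (uniformly in $u$) the curve stays in a tiny ball while the force point image reaches distance at least some $c_1>0$ from the tip; one then applies the non-degenerate case to the continuation. You need this, or an equivalent quantitative separation argument, for the ``heart'' of your proof to go through. A minor additional remark: $z_\infty=f(\infty)$ automatically lies outside $f(V)$ and hence outside the $\varepsilon_0$-neighbourhood of $\alpha$, so the target point never requires the degenerate treatment --- only the force point does.
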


\begin{proof}
We show the statement in case of $\abs{a} \ge 1$ and with $\tau_{\abs{a}+5}$ instead of $\tau_{\abs{a}+6}$. In case $\abs{a} < 1$, considering $\eta\big|_{[\tau_1,\tau_6]}$ conditionally on $\eta[0,\tau_1]$ gives us
\[ \nu_{D;a\to\infty;u}(\Cont(\eta[\tau_1,\tau_6]) < \ell) < p . \]

Let $f\colon D \to \bbD$ be the conformal map described in the paragraph below \eqref{eq:Da}, and $\varepsilon_0 > 0$ and $\alpha$ as in \cref{le:ghm}. Denote by $B(\alpha,\varepsilon_0)$ the $\varepsilon_0$-neighbourhood of $\alpha$. By the definitions above, $f\circ\eta$ leaves $B(\alpha,\varepsilon_0)$ before $\eta$ hits radius $\abs{a}+5$.

Although $\alpha$ depends on $D$, it can be picked among a finite number of nearest-neighbour paths. Therefore it suffices to show that for given $\varepsilon_0$ and $\alpha$, there exists $\ell>0$ and $q>0$ such that the following holds. Let $w \in \bbD \setminus B(\alpha,\varepsilon_0)$ and $\nu_{\bbD;1\to w;u}$ a radial \slek$(2)$ with a force point $u \in \partial\bbD$. Then
\[ \nu_{\bbD;1\to w;u}(\Cont(\eta[0,\sigma_{\alpha,\varepsilon_0}] \cap B(0,\varepsilon_0/2)) > \ell) > q \]
where $\sigma_{\alpha,\varepsilon_0}$ is the exit time of $B(\alpha,\varepsilon_0)$.

Indeed, since $\abs{(f^{-1})'} \asymp 1$ in a neighbourhood of $0$ (due to Koebe's distortion theorem), by the transformation rule for Minkowski content \eqref{eq:cont_transf} this will imply $f^{-1}(\eta[0,\sigma_{\alpha,\varepsilon_0}] \cap B(0,\varepsilon_0/2))$ has Minkowski content at least a constant times $\ell$.

To show the claim, we need to find $\ell$ and $q$ such that the bound holds uniformly over all target points and force points. For concreteness, let us map $(\bbD,1,0)$ to $(\bbH,0,i)$. Then the image is an \slek{} in $\bbH$ with force points $(u,2)$, $u \in \bbR$, and $(w,\kappa-8)$, $w \in \bbH \setminus B(\alpha,\varepsilon_0)$ (cf.\@ \cite[Theorem 3]{sw-sle-coordinate-change}). Since the force point $w$ lies outside $B(\alpha,\varepsilon_0)$, we can disregard it until the exit time of $B(\alpha,\varepsilon_0/2)$ since the density between the corresponding SLE measures is uniformly bounded, regardless of the locations of $u$ and $w$ (cf.\@ \cref{le:sle_abs_cont}). Hence we are reduced to proving the following statement.
\end{proof}

\begin{lemma}
Let $\alpha$ be a simple path in $\bbH$ from $0$ to $i$, and $\varepsilon_0 > 0$. There exists $\ell>0$ and $q>0$ such that the following holds. Let $\nu_{\bbH,0\to\infty;u}$ denote chordal \slek$(2)$ with a force point $u \in \partial\bbH$. Then
\[ \nu_{\bbH,0\to\infty;u}(\Cont(\eta[0,\sigma_{\alpha,\varepsilon_0}] \cap B(i,\varepsilon_0)) > \ell) > q \]
where $\sigma_{\alpha,\varepsilon_0}$ is the exit time of $B(\alpha,\varepsilon_0)$.
\end{lemma}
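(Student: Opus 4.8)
The plan is to apply the Paley--Zygmund inequality to the random variable
\[
Z \defeq \Cont\bigl(\eta[0,\sigma_{\alpha,\varepsilon_0}]\cap B(i,\varepsilon_0)\bigr) \ge 0 .
\]
Write $\bbE_u$ for expectation under $\nu_{\bbH,0\to\infty;u}$. If we can produce constants $a>0$ and $b<\infty$, \emph{not depending on the force point} $u\in\partial\bbH$, with $\bbE_u[Z]\ge a$ and $\bbE_u[Z^2]\le b$, then Paley--Zygmund gives $\nu_{\bbH,0\to\infty;u}(Z>a/2)\ge a^2/(4b)$, which is exactly the claim with $l=a/2$ and $q=a^2/(4b)$. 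So everything reduces to a uniform-in-$u$ lower bound on the first moment and a uniform-in-$u$ upper bound on the second moment of $Z$. (Note that one cannot hope to bound $\Cont(\eta[T,T'])$ below deterministically by a function of the curve geometry, since the Minkowski content has a nontrivial lower tail even conditionally on the trajectory --- this is the reason we argue probabilistically.)

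For the reduction in $u$, fix $R_0$ with $B(\alpha,\varepsilon_0)\subseteq B(0,R_0)$, so that $\sigma_{\alpha,\varepsilon_0}\le\tau_{R_0}$ and $Z\le\Cont(\eta[0,\sigma_{\alpha,\varepsilon_0}])$. When $u$ lies outside the $1$-neighbourhood of $B(\alpha,\varepsilon_0)$, \cref{le:sle_abs_cont} shows that the law of $\eta$ stopped at $\sigma_{\alpha,\varepsilon_0}$ has Radon--Nikodym derivative in $[c,c^{-1}]$, with $c$ independent of $u$, relative to that of ordinary chordal \slek{} (no force point); so it suffices to obtain the two moment bounds for ordinary \slek{} and, separately, for $u$ ranging over a fixed compact subset of $\partial\bbH$. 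The second moment bound is then immediate from the finiteness of polynomial moments of the Minkowski content of \slek{} and \slek{}$(2)$ curve segments in bounded regions \cite{lr-minkowski-content}, uniformly over $u$ in a compact set (by a further continuity/absolute-continuity argument in the latter case).

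The first moment is the heart of the matter. Let $G$ be the (open) event that $\eta$ enters $B(i,\varepsilon_0/8)$ strictly before time $\sigma_{\alpha,\varepsilon_0}$. For each fixed $u$ --- including the limiting cases $u=0^\pm$ and $u=\infty$ of the compactified $\partial\bbH$, at which \slek{}$(2)$ degenerates to \slek{}$(2)$ with force point $0^\pm$ or to plain \slek{} --- we have $\nu_{\bbH,0\to\infty;u}(G)>0$, because \slek{}$(2)$ charges every neighbourhood of any given simple path, here a path running inside $B(\alpha,\varepsilon_0/2)$ close to $\alpha$ up to near $i$. Since $u\mapsto\nu_{\bbH,0\to\infty;u}$ is weakly continuous (the \slek{}$(2)$ driving function depends continuously on the initial force-point location, there being no collisions for weight $2$ when $\kappa\le 8$) and the relevant range of $u$ is compact, lower semicontinuity of $u\mapsto\nu_{\bbH,0\to\infty;u}(G)$ yields $\inf_u\nu_{\bbH,0\to\infty;u}(G)=:q_0>0$. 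On $G$, let $T$ be the first hitting time of $\overline{B(i,\varepsilon_0/8)}$ and $z_*=\eta(T)$, and let $T'$ be the first time after $T$ at which $\eta$ hits $\partial B(i,\varepsilon_0/2)$. Since $\partial B(\alpha,\varepsilon_0)$ is disjoint from $B(i,\varepsilon_0/2)$ (as $i\in\alpha$) while $\eta$ eventually leaves $B(\alpha,\varepsilon_0)$, we have $T'\le\sigma_{\alpha,\varepsilon_0}$; moreover $\eta[T,T']\subseteq\overline{B(i,\varepsilon_0/2)}\subseteq B(i,\varepsilon_0)$ with $\diam\eta[T,T']\ge 3\varepsilon_0/8$, and by definition of $T$ the set $\eta[0,T]$ avoids $B(i,\varepsilon_0/8)$, so $\bbH\setminus\eta[0,T]$ contains that ball and $z_*$ lies on its boundary. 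Hence on $G$ we have $Z\ge\Cont(\eta[T,T'])$, and by the domain Markov property at $T$ together with the conformal covariance \eqref{eq:cont_transf} of Minkowski content,
\[
\bbE_u[Z]\;\ge\;\bbE_u\Bigl[\1_G\,\bbE_u[\Cont(\eta[T,T'])\mid\cF_T]\Bigr],
\]
and the inner conditional expectation is the expected $d$-dimensional Minkowski content deposited by a chordal \slek{}$(2)$-type curve, started from a boundary prime end of a domain that contains a ball of radius $\varepsilon_0/8$ abutting that prime end, run until it first travels Euclidean distance $3\varepsilon_0/8$.

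The main obstacle is to bound this last quantity below by a constant $c(\varepsilon_0)>0$, uniformly over the realisation of $\eta[0,T]$ (i.e.\ over the domain, the starting prime end, and the image of the force point). I would deduce it from the representation of the intensity of the Minkowski content measure of \slek{} via the one-point Green's function \cite{lr-minkowski-content}: the expected content in a Borel set is the integral of the Green's function over that set, the Green's function is strictly positive throughout the connected component of the domain explored by the curve, and because the domain contains a ball of radius $\varepsilon_0/8$ touching the starting point it is bounded below on a macroscopic sub-region by classical conformal distortion estimates and scaling. The genuinely delicate step is the behaviour near $z_*$, where $\eta[0,T]$ has the local geometry of a slit (the opening angle lies in $[\pi,2\pi]$, so Koebe's estimates at the tip give only one-sided control, and the Euclidean and conformal pictures differ); I expect this can be handled by enlarging $G$ so as to force the curve to make a short, geometrically controlled excursion inside $B(i,\varepsilon_0/2)$, after which one restarts from an intermediate point of bounded conformal radius and the estimate becomes standard. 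With $c(\varepsilon_0)$ in hand one gets $\bbE_u[Z]\ge q_0\,c(\varepsilon_0)$ uniformly in $u$, which together with the second moment bound and Paley--Zygmund completes the proof.
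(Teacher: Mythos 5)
Your overall architecture (Paley--Zygmund from a uniform first-moment lower bound and second-moment upper bound) is much heavier than what is needed, and the one step you yourself flag as ``genuinely delicate'' is in fact a real gap, not a routine technicality. The quantity you need is a lower bound on $\bbE[\Cont(\eta[T,T'])\mid\cF_T]$ that is uniform over \emph{all} realisations of $\eta[0,T]$ on $G$. This is precisely the ``arbitrary tip'' problem that forces the paper to introduce the whole $(p_{\mathrm{N}},r_{\mathrm{N}},c_{\mathrm{N}})$-niceness machinery in \cref{sec:diam-lower-nonspf}: the past curve can approach $z_*$ through a deep fjord so that one side of the tip has arbitrarily small harmonic measure from the ball $B(i,\varepsilon_0/8)$, and then the (conformally covariant) one-point intensity is correspondingly small there; containing a Euclidean ball abutting the tip does not control the conformal geometry at the tip. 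Your proposed repair (``enlarge $G$ to force a geometrically controlled excursion, then restart from a point of bounded conformal radius'') is exactly the kind of argument that needs to be carried out carefully, and restarting from an \emph{interior} point changes the process (the one-point function for the curve started from the tip versus conditioned to pass near an interior point are different objects). There is also a directional problem with your starting identity: the Green's-function representation computes $\bbE[\Cont(\eta[T,\infty)\cap U)]$, which is an \emph{upper} bound for $\bbE[\Cont(\eta[T,T']\cap U)]$, since the curve may revisit $U$ after $T'$; extracting a lower bound for the stopped segment requires subtracting the expected content of $\eta[T',\infty)\cap U$, which again depends on the configuration at time $T'$. Finally, two smaller asserted-but-unproved points: weak continuity of $u\mapsto\nu_{\bbH;0\to\infty;u}$ at the level of \emph{curves} (not just driving functions) is delicate, and the uniform second moment for $u$ near $0$ does not follow from bounded Radon--Nikodym derivatives (they are unbounded there).

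All of this can be avoided because the lemma only asks for \emph{some} $l,q>0$. The paper's proof reduces everything to a single fixed law: for $u\notin B(0,\varepsilon_0)$ the density against plain \slek{} is uniformly bounded up to the exit of $B(\alpha,\varepsilon_0/2)$ (\cref{le:sle_abs_cont}); for general $u$ one first runs the curve for a small deterministic time $t$ and uses that $f_t(u)$ is a Bessel process of positive index, monotone in its starting point and a.s.\@ positive at time $t$, to land (with uniformly positive probability) in the previous case. Under plain \slek{}, the support theorem gives positive probability of following $\alpha$ within $\varepsilon_0/2$, and on that event $\Cont(\eta[0,\sigma_{\alpha,\varepsilon_0}]\cap B(i,\varepsilon_0))$ is a.s.\@ strictly positive because the Minkowski content of every nontrivial curve segment is a.s.\@ positive; one then simply chooses $l$ small enough that the content exceeds $l$ on at least half of that event. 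No moment estimates, no Green's functions, and no uniformity over past realisations are required. I recommend you rebuild the proof along these lines.
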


\begin{proof}
Case 1: Suppose that $u \notin B(0,\varepsilon_0)$. Then the density between the laws of \slek$(2)$ and \slek$(0)$ is uniformly bounded until the exit of $B(\alpha,\varepsilon_0/2)$ (cf.\@ \cref{le:sle_abs_cont}). Therefore it suffices to consider \slek$(0)$. There is a positive probability that $\eta$ follows $\alpha$ within $\varepsilon_0/2$ distance. Moreover, since the Minkowski content on each sub-interval is almost surely positive, there is a positive probability that also $\Cont(\eta[0,\sigma_{\alpha,\varepsilon_0}] \cap B(i,\varepsilon_0)) > \ell$ with sufficiently small $\ell>0$.

Case 2: Suppose $u$ is arbitrary. Let $t>0$ be a small time. Denote by $f_t$ the conformal map from $\bbH \setminus \fill(\eta[0,t])$ to $\bbH$ with $f_t(\eta(t))=0$ and $f_t(z) = z+O(1)$. Then there exist $c_1>0$ and $q>0$ (independent of $u$) such that with probability at least $q$ the following occur:\\
1. $\eta[0,t] \subseteq B(0,\varepsilon_0/4)$,\\
2. $\abs{f_t(u)} \ge c_1$.\\
Indeed, $u_t = f_t(u)$ is a Bessel process of positive index started at $u$ (this follows directly from the definition of \slek$(2)$). By the monotonicity of Bessel processes in the starting point, it suffices to consider $u=0$. The claim follows since the Bessel process stopped at a deterministic time is almost surely positive.

It follows that if $t$ is chosen small enough, we have $\abs{f_t(z)-z} < \varepsilon_0/2$ for every $z \in \partial B(\alpha,\varepsilon_0) \subseteq \bbH$, and therefore $B(\alpha,\varepsilon_0/2) \subseteq f_t(B(\alpha,\varepsilon_0))$. Then, applying Case 1 to $f_t \circ \eta\big|_{[t,\infty)}$ with $\varepsilon_0$ replaced by $\varepsilon_0/2 \wedge c_1$ implies the claim.
\end{proof}

\subsection{Zero-one laws and upper bounds on the regularity of SLE}
\label{se:01laws}

In this subsection, we show the upper bounds in our main results (\cref{thm:moc_upper,thm:lil_upper,thm:psivar_upper}), which is stated as \cref{pr:upperbounds} below. To show that the constants $c_0,c_1$ are deterministic, we prove that they satisfy zero-one laws.

We begin by proving analogues of Blumenthal's and Kolmogorov's zero-one laws for SLE. Define $\cF_t = \sigma(\eta(s), s \in [0,t])$, and $\cF_{t+} = \bigcap_{s>t} \cF_s$. Moreover, denote by $\cG$ the shift-invariant $\sigma$-algebra, i.e. the sub-$\sigma$-algebra of $\bigcap_{t > 0} \sigma(\eta(s), s>t)$ consisting of events $A$ such that $(\eta(t))_{t \ge 0} \in A$ if and only if $(\eta(t_0+t))_{t \ge 0} \in A$ for any $t_0 \ge 0$. Note that we are considering paths restricted to $t \in \bbR^+$.

\begin{proposition}\label{pr:01law}
For two-sided whole-plane \slek{} and whole-plane space-filling \slek{} as in \cref{it:non-spfill,it:spfill}, the $\sigma$-algebras $\cF_{0+}$ and $\cG$ are trivial (in the sense that they contain only events of probability $0$ and $1$).
\end{proposition}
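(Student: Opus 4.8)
I would prove triviality of $\cF_{0+}$ and $\cG$ separately, in each case reducing to an absolute-continuity/scaling argument rather than a direct decorrelation computation.

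\emph{Triviality of $\cF_{0+}$.} The key structural input is self-similarity: for $\lambda>0$ the process $t\mapsto \lambda^{-1/d}\eta(\lambda t)$ has the same law as $\eta$. An event $A\in\cF_{0+}$ is, up to null sets, measurable with respect to $\sigma(\eta(s):s\le\delta)$ for every $\delta>0$; applying the scaling map with $\lambda=\delta$ shows that $\bbP(\eta\in A)$ equals the probability of the ``same'' event computed from the rescaled path on $[0,1]$, and iterating/letting $\delta\downarrow 0$ forces $A$ to be in the germ $\sigma$-algebra at $0$ of a process that is invariant under the scaling semigroup. To actually conclude $\bbP(A)\in\{0,1\}$ I would use a mixing input: the increments of $\eta$ on a small initial segment $[0,\delta]$ are, after conditioning on $\eta|_{[0,\delta]}$, asymptotically independent of what happened near time $0$ once $\delta$ is small. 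Concretely, I would show that for fixed disjoint macroscopic information (e.g.\ $\eta$ restricted to $[\delta,1]$ modulo the conformal image), the Radon–Nikodym derivative against an unconditioned reference law is uniformly bounded — this is exactly the flavour of \cref{le:sle_abs_cont} (chordal case) and \cref{le:ghm}/\cref{le:localset} (space-filling case). Combining bounded Radon–Nikodym derivatives with the scaling that sends $\delta\downarrow 0$ yields that $\bbP(A\mid \cF_{0+})=\bbP(A)$ a.s., i.e.\ $\cF_{0+}$ is trivial.

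\emph{Triviality of $\cG$.} Here the relevant input is stationarity of increments: $t\mapsto\eta(t_0+t)-\eta(t_0)$ has the law of $\eta$ for every $t_0$. A shift-invariant event $A$ is measurable with respect to $\sigma(\eta(s):s>t)$ for all $t$, hence with respect to the tail $\sigma$-algebra of the stationary-increment process. I would run a standard ergodicity argument: approximate $\1_A$ in $L^1$ by a functional $g$ of the increments of $\eta$ over a bounded window $[0,T]$, then use the domain Markov property to decorrelate $g$ from its time-$NT$ shift as $N\to\infty$. The mixing estimate needed is that the increment of $\eta$ over $[NT,(N{+}1)T]$ becomes asymptotically independent of $\eta|_{[0,T]}$ as $N\to\infty$; this again follows from absolute continuity — once $\eta$ has travelled far away (equivalently, after conditioning on a large initial segment and passing to the complementary domain), the conditional law is mutually absolutely continuous with a fixed reference SLE with uniformly controlled density on compact pieces, by \cref{le:sle_abs_cont} in case \ref{it:non-spfill} and by the imaginary-geometry local-set description (\cref{le:localset}, \cref{le:ghm}) in case \ref{it:spfill}. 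Shift-invariance of $A$ then gives $\bbP(A)=\bbP(A\cap \sigma_{NT}^{-1}A)\to \bbP(A)^2$, so $\bbP(A)\in\{0,1\}$.

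\emph{Main obstacle.} The hard part is making the decorrelation precise with uniform constants, particularly in the space-filling case for $\kappa\in(4,8)$, where ``the past curve is nice'' is not automatic: one must control the conditional law of the future given $\cF_r$ through the conformal map $f$ and the transformed GFF $\wh h$ of \eqref{eq:ig-coord-ch}, and argue the dependence on the past enters only through finitely many boundary marked points whose influence on a compact piece of the future curve has bounded Radon–Nikodym derivative. The two-sided whole-plane case is cleaner because \cref{le:sle_abs_cont} directly delivers a uniform density bound once we stay away from the force point, but even there I would need to be a little careful near time $0$ (for $\cF_{0+}$) and near $\infty$ (for $\cG$) that the relevant curve segments do not touch the force point at $0$, which holds almost surely by the local structure of whole-plane $\slek(2)$.
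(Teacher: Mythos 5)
Your overall plan (trivialise the germ $\sigma$-algebra and the shift-invariant $\sigma$-algebra via some form of asymptotic independence) has the right shape, but the mixing input you propose does not deliver what you need, and for $\cG$ you miss the main obstruction. A uniformly \emph{bounded} Radon--Nikodym derivative of the type in \cref{le:sle_abs_cont} compares two SLE variants with different force-point configurations in a fixed domain; it does not decouple the germ of $\eta$ at time $0$ (or its far future) from macroscopic information about the curve, and two-sided comparability with constants in $[c,c^{-1}]$ is in any case not asymptotic independence. What is actually needed, and what the paper uses, is that the conditional law of fixed observables given the initial part of the curve \emph{converges} to the unconditional law: in case \ref{it:non-spfill} this comes from the Markov property of the driving process together with \cite[Lemma 4.20]{law-conformal-book}, combined with backward martingale convergence to identify the limit as $\bbE[\,\cdot\mid\hat\cF_{-\infty+}]$ and conclude that $\hat\cF_{-\infty+}$ is independent of itself; in case \ref{it:spfill} the paper does not argue via SLE absolute continuity at all, but reduces, via the local-set property (\cref{le:localset}) and the capacity reparametrisation identity $\hat\cF_{-\infty+}=\cF_{0+}$, to the triviality of $\bigcap_{r>0}\sigma(h|_{B(0,r)})$ for the whole-plane GFF proved in \cite[Lemma 2.2]{hs-mating-eucl}. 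Your scaling step for $\cF_{0+}$ also does no work as stated: an event of $\cF_{0+}$ is not fixed by the scaling maps, so self-similarity alone does not constrain $\bbP(A)$.

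More seriously, your ergodicity argument for $\cG$ breaks on the parametrisation. An increment of $\eta$ over $[NT,(N+1)T]$ in the natural parametrisation is \emph{not} a functional of the curve far from the origin: the time at which $\eta$ enters that window depends on the cumulative Minkowski content of the entire initial segment, so decorrelating it from $\eta|_{[0,T]}$ is precisely the difficulty that makes the triviality of the full tail $\sigma$-algebra $\bigcap_{t>0}\sigma(\eta(s),s>t)$ an open problem, as the remark following the proposition points out. The paper circumvents this by proving \eqref{eq:sigmaalg_reversal}: a shift-invariant event does not see time shifts, so $\cG$ coincides (modulo null sets) with the germ $\sigma$-algebra at $\infty$ of the \emph{time-reversed} curve parametrised by log conformal radius, after which one concludes by reversibility of whole-plane \slek{}$(2)$ (resp.\ by triviality of $\bigcap_{R>0}\sigma(h|_{\bbC\setminus B(0,R)})$ in the space-filling case). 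Without either this reversal step or a genuine quantitative mixing estimate in the natural parametrisation, your argument for $\cG$ does not close.
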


\begin{proof}
Denote by $s(t)$ the logarithmic capacity of $\eta[0,t]$, and write $\hat\cF_u \defeq \cF_{s^{-1}(u)}$. Recall that $e^{s(t)}$ is comparable to $\diam(\eta[0,t])$. Note also that $s^{-1}(u) = \Cont(\hat\eta[0,u])$ where $\hat\eta$ is $\eta$ parametrised by capacity.

We claim that $\hat\cF_{-\infty+} = \cF_{0+}$. Let $A \in \hat\cF_{-\infty+}$. We want to show that $A \in \cF_t$ for any $t>0$. Since $s(t) \downarrow -\infty$ as $t \downarrow 0$, we can write $A = \bigcup_{u \in \bbZ} A \cap \{s^{-1}(u)\le t\}$. By definition, since $A \in \hat\cF_u$ for any $u$, we have $A \cap \{s^{-1}(u)\le t\} \in \cF_t$, and hence $A \in \cF_t$. The other inclusion is analogous.

For whole-plane space-filling \slek{}, it is shown in \cite[Lemma 2.2]{hs-mating-eucl} that for a whole-plane GFF modulo $2\pi\chi$, the $\sigma$-algebra $\bigcap_{r>0} \sigma( h\big|_{B(0,r)})$ is trivial. Since \cref{le:localset} implies $\hat\cF_{-\infty+} \subseteq \bigcap_{r>0} \sigma( h\big|_{B(0,r)})$, the former is also trivial. 

We now prove the proposition for two-sided whole-plane \slek{}, or rather whole-plane \slek$(2)$ since we are restricting to $t \ge 0$. Let $\hat\eta$ denote whole-plane \slek$(2)$ parametrised by capacity. Since initial segments of $\hat\eta$ and $\eta$ determine each other, we have the identity $\hat\cF_{-\infty+} = \bigcap_{u\in\bbR}\sigma(\hat\eta(u'), u' \le u)$. We show that $\hat\cF_{-\infty+}$ is independent of $\sigma(\hat\eta(u), u \in \bbR)$ which in particular implies that $\hat\cF_{-\infty+}$ is independent of itself, and therefore trivial.

Fix arbitrary $u_1<...<u_r$, and let $g$ be some bounded continuous function. From the Markov property of the driving process and \cite[Lemma 4.20]{law-conformal-book}, it follows that
\[ 
\bbE[ g(\hat\eta(u_1),...,\hat\eta(u_r)) \mid \hat\cF_u ] \to \bbE[ g(\hat\eta(u_1),...,\hat\eta(u_r)) ] 
\quad\text{as } u \downarrow -\infty .
\]
On the other hand, by backward martingale convergence, we also have
\[ 
\bbE[ g(\hat\eta(u_1),...,\hat\eta(u_r)) \mid \hat\cF_u ] \to \bbE[ g(\hat\eta(u_1),...,\hat\eta(u_r)) \mid \hat\cF_{-\infty+} ] 
\quad\text{as } u \downarrow -\infty 
\]
which implies that $\sigma(\hat\eta(u_1),...,\hat\eta(u_r))$ is independent of $\cF_{-\infty+}$. Since this is true for any choice of $u_1<...<u_r$, we must have that $\sigma(\hat\eta(u), u \in \bbR)$ is independent of $\hat\cF_{-\infty+}$.

For the triviality of $\cG$ we show the following statement: Denote by $\wt\eta\colon (-\infty,\infty)\to\bbC$ the time-reversal of $\eta$, parametrised by log conformal radius of its complement relative to the origin, and by $\wt\cF_{-\infty+}$ the infinitesimal $\sigma$-algebra of $\wt\eta$. Then we claim
\begin{equation}\label{eq:sigmaalg_reversal}
    \cG = \wt\cF_{-\infty+} \quad\text{(modulo null sets).}
\end{equation}
This will imply the triviality of $\cG$ since for whole-plane \slek{}$(2)$, the reversibility and the previous step imply $\wt\cF_{-\infty+}$ is trivial, and for space-filling \slek{}, we have triviality of $\bigcap_{R>0}\sigma(h\big|_{\bbC\setminus B(0,R)})$ by \cite[Lemma 2.2]{hs-mating-eucl}.

Now we show \cref{eq:sigmaalg_reversal}. The inclusion $\wt\cF_{-\infty+} \subseteq \cG$ is easy to see. Indeed, for any fixed $t_0$ the time reversals (parametrized by log conformal radius as before) of $\eta$ and $\eta(t_0+\cdot)$ agree until hitting some circle $\partial B_r(0)$ (with $r$ random and depending on $t_0$). We are left to show $\cG \subseteq \wt\cF_{-\infty+}$. For any $A \in \cG$ and $R>0$, we need to show $A \in \wt\cF_R$, where $\wt\cF_R$ is the $\sigma$-algebra generated by $\wt\eta$ up to the first hitting of circle $\partial B_R(0)$ (recall that conformal radius and radius are comparable up to a factor). Note that for any two curves $\wt\eta_1,\wt\eta_2$ starting at $\infty$ that agree until hitting circle $\partial B_R(0)$, their time-reversals agree after their last exit of $B_R(0)$. Consequently, when we parametrise their time-reversals by Minkowski content (denoted by $\eta_1,\eta_2$), they will agree up to a time-shift after their last exit of $B_R(0)$. In particular, $1_A(\eta_1) = 1_A(\eta_2)$. This implies $\bbP(A \mid \wt\cF_R)$ takes only the values $0,1$, or equivalently $A \in \wt\cF_R$ (modulo null sets).
\end{proof}

\begin{remark}
We believe that also the tail $\sigma$-algebra $\bigcap_{t > 0} \sigma(\eta(s), s>t)$ is trivial. Proving this requires extra work since the cumulative Minkowski content and hence the parametrisation of $\eta$ at large times \emph{does} depend on its initial part. An interesting consequence of the tail triviality would be that the measure-preserving maps $T_{t_0}\colon \eta \mapsto \eta(t_0+\cdot)-\eta(t_0)$ (now seen as paths on $t \in \bbR$) are ergodic, i.e.\@ any event $A \in \sigma(\eta)$ that is invariant under $T_{t_0}$ has probability $0$ or $1$. 
\end{remark}

The above proposition implies that the limits $c_2,c_3$ in \cref{thm:main_lil} are deterministic. We now show that the limits $c_0,c_1$ in \cref{thm:main_moc,thm:main_var} are also deterministic.

\begin{proposition}\label{pr:01law_mv}
There exist deterministic constants $c_0,c_1$ (possibly $0$) such that almost surely the following identities hold for any non-trivial bounded interval $I\subseteq\bbR$.
\begin{enumerate}[label=(\roman*)]
\item\label{it:moc_det}
$\displaystyle\lim_{\delta\downarrow 0} \sup_{s,t \in I, \abs{t-s}<\delta} \frac{\abs{\eta(t)-\eta(s)}}{\abs{t-s}^{1/d}(\log \abs{t-s}^{-1})^{1-1/d}} = c_1$.
\item\label{it:var_det}
$\displaystyle\lim_{\delta\downarrow 0} \sup_{(t_i) \subseteq I, \abs{t_{i+1}-t_i}<\delta} \sum_i \psi(\abs{\eta(t_{i+1})-\eta(t_i)}) = c_0\abs{I}$ \quad where $\psi(x) = x^d(\log\log\frac{1}{x})^{-(d-1)}$.
\end{enumerate}
\end{proposition}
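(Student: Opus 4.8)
The plan is to use the zero-one laws just established (\cref{pr:01law}) together with scaling and the additivity of Minkowski content to deduce that the limiting quantities are deterministic and interval-independent. The overall strategy is: (1) show each limsup is a tail-measurable random variable, hence (by the triviality of $\cG$) almost surely equal to a deterministic constant; (2) show it does not depend on the interval $I$.

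First I would observe that for each fixed $\delta$, the quantity
\[
M_\delta(I) \defeq \sup_{s,t\in I,\,\abs{t-s}<\delta} \frac{\abs{\eta(t)-\eta(s)}}{\abs{t-s}^{1/d}(\log\abs{t-s}^{-1})^{1-1/d}}
\]
is monotone decreasing in $\delta$ as $\delta\downarrow 0$, so the limit exists (it is the supremum replaced by a ``local'' supremum over small increments). The key point is that $\lim_{\delta\downarrow 0}M_\delta(I)$ is \emph{local}: for any partition of $I$ into finitely many subintervals $I=I_1\cup\dots\cup I_m$, one has $\lim_{\delta\downarrow 0}M_\delta(I) = \max_j \lim_{\delta\downarrow 0}M_\delta(I_j)$, because for $\delta$ smaller than the mesh of the partition every admissible pair $(s,t)$ lies within a single $I_j$. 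For the variation quantity the analogous locality holds by the additivity of the sum over a partition (each small increment lies in one piece), and this is exactly why we expect the variation limit to scale like $\abs{I}$ rather than be a max. I would then use stationarity of increments: $M_\delta([a,b])$ and $M_\delta([a+t_0,b+t_0])$ have the same law for every $t_0$, and likewise for the variation sum; combined with locality this shows the limit over $I=[0,T]$ is an additive (for variation) or max-stable (for moc) functional of the increments with a law invariant under time-shift.

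Next, to pin down that the constant is deterministic, I would express the limit for $I=[0,1]$ as a $\cG$-measurable random variable, where $\cG$ is the shift-invariant $\sigma$-algebra from \cref{pr:01law}. Concretely: let $L=\lim_{\delta\downarrow 0}M_\delta([0,1])$. By locality and the self-similarity $\eta(t_0+\lambda t)-\eta(t_0)\eqD\lambda^{1/d}\eta(t)$ one checks that $\lim_{\delta\downarrow 0}M_\delta([0,\lambda])=\lim_{\delta\downarrow 0}M_\delta([0,1])$ for every $\lambda>0$ up to the obvious scaling factor, which here is $1$ because the normalisation $\abs{t-s}^{1/d}(\log\abs{t-s}^{-1})^{1-1/d}$ already matches the scaling at the relevant (infinitesimal) scale; more carefully, one uses that near $\delta\to0$ the logarithmic correction is slowly varying. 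Then $L=\lim_{\delta\downarrow 0}M_\delta([t_0,t_0+1])$ for every $t_0\geq 0$ by the same reasoning applied after a shift, so $L$ is invariant under all shifts $T_{t_0}$, hence $\cG$-measurable, hence a.s.\ constant $=c_0$ by \cref{pr:01law}. The same argument gives $\lim_{\delta\downarrow 0}(\text{variation sum over }[0,T]) = c_1 T$ with $c_1$ deterministic: writing the sum over $[0,T]$ as the sum over unit subintervals, each term is i.i.d.-like with common mean-type constant $c_1$ by stationarity, and the super/sub-additive structure plus shift-invariance forces linearity in $T$; the value of $c_1$ is identified via $\cG$ after normalising by length.

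The main obstacle I anticipate is the careful handling of the logarithmic correction factor under scaling: the function $x\mapsto x^{1/d}(\log x^{-1})^{1-1/d}$ is not exactly scale-covariant, so to conclude that the limit is unchanged under $\lambda$-scaling one must argue that for the ratio $M_\delta(I)$ as $\delta\downarrow 0$ only the behaviour of the normalisation as its argument tends to $0$ matters, and that $(\log(\lambda^{-1}x)^{-1})^{1-1/d}/(\log x^{-1})^{1-1/d}\to 1$ as $x\to 0$. This requires writing out, for the variation quantity, the change of variables $\psi(\lambda^{1/d}y)$ vs $\lambda\psi(y)$ and checking the correction is $(1+o(1))$ uniformly over the relevant increments, which is where the bulk of the (routine but delicate) estimates will go. A secondary point to be careful about is measurability/separability, i.e.\ that the suprema can be taken over a countable dense set of times so that $M_\delta$ is a genuine random variable; this follows from continuity of $\eta$. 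Once these are in place, the zero-one law does the rest, and the finiteness (hence $c_0,c_1<\infty$) comes from the upper bounds proved in \cref{pr:diam_tail_upper} and \cref{sec:variation-upper-general}; positivity is deferred to \cref{sec:sle-lower-bounds}.
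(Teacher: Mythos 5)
Your treatment of part \ref{it:moc_det} is essentially the paper's argument (the paper routes the final step through $\cF_{0+}$ by letting the interval shrink to $0$, rather than through $\cG$; both $\sigma$-algebras are trivial by \cref{pr:01law}, so either works). One step you leave implicit should be made explicit: self-similarity and stationarity only give that $S_I$ and $S_{I'}$ have the \emph{same law}. To get almost sure equality you must combine this with the pointwise monotonicity $S_I \le S_{I'}$ for $I \subseteq I'$ (which your ``locality'' identity provides) and the elementary fact that two random variables with the same law, one of which dominates the other, coincide a.s. Without that step, ``$L$ is invariant under all shifts'' is only an identity in law and does not place $L$ in $\cG$. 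Relatedly, the obstacle you single out (the slowly varying logarithmic correction) is the routine part; the real crux is upgrading distributional identities to almost sure ones.

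For part \ref{it:var_det} there is a genuine gap. You assert that ``the super/sub-additive structure plus shift-invariance forces linearity in $T$,'' but additivity plus identically distributed summands does not force the summands to be a.s.\ constant: if the $V_{[k,k+1]}$ were i.i.d.\ and non-degenerate, $V_{[0,T]}/T$ would still converge to a deterministic constant while $V_{[0,1]}$ remained random. The route through $\cG$ only captures the asymptotic average $\lim_{T\to\infty} V_{[0,T]}/T$, not $V_{[0,1]}$ itself, and the ergodicity of the shifts that would close this loop is precisely what the paper states it does \emph{not} prove. The paper's fix is a second-moment argument: by additivity and scaling, $V_{[0,1]} = V_{[0,1/2]}+V_{[1/2,1]}$ with $2V_{[0,1/2]}$, $2V_{[1/2,1]}$, and $V_{[0,1]}$ all equal in law and with finite second moments (by \cref{pr:diam_tail_upper} and \cref{sec:variation-upper-general}); the equality case of Cauchy--Schwarz then forces $V_{[0,1/2]} = V_{[1/2,1]} = \tfrac12 V_{[0,1]}$ a.s., whence $V_{[0,t]} = tV_{[0,1]}$ for all $t$ and $V_{[0,1]}$ is $\cF_{0+}$-measurable. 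You need this (or an equivalent) ingredient; note also that the moment bounds therefore enter the proof of determinism itself, not merely the final verification that $c_1<\infty$.
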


\begin{proof}
\ref{it:moc_det}
For an interval $I$ define
\[ S_I \defeq \lim_{\delta\downarrow 0}\sup_{t,s\in I , \abs{t-s}<\delta} \frac{\abs{\eta(t)-\eta(s)}}{\abs{t-s}^{1/d}(\log \abs{t-s}^{-1})^{1-1/d}}. \]
The law of $S_I$ is independent of the choice of $I$ by scale-invariance in law of $\eta$ and since for any fixed $a>0$ it holds that $\log ((a\Delta t)^{-1})=(1+o(1))\log((\Delta t)^{-1})$ as $\Delta t\rta 0$.

We claim that $S_I = S_{I'}$ almost surely for any two intervals $I,I'$. Indeed, in case $I \subseteq I'$, we have $S_I \le S_{I'}$ by the definition of $S_I$. But then the two random variables can only have the same law if they are almost surely equal. For general $I,I'$ apply the argument iteratively.

It follows that $S_{[0,t]} = S_{[0,1]}$ almost surely for every $t$. Letting $t \downarrow 0$, we see that $S_{[0,1]}$ is (up to null sets) measurable with respect to $\cF_{0+}$, and therefore deterministic by \cref{pr:01law}.

\ref{it:var_det}
Let
\[ V_I \defeq \lim_{\delta\downarrow 0}\sup_{(t_i) \subseteq I, \abs{t_{i+1}-t_i}<\delta}\sum_i \psi(\abs{\eta(t_{i+1})-\eta(t_i)}) . \]
We claim that $V_I = \abs{I}V_{[0,1]}$ almost surely. This will imply $V_{[0,t]} = tV_{[0,1]}$ for all rational $t$, and by continuity for all $t$. As before, we conclude that $V_{[0,1]}$ is measurable with respect to $\cF_{0+}$, and therefore deterministic.

Note that $V$ is additive, i.e.
\[ V_{[t_1,t_2]}+V_{[t_2,t_3]} = V_{[t_1,t_3]} \quad\text{for } t_1 < t_2 < t_3 . \]
Moreover, by scaling and translation-invariance of $\eta$, the random variable $V_I$ has the same law as $\abs{I}V_{[0,1]}$. Therefore the claim follows from the lemma below. (Note that we have shown in the previous subsections that $V$ has exponential moments.)
\end{proof}

\begin{lemma}
Let $X,Y,Z$ be random variables with the same law and finite second moments. If $\lambda X+ (1-\lambda)Y = Z$ for some $\lambda \neq 0,1$, then $X=Y=Z$ a.s.
\end{lemma}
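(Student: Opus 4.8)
The plan is to reduce the statement to a single second-moment computation, exploiting that $X$, $Y$, $Z$ are identically distributed and hence share all their first- and second-order statistics. First I would record that, since the three variables have the same law and finite second moment, they share a common mean $\mu = \bbE X = \bbE Y = \bbE Z$ and a common variance $\sigma^2 = \Var X = \Var Y = \Var Z < \infty$. The relation $\lambda\mu + (1-\lambda)\mu = \mu$ is automatically consistent, so the hypothesis imposes no constraint at the level of means and we may work directly with variances.

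Next I would take variances on both sides of $Z = \lambda X + (1-\lambda)Y$. Expanding the right-hand side gives
\[ \sigma^2 = \Var Z = \lambda^2\sigma^2 + (1-\lambda)^2\sigma^2 + 2\lambda(1-\lambda)\,\Cov(X,Y). \]
Using the identity $\lambda^2 + (1-\lambda)^2 = 1 - 2\lambda(1-\lambda)$ and cancelling $\sigma^2$, this rearranges to $2\lambda(1-\lambda)\bigl(\sigma^2 - \Cov(X,Y)\bigr) = 0$. Since $\lambda \neq 0$ and $\lambda \neq 1$, we have $\lambda(1-\lambda)\neq 0$, and therefore $\Cov(X,Y) = \sigma^2$.

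Finally I would conclude using the equality case of Cauchy--Schwarz in the convenient packaged form: because $\bbE X = \bbE Y$, we have $\bbE[(X-Y)^2] = \Var(X-Y) = \Var X + \Var Y - 2\Cov(X,Y) = 2\sigma^2 - 2\sigma^2 = 0$. Hence $X = Y$ almost surely, and then $Z = \lambda X + (1-\lambda)Y = X$ almost surely, which gives $X = Y = Z$ a.s.

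There is no substantive obstacle here; the proof is a routine variance identity, and the case $\sigma^2 = 0$ needs no separate treatment since $\bbE[(X-Y)^2]=0$ holds regardless. The only points requiring a moment's care are that the finiteness of second moments is precisely what licenses the variance manipulation, and that the hypothesis $\lambda \neq 0,1$ is used exactly once, namely to divide by $\lambda(1-\lambda)$.
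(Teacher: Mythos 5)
Your proof is correct and follows essentially the same second-moment computation as the paper: the paper expands $\bbE[Z^2]$ using the uncentered moments and invokes the equality case of the Cauchy--Schwarz inequality to conclude that $X$ and $Y$ are linearly dependent, whereas you center the variables and derive $\Cov(X,Y)=\sigma^2$, hence $\bbE[(X-Y)^2]=0$, directly. The two routes are interchangeable; yours has the minor advantage of bypassing the equality case of Cauchy--Schwarz and the (slightly glossed-over) final step from linear dependence to $X=Y$.
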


\begin{proof}
We have
\[ \bbE[Z^2] = \lambda^2\bbE[X^2]+(1-\lambda)^2\bbE[Y^2]+2\lambda(1-\lambda)\bbE[XY] \]
and hence (using $\bbE[X^2]=\bbE[Y^2]=\bbE[Z^2] < \infty$)
\[ \bbE[Z^2] = \bbE[XY] \le (\bbE[X^2]\bbE[Y^2])^{1/2} = \bbE[Z^2] , \]
i.e. the Cauchy-Schwarz inequality holds with equality. This means $X,Y$ are linearly dependent. The claim follows.
\end{proof}

\begin{proposition}\label{pr:upperbounds}
The assertions of
\cref{thm:main_var,thm:main_moc,thm:main_lil} hold, except that the constants $c_0,c_1$ may take their value in $[0,\infty)$ (instead of $(0,\infty)$).
\end{proposition}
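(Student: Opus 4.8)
The plan is to assemble Proposition~\ref{pr:upperbounds} from the pieces already developed in the section, since almost everything is in place.

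First, recall that by Proposition~\ref{pr:diam_tail_upper}, the two SLE variants satisfy the increment moment condition \eqref{eq:Xinc} with $\alpha = 1/d$ and $\Phi(x) \asymp \exp(c\,x^{d/(d-1)})$, i.e.\ $\beta = \frac{d}{d-1}$, in the sense of \cref{ex:expo}. (More precisely, \eqref{eq:Xinc} asks for $\bbE\Phi(\abs{X_s-X_t}/\abs{s-t}^\alpha) \le 1$ with $\Phi$ a convex self-homeomorphism normalized by $\Phi(1)=1$; one obtains this from Proposition~\ref{pr:diam_tail_upper} by choosing the constant inside $\Phi$ small enough and rescaling, which only changes $c_0,c_1$ by constants. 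Strictly speaking one should restrict attention to a bounded time interval $I$ and then apply scaling, but this is routine.) One then checks that the hypotheses of \cref{thm:moc_upper,thm:lil_upper,thm:psivar_upper} hold for this pair $(\Phi,\varphi)$ with $\varphi = \Phi$: the multiplicativity \eqref{eq:Phi_mult} holds for exponential $\Phi$ with a suitable $R$, the ratio-monotonicity of $\varphi(R^k)/\varphi(R^{k+1})$ holds since $x \mapsto x^\beta$ is convex, the summability $\sum_k \varphi(R^k)/\Phi(R^{k+n_0}) < \infty$ holds for $n_0$ large, and the extra integrability $\int_2^\infty \frac{\log y}{\Phi(y^\alpha)}\,dy < \infty$ is immediate for exponential $\Phi$.

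Next, feed this into the three general theorems. \cref{thm:moc_upper} (together with \eqref{eq:Xsup_inc} applied to each dyadic sub-interval and a Borel--Cantelli argument over $\delta \downarrow 0$) gives the upper bound in \cref{thm:main_moc}, including the stretched-exponential tail $\bbP(\sup \ge u) \le c^{-1}\exp(-c u^{d/(d-1)})$ which is exactly the moment estimate that \cref{thm:moc_upper} encodes via $\Phi$. \cref{thm:lil_upper} with $h(u) = u(\logp u)^{1+\eps}$ (or any $h$ with $\int 1/h < \infty$ whose associated $\Phi^{-1}(h(\log\log\cdot))$ matches $(\log\log)^{1-1/d}$ up to constants) gives the upper bound $\limsup \le c_0 < \infty$ in \cref{thm:main_lil}, both at a fixed $t_0$ (by stationarity of increments) and at $\infty$ (by applying the finite-time statement to the scaled curve $t \mapsto \lambda\eta(\lambda^{-d}t)$ and taking $\lambda \to \infty$, or directly by the time-inversion symmetry together with the $\cG$-triviality). \cref{thm:psivar_upper} gives the upper bound in \cref{thm:main_var} together with its tail bound. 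Finally, \cref{pr:01law,pr:01law_mv} upgrade the relevant $\limsup$'s and $\lim$'s to deterministic constants in $[0,\infty)$, and \cref{pr:01law} itself shows the fixed-$t_0$ statements in \cref{thm:main_lil} hold simultaneously (the constant does not depend on $t_0$ by stationarity). This yields all three theorems with $c_0, c_1 \in [0,\infty)$ as claimed.

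The one genuine subtlety---the ``main obstacle''---is matching the asymptotic \emph{functional form} of the bounds produced by the general machinery to the precise normalizations $t^{1/d}(\log t^{-1})^{1-1/d}$, $t^{1/d}(\log\log t^{-1})^{1-1/d}$, and $\psi(x) = x^d(\logp\logp\frac1x)^{-(d-1)}$ appearing in the theorem statements. Here \cref{ex:expo} already does the bookkeeping: with $\alpha = 1/d$, $\beta = \frac{d}{d-1}$ one gets $\tau(t) \asymp t^{1/d}(\logp\frac1t)^{1/\beta} = t^{1/d}(\logp\frac1t)^{1-1/d}$, $\sigma(t) \asymp t^{1/d}(\log\log\frac1t)^{1-1/d}$, and $\psi(x) \asymp x^d(\logp\logp\frac1x)^{-(d-1)}$, so the forms agree up to constants---which is exactly why the theorems are stated ``up to a deterministic constant $c_0$''. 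One must also be slightly careful that the $\limsup$ definitions of $S_I, V_I$ in \cref{pr:01law_mv} coincide with the quantities the general theorems control; this is a matter of observing that the moment bound \eqref{eq:Xsup_inc}, summed over dyadic scales via Borel--Cantelli, controls $\sup_{\abs{t-s}<\delta}$ for every $\delta$, and that replacing $\abs{t-s}$ by a constant multiple of it inside $\log$ or $\log\log$ is absorbed into $c_0$. Everything else is a direct citation of the results already proven above.
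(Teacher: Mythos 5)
Your proposal is correct and follows essentially the same route as the paper: the paper's proof likewise combines \cref{pr:diam_tail_upper} (verifying \eqref{eq:Xinc} with $\Phi$ as in \cref{ex:expo}), the general \cref{thm:moc_upper,thm:lil_upper,thm:psivar_upper}, and the zero-one laws \cref{pr:01law,pr:01law_mv}. The additional bookkeeping you supply (checking the hypotheses on $\Phi,\varphi$, matching the functional forms, and the scaling/stationarity reductions) is exactly what the paper leaves implicit.
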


\begin{proof}
By \cref{pr:diam_tail_upper}, the condition \eqref{eq:Xinc} is satisfied with a function $\Phi$ as given in Example \ref{ex:expo} with $\alpha=1/d$ and $\beta=\frac{d}{d-1}$. \cref{thm:psivar_upper} then yields the last assertion of \cref{thm:main_var}, and \cref{pr:01law_mv}\ref{it:var_det} shows that \eqref{eq:main_var} holds for some deterministic $c_0\in[0,\infty)$. Similarly, \cref{thm:moc_upper} yields the second assertion of \cref{thm:main_moc}, and \cref{pr:01law_mv}\ref{it:moc_det} shows that \eqref{eq:main-moc} holds for some deterministic constant $c_1\in[0,\infty)$. Finally, the statement of \cref{thm:main_lil} for $t\to 0$ (with $c_2 \ge 0$) follows from \cref{thm:lil_upper} and \cref{pr:01law}. The statement of \cref{thm:main_lil} for $t\to\infty$ (with $c_3 \ge 0$) follows from the same argument by setting $\wt{A}_n = \{ \norm{\eta}_{\infty;[0,\wt{q}^n]} \le \wt{c}\wt{q}^{n\alpha} \Phi^{-1}(n^2) \}$ with a fixed constant $\wt{q} > 1$. Then \eqref{eq:Xinc} implies $\bbP(\wt A_n^c) \lesssim n^{-2}$, and the result follows by the exact same argument as in \cref{thm:lil_upper} together with the triviality of the $\sigma$-algebra $\mathcal G$ in \cref{pr:01law}.
\end{proof}

\subsection{Proof of \cref{thm:ball_filling}}
\label{se:ball_filling}

In essence, we follow the proof of \cref{thm:moc_upper} using the stronger input given in \cref{rm:ball_filling}. By stationarity, it suffices to prove the result on the interval $I=[0,1]$.

Denote by $A_{s,r}^{\ell,k}$ the event that $\eta[s,\tau_{s,r}]$ contains $k$ disjoint balls of radius $\ell$ where $\tau_{s,r} = \inf\{ t>s \mid \abs{\eta(t)-\eta(s)} \ge r\}$. For $n \in \bbN$, by \cref{rm:ball_filling},
\[ \bbP\left( \left(A_{s,\, u 2^{-n/2} n^{1/2}}^{u^{-1} 2^{-n/2} n^{-1/2},\, \delta u^2 n}\right)^c \right) \lesssim \exp(-c_2 u^2 n) . \]
Summing over $s = j\pi\delta 2^{-n}$, $j=0,...,(\pi\delta)^{-1} 2^n-1$, yields
\[ \bbP\left( \bigcup_{s = j\pi\delta 2^{-n}} \left(A_{s,\, u 2^{-n/2} n^{1/2}}^{u^{-1} 2^{-n/2} n^{-1/2},\, \delta u^2 n}\right)^c \right) \lesssim 2^n \exp(-c_2 u^2 n) \lesssim \exp(-\tilde c_2 u^2 n) \]
for sufficiently large $u$. Summing over $n \ge n_0$ yields
\[ \bbP\left( \bigcup_{n \ge n_0} \bigcup_{s = j\pi\delta 2^{-n}} \left(A_{s,\, u 2^{-n/2} n^{1/2}}^{u^{-1} 2^{-n/2} n^{-1/2},\, \delta u^2 n}\right)^c \right) \lesssim \exp(-\tilde c_2 u^2 n_0) \]
for sufficiently large $u$.

Let $r \in {(0,1)}$, and pick $n_0 \in \bbN$ such that $r \asymp 2^{-n_0/2} n_0^{1/2}$. Then the estimate above reads
\[ \bbP( \bigcup \dots ) \lesssim r^{\tilde c_2 u^2} . \]
We claim that
\[ \bigcap_{n \ge n_0} \bigcap_{s = j\pi\delta 2^{-n}} A_{s,\, u 2^{-n/2} n^{1/2}}^{u^{-1} 2^{-n/2} n^{-1/2},\, \delta u^2 n} \subseteq E_{r,u,[0,1]} . \]
Suppose $\abs{\eta(t)-\eta(s)} \le ur$. Find $n \ge n_0$ such that $\frac{\abs{\eta(t)-\eta(s)}}{u 2^{-n/2} n^{1/2}} \in [4,8]$.
Note that on the event $A_{s,\, u 2^{-n/2} n^{1/2}}^{u^{-1} 2^{-n/2} n^{-1/2},\, \delta u^2 n}$, we have $\tau_{s,u 2^{-n/2}n^{1/2}} > \pi\delta 2^{-n}$ and hence $\diam(\eta[s,s+\pi\delta 2^{-n}]) \le 2u 2^{-n/2}n^{1/2}$ since $\eta$ is parametrised by area. Therefore, by our choice of $n$ we must have $s \le j\pi\delta 2^{-n} < (j+1)\pi\delta 2^{-n} \le t$ for some $j$. In particular, $\eta[s,t] \supseteq \eta[j\pi\delta 2^{-n} , (j+1)\pi\delta 2^{-n}]$ contains $\delta u^2 n \asymp \delta u^2 \log(u\abs{\eta(t)-\eta(s)}^{-1})$ disjoint balls of radius $u^{-1} 2^{-n/2} n^{-1/2} \asymp u^{-2}\abs{\eta(t)-\eta(s)} /\log(u\abs{\eta(t)-\eta(s)}^{-1})$.

\section{Lower bounds for Markov processes}
\label{se:markov}

We prove lower bounds on the regularity (corresponding to the lower bounds in \cref{thm:main_lil,thm:main_moc,thm:main_var}) for Markov processes satisfying a uniform ellipticity condition. The arguments are elementary but they illustrate well the general idea on how to obtain lower bounds, and we have not seen them written out in earlier literature, except that (even functional versions of) laws of the iterated logarithms and rates of escape of Markov processes have been proved in \cite{bk-markov-lil}. Our arguments on SLE follow the same idea, but will be more technical since SLE is not exactly a Markov process, and we need to work with its domain Markov property.

In the following, let $X =(X_t)_{t\geq 0}$ be a Markov process on a metric space $(E,d)$, and let $\bbP^x$ denote the law of the Markov process started at $X_0=x$. In particular, we assume the Markov property $\bbP^x(X_{t+s} \in A \mid \cF_t) = \bbP^{X_t}(X_s \in A)$ for every $x$. We suppose the following uniform bounds on the transition probabilities: There exist constants $d_{\mathrm{w}} > 1$, $T>0$, $r_0 > 0$ such that
\begin{equation}\label{eq:markov_tail_upper}
    \bbP^x(d(X_t,x) > r) \le c_1 \exp\left(-c_2 \left(\frac{r}{t^{1/d_{\mathrm{w}}}}\right)^{d_{\mathrm{w}}/(d_{\mathrm{w}}-1)} \right)
\end{equation}
for all $r>0$, $0<t\le T$, and
\begin{equation}\label{eq:markov_tail_lower}
    \bbP^x(d(X_t,x) > r) \ge c_3 \exp\left(-c_4 \left(\frac{r}{t^{1/d_{\mathrm{w}}}}\right)^{d_{\mathrm{w}}/(d_{\mathrm{w}}-1)} \right)
\end{equation}
for $r \le r_0$, $0<t<r^{d_{\mathrm{w}}}$. The exponent $d_{\mathrm{w}}$ is usually called the walk dimension.
For instance, this is satisfied for diffusions on $\bbR^d$ with uniformly elliptic generator (for which $d_{\mathrm{w}}=2$). Other typical examples are Brownian motions on fractals (cf.\@ \cite{bk-markov-lil} and references therein) or Liouville Brownian motion (cf.\@ \cite{akm-lbm83}).

For these Markov processes, the analogues of \cref{thm:main_lil,thm:main_moc,thm:main_var} hold with $d = d_{\mathrm{w}}$, only that we do not prove 0-1 laws for the limits but only deterministic upper and lower bounds for them (but see e.g.\@ \cite{bk-markov-lil} for a type of 0-1 law). We only need to prove the lower bounds since the matching upper bounds follow already from the results in \cref{sec:variation-upper-general}. The upper bounds hold for general stochastic processes and the Markov property is not needed.

\begin{proposition}
Under assumption \eqref{eq:markov_tail_lower} there exist positive deterministic constants $a_1,a_2,a_3 > 0$ such that the following is true.
\begin{enumerate}[label=(\roman*)]
    \item\label{it:markov_var} 
    Variation: For any bounded interval $I \subseteq \bbR^+$, almost surely
    \[ \inf_{\delta > 0} \sup_{\abs{t_{i+1}-t_i}<\delta} \sum_i \psi(d(X_{t_{i+1}},X_{t_i})) \ge a_3\abs{I} \]
    with $\psi(x) = x^{d_{\mathrm{w}}}(\logp\logp\frac{1}{x})^{-(d_{\mathrm{w}}-1)}$, and the supremum is taken over finite sequences $t_0 < ... < t_r$ with $t_i \in I$ and $\abs{t_{i+1}-t_i}<\delta$.
    \item\label{it:markov_moc} 
    Modulus of continuity:
    For any non-trivial interval $I \subseteq \bbR^+$, almost surely
    \[ \inf_{\delta > 0} \sup_{s,t \in I, \abs{t-s}<\delta} \frac{d(X_t,X_s)}{\abs{t-s}^{1/d_{\mathrm{w}}}(\log \abs{t-s}^{-1})^{1-1/d_{\mathrm{w}}}} \ge a_2 . \]
    \item\label{it:markov_lil} 
    Law of the iterated logarithm: 
    For any $t \ge 0$, almost surely
    \[ \limsup_{t \downarrow 0} \frac{d(X_{t_0+t},X_{t_0})}{t^{1/d_{\mathrm{w}}}(\log\log t^{-1})^{1-1/d_{\mathrm{w}}}} \ge a_1 . \]
\end{enumerate}
\end{proposition}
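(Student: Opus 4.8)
All three lower bounds will be deduced from the lower transition estimate \eqref{eq:markov_tail_lower} via the Markov property together with a (conditional) Borel--Cantelli argument. For a constant $c>0$ put $\theta:=c_4\,c^{\,d_{\mathrm{w}}/(d_{\mathrm{w}}-1)}$. Then \eqref{eq:markov_tail_lower} and the Markov property give, for all sufficiently small $\ell>0$ and every starting point $y$,
\[
\bbP^y\Big(d(X_\ell,X_0)\ge c\,\ell^{1/d_{\mathrm{w}}}(\log\log\ell^{-1})^{1-1/d_{\mathrm{w}}}\Big)\ \ge\ c_3\,(\log\ell^{-1})^{-\theta},
\]
and replacing $(\log\log\ell^{-1})^{1-1/d_{\mathrm{w}}}$ by $(\log\ell^{-1})^{1-1/d_{\mathrm{w}}}$ gives a lower bound $c_3\,\ell^{\theta}$. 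Crucially, ``large increment'' events for $X$ over pairwise disjoint time intervals chain into a product bound: one conditions successively on the $\sigma$-algebras generated by the earlier intervals, each of which sits inside the $\sigma$-algebra at the start of the next increment (or, when the intervals accumulate at a point, inside the $\sigma$-algebra at the \emph{end} of the next increment), and then applies the Markov property and the uniform one-step estimate above. By the Markov property it suffices to argue under $\bbP^x$ for a fixed arbitrary $x$ (taking $t_0=0$ in \ref{it:markov_lil}), and since all quantities are monotone in $I$ it suffices to treat intervals $I=[\alpha,\beta]$ with rational endpoints and then intersect over countably many of them.

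\textbf{Law of the iterated logarithm and modulus of continuity.}
For \ref{it:markov_lil} I would fix $q\in(0,1)$, set $t_n=q^n$, and consider $E_n=\{\,d(X_{t_n},X_{t_{n+1}})\ge a\,(t_n-t_{n+1})^{1/d_{\mathrm{w}}}(\log\log(t_n-t_{n+1})^{-1})^{1-1/d_{\mathrm{w}}}\,\}$ with $a$ small enough that $c_4\,a^{\,d_{\mathrm{w}}/(d_{\mathrm{w}}-1)}<1$. The one-step bound gives $\bbP^x(E_n\mid\cF_{t_{n+1}})\ge c_3(\log(t_n-t_{n+1})^{-1})^{-\theta}\asymp n^{-\theta}$ with $\theta=c_4\,a^{\,d_{\mathrm{w}}/(d_{\mathrm{w}}-1)}$, and since $\sigma(E_m:m>n)\subseteq\cF_{t_{n+1}}$ the chaining argument yields $\bbP^x(\bigcap_{n\ge N}E_n^c)\le\prod_{n\ge N}(1-c_3 n^{-\theta}(1+o(1)))=0$, so $E_n$ occurs for infinitely many $n$ a.s.; on $E_n$ the triangle inequality forces $\max\{d(X_{t_n},x),d(X_{t_{n+1}},x)\}$ to be of LIL order at a time $\asymp t_n$, giving \ref{it:markov_lil} with $a_1$ a fixed multiple of $a$. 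For \ref{it:markov_moc}, with $\delta_k=2^{-k}$ I would tile $[\alpha,\beta]$ into $\asymp 2^k$ subintervals of length $\delta_k$; by the one-step bound each has conditional probability $\ge c_3\delta_k^{\theta}$ of an increment exceeding $a_2\,\delta_k^{1/d_{\mathrm{w}}}(\log\delta_k^{-1})^{1-1/d_{\mathrm{w}}}$ (now $\theta=c_4\,a_2^{\,d_{\mathrm{w}}/(d_{\mathrm{w}}-1)}<1$), and chaining over the subintervals the probability that none does so is $\le(1-c_3\delta_k^{\theta})^{\asymp 2^k}\le\exp(-c\,2^{(1-\theta)k})$, which is summable; Borel--Cantelli then gives that a.s.\ for all large $k$ some length-$\delta_k$ subinterval realizes ratio $\ge a_2$, hence \ref{it:markov_moc}.

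\textbf{Variation.}
This is the delicate part: a single-scale (or finitely-many-scale) argument only produces $\sum_i\psi(d(X_{t_{i+1}},X_{t_i}))$ of order $|I|(\log\delta^{-1})^{-\theta}\to0$, so one must exploit infinitely many scales without ever ``wasting'' time. For fixed small $\delta>0$ I would take scales $\ell_m=\delta 2^{-m}$ and $c$ with $c_4\,c^{\,d_{\mathrm{w}}/(d_{\mathrm{w}}-1)}\le1$ (so that $\sum_m c_3(\log\ell_m^{-1})^{-\theta}=\infty$), and build the partition by a stopping-time scheme: from $\sigma_j$ (with $\sigma_0=\alpha$) let $k_j$ be the least $m\ge1$ with $d(X_{\sigma_j+\ell_m},X_{\sigma_j+\ell_{m+1}})\ge c\,(\ell_m/2)^{1/d_{\mathrm{w}}}(\log\log(\ell_m/2)^{-1})^{1-1/d_{\mathrm{w}}}$, then insert the partition points $\sigma_j+\ell_{k_j+1}$ and $\sigma_{j+1}:=\sigma_j+\ell_{k_j}$, and repeat. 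The candidate sub-intervals $[\sigma_j+\ell_{m+1},\sigma_j+\ell_m]$, $m\ge1$, are pairwise disjoint, so the chaining argument and $\sum_m c_3(\log\ell_m^{-1})^{-\theta}=\infty$ give $k_j<\infty$ a.s., while $k_j=1$ occurs with conditional probability bounded below (so that $\sigma_{j+1}-\sigma_j=\delta/2$), whence a conditional Borel--Cantelli argument shows $\sigma_j\to\infty$ a.s.\ and the scheme exhausts $[\alpha,\beta]$ in finitely many steps. The design makes time advance only at a ``hit'', and at the $j$-th hit the good sub-interval has length $\ell_{k_j}/2$ with $\psi$ of its increment $\ge(1-\varepsilon_\delta)c^{d_{\mathrm{w}}}\ell_{k_j}/2$, a fixed fraction of the time $\ell_{k_j}$ consumed there (here $\varepsilon_\delta\to0$); summing over steps, the partition (of mesh $<\delta$) achieves $\sum_i\psi(d(X_{t_{i+1}},X_{t_i}))\ge(1-\varepsilon_\delta)\tfrac12 c^{d_{\mathrm{w}}}(|I|-\delta)$, and letting $\delta\downarrow0$ yields \ref{it:markov_var} with $a_3=\tfrac12 c^{d_{\mathrm{w}}}$.

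\textbf{Main obstacle.}
The genuinely new difficulty is this last construction. To get a non-vanishing bound one is forced to a scheme that, at each position, effectively scans all scales $\ell_1,\ell_2,\dots$ and advances only when the increment is a $\log\log$-factor above the natural scale, so that every unit of consumed time carries a fixed amount of $\psi$; making the Borel--Cantelli chaining clean is precisely why the scanned candidate sub-intervals must be taken pairwise disjoint rather than the nested intervals $[\sigma_j,\sigma_j+\ell_m]$ (for which the Markov property gives no product bound), and one must then verify that the scheme terminates, i.e.\ that $\sigma_j\to\infty$. By contrast, once the one-step estimate and the chaining lemma are in place, parts \ref{it:markov_lil} and \ref{it:markov_moc} are comparatively routine.
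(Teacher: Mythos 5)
Your arguments for \ref{it:markov_moc} and \ref{it:markov_lil} are correct and essentially the paper's: disjoint increments at a single scale (resp.\ along a geometric sequence of times), the uniform one-step lower bound from \eqref{eq:markov_tail_lower} with the constant tuned so that the conditional probabilities are $\asymp\varepsilon^{1/2}$ (resp.\ $\asymp n^{-\theta}$ with $\theta\le1$), and a conditional Borel--Cantelli/chaining step via the Markov property.

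For \ref{it:markov_var} you depart from the paper, and your construction has a genuine gap. The paper does not prove \ref{it:markov_var} directly: it deduces it from \ref{it:markov_lil} by a soft Vitali-covering argument (\cref{pr:var_lil}, following Taylor's argument for Brownian motion) --- by Fubini the lower LIL holds at Lebesgue-a.e.\ time, the intervals $[t,t+s]$ with $d(X_{t+s},X_t)>\sigma(\abs{s})$ form a Vitali cover of a full-measure set, and extracting a disjoint subfamily covering measure at least $1-\varepsilon$ gives the variation lower bound. Your multi-scale stopping scheme, by contrast, is not compatible with the Markov property: the event $\{k_j=m\}$ is determined by the values $X_{\sigma_j+\ell_{m'}}$ for $m'\le m+1$, and the times $\sigma_j+\ell_{m'}$ with $m'<m$ lie strictly \emph{after} $\sigma_{j+1}=\sigma_j+\ell_m$ (indeed $\sigma_j+\ell_{m-1}=\sigma_{j+1}+\ell_m$ coincides with one of the very points scanned at step $j+1$). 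So $\sigma_{j+1}$ is not a stopping time, and conditioning on the outcome of step $j$ constrains the path on $[\sigma_{j+1},\sigma_j+\delta/2]$, which contains almost all of the candidate sub-intervals of step $j+1$. The bound \eqref{eq:markov_tail_lower} applies to the unconditioned transition kernel, not to the resulting bridge-type conditional laws, so neither the lower bound on $\bbP(k_{j+1}<\infty)$ nor the termination argument $\sigma_j\to\infty$ goes through as written. (You correctly note that one cannot instead advance by the full $\delta/2$ at each step, since then the $\psi$-mass collected per unit of consumed time vanishes; this tension is exactly what the Vitali route avoids, and I recommend replacing your construction by it.)
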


\begin{proof}
\ref{it:markov_moc}: By the Markov property, there is no loss of generality assuming $I = [0,1]$. For $\varepsilon > 0$ and $k = 1,...,\lfloor\varepsilon^{-1}\rfloor$, we define the event
\[ A_{\varepsilon,k} = \{ d(X_{k\varepsilon},X_{(k-1)\varepsilon}) \ge a_0\varepsilon^{1/d_{\mathrm{w}}}(\log\varepsilon^{-1})^{1-1/d_{\mathrm{w}}} \} \in \cF_{k\varepsilon} \]
with $a_0 > 0$ a constant whose value will be decided upon later. By \eqref{eq:markov_tail_lower} and the Markov property, we have
\[ \bbP( A_{\varepsilon,k} \mid \cF_{(k-1)\varepsilon} ) \ge c_3 \exp\left( -c_4 a_0^{d_{\mathrm{w}}/(d_{\mathrm{w}}-1)} (\log\varepsilon^{-1}) \right) = c_3\varepsilon^{1/2} \]
for a suitable choice of $a_0$. Applying this estimate iteratively yields
\[ \bbP( A_{\varepsilon,1}^c \cap ... \cap A_{\varepsilon,\varepsilon^{-1}}^c ) \le \left( 1-c_3\varepsilon^{1/2} \right)^{\varepsilon^{-1}} \le \exp\left( -c_3\varepsilon^{-1/2} \right) \to 0 \quad \text{as }\varepsilon \downarrow 0 . \]
This shows that for any fixed $\delta > 0$, the event
\[ \sup_{s,t \in I, \abs{t-s}<\delta} \frac{d(X_t,X_s)}{\abs{t-s}^{1/d_{\mathrm{w}}}(\log \abs{t-s}^{-1})^{1-1/d_{\mathrm{w}}}} \ge a_0 \]
must occur with probability $1$. The claim follows.

\ref{it:markov_lil}: By the Markov property, there is no loss of generality assuming $t_0 = 0$. Define a sequence of events
\[ A_k = \{ d(X_{e^{-k}},X_{e^{-(k+1)}}) \ge a_0 e^{-k/d_{\mathrm{w}}}(\log k)^{1-1/d_{\mathrm{w}}} \} \in \cF_{e^{-k}} \]
with a constant $a_0 > 0$ whose value will be decided upon later. We show that almost surely $A_k$ occur infinitely often. This implies the claim since on the event $A_k$ we have
\[ d(X_{e^{-k}},X_0)+d(X_{e^{-(k+1)}},X_0) \ge a_0 e^{-k/d_{\mathrm{w}}}(\log k)^{1-1/d_{\mathrm{w}}} \]
and hence for either $t=e^{-k}$ or $t=e^{-(k+1)}$ we have
\[ d(X_t,X_0) \ge \frac{a_0}{2} t^{1/d_{\mathrm{w}}}(\log\log t^{-1})^{1-1/d_{\mathrm{w}}} . \]

By \eqref{eq:markov_tail_lower} and the Markov property, we have
\[ \bbP( A_k \mid \cF_{e^{-(k+1)}} ) \ge c_3 \exp\left( -c_4 a_0^{d_{\mathrm{w}}/(d_{\mathrm{w}}-1)}(1-e^{-1})^{1/(d_{\mathrm{w}}-1)} (\log k) \right) = c_3 k^{-1} \]
for a suitable choice of $a_0$. Applying this estimate iteratively yields
\[ \bbP( A_{k}^c \cap ... \cap A_{k'}^c ) \le (1-c_3 k^{-1})\dots(1-c_3 (k')^{-1}) \le \exp(-c_3(k^{-1}+...+(k')^{-1})) \to 0 \quad \text{as }k' \to \infty \]
and hence
\[ \bbP\left( \bigcup_{k' > k} A_{k'} \right) = 1 . \]
Since this holds for any $k$, the claim follows.

\ref{it:markov_var}: This follows from \ref{it:markov_lil} by a general result which we will state as \cref{pr:var_lil} in the next section.
\end{proof}

\section{Lower bounds for SLE}
\label{sec:sle-lower-bounds}

In this section we conclude the lower bounds in our main results (\cref{thm:main_lil,thm:main_moc,thm:main_var}). We begin in \cref{sec:variation-lower-general} by reviewing a general argument saying that the lower bound for $\psi$-variation follows from the lower bound in the law of the iterated logarithm. In  \cref{sec:diam-lower-nonspf,sec:diam-lower-spf}, which constitute the main part of this section, we prove the lower bound in \cref{eq:mink-tail} along with some conditional variants of this estimate. Finally, we use these in \cref{se:lowerbounds_pf} to conclude the lower bounds for the modulus of continuity and the law of the iterated logarithm.

\subsection{Law of the iterated logarithm implies variation lower bound}
\label{sec:variation-lower-general}

We review an argument for general processes that says that a ``lower'' law of the iterated logarithm implies a lower bound on the variation regularity. We follow \cite[Section 13.9]{fv-rp-book} where the argument is spelled out for Brownian motion (implying Taylor's variation \cite{tay-bm-variation}).

\begin{proposition}\label{pr:var_lil}
Let $(X_t)_{t \in [0,1]}$ be a separable process such that for every fixed $t \in(0,1)$ we almost surely have
\begin{equation}\label{eq:lil_lower}
\limsup_{s \to 0}\frac{\abs{X_{t+s}-X_t}}{\sigma(\abs{s})} > 1
\end{equation}
where $\sigma$ is a (deterministic) self-homeomorphism of $[0,\infty)$.
Then, almost surely, for any $\varepsilon > 0$ there exist disjoint intervals $[t_i,u_i]$ of length at most $\varepsilon$ such that 
\[ \sum_i \sigma^{-1}(\abs{X_{t_i}-X_{u_i}}) > 1-\varepsilon . \]
\end{proposition}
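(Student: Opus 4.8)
The plan is to reduce the statement to a deterministic one-dimensional covering problem. Fix a realisation and consider the set of ``good times'' $G=\{t\in(0,1):\eqref{eq:lil_lower}\text{ holds}\}$. First I would show that, almost surely, $G$ has full Lebesgue measure in $[0,1]$: let
\[ N=\Bigl\{(t,\omega):\limsup_{s\to0}\frac{\abs{X_{t+s}(\omega)-X_t(\omega)}}{\sigma(\abs{s})}\le1\Bigr\}, \]
which is jointly measurable because, by separability of $X$, the $\limsup$ may be evaluated along a fixed countable dense set of increments $s$ (on the almost sure separability event). The hypothesis gives $\bbP(N_t)=0$ for each fixed $t\in(0,1)$, so by Fubini $\bbE[\operatorname{Leb}\{t:(t,\omega)\in N\}]=\int_0^1\bbP(N_t)\,dt=0$, and hence $\operatorname{Leb}(G)=1$ for almost every $\omega$. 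From now on work on this full-measure event and write $f=X_\cdot(\omega)$.

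Next, fix $\varepsilon>0$ and let $\mathcal V$ be the family of closed intervals $[a,b]\subseteq[0,1]$ with $0<b-a\le\varepsilon$ and $\abs{f(a)-f(b)}>\sigma(b-a)$. I claim $\mathcal V$ is a Vitali cover of $G$: for $t\in G$ there is a sequence $s_j\to0$ with $\abs{f(t+s_j)-f(t)}>\sigma(\abs{s_j})$; infinitely many of the $s_j$ share a common sign, and the corresponding intervals $[t,t+s_j]$ or $[t+s_j,t]$ lie in $\mathcal V$, contain $t$, are contained in $[0,1]$ (since $0<t<1$) and have length tending to $0$. By the Vitali covering theorem for Lebesgue measure on $\bbR$, there is a countable pairwise disjoint subfamily $\{[t_i,u_i]\}\subseteq\mathcal V$ with $\operatorname{Leb}(G\setminus\bigcup_i[t_i,u_i])=0$; since $\operatorname{Leb}(G)=1$ and the intervals are disjoint, $\sum_i(u_i-t_i)=1$.

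Finally, since $\sigma$ is an increasing homeomorphism of $[0,\infty)$, each $[t_i,u_i]\in\mathcal V$ satisfies $\sigma^{-1}(\abs{X_{t_i}-X_{u_i}})>u_i-t_i$, whence
\[ \sum_i\sigma^{-1}\bigl(\abs{X_{t_i}-X_{u_i}}\bigr)\ge\sum_i(u_i-t_i)=1>1-\varepsilon; \]
truncating to a finite initial segment of this (possibly infinite) sum preserves the inequality, and all intervals have length at most $\varepsilon$, which proves the proposition — and since the construction works for every $\varepsilon>0$ on the same full-measure event, the ``almost surely'' is uniform in $\varepsilon$. The one delicate point is the joint measurability of $N$ needed for the Fubini step, for which separability of $X$ is exactly what is required; the passage from a two-sided $\limsup$ to one-sided intervals is harmless, since we only ever use intervals having the good time $t$ as one endpoint.
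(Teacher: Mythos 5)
Your proof is correct and takes essentially the same route as the paper's: a Fubini argument showing that the set of times where \eqref{eq:lil_lower} holds has full Lebesgue measure, followed by a Vitali covering argument producing the disjoint intervals and the lower bound $\sum_i\sigma^{-1}(\abs{X_{t_i}-X_{u_i}})\ge\sum_i(u_i-t_i)>1-\varepsilon$. The only difference is that you spell out the measurability and one-sided-increment details that the paper leaves implicit.
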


The proposition proves in particular that $X$ cannot have better $\psi$-variation regularity than $\psi = \sigma^{-1}$, i.e., $X$ has infinite $\wt\psi$-variation if $\wt\psi(x)/\psi(x)\rta \infty$ as $x\downarrow 0$.

\begin{proof}
Let $E$ be the set of $t \in [0,1]$ where \eqref{eq:lil_lower} holds. By Fubini's theorem, we almost surely have $\abs{E} = 1$. By definition, for each $t \in E$, there exist arbitrarily small $s$ such that $\abs{X_{t+s}-X_t} > \sigma(\abs{s})$. The collection of all such intervals $[t,t+s]$ form a Vitali cover of $E$ in the sense of \cite[Lemma 13.68]{fv-rp-book}. In particular, there exist disjoint invervals $[t_i,u_i]$ with $\abs{E \setminus \bigcup_i [t_i,u_i]} < \varepsilon$ and $\abs{X_{t_i}-X_{u_i}} > \sigma(\abs{t_i-u_i})$, so
\[ \sum_i \sigma^{-1}(\abs{X_{t_i}-X_{u_i}}) > \sum_i \abs{t_i-u_i} >1-\varepsilon . \]
By picking in the Vitali cover only intervals of length at most $\varepsilon$, we get intervals $[t_i,u_i]$ of length at most $\varepsilon$.
\end{proof}

\subsection{Diameter lower bound for non-space-filling SLE}
\label{sec:diam-lower-nonspf}

In this section we prove the matching lower bound to the result in \cref{pr:diam_tail_upper}. Our main result is the following proposition, together with a ``conditional'' variant of it, see \cref{pr:cont_ltail_cond} below. As before we let $\tau_r=\inf\{t\geq 0\,:\, |\eta(t)|=r \}$ denote the hitting time of radius $r$.

\begin{proposition}\label{pr:cont_tail_lower}
Let $\eta$ be a whole-plane \slek{}$(2)$ from $0$ to $\infty$, $\kappa \le 8$. For some $\wt c_2 > 0$ we have
\[ \bbP( \Cont(\eta[0,\tau_r]) < \ell ) \ge \exp(-\wt c_2 \ell^{-1/(d-1)} r^{d/(d-1)}) \]
for any $r,\ell>0$ with $\ell \le r^d$.
\end{proposition}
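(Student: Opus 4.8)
The plan is to build an explicit ``thin corridor'' event on which $\eta$ escapes to radius $r$ while accumulating only a small amount of Minkowski content, and to bound its probability by chaining together local crossing estimates via the domain Markov property. First I would normalise. Since $\eta$ (parametrised by Minkowski content) is self-similar of index $1/d$ and $\Cont$ is homogeneous of degree $d$, we have $\Cont(\eta[0,\tau_r]) \eqD r^d\,\Cont(\eta[0,\tau_1])$, so with $\varepsilon := \ell r^{-d} \in {(0,1]}$ it suffices to prove $\bbP(\Cont(\eta[0,\tau_1]) < \varepsilon) \ge \exp(-\wt c_2\,\varepsilon^{-1/(d-1)})$; when $\varepsilon$ is bounded below this is immediate, so we may assume $\varepsilon$ small. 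Set $w := (\varepsilon/C_0)^{1/(d-1)} \in {(0,1)}$ for a large constant $C_0$ to be fixed, so that $w^{d-1} = \varepsilon/C_0$ and $1/w \asymp \varepsilon^{-1/(d-1)}$.

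The key input is a \emph{uniform crossing estimate}: there are constants $q,C_1 \in (0,\infty)$ such that for every $(D,a)$ as in \eqref{eq:Da} with $\abs a \ge 1$ and every $u \in \partial D \setminus \{a\}$, under $\nu_{D;a\to\infty;u}$ one has, with probability at least $q$, that $\eta[0,\tau_{\abs a + 1}] \subseteq B(a,6)$ \emph{and} $\Cont(\eta[0,\tau_{\abs a+1}]) \le C_1$. This is a mild strengthening of \cref{le:sle_markov_inc}: by \cref{le:ghm} there is a positive‑probability event, uniform over the configuration because $\alpha$ may be taken among finitely many lattice paths, on which $f\circ\eta$ stays in the $\varepsilon_0$‑tube of $\alpha$, i.e.\ $\eta$ follows the preimage of $\alpha$, which lies in $B(a,6)$ and runs from $a$ outward past radius $\abs a+1$; on that event $\Cont(\eta[0,\tau_{\abs a+1}])$ is the Minkowski content of a bounded segment of an SLE in a bounded domain and hence, by absolute continuity and conformal invariance (\cref{le:sle_abs_cont}, \eqref{eq:cont_transf}) together with the standard finiteness of the expected Minkowski content of bounded SLE segments, exceeds $C_1$ with probability at most $q/2$ once $C_1$ is large. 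Rescaling by $w$ gives the same statement with $\tau_{\abs a+1}$, $B(a,6)$, $C_1$ replaced by $\tau_{\abs a+w}$, $B(a,6w)$, $C_1 w^d$.

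Given this, I would conclude as follows. Let $E_0 := \{\Cont(\eta[0,\tau_w]) \le \varepsilon/2\}$; since $\bbE[\Cont(\eta[0,\tau_w])] = w^d\,\bbE[\Cont(\eta[0,\tau_1])] \lesssim w^d = (\varepsilon/C_0)^{d/(d-1)}$, Markov's inequality gives $\bbP(E_0) \ge 1/2$ for $C_0$ large. On $E_0$, $\fill(\eta[0,\tau_w]) \subseteq \overline{B(0,w)}$ with $\abs{\eta(\tau_w)} = w$, so $(D_0,a_0) := (\hatC \setminus \fill(\eta[0,\tau_w]),\,\eta(\tau_w))$ is of the form \eqref{eq:Da}, and by the domain Markov property the future curve is $\nu_{D_0;a_0\to\infty;u}$ for the appropriate boundary force point $u$. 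Applying the rescaled crossing estimate repeatedly — at step $i$ conditionally on the curve up to $T_{i-1} := \tau_{iw}$, which again leaves a hull inside $\overline{B(0,iw)}$ with current tip a farthest point, hence a configuration of the form \eqref{eq:Da} with rescaled radius $i \ge 1$ — produces stopping times $\tau_w = T_0 < \dots < T_N$ with $T_i = \tau_{(i+1)w}$ and $N \asymp 1/w$, so that $\tau_1 \le T_N$ and, on each step, the crossing event holds. Hence, by additivity of $\Cont$ over curve segments,
\[ \Cont(\eta[\tau_w,\tau_1]) \le \sum_{i=1}^N \Cont(\eta[T_{i-1},T_i]) \le N\,C_1 w^d \lesssim C_1 w^{d-1} = C_1\varepsilon/C_0 \le \varepsilon/2 \]
once $C_0 \ge 2C_1$, so on $E_0$ intersected with the $N$ crossing events we get $\Cont(\eta[0,\tau_1]) \le \varepsilon$. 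Each crossing event has conditional probability at least $q$ uniformly over the past, so this intersection has probability at least $\tfrac12 q^N \ge \exp(-\wt c_2\,\varepsilon^{-1/(d-1)})$ for a suitable $\wt c_2$, which is the claim.

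The main obstacle is the uniform \emph{upper} bound $\Cont(\eta[0,\tau_{\abs a+1}]) \le C_1$ in the crossing estimate: the available results control the \emph{lower} tail of $\Cont$ and the upper tail of the diameter in the natural parametrisation, but not directly the upper tail of $\Cont$ of a segment, and one must ensure the prescribed crossing does not inflate the content even when the past hull is irregular near $a$ and the force point $u$ (and the force point at $0$) lie nearby. I would circumvent this by working on the explicit ``follow $\alpha$'' event of \cref{le:ghm} rather than conditioning on the bare crossing, and by using \cref{le:sle_abs_cont} and \eqref{eq:cont_transf} inside a bounded region at positive distance from the force points to compare with a standard SLE whose bounded segments have finite expected content; the force point at $0$ is harmless once $\abs a \ge 1$, since then it is at definite distance from $B(a,6)$. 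The bookkeeping that the post‑crossing configuration is again of the form \eqref{eq:Da} and that $\tau_1 \le T_N$ is routine. Run with a generic nice tip rather than from $0$, the same scheme yields the conditional variant \cref{pr:cont_ltail_cond}, whose lower‑tail form needs the tip to be nice precisely so that the crossing estimate applies at the first step.
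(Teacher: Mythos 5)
Your overall skeleton (reduce by scaling to unit radius, then chain $N \asymp \varepsilon^{-1/(d-1)}$ radial crossings each costing a uniformly positive probability and a uniformly bounded amount of content, giving $q^N$) is exactly the architecture of the paper's proof. But the proof hinges entirely on your claimed ``uniform crossing estimate'': that for \emph{every} $(D,a,u)$ as in \eqref{eq:Da} one has $\nu_{D;a\to\infty;u}\bigl(\Cont(\eta[0,\tau_{\abs{a}+1}]) \le C_1\bigr) \ge q$ with $q,C_1$ independent of $D$. This is precisely the step that does not hold uniformly, and it is the reason the paper introduces the $(p_{\mathrm{N}},r_{\mathrm{N}},c_{\mathrm{N}})$-niceness machinery. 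The difficulty is not the force points (which, as you say, are handled by \cref{le:sle_abs_cont}) but the conformal distortion at the tip: by \eqref{eq:cont_transf}, $\Cont(f^{-1}(\eta[0,\sigma])) = \int \abs{(f^{-1})'}^d\,d\mu$, and near the marked boundary point $1=f(a)$ the derivative $\abs{(f^{-1})'}$ depends on the local geometry of $\partial D$ at $a$ (e.g.\ the tip sitting at the end of a deep fjord or spiral of the past hull) and admits no bound that is uniform over all hulls generated by the past curve. Your proposed circumvention --- restrict to the ``follow $\alpha$'' tube of \cref{le:ghm} and compare with a standard SLE of finite expected content --- only controls the content of the \emph{image} curve in $\bbD$; it says nothing about the weight $\abs{(f^{-1})'}^d$ against which that content measure must be integrated in the initial piece $\eta[0,\sigma_{r_{\mathrm{N}}}]$ near $1$, where Koebe distortion gives no uniform control because one is at the boundary.

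The paper's resolution is to define a domain $(D,a,u)$ to be nice exactly when this pulled-back content of the initial piece of the continuation is controlled with quantified probability, to prove that niceness propagates across each unit crossing with uniformly positive probability (\cref{pr:nice_tip_iter}, via \cref{le:nice_tip,le:no_return,le:nice_tip_in_D,le:next_tip}), and to show the initial configuration is nice with positive probability (\cref{le:initial_nice_tip}). Away from the tip (i.e.\ for the part of the crossing after $\sigma_{r_{\mathrm{N}}}$) your argument is fine, since there Koebe distortion does give uniform bounds on $\abs{(f^{-1})'}$; so the missing ingredient is specifically the inductive control of the tip, which is the bulk of \cref{sec:diam-lower-nonspf}. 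As written, your proof has a genuine gap at its central estimate.
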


This is the matching lower bound to the upper bound in \cref{pr:diam_tail_upper}. Note that our upper and lower bounds match except for a different constant $\wt c_2$.

We remark that the proposition can be equivalently stated as a tail lower bound on the increment of $\eta$ when parametrised by Minkowski content. Namely, we have
\[ \bbP( \diam(\eta[0,1]) > r ) \ge \exp(-c r^{d/(d-1)}) \]
for $r \ge 1$.

Similarly as in \cref{se:sle_upper}, we can use the scaling property to reduce the statement of \cref{pr:cont_tail_lower} to the following special case.

\begin{lemma}\label{le:cont_tail_lower1}
Let $\eta$ be a whole-plane \slek{}$(2)$ from $0$ to $\infty$. Then for some $c_1,c_2 > 0$ we have
\[ \bbP( \Cont(\eta[0,\tau_r]) < c_1 r ) \ge \exp(-c_2 r) \]
for any $r \ge 1$.
\end{lemma}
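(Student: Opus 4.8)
The guiding picture is that keeping $\Cont(\eta[0,\tau_r])$ small forces $\eta$ to travel roughly radially out to radius $r$ inside a tube of bounded width, and each unit of radial progress costs a constant factor of probability, yielding $\exp(-c_2 r)$ after $r$ steps. I would assume $r\in\bbN$ (rounding affects only constants), write $\tau_k$ for the hitting time of radius $k$, set $D_k=\hatC\setminus\fill(\eta[0,\tau_k])$ and $a_k=\eta(\tau_k)$ — so each $(D_k,a_k)$ is of the form \eqref{eq:Da} with $\abs{a_k}=k$ — and use additivity of Minkowski content over SLE segments to write $\Cont(\eta[0,\tau_r])=\sum_{k=0}^{r-1}\Cont(\eta[\tau_k,\tau_{k+1}])$. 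It then suffices to exhibit a constant $c_1$ and events $G_k\in\cF_{\tau_{k+1}}$ on which $\Cont(\eta[\tau_k,\tau_{k+1}])\le c_1$ and with $\bbP(G_k\mid\cF_{\tau_k})\ge q$ on $\bigcap_{j<k}G_j$ for a fixed $q\in(0,1)$ independent of $k$ and of the past; iterating this conditional bound gives $\bbP(\bigcap_{k<r}G_k)\ge \exp(-c_2 r)$, which is the lemma. By the domain Markov property of whole-plane \slek{}$(2)$, conditionally on $\cF_{\tau_k}$ the curve $\eta|_{[\tau_k,\infty)}$ has law $\nu_{D_k;a_k\to\infty;u_k}$, where $u_k\in\partial D_k\setminus\{a_k\}$ is the prime end at the (now swallowed) origin. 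So the entire problem reduces to a uniform per-step estimate for the measures $\nu_{D;a\to\infty;u}$.

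\textbf{The per-step estimate.} The plan is to introduce a \emph{quantitative notion of a ``nice'' pointed domain} $(D,a,u)$ and to prove: there are $c_1,q>0$ such that if $(D,a,u)$ is nice with $\abs{a}\ge 2$, then under $\nu=\nu_{D;a\to\infty;u}$, with probability at least $q$ one has simultaneously $\Cont(\eta[0,\tau_{\abs{a}+1}])\le c_1$ and that the new pointed domain built from $\eta[0,\tau_{\abs{a}+1}]$ is again nice. Since niceness is reproduced, this chains. The initial steps out to radius $2$ — where $\eta$ is still the \emph{unconditioned} whole-plane \slek{}$(2)$ from $0$ — are handled separately, using that $\Cont(\eta[0,\tau_2])<\infty$ a.s.\ (hence is $\le c_1$ with probability close to $1$ for $c_1$ large) and that $(D_2,a_2,u_2)$ is a.s.\ nice for an appropriate threshold (hence nice with probability close to $1$); from radius $2$ onward the per-step estimate applies, and multiplying the conditional probabilities yields the lemma.

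\textbf{Proof of the per-step estimate.} I would map $(D,a)$ conformally to $(\bbD,1)$ by the map $f$ of \cref{le:ghm}, so that $f(a)=1$, $f(z_D)=0$ with $z_D\in\partial B(0,\abs{a}+2)$, and $f\circ\eta$ is a radial \slek{}$(2)$ in $\bbD$ from $1$ with a force point on $\partial\bbD$. \cref{le:ghm} supplies a nearest-neighbour path $\alpha$ in $\varepsilon_0\bbZ^2$ from $0$ to $1$ — one of only finitely many, regardless of $D$ — whose $\varepsilon_0$-neighbourhood lies in $f(V)$, $V$ a bounded set of diameter $O(1)$; tracking $\alpha$ corresponds to $\eta$ running from $a$ out past $z_D$ while staying inside $V$. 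Exactly as in the lemmas of \cref{se:sle_upper} — an explicit construction forcing the curve to track $\alpha$, discarding the force point via \cref{le:sle_abs_cont} until the curve leaves a slightly narrower tube (using $\abs{a}\ge2$, so that $V$ has distance $\ge 1$ from the origin), and finiteness of the list of candidate paths — there is a uniform $q_1>0$ such that with probability $\ge q_1$ the curve $\eta$ crosses radius $\abs{a}+1$ while $f\circ\eta$ stays within distance $\varepsilon_0/2$ of $\alpha$; call this event $A$. On $A$ one has $\eta[0,\tau_{\abs{a}+1}]\subseteq V$, and near its endpoint the curve hugs a fixed reference arc, so — after intersecting with one more positive-probability event controlling the curve's small-scale behaviour near $\eta(\tau_{\abs{a}+1})$ — the new pointed domain is nice with uniform constants. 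Finally, on $A$ the transformation rule \eqref{eq:cont_transf} gives
\[ \Cont(\eta[0,\tau_{\abs{a}+1}]) = \int_{f(\eta[0,\tau_{\abs{a}+1}])}\abs{(f^{-1})'}^d\,d\mu \ \le\ M^d\,\mu\bigl(B(\alpha,\varepsilon_0/2)\bigr), \]
where $\mu$ is the Minkowski-content measure of the relevant initial segment of $f\circ\eta$ and $M$ bounds $\abs{(f^{-1})'}$ on $B(\alpha,\varepsilon_0/2)$: away from the tip image $1$ this is automatic from Koebe's distortion theorem, while the bound near $1$ is precisely the content of the assumption that $(D,a,u)$ is nice. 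Since $\bbE[\mu(B(\alpha,\varepsilon_0/2))]$ is bounded uniformly — by absolute continuity to \slek{} without force point and standard integrability of the SLE one-point function, together with a.s.\ finiteness of Minkowski content — Markov's inequality makes $\{\Cont(\eta[0,\tau_{\abs{a}+1}])>c_1\}$ have conditional probability $\le 1/2$ on $A$ for $c_1$ large, and intersecting with the ``nice output'' event gives the per-step estimate with, say, $q=q_1/4$.

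\textbf{Main obstacle.} The delicate point — where essentially all the work lies — is the \emph{uniformity} of the content upper bound over all admissible slit domains $(D,a,u)$, in particular the contribution to $\Cont$ coming from near the current tip $a$, where the uniformizing map $f$ degenerates because the tip of an SLE curve is effectively a needle. This is exactly what the quantitative nice-tip hypothesis is designed to absorb: it bounds $\abs{(f^{-1})'}$ near $1$, and it is reproduced after a good step because we force the new tip to lie near a fixed reference arc (the SLE's residual small-scale roughness there being, up to a bounded-probability event, that of a typical tip). A secondary nuisance is that for small radii the force point at the origin is not separated from the tube where the step happens, so the absolute-continuity argument of \cref{se:sle_upper} does not apply directly; this is why the first couple of steps, where $\eta$ is still unconditioned, must be treated by hand.
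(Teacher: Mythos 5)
Your overall architecture is exactly the paper's: reduce to a per-step estimate for $\nu_{D;a\to\infty;u}$ saying that from a ``nice'' pointed domain the curve crosses one more unit of radius with $O(1)$ Minkowski content \emph{and} reproduces niceness with uniformly positive probability; seed the iteration by treating the initial segment out to radius $2$ separately; and prove the per-step estimate via the support theorem, absolute continuity to discard the force point, and Koebe distortion away from the tip. The place where your sketch genuinely diverges — and where I think it breaks — is the formulation of niceness. You propose that niceness supply a uniform pointwise bound $M$ on $\abs{(f^{-1})'}$ on a neighbourhood $B(1,r_{\mathrm{N}})\cap\bbD$ of the tip image, so that $\Cont(\eta[0,\tau_{\abs a+1}])\le M^d\,\mu(B(\alpha,\varepsilon_0/2))$. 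But $\sup_{B(1,r_{\mathrm{N}})\cap\bbD}\abs{(f^{-1})'}$ is governed by the tip multifractal spectrum of the previously traced SLE segment; there is no reason such a sup is finite with uniformly positive conditional probability, let alone bounded by a constant $M$ that can be \emph{reproduced} after composing with the mapping-out function of the new segment (whose own tip contributes a fresh, uncontrolled factor $(f_\alpha^{-1})'$ near its image point). The paper's definition deliberately avoids any pointwise derivative bound: $(D,a,u)$ is nice if $\abs{f(u)-1}\ge 2r_{\mathrm{N}}$ and, for an \emph{independent} radial \slek{} $\eta$ from $1$, $\nu_{\bbD;1\to 0}(\Cont(f^{-1}(\eta[0,\sigma_{r_{\mathrm{N}}}]))>c_{\mathrm{N}})<p_{\mathrm{N}}$ — a distributional bound on exactly the quantity needed (the content contributed near the old tip), which is then split off as item \ref{it:nice_base} of \cref{le:nice_tip} and combined with a Koebe bound for the rest of the segment.

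Two further ingredients that your ``one more positive-probability event near $\eta(\tau_{\abs a+1})$'' glosses over but that are needed to propagate even the paper's weaker condition: (i) a purely geometric ``no return'' statement (\cref{le:no_return}) guaranteeing that the preimage under the new segment's mapping-out function of a neighbourhood of $1$ lands in $B(0,1-r_0/4)$, i.e.\ in the region where the \emph{old} map's derivative is controlled by Koebe — without this the composition $f^{-1}\circ f_\alpha^{-1}$ cannot be bounded; and (ii) the seeding step (\cref{le:initial_nice_tip}) is not ``a.s.\ nice for an appropriate threshold'': the constants $(p_{\mathrm{N}},r_{\mathrm{N}},c_{\mathrm{N}})$ are fixed by the per-step lemma before the seeding, the initial domain is only shown to be nice with some positive probability $p_1$ (not probability close to $1$), and the case where the force point at the origin sits next to the tip requires a separate Bessel-process argument rather than being absorbed into ``the first couple of steps''. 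The order in which the constants are chosen so that all these requirements are simultaneously satisfiable is a real part of the proof, not bookkeeping.
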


\begin{proof}[Proof of \cref{pr:cont_tail_lower} given \cref{le:cont_tail_lower1}]
Let $\lambda = c_1^{1/(d-1)}\ell^{-1/(d-1)}r^{1/(d-1)}$. By the scaling property (cf.\@ \cref{se:prelim_sle}), $\wt\eta = \lambda\eta$ is also a whole-plane \slek{}$(2)$, and $\Cont(\wt\eta[0,\wt\tau_{\lambda r}]) = \lambda^d \Cont(\eta[0,\tau_r])$. Hence the desired probability is equal to
\[ \bbP( \Cont(\wt\eta[0,\wt\tau_{\lambda r}]) < \lambda^d \ell ) . \]
We have chosen $\lambda$ such that $\wt r \defeq \lambda r = c_1^{1/(d-1)}\ell^{-1/(d-1)}r^{d/(d-1)}$ and $\lambda^d \ell = c_1 \wt r$. Therefore, by \cref{le:cont_tail_lower1}, the probability is at least
\[ \exp(-c_2 \wt r) = \exp(-\wt c_2 \ell^{-1/(d-1)}r^{d/(d-1)}) . \]
\end{proof}

The heuristic idea of \cref{le:cont_tail_lower1} is straightforward, but requires some care to implement. We would like to show that when $\eta$ crosses from radius $r$ to $r+1$, it has some chance $p>0$ to do so while creating at most $c_1$ Minkowski content. If $p>0$ is uniform conditionally on $\eta[0,\tau_r]$, the lemma would follow. However, it is difficult to control the Minkowski content near the tip of $\eta[0,\tau_r]$ due to its fractal nature, therefore we want to keep only those curves that have a sufficiently ``nice'' tip when hitting radius $r$. Below we will implement this idea.

Let $(D,a)$ be as in \eqref{eq:Da}, and $u\in\partial D\setminus\{a\}$. The point $u$ will play the role of a force point with weight $2$. (We can allow more general force points but to keep notation as simple as possible, we restrict to this case.) 
For $p_{\mathrm{N}},r_{\mathrm{N}},c_{\mathrm{N}} > 0$  
we say that $(D,a,u)$ is $(p_{\mathrm{N}},r_{\mathrm{N}},c_{\mathrm{N}})$-nice if
\[ \abs{f(u)-1} \ge 2r_{\mathrm{N}} \]
and
\[ \nu_{\bbD;1\to 0}(\Cont(f^{-1}(\eta[0,\sigma_{r_{\mathrm{N}}}])) > c_{\mathrm{N}}) < p_{\mathrm{N}} \]
where $\nu_{\bbD;1\to 0}$ denotes radial \slek{} (without force point) and $\sigma_{r_{\mathrm{N}}}$ the exit time of $B(1,r_{\mathrm{N}}) \cap \barD$.

\begin{proposition}\label{pr:nice_tip_iter}
There exist finite and positive constants $p_{\mathrm{N}},r_{\mathrm{N}},c_{\mathrm{N}},p,c$ such that the following is true. Let $(D,a,u)$ with $\abs{a}\ge 1$ be $(p_{\mathrm{N}},r_{\mathrm{N}},c_{\mathrm{N}})$-nice. Then
\[ 
\nu_{D;a\to \infty;u} \left( \begin{array}{cc}
\Cont(\eta[0,\tau_{\abs{a}+1}]) < c \quad\text{and} \\
D \setminus \eta[0,\tau_{\abs{a}+1}] \text{ is } (p_{\mathrm{N}},r_{\mathrm{N}},c_{\mathrm{N}})\text{-nice} 
\end{array} \right)
\ge p .
\]
\end{proposition}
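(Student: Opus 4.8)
The strategy is to work in the reference disk. Write $\nu\defeq\nu_{D;a\to\infty;u}$ and let $f\colon D\to\bbD$ be the conformal map associated to $(D,a)$, so that $f(a)=1$ and $f(z_D)=0$; by the coordinate-change rule for SLE, $\nu$ pushes forward under $f$ to a radial \slek{}$(2)$ in $\bbD$ started from $1$ with force point $f(u)$ and target $f(\infty)$ (which one may treat as a further force point of weight $\kappa-8$). By \cref{le:ghm} there is a lattice path $\alpha$ from $0$ to $1$, drawn from a finite list, with $B(\alpha,\varepsilon_0)\subseteq f(V)$ for $V$ as in that lemma; since $\infty\notin V$ one gets $\dist(f(\infty),\alpha)\ge\varepsilon_0$, and the first niceness condition gives $\abs{f(u)-1}\ge 2r_{\mathrm{N}}$. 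I will fix constants in the order $\varepsilon_0$ (given) $\rightsquigarrow r_{\mathrm{N}}<\varepsilon_0/2\rightsquigarrow\delta_1\ll r_{\mathrm{N}}\rightsquigarrow p_{\mathrm{N}}\rightsquigarrow c_{\mathrm{N}}$ and then $c,p$, and produce the event in the statement as an intersection of three pieces.

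For the first piece, let $T_1$ be the first time $f\circ\eta$ leaves $B(1,r_{\mathrm{N}})\cap\barD$. Up to $T_1$ the curve stays at distance $\ge r_{\mathrm{N}}$ from $f(u)$ and from $f(\infty)$, so by \cref{le:sle_abs_cont} its law there is comparable, with Radon--Nikodym derivative bounded by some $C_0=C_0(r_{\mathrm{N}},\varepsilon_0)$, to that of radial \slek{} from $1$ to $0$ \emph{without} force point; combining this with the second niceness condition and the transformation rule \eqref{eq:cont_transf} yields $\nu(\Cont(\eta[0,T_1])>c_{\mathrm{N}})\le C_0 p_{\mathrm{N}}$. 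For the second piece I would use the positive-probability ``SLE follows a path'' mechanism exactly as in \cref{se:sle_upper}: uniformly over the finite list of admissible $\alpha$'s, with probability $\ge p_2>0$ the curve $f\circ\eta$ also exits $B(1,r_{\mathrm{N}})$ within $\varepsilon_0/4$ of the first crossing of $\alpha$ with $\partial B(1,r_{\mathrm{N}})$. Choosing $p_{\mathrm{N}}$ so small that $C_0 p_{\mathrm{N}}<p_2/2$, the intersection of these two events has probability $\ge p_2/2$ and places $f(\eta(T_1))$ in a fixed compact subset of $\bbD\setminus B(1,r_{\mathrm{N}})$ bounded away from $f(u)$ and $f(\infty)$.

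For the third piece I would condition on $f\circ\eta|_{[0,T_1]}$ and run the support argument again in the remaining disk region, obtaining a further conditional probability $\ge p_3>0$ (uniform over that compact set and over the admissible $\alpha$'s) that $f\circ\eta$ stays inside $B(\alpha,\varepsilon_0/2)$ until $\eta$ reaches radius $\abs{a}+1$, and that over the last stretch, from radius $\abs{a}+1-\delta_1$ to $\abs{a}+1$, the curve stays within $\delta_1^{10}$ of the radial segment at the appropriate angle; the force and target points do not interfere because they remain at distance $\ge\varepsilon_0/4$ from this part of the thinned tube. Let $G_0$ be the intersection of all three pieces, so $\nu(G_0)\ge(p_2/2)p_3>0$ uniformly over nice $(D,a,u)$. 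The Minkowski content $\Cont(\eta[0,\tau_{\abs{a}+1}])$ is a.s.\ finite, and on $G_0$ it is bounded by an absolute constant (the part up to $T_1$ by $c_{\mathrm{N}}$, and the remainder because the curve is then confined to a region of bounded Euclidean diameter and SLE Minkowski content there is controlled), so enlarging $c$ once more and intersecting $G_0$ with $\{\Cont(\eta[0,\tau_{\abs{a}+1}])<c\}$ retains probability $\ge p>0$.

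It remains to check that on $G_0$ the triple $(D',a',u)$ with $D'\defeq D\setminus\eta[0,\tau_{\abs{a}+1}]$ and $a'\defeq\eta(\tau_{\abs{a}+1})$ is again $(p_{\mathrm{N}},r_{\mathrm{N}},c_{\mathrm{N}})$-nice; this is where the near-straight stub is used and is the step I expect to be the real obstacle. On $G_0$, $\partial D'$ near $a'$ is a slit of length $\asymp\delta_1$ that is $\delta_1^{10}$-close to straight, while the rest of $\partial D'$ is at distance $\ge\delta_1$ from $a'$; hence the associated map $g\colon D'\to\bbD$ (with $g(a')=1$, $g(z_{D'})=0$) has $g^{-1}$ with square-root behaviour at $1$, and a distortion estimate for such near-slit domains gives $\sup_{B(1,r_{\mathrm{N}})\cap\bbD}\abs{(g^{-1})'}\le C_1(\delta_1)\,r_{\mathrm{N}}$. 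In particular $g^{-1}(B(1,2r_{\mathrm{N}}))$ has diameter $\ll 1$, so it does not contain $u$ (which is at Euclidean distance $\ge 1$ from $a'$ on $G_0$), giving $\abs{g(u)-1}\ge 2r_{\mathrm{N}}$. For the second niceness condition, conformal invariance of SLE identifies $g^{-1}(\eta_0)$, for a radial \slek{} $\eta_0$ from $1$ to $0$ in $\bbD$, with a radial SLE in $D'$ from $a'$, so by \eqref{eq:cont_transf} the content $\Cont(g^{-1}(\eta_0[0,\sigma_{r_{\mathrm{N}}}]))$ is at most $(C_1 r_{\mathrm{N}})^d\,\Cont(\eta_0\cap\overline{B(1,r_{\mathrm{N}})})$, a quantity with finite expectation; Markov's inequality then gives the required tail bound for $D'$ provided $c_{\mathrm{N}}$ is taken large, which is permissible since $c_{\mathrm{N}}$ is chosen last in the list and the loop of constants closes. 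Besides this distortion estimate (which is the reason the stub is required to be $\delta_1^{10}$-straight rather than merely $\delta_1/100$-straight), the points needing care are keeping $f(u)$ and $f(\infty)$ clear of the tube throughout — controlled by $\abs{f(u)-1}\ge 2r_{\mathrm{N}}$ and \cref{le:ghm} — and the uniform-in-$(D,a,u)$ bound on the Minkowski content of the non-tip part of the curve.
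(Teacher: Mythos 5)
Your overall architecture (absolute continuity to reduce to radial SLE in $\bbD$, the support theorem to confine $f\circ\eta$ to a tube around a lattice path from \cref{le:ghm}, a.s.\ finiteness of Minkowski content plus Koebe distortion away from $\partial\bbD$ to get $\Cont(\eta[0,\tau_{\abs{a}+1}])<c$) matches the paper's proof via \cref{le:nice_tip}. But the step you yourself flag as ``the real obstacle'' --- showing that $D\setminus\eta[0,\tau_{\abs{a}+1}]$ is again nice --- is where your argument breaks, and the proposed fix does not work. You want a deterministic distortion bound $\sup_{B(1,r_{\mathrm{N}})\cap\bbD}\abs{(g^{-1})'}\le C_1(\delta_1)\,r_{\mathrm{N}}$ for the uniformizing map of the new domain, derived from the last stretch of the curve being a ``$\delta_1^{10}$-close to straight'' slit. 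Confining an SLE to an arbitrarily thin tube via the support theorem does not make it a straight slit at small scales: the trace inside the tube remains a fractal of dimension $d>1$ at every scale below the tube width, and the behaviour of $\abs{(g^{-1})'}$ at the tip of an SLE segment is multifractal (cf.\@ the tip spectrum results cited in the introduction), not square-root. No uniform deterministic bound of the kind you assert holds for a typical realization, and this is exactly the difficulty the paper points out below \cref{le:cont_tail_lower1}: ``it is difficult to control the Minkowski content near the tip of $\eta[0,\tau_r]$ due to its fractal nature.'' If such a deterministic estimate were available, the probabilistic definition of niceness would be unnecessary.

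The paper's way around this is structural rather than geometric, and it is the idea missing from your proposal. The second niceness condition for the new domain is verified through \cref{le:nice_tip_in_D,le:next_tip}: one writes $\wt f^{-1}=f^{-1}\circ f_\alpha^{-1}\circ h$, where $f_\alpha$ is the mapping-out function of the newly drawn piece, and observes that, conditionally on the past, $f_{\tau_A}^{-1}(\eta_0[0,\sigma_{r_1}])$ for an independent radial \slek{} $\eta_0$ is equal in law to a \emph{further segment of the original curve} (by the domain Markov property). Its Minkowski content is therefore an SLE content, a.s.\ finite, and the conditional tail bound required by niceness is obtained by Markov's inequality applied to the unconditional probability. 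The only distortion input needed is Koebe's theorem for $f^{-1}$ on a compact set $B(0,1-r_0/4)$ away from $\partial\bbD$ (guaranteed by \cref{le:no_return}), never near the fractal tip. Your argument as written cannot close without replacing the slit-domain distortion estimate by something of this kind.
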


\begin{lemma}\label{le:nice_tip}
There exist finite positive constants $p_{\mathrm{N}},r_{\mathrm{N}},c_{\mathrm{N}},p,\varepsilon,c_1$ with $\varepsilon < r_{\mathrm{N}}/2$ such that the following is true. 
Let $(D,a,u)$ with $\abs{a}\ge 1$ be $(p_{\mathrm{N}},r_{\mathrm{N}},c_{\mathrm{N}})$-nice, and let $f\colon D \to \bbD$ the corresponding conformal map. Let $A = f(\hatC \setminus B(0,\abs{a}+1))$, and $\tau_A$ the hitting time, and $\eta$ a radial \slek{}$(2)$ in $\bbD$ from $1$ to $f(\infty)$ with force point $f(u)$. Then the following event has probability at least $p$.
\begin{enumerate}[label=(\roman*)]
    \item\label{it:follow} $\norm{\gamma-\eta}_{\infty;[0,\tau_A]} < \varepsilon$ where $\gamma$ denotes the straight line from $1$ to $1/64$, and $\gamma$ and $\eta$ are parametrised by capacity relative to $-1$ 
    (see \cref{se:prelim} for the definition of relative capacity).
    \item\label{it:cont} $\Cont(\eta[0,\tau_A]) \le c_1$.
    \item\label{it:nice_base} $\Cont(f^{-1}(\eta[0,\sigma_{r_{\mathrm{N}}}])) \le c_{\mathrm{N}}$.
    \item\label{it:nice_tip} $D \setminus f^{-1}(\eta[0,\tau_A])$ is $(p_{\mathrm{N}},r_{\mathrm{N}},c_{\mathrm{N}})$-nice.
\end{enumerate}
\end{lemma}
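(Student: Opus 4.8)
Write $f\colon D\to\bbD$ for the map associated to $(D,a)$, $z_D$ for its normalisation point, and $A:=f(\hatC\setminus B(0,\abs a+1))$, so that $\tau_A$ is the first time $f\circ\eta$ hits $A$. I would first record facts valid for \emph{every} $(p_{\mathrm N},r_{\mathrm N},c_{\mathrm N})$-nice triple with $\abs a\ge 1$. Since $\hatC\setminus D\subseteq\overline{B(0,\abs a)}$ while $a\in\partial D$ lies at distance exactly $2$ from $z_D\in\partial B(0,\abs a+2)$, we get $\dist(z_D,\partial D)=2$, hence $\crad(z_D,D)=\abs{(f^{-1})'(0)}\in[2,8]$ and, by Koebe's distortion theorem, $\abs{(f^{-1})'}$ and $\abs{f'}$ are comparable to universal constants on $B(0,1/2)$. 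Consequently $f^{-1}(\overline{B(0,1/32)})\subseteq B(z_D,1/2)\subseteq\hatC\setminus\overline{B(0,\abs a+1)}$, so $\overline{B(0,1/32)}\subseteq A$; in particular $[1/64,1/48]\subseteq A$, the endpoint of $\gamma$ lies well inside $A$, and $A$ is compactly contained in $\bbD$. Next, by \cref{le:ghm} and $\infty\notin V$ we have $f(\infty)\notin f(V)\supseteq B(\alpha,\varepsilon_0)$, so $\abs{f(\infty)-1}\ge\varepsilon_0$; combining this with the Koebe bound near the inner part of $\gamma$ and a conformal-radius estimate (using $\crad(f(\infty),\bbD\setminus\gamma)\le 4\dist(f(\infty),\gamma)$, that the logarithmic capacity of $\hatC\setminus D$ is $\le\abs a$, and that the residue of $f^{-1}$ at $f(\infty)$ has modulus $\gtrsim\abs a(1-\abs{f(\infty)}^2)$, whence $\dist(f(\infty),\gamma)\gtrsim 1-\abs{f(\infty)}^2$), one obtains $\dist(f(\infty),\gamma)\ge c_0$ for a universal $c_0>0$. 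I would then fix the constants in the order $r_{\mathrm N}$ small (say $\le\varepsilon_0/2\wedge c_0\wedge1$), then $\varepsilon$ small ($<r_{\mathrm N}/2$ and $<1/128$), then $c_1,c_{\mathrm N}$ large, then $p_{\mathrm N}$ small, with the precise constraints imposed below.

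\textbf{Item (i): the following event.}
Map $(\bbD,1)$ to $(\bbH,0)$ by a fixed Möbius transformation; then $\gamma$ becomes a fixed segment $\gamma'\subseteq i\bbR^+$, and $\eta$ becomes a chordal \slek{} in $\bbH$ from $0$ with a boundary force point of weight $2$ (image of $f(u)$, at distance $\gtrsim r_{\mathrm N}$ from $\gamma'$ by niceness) and an interior force point of weight $\kappa-8$ (image of $f(\infty)$, at distance $\gtrsim c_0$ from $\gamma'$ by the previous paragraph). Exactly as in the proof of \cref{le:sle_markov_inc}, \cref{le:sle_abs_cont} lets us drop both force points at the cost of a Radon--Nikodym factor bounded by $C=C(r_{\mathrm N})$, reducing to plain chordal \slek{}. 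For the latter a standard support-theorem argument (continuity of the Loewner chain in the driving function, and the positive probability that $\sqrt\kappa B$ stays uniformly close to a prescribed continuous function) shows that $\eta$ follows $\gamma'$ within $\varepsilon$ in the capacity parametrisation up to the first time it enters the image of $B(0,1/32)\subseteq A$, with probability at least a universal $p_1>0$; since $1/48\in B(0,1/32)$ and $\varepsilon<1/128$, that time is $\le\tau_A$, so item (i) holds. Transferring back, item (i) holds with probability $\ge p_1/C$, uniformly over all nice $(D,a,u)$.

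\textbf{Items (ii) and (iii).}
On the event of (i) the curve $\eta[0,\tau_A]$ lies in the fixed compact tube $T:=\overline{B([1/48,1],\varepsilon)}\subseteq\overline\bbD$, which stays a definite distance from $0$, from $\partial\bbD$ away from $1$, and from $f(u)$ and $f(\infty)$. Hence, again via \cref{le:sle_abs_cont}, the law of $\eta$ up to exiting a slightly larger tube — and so of $\Cont(\eta\cap T)$, as well as of $\Cont(f^{-1}(\eta[0,\sigma_{r_{\mathrm N}}]))$ — is comparable, with a universal factor $c=c(r_{\mathrm N})$, to that under $\nu_{\bbD;1\to 0}$. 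Since $\Cont(\eta\cap T)$ is a.s.\ finite under $\nu_{\bbD;1\to 0}$, the probability that (i) holds but $\Cont(\eta[0,\tau_A])>c_1$ is at most $c^{-1}\nu_{\bbD;1\to 0}(\Cont(\eta\cap T)>c_1)\to0$ as $c_1\to\infty$; and the probability that (iii) fails is at most $c^{-1}\nu_{\bbD;1\to 0}(\Cont(f^{-1}(\eta[0,\sigma_{r_{\mathrm N}}]))>c_{\mathrm N})<c^{-1}p_{\mathrm N}$ directly from the niceness of $(D,a,u)$. Choosing $c_1$ large, then $p_{\mathrm N}$ small (after $r_{\mathrm N}$, hence $c$ and $C$, are fixed), makes each failure probability $\le p_1/(3C)$.

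\textbf{Item (iv) — the main obstacle — and conclusion.}
It remains to show that on the event of (i) the configuration consisting of $D'$, the component of $\infty$ in $D\setminus f^{-1}(\eta[0,\tau_A])$, the point $a'=f^{-1}(\eta(\tau_A))$, and the force point $u$, is again $(p_{\mathrm N},r_{\mathrm N},c_{\mathrm N})$-nice. One notes first that $f^{-1}(\eta[0,\tau_A])\subseteq\overline{B(0,\abs a+1)}$ by definition of $\tau_A$, so $\abs{a'}=\abs a+1$ and $D'\supseteq\{\abs z>\abs a+1\}$; that on (i) the crossing stays conformally $\gtrsim r_{\mathrm N}$ away from $f(u)$, so $u$ is not swallowed; and that near $a'$ the domain $D'$ looks like a unit-scale disc slit to $a'$, which by a conformal-distortion estimate forces the new map $f'^{-1}\colon\bbD\to D'$ to satisfy $\abs{f'^{-1}}\le M_0$ on $B(1,r_{\mathrm N})\cap\bbD$ for a universal $M_0$; hence, by \eqref{eq:cont_transf}, $\Cont(f'^{-1}(\eta[0,\sigma_{r_{\mathrm N}}]))\le M_0^d\,\Cont(\eta[0,\sigma_{r_{\mathrm N}}])$ for every curve, so the second niceness condition for $(D',a',u)$ holds once $c_{\mathrm N}$ is large enough (given $M_0$ and $p_{\mathrm N}$). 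The delicate point — and, I expect, the genuine difficulty — is the first niceness condition $\abs{f'(u)-1}\ge 2r_{\mathrm N}$: writing $\phi=f'\circ f^{-1}\colon\bbD\setminus K\to\bbD$ for the ``unslitting'' map, where $K=f(\fill(f^{-1}(\eta[0,\tau_A])))$ meets $\partial\bbD$ only in an arc of size $\lesssim\varepsilon$ near $1$ and lies conformally $\gtrsim r_{\mathrm N}$ away from $f(u)$, one must show by conformal-distortion and harmonic-measure comparison estimates that $\phi$ does not contract the $2r_{\mathrm N}$-gap between $f(u)$ and $1=\phi(\eta(\tau_A))$. Making a self-consistent choice of $p_{\mathrm N},r_{\mathrm N},c_{\mathrm N},\varepsilon,c_1$ so that both the ``$2r_{\mathrm N}$'' separation and the ``$c_{\mathrm N}$'' content bound are reproduced after a crossing is the heart of the lemma; everything else is assembly. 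Granting this, item (iv) is a deterministic consequence of item (i), so $p:=\bbP\big(\text{(i) holds},\ \Cont(\eta[0,\tau_A])\le c_1,\ \text{(iii) holds}\big)\ge p_1/C-2p_1/(3C)>0$, and on this event all of (i)--(iv) hold.
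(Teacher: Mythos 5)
Your items \ref{it:follow}--\ref{it:nice_base} follow the same route as the paper (support theorem plus absolute continuity to remove the force points, then a.s.\ finiteness of Minkowski content and the definition of niceness), and that part is fine. The genuine gap is item \ref{it:nice_tip}, which you yourself flag as ``the heart of the lemma'' and then essentially assume. Two things go wrong. First, your claimed universal bound $\abs{(f'^{-1})'}\le M_0$ on $B(1,r_{\mathrm N})\cap\bbD$ for the new uniformizing map is false: the new marked prime end $a'=f^{-1}(\eta(\tau_A))$ is the tip of an SLE trace, and the derivative of the uniformizing map of the complement of an SLE curve near (the image of) its tip is not uniformly bounded --- it has a nontrivial multifractal spectrum. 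Koebe only gives $\abs{(f'^{-1})'(z)}\lesssim \dist(f'^{-1}(z),\partial D')/(1-\abs{z})$, which degenerates as $z\to\partial\bbD$. This is precisely the difficulty the paper points out after \cref{le:cont_tail_lower1} (``it is difficult to control the Minkowski content near the tip \dots due to its fractal nature'') and is the entire reason the niceness condition is phrased in terms of the \emph{law} of a fresh SLE mapped by $f^{-1}$ rather than in terms of a derivative bound.

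Second, and relatedly, item \ref{it:nice_tip} is \emph{not} a deterministic consequence of item \ref{it:follow}: two curves that both stay $\varepsilon$-close to $\gamma$ can have very different tip behaviour, and the paper only establishes \ref{it:nice_tip} with probability $1-p_{\mathrm N}$ conditionally on \ref{it:follow}. The mechanism is \cref{le:next_tip}: one applies Markov's inequality to the conditional probability $\nu_{\bbD;1\to 0}(\Cont(\eta[\tau_A,\tau_{\partial B(0,1/128)}])>c_2\mid\eta[0,\tau_A])$, so that for most realisations of $\eta[0,\tau_A]$ the continuation of the curve certifies the niceness of the new domain; the deterministic ingredient is \cref{le:nice_tip_in_D}, which writes $f'^{-1}=f^{-1}\circ f_\alpha^{-1}\circ h$ and uses \cref{le:no_return} to ensure $f_\alpha^{-1}(B(1,r_1)\cap\bbD)\subseteq B(0,1-r_0/4)$, i.e.\ the composition only ever evaluates $\abs{(f^{-1})'}$ well away from $\partial\bbD$, where Koebe does give a uniform bound. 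Your proposal is missing exactly this decomposition and the probabilistic (rather than deterministic) verification of niceness, so as written the proof does not close; the first niceness condition $\abs{f'(u)-1}\ge 2r_{\mathrm N}$, which you also leave open, is handled in the paper by the same composition together with the containment $f_\alpha^{-1}(B(1,r_1)\cap\bbD)\subseteq B(0,1-r_0/4)$.
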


This lemma will be proved in several steps where we successively pick the constants that we look for. First, we recall that for any $\varepsilon > 0$, the probability of \ref{it:follow} can be bounded from below by some $p_\varepsilon > 0$ (cf.\@ \cref{le:support} below). The lemma then follows if we can guarantee that the probability of any of the other conditions failing is at most $p_\varepsilon/2$. We will pick all the required constants in a way such that this is true.

We begin by making a few general comments about the conformal maps $f\colon D \to \bbD$ corresponding to domains as in \eqref{eq:Da}. Consider $D^* = \{ 1/z \mid z \in D\} \subseteq \bbC$ and $g(z) = f(1/z)$. This is the conformal map from $D^*$ to $\bbD$ with $g(1/z_D) = 0$ and $g(1/a) = 1$. Note that $\dist(1/z_D, \partial D^*) = \abs{1/z_D-1/a} = \frac{1}{\abs{a}}-\frac{1}{\abs{a}+2}$. Hence, the conformal radius of $1/z_D$ in $D^*$ is between $\frac{2}{\abs{a}(\abs{a}+2)}$ and $\frac{8}{\abs{a}(\abs{a}+2)}$ (cf.\@ \cref{se:prelim_conformal}). It follows that $\abs{f'(z_D)} \in [\frac{\abs{a}}{8(\abs{a}+2)}, \frac{\abs{a}}{2(\abs{a}+2)}]$.

\begin{lemma}\label{le:dist_1}
There exists $r_0 > 0$ such that for any $(D,a)$ as in \eqref{eq:Da} we have $f^{-1}(B(1,r_0) \cap \bbD) \subseteq B(0,\abs{a}+1)$.
\end{lemma}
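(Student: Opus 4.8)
The plan is as follows.

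\textbf{Step 1 (reduction to the circle $\{\abs z=\abs a+1\}$).} The set $V:=f^{-1}(B(1,r_0)\cap\bbD)$ is open and connected, and since $\widehat\bbC\setminus D\subseteq\overline{B(0,\abs a)}$ we have $D\supseteq\widehat\bbC\setminus\overline{B(0,\abs a)}$, so $D$ is "fat" at $a$ and $f$ extends continuously to the prime end $a$ with $f(a)=1$; hence $a\in\overline V$. If $V$ met $\{\abs z\ge\abs a+1\}$ then, being a path-connected neighbourhood of $a$ (which has modulus $\abs a<\abs a+1$), it would contain a point $w$ with $\abs w=\abs a+1$. So it suffices to produce a universal $r_0>0$ with $\abs{f(w)-1}\ge r_0$ whenever $w\in D$ and $\abs w=\abs a+1$.

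\textbf{Step 2 (comparison with the explicit model).} Put $\widetilde D:=\widehat\bbC\setminus\overline{B(0,\abs a)}\subseteq D$ and let $\tilde f\colon\widetilde D\to\bbD$ be the Möbius map normalised like $f$, i.e.\ $\tilde f(z_D)=0$, $\tilde f(a)=1$; after a rotation making $a>0$ one has $\tilde f(z)=\frac{a-tz}{z-ta}$ with $t=\abs a/(\abs a+2)$. Since $\widetilde D\subseteq D$, the map $F:=f\circ\tilde f^{-1}$ is a univalent self-map of $\bbD$ with $F(0)=0$ and boundary value $F(1)=1$; combining $\abs{f'(z_D)}\ge\frac{\abs a}{8(\abs a+2)}$ (recorded just above the lemma) with the elementary identity $\abs{\tilde f'(z_D)}=\frac{\abs a}{4(\abs a+1)}$ yields $\abs{F'(0)}=\abs{f'(z_D)}/\abs{\tilde f'(z_D)}\ge c_0$ for a universal $c_0>0$. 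A direct Möbius computation shows that $\tilde f$ carries $\{\abs z\ge\abs a+1\}$ onto a round disk $K\subseteq\bbD$ with $0\in K$ and $\dist(K,1)\ge\frac{2\abs a+2}{3\abs a+2}\ge\tfrac23$. As $\tilde f(w)\in K$, everything reduces to: \emph{for every univalent self-map $F$ of $\bbD$ with $F(0)=0$, $F(1)=1$ and $\abs{F'(0)}\ge c_0$, one has $\abs{F(\zeta)-1}\ge r_0$ for all $\zeta\in\bbD$ with $\abs{\zeta-1}\ge\tfrac23$}, with $r_0$ depending only on $c_0$.

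\textbf{Step 3 (proof of the reduced statement).} When $\zeta$ stays a fixed distance $\eta$ inside $\bbD$ this is soft: the family of such $F$ is normal, any locally uniform limit $F_\infty$ is non-constant (since $\abs{F_\infty'(0)}\ge c_0$) hence maps $\bbD$ into $\bbD$, and $(F,\zeta)\mapsto\abs{F(\zeta)-1}$ is continuous and strictly positive on the compact set $\{\abs{\zeta-1}\ge\tfrac23,\ \abs\zeta\le1-\eta\}$, so its infimum is a positive universal constant; equivalently, $h:=1/(1-F)$ maps $\bbD$ into $\{\Re>\tfrac12\}$ with $h(0)=1$, and Schwarz--Pick confines $h(\zeta)$ to a bounded Euclidean region once $\abs\zeta\le1-\eta$, giving $\abs{F(\zeta)-1}=\abs{h(\zeta)}^{-1}\gtrsim\eta$. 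This settles all $w$ with $\abs{\tilde f(w)}\le1-\eta$ (in particular it settles all bounded $\abs a$). The remaining case $\abs{\tilde f(w)}>1-\eta$ arises only for $\abs a$ large — there $w$ lies at Euclidean distance $1$ from $\partial\widetilde D$ while $\widetilde D$ is a huge exterior domain, so $\tilde f(w)$ is squeezed against $\partial\bbD$ — and I would handle it by a harmonic-measure argument in $D$: if $\abs{f(w)-1}<r_0$ then the crosscut $f^{-1}(\{\abs{\zeta-1}=\tfrac12\}\cap\bbD)$ separates $w$ from $z_D$, so (transporting by $\tilde f$) its trace in $\widetilde D$ must cross $K$, i.e.\ $F$ would send some point of $K$ to distance exactly $\tfrac12$ from $1$. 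Ruling this out — showing $\dist(F(K),1)>0$ uniformly — is where one must use that $F$ comes from the inclusion $\widetilde D\subseteq D$ with $a$ on the \emph{outermost} circle of $\widehat\bbC\setminus D$: this forces $f^{-1}(\{\abs{\zeta-1}<\tfrac12\})$ to be a connected boundary arc around $a$ whose harmonic measure from $w$ in $D$ is bounded away from $1$, which can be quantified by comparison with the model $\widetilde D$ (for which $\dist(K,1)\ge\tfrac23$).

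\textbf{Main obstacle.} It is precisely this last, "large $\abs a$" case: as $\tilde f(w)$ approaches $\partial\bbD$ the Schwarz--Pick / normal-families estimate degenerates, so getting a bound \emph{uniform} in $\abs a$ requires genuinely exploiting the geometric hypotheses of \eqref{eq:Da} — that $a$ realises the maximal modulus of $\widehat\bbC\setminus D$ and that $D$ contains the entire exterior of $\overline{B(0,\abs a)}$ — rather than soft function theory.
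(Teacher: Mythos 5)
The easy parts of your plan are sound: Step~1's reduction to points $w\in D$ with $\abs{w}=\abs{a}+1$ is correct, and the Schwarz--Pick argument in Step~3 does settle the case where $\tilde f(w)$ stays a definite distance from $\partial\bbD$ (hence all bounded $\abs{a}$, which is also how the paper handles that case, via Koebe distortion on the inverted domain). But the lemma's entire content is the remaining case, which you yourself flag as ``the main obstacle'' and only sketch. Worse, the reduced statement you isolate at the end of Step~2 is \emph{false}: univalence, $F(0)=0$, $\abs{F'(0)}\ge c_0$ and the radial limit $F(1)=1$ do not force $\abs{F(\zeta)-1}\ge r_0$ for $\zeta$ far from $1$. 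Take $\Omega=\bbD\setminus[1/2,1)$ and let $F\colon\bbD\to\Omega$ be the Riemann map with $F(0)=0$ (so $\abs{F'(0)}\ge 1/2$) normalised so that $F(1)$ is the prime end at $1$ approached from above the slit. Points of $\Omega$ just \emph{below} the slit and Euclidean-close to $1$ are images of points of $\bbD$ near the preimage of the second prime end at $1$, which sits at a definite distance from $1$ on $\partial\bbD$; so no universal $r_0$ exists for this $F$. The moral is that the conclusion cannot follow from the derivative normalisation alone: one must use that $D$ contains the full exterior of $\overline{B(0,\abs{a})}$, so that $f$ maps a genuine neighbourhood of $a$ in $D$ \emph{onto} a full relative neighbourhood of $1$ in $\bbD$ --- precisely the feature the slit example destroys.

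Your proposed repair via harmonic measure is not carried out, and its natural quantitative implementations degenerate exactly in the regime at issue: e.g.\ comparing the harmonic measure of the prime-end arc $f^{-1}(\partial\bbD\cap B(1,3r_0))$ seen from $w$ with that seen from $z_D$ by Harnack inside $\hatC\setminus\overline{B(0,\abs{a})}$ costs a factor polynomial in $\abs{a}$, since the hyperbolic distance from $w$ to $z_D$ there grows like $\log\abs{a}$; and the claim $\dist(F(K),1)>0$ uniformly is essentially a restatement of the lemma. The paper closes this gap with \cref{le:ghm} (applied after rescaling to the ball $B(\wt z,2/3)$ around the point $\wt z\in\partial B(0,\abs{a}+1/3)$ nearest $a$): it produces a path from $f(\wt z)$ to $1$ whose $r_0$-neighbourhood is contained in $f(V)$, where $V\subseteq B(0,\abs{a}+1)$ is the union of $B(\wt z,2/3)\cap D$ with the points it separates from $\infty$. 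Since the path terminates at $1$, this gives $B(1,r_0)\cap\bbD\subseteq f(V)$ outright. To complete your argument you would need to invoke or reprove a statement of this type; as written, the key uniform estimate is asserted rather than proved.
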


\begin{proof}
When $\abs{a}$ is not too large, we can use Koebe's distortion theorem to argue that even $f^{-1}(\bbD \setminus B(0,1-r_0)) \subseteq B(0,\abs{a}+1)$. Indeed, considering $D^* = \{ 1/z \mid z \in D\} \subseteq \bbC$ and the conformal map $z \mapsto f(1/z)$ from $D^*$ to $\bbD$, we see that its derivative is comparable everywhere on $B(0,\frac{1}{\abs{a}+1})$. In particular, it cannot map any of these points anywhere close to $\partial\bbD$.

When $\abs{a}$ is large, let $\wt z \in \partial B(0,\abs{a}+1/3)$ be the closest point to $a$. The argument above shows that $f(\wt z)$ cannot be too close to $\partial\bbD$. Let $V$ be the union of $B(\wt z,2/3) \cap D$ with all points that it separates from $\infty$ in $D$. By \cref{le:ghm} there exists a universal constant $r_0$ (independent of $D$) and a path from $f(\wt z)$ to $1$ (dependent of $D$) whose $r_0$-neighbourhood is contained in $f(V)$. Since $V \subseteq B(0,\abs{a}+1)$, the claim follows.
\end{proof}

\begin{lemma}\label{le:inf_loc}
Given $r_0 > 0$, there exists $R>0$ such that for any $(D,a)$ as in \eqref{eq:Da} with $\abs{a} > R$ we have $f(\infty) \notin B(0,1-r_0/2)$.
\end{lemma}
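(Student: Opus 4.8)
The plan is to invert about the origin, turning the point $\infty \in D$ (whose image we want to control) into an interior point of a \emph{bounded} domain, and then read off the answer from the Koebe distortion theorem. Concretely, set $D^* = \{1/z \mid z \in D\}$ and $g(z) = f(1/z)$; since $0 \notin D$ we have $\infty \notin D^*$, so $D^* \subseteq \bbC$ and $g \colon D^* \to \bbD$ is a conformal map with $g(1/z_D) = 0$, $g(1/a) = 1$, and $g(0) = f(\infty)$. Let $h \defeq g^{-1} \colon \bbD \to D^*$, so that $h(0) = 1/z_D$ and $\abs{h'(0)} = \crad(1/z_D, D^*)$. As observed before \cref{le:dist_1} (using $1/a \in \partial D^*$ and the standard estimate $\crad \le 4\dist$), one has
\[ \abs{h'(0)} = \crad(1/z_D,D^*) \le 4\,\abs{\tfrac1{z_D}-\tfrac1a} = \frac{8}{\abs{a}(\abs{a}+2)} . \]

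\textbf{Applying Koebe.} Apply the growth form of Koebe's distortion theorem to the univalent function $h$ on $\bbD$ at the point $w_0 \defeq g(0) = f(\infty)$. Since $h(w_0) = 0$ and $h(0) = 1/z_D$, this yields
\[ \frac{1}{\abs{a}+2} = \abs*{0 - \tfrac1{z_D}} = \abs{h(w_0)-h(0)} \le \abs{h'(0)}\,\frac{\abs{w_0}}{(1-\abs{w_0})^2} \le \frac{8}{\abs{a}(\abs{a}+2)}\cdot\frac{\abs{f(\infty)}}{(1-\abs{f(\infty)})^2} , \]
and hence $\dfrac{\abs{f(\infty)}}{(1-\abs{f(\infty)})^2} \ge \dfrac{\abs{a}}{8}$. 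This is the only real computation in the proof; the rest is bookkeeping.

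\textbf{Conclusion.} If $r_0 \ge 2$ then $B(0,1-r_0/2) = \emptyset$ and there is nothing to prove, so assume $r_0 < 2$ and put $R \defeq 32/r_0^2$. Suppose $\abs{a} > R$ and, for contradiction, $f(\infty) \in B(0,1-r_0/2)$. Since $t \mapsto t/(1-t)^2$ is increasing on $[0,1)$,
\[ \frac{\abs{a}}{8} \le \frac{\abs{f(\infty)}}{(1-\abs{f(\infty)})^2} \le \frac{1-r_0/2}{(r_0/2)^2} < \frac{4}{r_0^2} = \frac{R}{8} < \frac{\abs{a}}{8} , \]
a contradiction. Therefore $f(\infty) \notin B(0,1-r_0/2)$, which proves the lemma with $R = 32/r_0^2$. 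The only points needing a little care are that $D^* \subseteq \bbC$ (so that Koebe applies to $h$ on all of $\bbD$) and that the distance from $1/z_D$ to $\partial D^*$ is realized at $1/a$; both are already recorded in the discussion preceding \cref{le:dist_1}, so I do not anticipate any genuine obstacle.
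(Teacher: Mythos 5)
Your proof is correct and follows essentially the same route as the paper: invert about the origin, bound $\crad(1/z_D,D^*)$ by $O(\abs{a}^{-2})$ using $1/a\in\partial D^*$, and apply Koebe distortion to $h(z)=1/f^{-1}(z)$ to see that $0$, which lies at distance $\asymp\abs{a}^{-1}$ from $h(0)=1/z_D$, cannot be in the image of $B(0,1-r_0/2)$. The only cosmetic difference is that you use the integrated (growth) form of the distortion theorem, which yields the explicit constant $R=32/r_0^2$, whereas the paper argues via the derivative bound on $B(0,1-r_0/2)$ and keeps the constants implicit.
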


\begin{proof}
Consider the conformal map $z \mapsto 1/f^{-1}(z)$ from $\bbD$ to $D^* = \{ 1/z \mid z \in D\} \subseteq \bbC$. We saw right above the statement of Lemma \ref{le:dist_1} that the conformal radius of $1/z_D$ in $D^*$ is comparable to $\abs{a}^{-2}$. 
Koebe's distortion theorem implies that the derivative of $z \mapsto 1/f^{-1}(z)$ is comparable to $\abs{a}^{-2}$ on $B(0,1-r_0/2)$ (up to some factor depending on $r_0$). But since the distance from $1/z_D = 1/f^{-1}(0)$ to $0 = 1/\infty$ is comparable to $\abs{a}^{-1} \gg \abs{a}^{-2}$, we cannot have $\infty \in f^{-1}(B(0,1-r_0/2))$.
\end{proof}

\begin{lemma}\label{le:inf_dist}
For any $R>0$ there exists $\delta>0$ such that the following is true. Let $(D,a)$ be as in \eqref{eq:Da} with $\abs{a} \in [1,R]$, and $A = f(\hatC \setminus B(0,\abs{a}+1))$. Then $\dist(f(\infty),\partial A) \ge \delta$.
\end{lemma}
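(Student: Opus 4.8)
\textbf{Proof plan for \cref{le:inf_dist}.}
By rotating $D$ we may assume $a=\abs a>0$, so that $z_D=\abs a+2$. Since $\abs a=\sup\{\abs z:z\in\hatC\setminus D\}$, the exterior disk $\Omega\defeq\hatC\setminus\overline{B(0,\abs a)}$ is contained in $D$, and it contains both $\infty$ and $z_D$. It will be convenient to work in the chart $\phi(w)\defeq f(1/w)$, which (using the conformal Laurent expansion of $f$ at $\infty$, together with the injectivity of $w\mapsto1/w$ on $B(0,1/\abs a)$ onto $\{\abs z>\abs a\}\cup\{\infty\}\subseteq D$) is analytic and univalent on $B(0,1/\abs a)$ with $\phi(0)=f(\infty)$ and $c_1\defeq\phi'(0)=\lim_{z\to\infty}z(f(z)-f(\infty))\ne0$. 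The proof has three steps.

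\emph{Step 1: $f(\infty)$ stays a bounded hyperbolic distance from the centre.} Since $\Omega\subseteq D$, monotonicity of the hyperbolic metric gives $d_{\bbD}(f(\infty),0)=d_D(\infty,z_D)\le d_\Omega(\infty,z_D)$. The map $z\mapsto\abs a/z$ takes $\Omega$ conformally onto $\bbD$, sending $\infty\mapsto0$ and $z_D$ to a point of modulus $\tfrac{\abs a}{\abs a+2}$, so $d_\Omega(\infty,z_D)=d_{\bbD}\bigl(0,\tfrac{\abs a}{\abs a+2}\bigr)\le d_{\bbD}\bigl(0,\tfrac{R}{R+2}\bigr)=:M(R)<\infty$ for $\abs a\in[1,R]$. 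Since $d_{\bbD}(v,0)$ depends only on $\abs v$ and tends to $\infty$ as $\abs v\uparrow1$, this yields $1-\abs{f(\infty)}\ge\delta_1(R)>0$.

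\emph{Step 2: lower bound on $\abs{c_1}$.} Consider $F(v)\defeq1/f^{-1}(v)$, which is conformal from $\bbD$ onto $D^*\defeq\{1/z:z\in D\}\subseteq\bbC$ with $F(f(\infty))=0$ and $F'(f(\infty))=1/c_1$. Since $\{\abs z>\abs a\}\subseteq D$ we have $B(0,1/\abs a)\subseteq D^*$, while $1/a\in\bbC\setminus D^*$, so $\dist(0,\bbC\setminus D^*)=1/\abs a$. Applying Koebe's $\tfrac14$-theorem to $F$ on the disk $B(f(\infty),1-\abs{f(\infty)})\subseteq\bbD$ gives $\tfrac{1-\abs{f(\infty)}}{4\abs{c_1}}\le1/\abs a$, hence $\abs{c_1}\ge\tfrac{\abs a\,(1-\abs{f(\infty)})}{4}\ge\tfrac{\delta_1(R)}{4}$ using $\abs a\ge1$ and Step 1.

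\emph{Step 3: conclusion.} Under $z\mapsto1/w$ the set $\hatC\setminus B(0,\abs a+1)$ corresponds to $\overline{B(0,1/(\abs a+1))}$, so $A=\phi\bigl(\overline{B(0,1/(\abs a+1))}\bigr)$; since $\phi$ is a homeomorphism on its domain, $f(\infty)=\phi(0)$ lies in the interior of $A$ and $\partial A=\phi(\{\abs w=1/(\abs a+1)\})$. Koebe's $\tfrac14$-theorem applied to $\phi$ on $B(0,1/(\abs a+1))$ shows that the interior of $A$ contains $B\bigl(f(\infty),\tfrac{\abs{c_1}}{4(\abs a+1)}\bigr)$, so for $\abs a\le R$,
\[
\dist(f(\infty),\partial A)\ \ge\ \frac{\abs{c_1}}{4(\abs a+1)}\ \ge\ \frac{\delta_1(R)}{16(R+1)}\ =:\ \delta>0 .
\]

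The only genuinely non-routine point is Step 1: a priori $\abs{f(\infty)}$ and $\abs{c_1}=\abs{\phi'(0)}$ are coupled to one another through Koebe estimates in a way that by itself does not prevent either from degenerating, so an independent input is needed, and that input is exactly that $\infty$ sits deep inside $D$ (the entire exterior $\{\abs z>\abs a\}$ is part of $D$), cleanly encoded by hyperbolic-metric monotonicity. (This is also why the boundedness assumption $\abs a\le R$ is essential, consistent with \cref{le:inf_loc} treating large $\abs a$ separately.) Everything after Step 1 is bookkeeping of two Koebe-type estimates; one should only take minor care that $\phi$ is analytic and univalent at $w=0$ and that $D^*\subseteq\bbC$, so that the planar Koebe theorem applies to $F$.
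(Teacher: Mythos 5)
Your proof is correct. The final step is the same as the paper's: both arguments end by applying Koebe's $1/4$-theorem to $\phi(w)=f(1/w)$ on $B(0,1/(\abs a+1))$, so everything reduces to a lower bound on $\abs{\phi'(0)}$. Where you differ is in how that lower bound is obtained. The paper recycles the computation preceding \cref{le:dist_1}: the conformal radius of $1/z_D$ in $D^*$ is comparable to $1/(\abs a(\abs a+2))$ because $f(z_D)=0$, so $\abs{\phi'(1/z_D)}\asymp\abs a(\abs a+2)\ge 1$, and since $0$ and $1/z_D$ both lie in $B(0,1/\abs a)\subseteq D^*$ at relative distance $\abs a/(\abs a+2)\le R/(R+2)<1$ from the centre, Koebe's distortion theorem transfers this to $\abs{\phi'(0)}\gtrsim_R 1$. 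You instead first control $1-\abs{f(\infty)}$ via hyperbolic-metric monotonicity (equivalently the Schwarz--Pick lemma) using $\hatC\setminus\overline{B(0,\abs a)}\subseteq D$, and then convert this into a lower bound on $\abs{\phi'(0)}$ by applying Koebe's $1/4$-theorem to the inverse map $F=\phi^{-1}$ together with $\dist(0,\bbC\setminus D^*)=1/\abs a$. Both routes exploit exactly the same geometric input (the exterior disk sits inside $D$, and $\abs a\in[1,R]$), and both are sound; the paper's is marginally more economical because the conformal-radius estimate at $1/z_D$ had already been set up for \cref{le:dist_1}, whereas your version has the small advantage of isolating the statement $1-\abs{f(\infty)}\ge\delta_1(R)$, which is the bounded-$\abs a$ companion to \cref{le:inf_loc}.
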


\begin{proof}
Consider the conformal map $z \mapsto f(1/z)$ from $D^* = \{ 1/z \mid z \in D\}$ to $\bbD$. 
By the discussion right above the statement of Lemma \ref{le:dist_1}, its derivative at $1/z_D$ is comparable to $\abs{a}(\abs{a}+2) \ge 1$. Since $\abs{1/z_D} \asymp \dist(1/z_D,\partial D^*)\asymp 1$ (where the implicit constants depend on $R$),
Koebe's distortion theorem gives that also the derivative at $0$ is bounded from below by a constant depending on $R$. Since $\partial A = \{ f(1/z) \mid z \in \partial B(0,1/(\abs{a}+1)) \}$, the claim follows from Koebe's $1/4$-theorem.
\end{proof}

\begin{corollary}\label{co:target_change}
There exist finite constants $c' > 0$ and $\varepsilon > 0$ such that the following is true.
Let $(D,a)$ be as in \eqref{eq:Da} with $\abs{a} \ge 1$, and $A = f(\hatC \setminus B(0,\abs{a}+1))$. Let $\gamma$ be the straight line from $1$ to $0$. Consider either the two \slek{} measures $\nu_{\bbD;1\to 0}$ and $\nu_{\bbD;1\to f(\infty)}$, or the two \slek{}$(2)$ measures $\nu_{\bbD;1\to 0;u}$ and $\nu_{\bbD;1\to f(\infty);u}$ with the same force point $u \in \barD$. Then, on the event $\{ \norm{\gamma-\eta}_{\infty;[0,\tau_A]} < \varepsilon \}$, the law of $\eta\big|_{[0,\tau_A]}$ under the two measures are absolutely continuous with density bounded between $[1/c',c']$.
\end{corollary}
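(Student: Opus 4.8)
The plan is to write the Radon--Nikodym derivative between the two SLE measures explicitly as a coordinate-change martingale and to show that it stays in a universal interval on the given event. Using the convention that the target point is an extra force point of weight $\kappa-6$ minus the other weights, $\nu_{\bbD;1\to 0}$ is \slek{}$(\kappa-6)$ from $1$ with force point at $0$, and $\nu_{\bbD;1\to f(\infty)}$ is the same with the force point moved to $f(\infty)$ (and analogously, carrying along an unchanged weight-$2$ force point at $u$, in the \slek{}$(2)$ case). Since both $0$ and $f(\infty)$ lie in $\mathrm{int}(A)$ while $\eta[0,\tau_A]$ stays inside the connected set $\bbD\setminus\mathrm{int}(A)$, the curve up to $\tau_A$ disconnects neither target from its tip, so the transformation rule and density formula of \cite[Theorems 3 and 6]{sw-sle-coordinate-change} apply and express $\frac{d\nu_{\bbD;1\to f(\infty)}}{d\nu_{\bbD;1\to 0}}\big|_{\cF_{\tau_A}}$ as a fixed function of $\abs{g_{\tau_A}'(0)}$, $\abs{g_{\tau_A}'(f(\infty))}$, the images $g_{\tau_A}(0),g_{\tau_A}(f(\infty))$ and the driving point, where $g_t$ is the radial Loewner map of $\bbD\setminus\eta[0,t]$.

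The bulk of the work is then a uniform (in $D$) control of these conformal quantities. For the target $0$: applying Koebe's distortion and $1/4$ theorems to $f$ on the ball $B(z_D,2)\subseteq D$, on which $\abs{f'}\asymp\abs{f'(z_D)}\asymp 1$ by the stated bound $\abs{f'(z_D)}\in[\frac{|a|}{8(|a|+2)},\frac{|a|}{2(|a|+2)}]$, gives a universal $\delta_0>0$ with $B(0,\delta_0)\subseteq f(B(z_D,1))\subseteq A$; hence $\eta[0,\tau_A]$ avoids $B(0,\delta_0)$ and the conformal radius of $0$ in $\bbD\setminus\eta[0,\tau_A]$ lies in $[\delta_0,1]$, so all quantities attached to $0$ are bounded above and below by universal constants. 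For the target $f(\infty)$: passing to $D^*=\{1/z:z\in D\}$ and using the conformal-radius estimates for $D^*$ from the proofs of \cref{le:inf_loc,le:inf_dist}, I would show $\dist(f(\infty),\partial A)\gtrsim\crad(f(\infty),\bbD)=1-\abs{f(\infty)}^2$ uniformly; combined with $\eta[0,\tau_A]\subseteq\bbD\setminus\mathrm{int}(A)$, this makes the conformal radius of $f(\infty)$ in $\bbD\setminus\eta[0,\tau_A]$ comparable to the one in $\bbD$, so the modulus-type quantities at $f(\infty)$ are controlled as well.

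Finally, the event $\{\norm{\gamma-\eta}_{\infty;[0,\tau_A]}<\varepsilon\}$ enters to control the remaining angular terms in the coordinate-change martingale: for $\varepsilon$ below the $r_0$ of \cref{le:dist_1} and below $\delta_0$, it confines $\eta[0,\tau_A]$ to the $\varepsilon$-sausage around the segment $[0,1]$, a simply connected region that winds around neither $0$ nor $f(\infty)$, so that the argument of the tip relative to either target, read through $g_t$, varies by at most a universal amount. To make this uniform I would split along the threshold $R$ of \cref{le:inf_loc}: for $\abs{a}\le R$, \cref{le:inf_dist} keeps $f(\infty)$ at distance $\ge\delta$ from $\partial A$ and hence bounded away from $\partial\bbD$, so everything is controlled by constants depending on $R$; for $\abs{a}>R$, \cref{le:inf_loc} puts $f(\infty)$ within $r_0/2$ of $\partial\bbD$, where the sausage description together with \cref{le:dist_1} keeps the curve away from any region from which it could wind around the degenerating target. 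Assembling the three bounds gives the density in a universal interval $[1/c',c']$ on the event; the \slek{}$(2)$ case is identical since $u$ is common to both measures and only contributes the factor $1$ to the ratio.

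The step I expect to be the main obstacle is the uniformity as $\abs{a}\to\infty$: there $f(\infty)$ and the arc $\partial A$ both collapse toward $\partial\bbD$, the relevant ratio in Koebe's distortion theorem tends to $1$, and so the estimate near $f(\infty)$ cannot be obtained by comparing derivatives but must go through conformal-radius comparisons as above, and one then has to verify carefully that the event $\{\norm{\gamma-\eta}<\varepsilon\}$ genuinely prevents the curve from winding around the degenerating target --- which is exactly what \cref{le:dist_1,le:inf_loc,le:inf_dist} are set up to provide.
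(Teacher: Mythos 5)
Your proposal is correct in outline and rests on exactly the same geometric skeleton as the paper's proof: Koebe's $1/4$-theorem to get $B(0,1/64)\subseteq A$ (so $0$ stays away from the curve), and \cref{le:dist_1,le:inf_loc,le:inf_dist} to keep $f(\infty)$ at definite distance from $\gamma[0,\tau_A]$, uniformly in $\abs{a}$. Where you diverge is in how the absolute continuity is then extracted. The paper's proof is three lines: having checked that $0$ and $f(\infty)$ lie at distance at least $2\varepsilon$ from $\gamma[0,\tau_A]$, it treats each target as a force point (of weight $\kappa-6$, resp.\@ $\kappa-8$ in the \slek{}$(2)$ case) and invokes \cref{le:sle_abs_cont} directly, comparing both measures to a common reference with the far force point dropped; since $\gamma[0,\tau_A]\subseteq[1/64,1]$ always, the subdomain $U$ there can be taken to be a fixed $\varepsilon$-sausage around $[1/64,1]$, independent of $(D,a)$. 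You instead unfold the Schramm--Wilson coordinate-change martingale and propose to bound each conformal quantity ($\abs{g_t'}$ at the targets, conformal radii, angular terms) by hand. That is in substance a from-scratch proof of the special case of \cref{le:sle_abs_cont} that is needed, and the step you yourself flag as the main obstacle --- uniformity as $\abs{a}\to\infty$, where $f(\infty)$ degenerates towards $\partial\bbD$ together with $\partial A$ --- is precisely the content that \cref{le:sle_abs_cont} packages: its constant depends only on $U$, $\varepsilon$ and the weights, not on the location of the far force points. So your route is viable but leaves the hardest analytic estimate only sketched, whereas the intended argument reduces the corollary entirely to the geometric lemmas plus one citation; if you replace your second and third paragraphs by an appeal to \cref{le:sle_abs_cont} (applied twice, through a common reference measure), the proof closes immediately.
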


\begin{proof}
By \cref{le:inf_loc,le:inf_dist}, if $\varepsilon$ is sufficiently small, the point $f(\infty)$ has distance at least $2\varepsilon$ from $\gamma[0,\tau_A]$. Moreover, we have $B(0,1/64) \subseteq A$ due to Koebe's $1/4$-theorem, so $0$ is also bounded away from $\gamma[0,\tau_A]$. Therefore, the claim follows from \cref{le:sle_abs_cont}.
\end{proof}

\begin{lemma}\label{le:no_return}
Given $r_0 > 0$, there exists $r_1 > 0$ such that the following is true. Let $\alpha\colon [0,1] \to \overline{\bbD}$ be a curve with $\alpha(0) = 1$ and $\alpha \subseteq \{ \abs{z} \ge 1/64, \Re(z)>0 \}$. Suppose also that $\alpha$ does not leave $B(0,1-r_0/4)$ after entering $B(0,1-r_0/2)$, and that $\alpha(1)$ is connected to $0$ in $B(0,1-r_0) \setminus \alpha$. Let $D_\alpha$ denote the connected component of $\bbD\setminus\alpha$ containing $0$ (so in particular $D_\alpha=\bbD\setminus\alpha$ if $\bbD\setminus\alpha$ is connected), and let $f_\alpha\colon D_\alpha \to \bbD$ denote the conformal map with $f_\alpha(0) = 0$ and $f_\alpha(\alpha(1)) = 1$. 
Then $f_\alpha^{-1}(B(1,r_1) \cap \bbD) \subseteq B(0,1-r_0/4)$.
\end{lemma}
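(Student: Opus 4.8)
The plan is to deduce \cref{le:no_return} from a \emph{uniform modulus-of-continuity estimate for $f_\alpha^{-1}\colon\bbD\to D_\alpha$ at the prime end $1$}, obtained via Beurling's projection theorem using a sub-arc of $\alpha$ near its tip as a geometric obstacle. In spirit this is the counterpart for the domain $D_\alpha$ of what \cref{le:dist_1} does for domains as in \eqref{eq:Da}.

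First I would record the easy consequences of the hypotheses. Since $\alpha(1)$ is joined to $0$ inside $B(0,1-r_0)\setminus\alpha$, it is accessible from $D_\alpha$ and $\abs{\alpha(1)}\le 1-r_0$, so $\dist(\alpha(1),\partial\bbD)\ge r_0$; moreover $D_\alpha$ is simply connected (its complement in $\hatC$ is connected, consisting of $\hatC\setminus\bbD$, the arc $\alpha$, and the remaining components of $\bbD\setminus\alpha$, all chained through $1\in\partial\bbD$) and $\diam(D_\alpha)\le 2$. I would then reduce the assertion to finding $r_1=r_1(r_0)>0$ with $f_\alpha^{-1}(B(1,r_1)\cap\bbD)\subseteq B(\alpha(1),r_0/2)$; since $\abs{\alpha(1)}\le 1-r_0$, this places the set inside $B(0,1-r_0/2)\subseteq B(0,1-r_0/4)$. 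Finally, as $f_\alpha^{-1}(B(1,r_1)\cap\bbD)$ is connected and, by accessibility of $\alpha(1)$, has $\alpha(1)$ in its closure, it suffices to bound its diameter by $r_0/2$, i.e.\ to show $f_\alpha^{-1}$ is continuous at the prime end $1$ with a modulus uniform over all admissible $\alpha$.

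Next I would extract the obstacle. Let $t_1$ be the first time $\alpha$ enters $B(0,1-r_0/2)$, which exists because $\abs{\alpha(1)}\le 1-r_0<1-r_0/2$ and $\abs{\alpha(0)}=1$. By the hypothesis that $\alpha$ does not leave $B(0,1-r_0/4)$ after entering $B(0,1-r_0/2)$, the sub-arc $\beta:=\alpha\big|_{[t_1,1]}$ lies in $B(0,1-r_0/4)$ and joins $\alpha(t_1)$ (of modulus $1-r_0/2$) to $\alpha(1)$ (of modulus $\le 1-r_0$); hence $\beta\subseteq\alpha\subseteq\bbC\setminus D_\alpha$ is a continuum through $\alpha(1)$ with $\diam(\beta)\ge r_0/2$. (One also gets for free that $\alpha\big|_{[0,t_1)}$ stays at distance $\ge r_0/2$ from $\alpha(1)$, so that $\partial D_\alpha\cap B(\alpha(1),r_0/2)\subseteq\beta$.) Then I would invoke the standard consequence of Beurling's projection theorem: if a simply connected $U$ with $\diam(U)\le M$ has $\bbC\setminus U$ containing a continuum $K\ni p$ with $\diam(K)\ge\delta$, then any conformal $g\colon\bbD\to U$ with $g(1)=p$ obeys $\abs{g(w)-p}\le C(M,\delta)\,\abs{1-w}^{1/2}$ for all $w\in\bbD$ (equivalently, Brownian motion started at $z\in U$ with $\abs{z-p}\ge\rho$ exits $U$ inside $B(p,\rho_1)$ with probability $\lesssim(\rho_1/\min(\rho,\delta))^{1/2}$, by comparison with a radial slit). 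Applying this with $U=D_\alpha$, $p=\alpha(1)$, $K=\beta$, $\delta=r_0/2$, $M=2$ gives $\abs{f_\alpha^{-1}(w)-\alpha(1)}\le C(r_0)\abs{1-w}^{1/2}$; choosing $r_1$ so small that $C(r_0)r_1^{1/2}<r_0/4$ forces $\diam f_\alpha^{-1}(B(1,r_1)\cap\bbD)<r_0/2$, and the reductions complete the proof.

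The main obstacle is precisely the uniform boundary regularity used in the last step: $D_\alpha$ can be a very irregular simply connected domain (the curve $\alpha$ need not be simple and $\partial D_\alpha$ may be wild), so the Beurling argument has to be run using only the robust data actually available — $\diam(D_\alpha)\le 2$, the continuum $\beta$ of diameter $\ge r_0/2$ attached at $\alpha(1)$ inside $\bbC\setminus D_\alpha$, the accessibility of $\alpha(1)$, and $\partial D_\alpha\cap B(\alpha(1),r_0/2)\subseteq\beta$ — and in particular without assuming anything about the local structure of $\partial D_\alpha$ near $\alpha(1)$. If one prefers not to black-box the Beurling consequence, the required statement (that the crosscuts $f_\alpha^{-1}(\partial B(1,s)\cap\bbD)$ have Euclidean diameter tending to $0$ as $s\downarrow 0$, uniformly in $\alpha$) can be reproved directly by a crosscut/extremal-length argument: such a crosscut contained in $B(\alpha(1),\rho)$, together with the piece of $\beta$ inside that ball, yields two disjoint radial crossings of a round annulus of definite modulus centered at $\alpha(1)$, and Beurling's projection theorem then bounds the harmonic measure from points of $D_\alpha$ at distance $\ge r_0/4$ of a small ball about $\alpha(1)$ by $C\sqrt{\rho/r_0}$, which collapses the crosscut onto $\alpha(1)$ at the quantitative rate claimed.
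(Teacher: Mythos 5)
Your argument hinges on the claim that $f_\alpha^{-1}$ admits a \emph{uniform} modulus of continuity at the prime end $1$ --- concretely, that $\abs{f_\alpha^{-1}(w)-\alpha(1)}\le C(r_0)\abs{1-w}^{1/2}$, so that $\diam f_\alpha^{-1}(B(1,r_1)\cap\bbD)$ can be made $<r_0/2$. This claim is false under the hypotheses of the lemma, and the reduction to a diameter bound cannot work. Counterexample: let $\alpha$ approach $\alpha(1)$ (with $\abs{\alpha(1)}=1-r_0$, say), trace a near-circle of radius $0.7r_0$ around $\alpha(1)$ leaving only a gap of size $\varepsilon$, then enter through the gap and terminate at $\alpha(1)$. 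All hypotheses of the lemma are satisfied, and your continuum $\beta$ of diameter $\ge r_0/2$ through $\alpha(1)$ is present. But the enclosed chamber is a connected piece of $D_\alpha$ of diameter $\asymp r_0$ that is attached to the rest of $D_\alpha$ only through the $\varepsilon$-gap and that contains the access to the prime end at $\alpha(1)$; hence its $f_\alpha$-image is squeezed into $B(1,s)\cap\bbD$ with $s\to 0$ as $\varepsilon\to 0$. So for every $r_1>0$ there are admissible $\alpha$ with $\diam f_\alpha^{-1}(B(1,r_1)\cap\bbD)\asymp r_0$ and with points $w\in B(1,r_1)\cap\bbD$ at $\abs{f_\alpha^{-1}(w)-\alpha(1)}\asymp r_0$. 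The same construction defeats the fallback crosscut argument in your last paragraph: the crosscuts $f_\alpha^{-1}(\partial B(1,s)\cap\bbD)$ do \emph{not} collapse onto $\alpha(1)$ uniformly in $\alpha$ (for $s$ in a range depending on $\varepsilon$ they span the chamber). The lemma itself survives this example only because the chamber happens to sit inside $B(0,1-r_0/4)$ --- which is exactly what must be proved directly, not via a diameter bound.

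The underlying error is a reversal of what Beurling's projection theorem controls. A continuum in $\bbC\setminus D_\alpha$ joining $\alpha(1)$ to $\partial B(\alpha(1),\delta)$ gives $\omega(z,\partial D_\alpha\cap B(\alpha(1),s),D_\alpha)\le C(s/\abs{z-\alpha(1)})^{1/2}$, i.e.\ it bounds the harmonic measure of the \emph{boundary near $\alpha(1)$} seen from afar --- equivalently, the size of the \emph{preimage} $f_\alpha(\partial D_\alpha\cap B(\alpha(1),s))\subseteq\partial\bbD$. It says nothing about how far the conformal neighbourhood $f_\alpha^{-1}(B(1,s)\cap\bbD)$ of the prime end extends into $D_\alpha$; large chambers behind small gaps are fully compatible with it. The paper's proof is built precisely to avoid needing any modulus of continuity at the prime end: it exhibits two arcs $J_\pm\subseteq\partial\bbD$ of definite length flanking $1$ (coming from the longest arcs of $\partial B(0,1-r_0)\setminus\alpha$ and $\partial B(0,1-r_0/2)\setminus\alpha$, which by your hypothesis on connectivity to $0$ do not separate $1$ from $0$) such that the boundary values of $f_\alpha^{-1}$ on $J_\pm$ and on the arc between them lie in $B(0,1-r_0/4)$, and then writes $f_\alpha^{-1}(z)$ as a Poisson integral: for $z\in B(1,r_1)\cap\bbD$ almost all of the harmonic measure sits on those arcs, and convexity of the ball does the rest. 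Your observation that $\partial D_\alpha\cap B(\alpha(1),r_0/2)\subseteq\beta\subseteq B(0,1-r_0/4)$ is correct and relevant, but on its own it controls only the near part of the boundary; the definite-length shielding arcs are the missing ingredient needed to suppress the contribution of the far part.
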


\begin{figure}[h]
	\centering
	\includegraphics[width=0.8\textwidth]{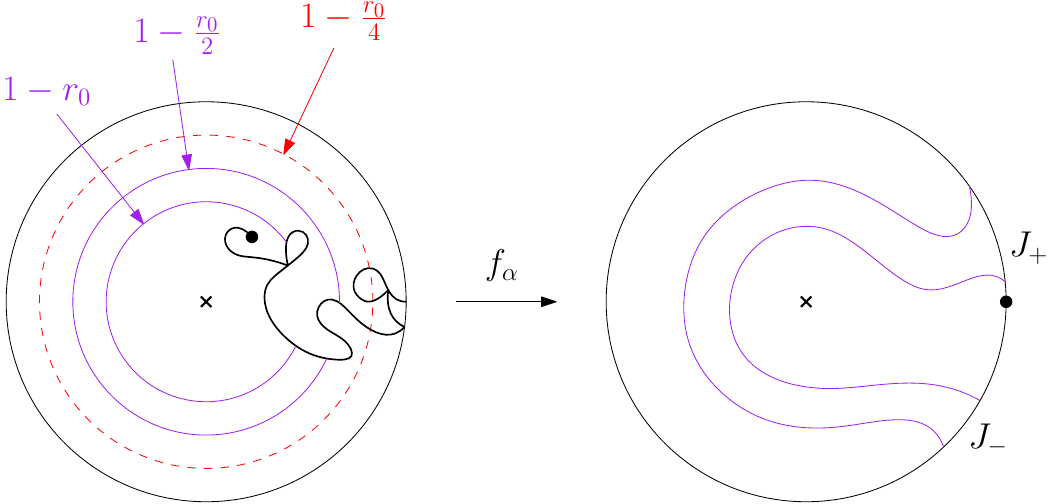}
	\caption{The setup and proof of \cref{le:no_return}.}
\end{figure}

\begin{proof}
Notice that $\partial B(0,1-r_0) \setminus \alpha$ may have several connected components, each of which is an arc of $\partial B(0,1-r_0)$.
Let $C_{r_0}$ denote the longest arc of $\partial B(0,1-r_0) \setminus \alpha$ (i.e. the one to the left in the figure, or, equivalently, the unique arc crossing the negative real axis). By our assumption, it does not separate $\alpha(1)$ from $0$ in $D_\alpha$. Therefore $f_\alpha(C_{r_0})$ does not separate $1$ from $0$.

Next, let $C_{r_0/2}$ denote the longest arc of $\partial B(0,1-r_0/2) \setminus \alpha$. Its image $f_\alpha(C_{r_0/2})$ lies in the component of $\bbD \setminus f_\alpha(C_{r_0})$ that does not contain $0$. Let $J_+$, $J_-$ denote the two arcs of $\partial\bbD$ that lie between $f_\alpha(C_{r_0})$ and $f_\alpha(C_{r_0/2})$. By considering a Brownian motion in the domain $D_\alpha$ starting from $0$, we see that the harmonic measures of both $J_\pm$ seen from $0$, and therefore their lengths, are at least some constant depending on $r_0$. (More precisely, these harmonic measures are at least the probability of Brownian motion staying inside $B(0,1/64) \cup \{\Re(z)<0\}$ before entering the annulus $\{ 1-r_0 < \abs{z} < 1-r_0/2 \}$ and then making a clockwise (resp., counterclockwise) turn inside the annulus.)

We claim that $f_\alpha^{-1}(J_\pm) \subseteq B(0,1-r_0/4)$. The result will then follow from the fact that $f_\alpha^{-1}(z)$ is obtained from integrating $f_\alpha^{-1}$ on $\partial\bbD$ against the harmonic measure seen from $z$.

By symmetry, it suffices to show the claim for $J_+$. Let $z_1,z_2$ denote the endpoints of $f_\alpha^{-1}(J_+)$. By our assumption, any sub-curve of $\alpha$ from $z_1$ to $z_2$ stays inside $B(0,1-r_0/4)$. Suppose $f_\alpha^{-1}(J_+)$ contains some point $z' \notin B(0,1-r_0/4)$. Pick a simple path $P'$ in $D_\alpha$ that begins with a straight line from $-1$ to $-(1-\frac{3}{4}r_0)$, then stays between $C_{r_0}$ and $C_{r_0/2}$, and ends at $z'$. As a consequence of the Jordan curve theorem, $P'$ separates $B(0,1-r_0/4)$ into at least two components, and the two halfs of $C_{r_0/2}$ lie in different components. Moreover, $C_{r_0}$ is in the same component as the lower half of $C_{r_0/2}$. Therefore $z_1$ and $z_2$ (which are the upper endpoints of $C_{r_0}$ and $C_{r_0/2}$ respectively) lie in different components of $B(0,1-r_0/4) \setminus P'$. But since $P' \subseteq D_\alpha$, any sub-curve of $\alpha$ from $z_1$ to $z_2$ must avoid $P'$ which is impossible while staying inside $B(0,1-r_0/4)$.
\end{proof}

\begin{lemma}\label{le:nice_tip_in_D}
Given $r_0,r_1 > 0$ there exist $r_{\mathrm{N}} > 0$ and $\wt c > 0$ such that the following is true.

Let $(D,a,u)$ be as in \eqref{eq:Da} with $\abs{a} \ge 1$, and $f\colon D \to \bbD$ the corresponding conformal map. Let $\alpha\colon [0,1] \to \overline{\bbD}$ be a curve with $\alpha(0) = 1$, $\alpha(1) \in B(0,1-r_0)$, and $\abs{f^{-1}(\alpha(1))} = \abs{a}+1 > \abs{f^{-1}(\alpha(t))}$ for $t < 1$. Let $D_\alpha$ be the connected component of $\bbD\setminus\alpha$ containing $0$ and suppose $\alpha(1)\in\partial D_\alpha$. Let $f_\alpha\colon D_\alpha \to \bbD$ denote the conformal map with $f_\alpha(0) = 0$ and $f_\alpha(\alpha(1)) = 1$. 
Suppose additionally that $f_\alpha^{-1}(B(1,r_1)\cap \bbD) \subseteq B(0,1-r_0/4)$. If for some $p_{\mathrm{N}}, c_{\mathrm{N}} > 0$
\[ \nu_{\bbD;1\to 0}(\Cont(f_\alpha^{-1}(\eta[0,\sigma_{r_1}])) > \wt c\,c_{\mathrm{N}}) < \wt c\,p_{\mathrm{N}} , \]
then $(D \setminus f^{-1}(\alpha),\ f^{-1}(\alpha(1)),\ u)$ is $(p_{\mathrm{N}},r_{\mathrm{N}},c_{\mathrm{N}})$-nice. 
\end{lemma}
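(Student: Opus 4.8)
The plan is to unwind the definition of $(p_{\mathrm{N}},r_{\mathrm{N}},c_{\mathrm{N}})$-niceness for the triple $(D \setminus f^{-1}(\alpha),\ f^{-1}(\alpha(1)),\ u)$ and verify its two defining conditions. Write $D' = D \setminus f^{-1}(\alpha)$, $a' = f^{-1}(\alpha(1))$, and let $f'\colon D' \to \bbD$ be the conformal map associated to $(D',a')$ as in the paragraph below \eqref{eq:Da} (so $f'(z_{D'}) = 0$ with $z_{D'} \in \partial B(0,|a'|+2)$ the point closest to $a'$, and $f'(a') = 1$). The key observation is that $f' \circ f^{-1}$ restricted to $D_\alpha$ is a conformal map onto $\bbD$, so it differs from $f_\alpha$ only by a conformal automorphism of $\bbD$ fixing $1$. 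We will need to control this automorphism — that is, to show that $0 = f_\alpha(0)$ and $f'(z_{D'})$ are at bounded hyperbolic distance, uniformly in $D$ and $\alpha$. First I would show this using the prior lemmas: $\dist(z_{D'},\partial D') \asymp 1$ (similar to the discussion above Lemma \ref{le:dist_1}, since $|a'| = |a|+1 \ge 2$), and $z_{D'}$ is at bounded distance from the image under $f^{-1}$ of the relevant part of $\bbD$, so that $f(z_{D'})$ lies at bounded hyperbolic distance from $0$ inside $D_\alpha$. Composing with $f_\alpha$ then places $f_\alpha(f(z_{D'}))$ at bounded hyperbolic distance from $f_\alpha(0) = 0$; call this automorphism $\varphi = f' \circ f^{-1} \circ f_\alpha^{-1} \in \operatorname{Aut}(\bbD)$, which fixes $1$ and moves $0$ a bounded hyperbolic amount.

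With $\varphi$ under control, the first niceness condition $|f'(u) - 1| \ge 2r_{\mathrm{N}}$ follows because $u$ lies on $\partial D$ away from the negative-reciprocal neighbourhood of $a$, hence $f(u)$ is bounded away from $1$ in $\bbD$ (as in the discussion of $f'(z_D)$ and Corollary \ref{co:target_change}); since $\varphi$ has bounded distortion near $1$, $f'(u) = \varphi(f_\alpha(f(u)))$ stays bounded away from $1$ as well, and we pick $r_{\mathrm{N}}$ small enough. For the second condition, we must bound
\[ \nu_{\bbD;1\to 0}\bigl(\Cont\bigl((f')^{-1}(\eta[0,\sigma_{r_{\mathrm{N}}}])\bigr) > c_{\mathrm{N}}\bigr) . \]
Here $(f')^{-1} = f^{-1} \circ f_\alpha^{-1} \circ \varphi^{-1}$, so $(f')^{-1}(\eta) = f^{-1}(f_\alpha^{-1}(\varphi^{-1}(\eta)))$. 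The curve $\varphi^{-1}(\eta)$ under $\nu_{\bbD;1\to 0}$ is a radial \slek{} targeted at $\varphi^{-1}(0)$, which is a bounded hyperbolic distance from $0$; by the absolute continuity in Lemma \ref{le:sle_abs_cont} (applied as in Corollary \ref{co:target_change}, since $\varphi^{-1}(0)$ stays away from the line segment near $1$), this law is comparable — with a uniform constant, which is where the factor $\wt c$ enters — to $\nu_{\bbD;1\to 0}$ itself on curve segments near $1$. Next, since $\varphi$ is conformal with $|\varphi'| \asymp 1$ near $1$, the hypothesis $f_\alpha^{-1}(B(1,r_1)\cap\bbD) \subseteq B(0,1-r_0/4)$ guarantees that for a suitable $r_{\mathrm{N}} \le r_1$ (chosen using Koebe distortion applied to $\varphi$) we have $\varphi(B(1,r_{\mathrm{N}})\cap\bbD) \subseteq B(1,r_1)\cap\bbD$, so $\sigma_{r_{\mathrm{N}}}$ for $\eta$ corresponds to an exit time at most $\sigma_{r_1}$ for $\varphi^{-1}(\eta)$; combined with Lemma \ref{le:dist_1} this also ensures $f^{-1}(f_\alpha^{-1}(B(1,r_1)\cap\bbD)) \subseteq f^{-1}(B(0,1-r_0/4)) \subseteq B(0,|a|+1)$, so the content being measured lives where $|f^{-1}|$ and $|f_\alpha^{-1}|$ have bounded derivative and the transformation rule \eqref{eq:cont_transf} converts Minkowski content up to a bounded factor. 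Unwinding all these bounded factors (absorbed into $\wt c$) and invoking the hypothesis $\nu_{\bbD;1\to 0}(\Cont(f_\alpha^{-1}(\eta[0,\sigma_{r_1}])) > \wt c\,c_{\mathrm{N}}) < \wt c\, p_{\mathrm{N}}$ yields the desired bound $< p_{\mathrm{N}}$.

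The main obstacle I anticipate is the uniform control of the automorphism $\varphi$, i.e. showing that $f_\alpha(0)$-versus-$f'(z_{D'})$ discrepancy is bounded independently of $D$, $a$, and the curve $\alpha$ — in particular handling both the small-$|a|$ regime (direct Koebe distortion on $D^*$, as in Lemma \ref{le:dist_1}) and the large-$|a|$ regime (where one needs Lemma \ref{le:ghm} to produce a path of definite width connecting the relevant points). Once that is in hand, everything else is a bookkeeping exercise of tracking bounded multiplicative constants through the transformation rule \eqref{eq:cont_transf} and the absolute-continuity estimates, and choosing $r_{\mathrm{N}}$ small enough and $\wt c$ large enough at the end.
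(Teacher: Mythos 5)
Your overall route coincides with the paper's: factor the new uniformizing map as $\wt f = \varphi\circ f_\alpha\circ f$ with $\varphi\in\operatorname{Aut}(\bbD)$ fixing $1$, show that $\varphi$ displaces $0$ by only a bounded hyperbolic amount so that $|\varphi'|\asymp 1$ on $\overline{\bbD}$, choose $r_{\mathrm{N}}$ so that $\varphi^{-1}(B(1,2r_{\mathrm{N}})\cap\bbD)\subseteq B(1,r_1)\cap\bbD$, and transfer the content estimate via the absolute continuity of the two radial SLE laws (this is where $\wt c$ enters) together with the transformation rule \eqref{eq:cont_transf} and the boundedness of $|(f^{-1})'|$ on $B(0,1-r_0/4)$. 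The paper controls $\varphi$ by bounding $|z_D-z_{\wt D}|$, using Koebe distortion on $D^*$ when $|a|$ is \emph{large} and a trivial bound when $|a|$ is small (your two regimes are assigned the opposite tools, and the connecting path of definite width is elementary here since both points lie at definite distance from $\overline{B(0,|a|+1)}\supseteq\hatC\setminus\wt D$); you correctly identified this as the main point to be checked.

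The one genuine flaw is your verification of the first niceness condition $|\wt f(u)-1|\ge 2r_{\mathrm{N}}$. You assert that ``$f(u)$ is bounded away from $1$ in $\bbD$''; this is not a hypothesis of the lemma and is false in general: nothing in \eqref{eq:Da} constrains $u$ beyond $u\in\partial D\setminus\{a\}$, and the lemma is applied precisely in situations where $f(u)$ may be arbitrarily close to $1$ (this is the hard case treated in \cref{le:initial_nice_tip}). The correct argument, which fits inside your framework, does not look at $f(u)$ at all: the hypothesis $f_\alpha^{-1}(B(1,r_1)\cap\bbD)\subseteq B(0,1-r_0/4)$ gives
\[
\wt f^{-1}(B(1,2r_{\mathrm{N}})\cap\bbD) \;=\; f^{-1}\bigl(f_\alpha^{-1}(\varphi^{-1}(B(1,2r_{\mathrm{N}})\cap\bbD))\bigr) \;\subseteq\; f^{-1}(B(0,1-r_0/4)),
\]
a set whose closure stays away from $\partial D$; hence no prime end of $\partial D$, in particular not $u$, can be mapped by $\wt f$ into $B(1,2r_{\mathrm{N}})$. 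With that replacement, the remaining bookkeeping in your proposal (one factor of $\wt c$ from the Radon--Nikodym bound, the containment $\wt f^{-1}(\eta[0,\sigma_{r_{\mathrm{N}}}])\subseteq f^{-1}\circ f_\alpha^{-1}(\eta[0,\sigma_{r_1}])$, and Koebe applied to $z\mapsto 1/f^{-1}(z)$) goes through exactly as in the paper.
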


\begin{figure}[h]
	\centering
	\includegraphics[width=0.8\textwidth]{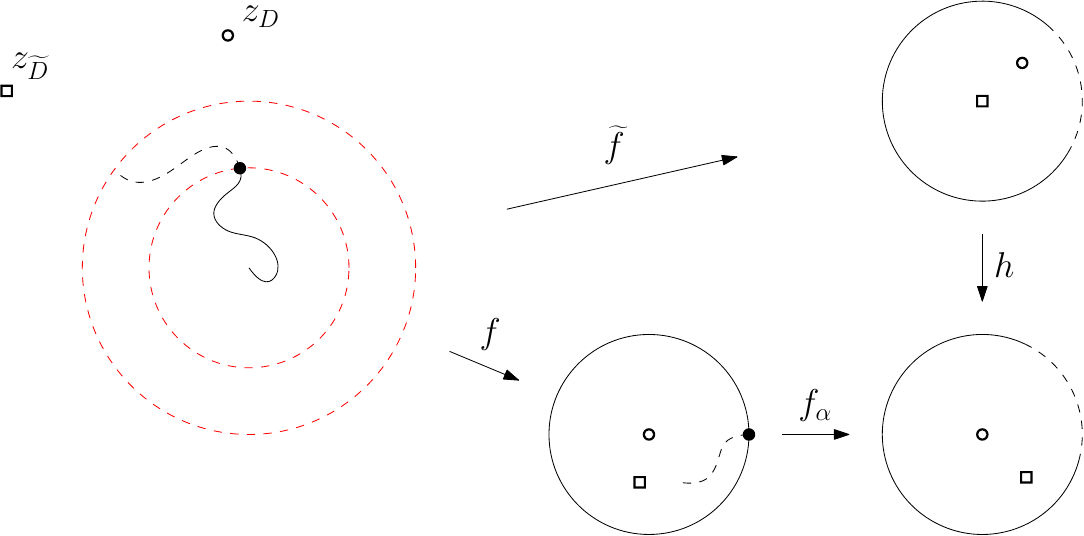}
	\caption{The setup and proof of \cref{le:nice_tip_in_D}.}
\end{figure}

\begin{proof}
Write $\wt D = D \setminus f^{-1}(\alpha)$ and denote by $\wt f\colon \wt D \to \bbD$ the conformal map corresponding to $\wt D$. Let $z_D$ and $z_{\wt D}$ denote the points on $\partial B(0,\abs{a}+2)$ and $\partial B(0,\abs{a}+3)$ closest to $a$ and $f^{-1}(\alpha(1))$, respectively. 

Observe that $\wt f$ and $f$ are related via $\wt f = h^{-1} \circ f_\alpha \circ f$ where $h\colon \bbD \to \bbD$ is the conformal map with $h(\wt f(z_D)) = 0$ and $h(1) = 1$. We claim that the distance $\abs{z_D-z_{\wt D}}$ is bounded from above by a constant depending on $r_0$. Indeed, if we let $R$ be as in \cref{le:inf_loc}, then if $\abs{a} \ge R$, the distance $\abs{z_D-f^{-1}(\alpha(1))}$ is bounded via Koebe's distortion theorem, and hence also $\abs{z_D-z_{\wt D}}$. If $\abs{a} < R$, then the distance is trivially bounded by $2R+5$. We conclude that $\abs{z_D-z_{\wt D}}$ is bounded.

Since $\abs{z_D-z_{\wt D}}$ is bounded from above (by a constant depending on $r_0$), we get that $\wt f(z_D)$ is bounded away from $\partial\bbD$, and $\abs{h'} \asymp 1$ on $\bbD$, with both bounds depending only on $r_0$. 
Consequently there exists $r_{\mathrm{N}} > 0$ such that $h(B(1,2r_{\mathrm{N}}) \cap \bbD) \subseteq B(1,r_1) \cap \bbD$.

To show that $\wt D$ is nice, we need to consider an \slek{} in $\bbD$ from $1$ to $0$, stopped at hitting $\partial B(1,r_{\mathrm{N}})$. However since $\wt f(z_D)$ is bounded away from $\partial\bbD$, this law is absolutely continuous with respect to \slek{} from $1$ to $\wt f(z_D)$, with Radon-Nikodym derivative bounded in some interval $[\wt c, \wt c^{-1}]$ with $\wt c > 0$ depending on $r_0$ and $r_{\mathrm{N}}$ 
(after possibly further decreasing $r_{\mathrm{N}}$; cf.\@ \cref{le:sle_abs_cont}).

The first condition for niceness, i.e. $\abs{\wt f(u)-1} \ge 2r_{\mathrm{N}}$ is guaranteed by $f_{\alpha}^{-1}(h(B(1,2r_{\mathrm{N}}) \cap \bbD)) \subseteq f_\alpha^{-1}(B(1,r_1)\cap \bbD) \subseteq B(0,1-r_0/4)$, which is away from the boundary. The second condition for niceness is satisfied if
\[ \nu_{\bbD;1\to \wt f(z_D)}(\Cont(\wt f^{-1}(\wt\eta[0,\sigma_{r_{\mathrm{N}}}])) > c_{\mathrm{N}}) < \wt c\,p_{\mathrm{N}} . \]
Suppose now that $\wt\eta$ is an \slek{} from $1$ to $\wt f(z_D)$, so that $h\circ\wt\eta$ has the law of an \slek{} $\eta$ from $1$ to $0$. 
By our assumption, with probability at least $1-\wt c\,p_{\mathrm{N}}$ we have
\[ \Cont(f_\alpha^{-1}(\eta[0,\sigma_{r_1}])) \le \wt c\,c_{\mathrm{N}} . \]
Applying Koebe's distortion theorem to the map $z \mapsto 1/f^{-1}(z)$ we see that on the set $B(0,1-r_0/4) \cap f(D \cap B(0,\abs{a}+2))$ the derivative  $\abs{(f^{-1})'}$ is bounded from above by a constant depending on $r_0$.
Therefore, using the assumption $f_\alpha^{-1}(\eta[0,\sigma_{r_1}]) \subseteq f_\alpha^{-1}(B(1,r_1)\cap \bbD) \subseteq B(0,1-r_0/4)$ and the transformation rule for Minkowski content \eqref{eq:cont_transf}, such $\eta$ satisfies
\[ 
\Cont(\wt f^{-1}(\wt\eta[0,\sigma_{r_{\mathrm{N}}}]))
\le \Cont(f^{-1} \circ f_\alpha^{-1}(\eta[0,\sigma_{r_1}])) 
\lesssim \wt c\,c_{\mathrm{N}} 
\]
with a factor depending only on $r_0$. 
The claim follows if $\wt c$ has been picked small enough.
\end{proof}

\begin{lemma}\label{le:support}
Let $\gamma$ be a simple curve in $\bbD \setminus \{0\}$ with $\gamma(0) = 1$, and $T \ge 0$ the capacity of $\gamma$ relative to $-1$ (i.e. the half-plane capacity of the curve after mapping to $\bbH$ as described in \cref{se:prelim}). For any $\varepsilon > 0$ there exists $p_\varepsilon > 0$ such that
\[ \nu_{\bbD;1\to 0}( \norm{\gamma-\eta}_{\infty;[0,T]} < \varepsilon ) \ge p_\varepsilon \]
where $\gamma$ and $\eta$ are parametrised by capacity.
\end{lemma}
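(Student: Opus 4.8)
\emph{Strategy.} The plan is to transfer the statement to the upper half-plane, strip off the force point coming from the radial target point via absolute continuity, and then run the classical support argument: the driving function of plain chordal \slek{} is a Brownian motion, the topological support of Wiener measure on $C([0,T])$ is $\{f:f(0)=0\}$, and the Loewner transform is continuous at the driving function of a simple curve. (We may assume $T<\infty$; in the application of this lemma $\gamma$ is a line segment.)

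\emph{Reduction to the half-plane.} Let $\phi\colon\bbD\to\bbH$ be the Möbius map with $\phi(1)=0$ and $\phi(-1)=\infty$, and set $z_0=\phi(0)\in\bbH$. By conformal invariance and the ``target point as force point'' convention of \cref{se:prelim_sle}, $\phi$ pushes $\nu_{\bbD;1\to 0}$ forward to chordal \slek{}$(\kappa-6)$ in $\bbH$ from $0$ with interior force point $z_0$, parametrised by half-plane capacity; by the definition of relative capacity this time variable equals the capacity of the $\bbD$-curve relative to $-1$, and the normalisation of $\phi$ does not matter. Write $\tilde\eta$ for this half-plane curve and $\tilde\gamma=\phi\circ\gamma$. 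Since $\gamma$ has compact image in $\overline\bbD\setminus\{0\}$ with $T<\infty$, the curve $\tilde\gamma$ is bounded, touches $\bbR$ only at $\tilde\gamma(0)=0$, and satisfies $\dist(z_0,\tilde\gamma)=:\rho_0>0$. As $\phi^{-1}$ is Lipschitz on the bounded set $\overline{N_{\rho_0/2}(\tilde\gamma)}\subseteq\overline\bbH$, it suffices to produce, for each small $\varepsilon'>0$, a positive lower bound for $\bbP(\norm{\tilde\gamma-\tilde\eta}_{\infty;[0,T]}<\varepsilon')$.

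\emph{Removing the force point and the support argument.} Fix $\varepsilon'<\rho_0/2$ and put $F=\{\norm{\tilde\gamma-\tilde\eta}_{\infty;[0,T]}<\varepsilon'/2\}$. On $F$ the curve $\tilde\eta|_{[0,T]}$ stays in a bounded region of $\overline\bbH$ at distance $\ge\rho_0/2$ from $z_0$ (and from $\infty$), so by \cref{le:sle_abs_cont} --- using, for $\kappa\ge4$, the explicit Girsanov density of \cite[Theorem 6]{sw-sle-coordinate-change}, which stays in $[c,c^{-1}]$ as long as $z_0$ is not approached --- the law of $\tilde\eta|_{[0,T]}$ restricted to $F$ is mutually absolutely continuous, with density in $[c,c^{-1}]$, with respect to the law restricted to $F$ of a plain chordal \slek{} $\tilde\eta^0$ in $\bbH$ from $0$ to $\infty$, whose driving function is simply $(\sqrt\kappa B_t)_{t\in[0,T]}$. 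It is thus enough to show $\bbP(\norm{\tilde\gamma-\tilde\eta^0}_{\infty;[0,T]}<\varepsilon'/2)>0$. Let $W^{\tilde\gamma}$ be the (continuous, since $\tilde\gamma$ is simple) Loewner driving function of $\tilde\gamma$ on $[0,T]$, with $W^{\tilde\gamma}(0)=0$. By continuity of the Loewner transform at the driving function of a simple curve (see e.g.\ \cite{law-conformal-book}; here $\tilde\gamma$ is a real-analytic arc away from $0$, so elementary Loewner estimates suffice) there is $\delta>0$ such that any Loewner chain on $[0,T]$ generated by a curve $\zeta$ with $\norm{W^\zeta-W^{\tilde\gamma}}_{\infty;[0,T]}<\delta$ obeys $\sup_{t\le T}\abs{\zeta(t)-\tilde\gamma(t)}<\varepsilon'/2$. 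Since the support of the law of $(\sqrt\kappa B_t)_{t\in[0,T]}$ in $C([0,T])$ is $\{f\in C([0,T]):f(0)=0\}\ni W^{\tilde\gamma}$, the event $\{\norm{\sqrt\kappa B-W^{\tilde\gamma}}_{\infty;[0,T]}<\delta\}$ has positive probability, and on it $\tilde\eta^0$ is $\varepsilon'/2$-close to $\tilde\gamma$ on $[0,T]$. Tracing back through the two reductions gives the required $p_\varepsilon>0$.

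\emph{Main obstacle.} The only substantive input is the continuity of the Loewner transform at a simple-curve driving function; the remaining ingredients --- absolute continuity of \slek{} variants, the Brownian support theorem, and Koebe/Lipschitz bounds for $\phi^{\pm1}$ --- are routine. The care needed is bookkeeping: matching the three parametrisations (capacity relative to $-1$, half-plane capacity, and the Loewner time variable all coincide here), and, for $\kappa\ge4$, observing that $\tilde\eta^0$ may touch $\bbR$ near its start but, on the event of $\varepsilon'/2$-closeness to $\tilde\gamma$, only at times $t$ for which $\tilde\gamma(t)$ is itself within $\varepsilon'/2$ of $\bbR$, so that $\varepsilon'/2$-closeness on all of $[0,T]$ is not contradicted.
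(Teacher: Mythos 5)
Your reduction to the half-plane and the removal of the interior force point $z_0=\phi(0)$ by absolute continuity on the event that the curve stays in a neighbourhood of $\tilde\gamma$ avoiding $z_0$ are both fine, and in fact parallel what the paper does: its entire proof consists of citing the chordal support theorem \cite[Proposition 1.4]{ty-support} and transferring to $\nu_{\bbD;1\to 0}$ by exactly this kind of absolute continuity. The difference is that you then try to prove the chordal support theorem itself from scratch, and that is where the argument breaks.

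The step ``there is $\delta>0$ such that any Loewner chain generated by a curve $\zeta$ with $\norm{W^\zeta-W^{\tilde\gamma}}_{\infty;[0,T]}<\delta$ obeys $\sup_{t\le T}\abs{\zeta(t)-\tilde\gamma(t)}<\varepsilon'/2$'' is false, and the failure has nothing to do with the regularity of $\tilde\gamma$. Uniform closeness of driving functions only yields Carath\'eodory (kernel) convergence of the complementary domains, i.e.\ locally uniform convergence of $g_t^{-1}$ on compacts of $\bbH$, uniformly in $t$; it does not control the trace in the supremum norm. A curve whose driving function stays within $\delta$ of $W^{\tilde\gamma}$ can still make spatial excursions of order one — for instance by entering a deep narrow fjord of its own past hull, or by growing a piece whose harmonic measure from $\infty$ is tiny — since such pieces contribute almost no half-plane capacity and move $g_t(\text{tip})$ almost not at all, while moving the tip far from $\tilde\gamma$. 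This discontinuity of the driving-function-to-trace map is precisely the obstruction that makes the SLE support theorem a genuine theorem rather than a corollary of the Brownian support theorem; overcoming it is the content of \cite{ty-support} (and of the Kemppainen--Smirnov-type arguments for convergence of discrete interfaces, where uniform convergence of driving processes must be supplemented by a priori crossing estimates on the curves). To repair your proof you would either have to import \cite[Proposition 1.4]{ty-support} as the paper does, or replace the continuity claim by an iterative argument that guides the SLE along $\tilde\gamma$ over short capacity increments using the domain Markov property, controlling the conformal maps at each step — which is essentially what the cited proof does.
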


\begin{proof}
For chordal SLE from $1$ to $-1$ in $\bbD$, this is \cite[Proposition 1.4]{ty-support}. The result transfers to radial SLE by absolute continuity (cf.\@ \cite{sw-sle-coordinate-change}). 
\end{proof}

\begin{lemma}\label{le:next_tip}
Let $\gamma$ be the straight line from $1$ to $0$. Let $r_0,r_1,r_{\mathrm{N}} > 0$ be as in \cref{le:dist_1,le:no_return,le:nice_tip_in_D}. For any $\varepsilon \in {(0,r_0/4]}$ and $p_{\mathrm{N}} > 0$ there exists $c_{\mathrm{N}} > 0$ such that the following is true.

Let $(D,a,u)$ be as in \eqref{eq:Da} with $\abs{a} \ge 1$, and $A = f(\hatC \setminus B(0,\abs{a}+1))$. Then
\[
\nu_{\bbD;1\to 0}\left(\begin{array}{cc}
\norm{\gamma-\eta}_{\infty;[0,\tau_A]} < \varepsilon
\quad\text{but the domain} \\
D \setminus f^{-1}(\eta[0,\tau_A]) \text{ is not } (p_{\mathrm{N}}, r_{\mathrm{N}}, c_{\mathrm{N}})\text{-nice}
\end{array}\right) 
< p_{\mathrm{N}} 
\]
where $\tau_A$ denotes the hitting time of $A$.
\end{lemma}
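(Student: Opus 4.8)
\emph{Overview of the plan.} The plan is to apply \cref{le:nice_tip_in_D} to the random curve $\alpha \defeq \eta[0,\tau_A]$ (reparametrised to $[0,1]$) on the event $G_\varepsilon \defeq \{ \norm{\gamma-\eta}_{\infty;[0,\tau_A]} < \varepsilon \}$. Recall that \cref{le:nice_tip_in_D} asserts: if $\alpha$ satisfies a list of deterministic geometric conditions — the ones appearing in \cref{le:no_return} together with $f_\alpha^{-1}(B(1,r_1)\cap\bbD)\subseteq B(0,1-r_0/4)$ — and if moreover
\[ \Psi(\alpha) \defeq \nu_{\bbD;1\to 0}\bigl(\Cont(f_\alpha^{-1}(\eta'[0,\sigma_{r_1}])) > \wt c\, c_{\mathrm{N}}\bigr) < \wt c\, p_{\mathrm{N}} \]
(with $\eta'$ an independent radial \slek{} from $1$ to $0$), then $\wt D \defeq D\setminus f^{-1}(\alpha)$ is $(p_{\mathrm{N}},r_{\mathrm{N}},c_{\mathrm{N}})$-nice. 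I would then use the inclusion
\[ G_\varepsilon \cap \{\wt D \text{ not nice}\} \ \subseteq\ \bigl( G_\varepsilon \cap \{\alpha \text{ fails a geometric condition}\} \bigr) \cup \bigl( G_\varepsilon \cap \{\Psi(\alpha) \ge \wt c\, p_{\mathrm{N}}\} \bigr) , \]
show that the first event on the right is empty for $\varepsilon\le r_0/4$, and bound the probability of the second by $p_{\mathrm{N}}$ after choosing $c_{\mathrm{N}}$ large.

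\emph{Verifying the geometric conditions.} On $G_\varepsilon$ the set $\eta[0,\tau_A]$ lies in the $\varepsilon$-tube around the segment $\gamma=[0,1]$; since $\varepsilon\le r_0/4$, this tube stays in $\{\Re z>0\}$ at macroscopic distance from $0$ and cannot wind around $0$, which gives $\alpha(0)=1$, the confinement of $\alpha$ required in \cref{le:no_return}, the property $\alpha(1)\in\partial D_\alpha$ (the tip always lies on the boundary of the component of $\bbD\setminus\alpha$ containing the target, which here is $0$, not swallowed), and connectedness of $\alpha(1)$ to $0$ in $\bbD\setminus\alpha$. By \cref{le:dist_1}, $A$ is disjoint from $B(1,r_0)\cap\bbD$, so the tip $\eta(\tau_A)\in A$ satisfies $\abs{\eta(\tau_A)-1}\ge r_0$; combined with $\eta(\tau_A)$ lying within $\varepsilon$ of $[0,1]$ this keeps $\alpha(1)$ at macroscopic distance from $\partial\bbD$. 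To ensure that $\eta$ reaches such a well-behaved tip before meeting the part of $A$ surrounding $f(\infty)$, I would invoke \cref{le:inf_loc} (which, for $\abs a>R$, traps $f(\infty)$ — and, via Koebe's theorem applied to $z\mapsto f(1/z)$, also $A\setminus B(0,1/64)$ — near $\partial\bbD$ and away from the segment) and \cref{le:inf_dist} (for bounded $\abs a$), together with the inclusion $B(0,1/64)\subseteq A$ from Koebe's $1/4$-theorem. The condition $\abs{f^{-1}(\alpha(1))}=\abs a+1>\abs{f^{-1}(\alpha(t))}$ for $t<1$ is immediate from $\tau_A$ being the first hitting time of $A=f(\hatC\setminus B(0,\abs a+1))$ and $\bbD\setminus A=f(D\cap B(0,\abs a+1))$, and \cref{le:no_return} then delivers $f_\alpha^{-1}(B(1,r_1)\cap\bbD)\subseteq B(0,1-r_0/4)$, completing the hypotheses of \cref{le:nice_tip_in_D}.

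\emph{The content bound.} On $G_\varepsilon$ the map $f_\alpha^{-1}\colon\bbD\to D_\alpha$ has $\abs{(f_\alpha^{-1})'}$ bounded by a constant $C=C(r_0,r_1)$ on $B(1,r_1)\cap\bbD$, uniformly over $\alpha\in G_\varepsilon$ — this is the Koebe-distortion estimate already used in the proof of \cref{le:nice_tip_in_D}, relying on $f_\alpha^{-1}(B(1,r_1)\cap\bbD)\subseteq B(0,1-r_0/4)$. Since $\eta'[0,\sigma_{r_1}]$ lies in $\overline{B(1,r_1)}$, the transformation rule \eqref{eq:cont_transf} gives $\Cont(f_\alpha^{-1}(\eta'[0,\sigma_{r_1}]))\le C^d\,\Cont(\eta'[0,\sigma_{r_1}])$, so, taking $\eta'$ independent of $\eta$ and using $\bbP(G_\varepsilon)\le 1$,
\[ \bbE[\Psi(\alpha)\1_{G_\varepsilon}] = \bbP\bigl(G_\varepsilon \cap \{\Cont(f_\alpha^{-1}(\eta'[0,\sigma_{r_1}]))>\wt c\, c_{\mathrm{N}}\}\bigr) \le \bbP\bigl(\Cont(\eta'[0,\sigma_{r_1}])>\wt c\, c_{\mathrm{N}}/C^d\bigr) . \]
As the $d$-dimensional Minkowski content of an initial segment of a radial \slek{} curve is almost surely finite, I may fix $c_{\mathrm{N}}$ so large that the right-hand side is $<\wt c\, p_{\mathrm{N}}^2$; Markov's inequality then gives $\bbP(G_\varepsilon\cap\{\Psi(\alpha)\ge\wt c\, p_{\mathrm{N}}\})\le(\wt c\, p_{\mathrm{N}})^{-1}\bbE[\Psi(\alpha)\1_{G_\varepsilon}]<p_{\mathrm{N}}$. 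Combined with the previous paragraph and \cref{le:nice_tip_in_D}, this yields $\bbP(G_\varepsilon\cap\{\wt D\text{ not }(p_{\mathrm{N}},r_{\mathrm{N}},c_{\mathrm{N}})\text{-nice}\})<p_{\mathrm{N}}$, which is the assertion.

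\emph{Main obstacle.} I expect essentially all the difficulty to sit in the geometric step: checking, uniformly over all admissible $(D,a,u)$, that on $G_\varepsilon$ the curve $\eta$ first meets $A$ at a tip which is simultaneously bounded away from $\partial\bbD$ and from $0$, realises $\abs{f^{-1}(\cdot)}=\abs a+1$ as a strict running maximum, and is compatible with the no-return geometry of \cref{le:no_return}. This forces the case split $\abs a\le R$ versus $\abs a>R$, repeated Koebe estimates through the inversion $z\mapsto 1/z$, and careful tracking of the absolute constants $r_0,r_1,r_{\mathrm{N}},\varepsilon$ (the hypothesis $\varepsilon\le r_0/4$, possibly tightened to a smaller numerical multiple of $r_0$, is exactly what makes the $\varepsilon$-tube thin enough for all these conditions to hold). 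Once the geometry is pinned down, the probabilistic part above is a one-line Markov-inequality argument.
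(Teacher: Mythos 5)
Your overall decomposition (geometric conditions of \cref{le:no_return}/\cref{le:nice_tip_in_D} hold deterministically on $G_\varepsilon$; the remaining content condition is handled by Fubini plus Markov's inequality) matches the paper, and your verification of the geometric hypotheses is essentially correct. However, the ``content bound'' paragraph contains a genuine error that breaks the proof. You claim that $\abs{(f_\alpha^{-1})'}$ is bounded by a constant $C(r_0,r_1)$ on $B(1,r_1)\cap\bbD$, uniformly over admissible $\alpha$, ``relying on $f_\alpha^{-1}(B(1,r_1)\cap\bbD)\subseteq B(0,1-r_0/4)$.'' This is not the estimate used in the proof of \cref{le:nice_tip_in_D}: there, Koebe distortion is applied to $f^{-1}$ on the set $B(0,1-r_0/4)$, which is \emph{compactly contained} in $\bbD$. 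Your set $B(1,r_1)\cap\bbD$ touches $\partial\bbD$ at the prime end $1$ corresponding to the tip $\alpha(1)$, and no uniform derivative bound holds there: knowing that the \emph{image} lies in $B(0,1-r_0/4)$ does not control the derivative. Concretely, if $\alpha$ wiggles inside the $\varepsilon$-tube so as to create a bottleneck of width $\delta$ near its tip, the harmonic measure (from $0$) of the region beyond the bottleneck is of order $e^{-c/\delta}$, so $f_\alpha$ compresses a macroscopic piece of $D_\alpha$ into a neighbourhood of $1$ of that size, and $\abs{(f_\alpha^{-1})'}$ is of order $e^{c/\delta}$ at points of $B(1,r_1)\cap\bbD$. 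Since $\delta$ can be arbitrarily small on $G_\varepsilon$, the constant $C$ does not exist, and the inequality $\Cont(f_\alpha^{-1}(\eta'[0,\sigma_{r_1}]))\le C^d\,\Cont(\eta'[0,\sigma_{r_1}])$ fails. This is not a repairable technicality: the inability to control conformal distortion (and hence Minkowski content) near the fractal tip is precisely the difficulty that the entire ``niceness'' bookkeeping of this section is designed to circumvent, as the paper states explicitly before \cref{le:nice_tip}.

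The paper's proof avoids the derivative estimate altogether by using the domain Markov property: conditionally on $\eta[0,\tau_A]$, the continuation $\wt\eta=f_{\tau_A}\circ\eta\big|_{[\tau_A,\infty)}$ is an independent radial \slek{}, and $f_{\tau_A}^{-1}(\wt\eta[0,\sigma_{r_1}])$ is contained in the genuine sub-segment $\eta[\tau_A,\tau_{\partial B(0,1/128)}]$ of the original curve. Hence the quantity $\Psi(\alpha)$ you need to control equals (up to monotonicity of $\Cont$) the conditional probability $\nu_{\bbD;1\to 0}\bigl(\Cont(\eta[\tau_A,\tau_{\partial B(0,1/128)}])>c_2\mmiddle|\eta[0,\tau_A]\bigr)$, whose expectation is the unconditional probability $\nu_{\bbD;1\to 0}(\Cont(\eta[0,\tau_{\partial B(0,1/128)}])>c_2)<p_2$; Markov's inequality then gives $\Psi\ge p_2^{1/2}$ with probability at most $p_2^{1/2}$, and choosing $p_2=\wt c^2 p_{\mathrm{N}}^2$, $c_{\mathrm{N}}=\wt c^{-1}c_2$ feeds directly into \cref{le:nice_tip_in_D}. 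You should replace your Koebe-distortion step with this identification; the rest of your argument then goes through.
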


\begin{proof}
We have $B(0,1/64) \subseteq A$ and $A \cap B(1,r_0) = \varnothing$ due to Koebe's $1/4$-theorem and \cref{le:dist_1}. Since the Minkowski content of radial \slek{} (stopped before entering $B(0,1/128)$) is almost surely finite, we can find for any $p_2 > 0$ some $c_2 > 0$ such that
\[ \nu_{\bbD;1\to 0}(\Cont(\eta[0,\tau_{\partial B(0,1/128)}]) > c_2) < p_2 . \]
This gives
\[
\nu_{\bbD;1\to 0}\left( \nu_{\bbD;1\to 0}(\Cont(\eta[0,\tau_{\partial B(0,1/128)}]) > c_2 \mid \eta[0,\tau_A] ) > p_2^{1/2} \right) < p_2^{1/2}
\]
by Markov's inequality since
\begin{multline*}
\nu_{\bbD;1\to 0}\left( \nu_{\bbD;1\to 0}(\Cont(\eta[0,\tau_{\partial B(0,1/128)}]) > c_2 \mid \eta[0,\tau_A] ) \right) \\
= \nu_{\bbD;1\to 0}(\Cont(\eta[0,\tau_{\partial B(0,1/128)}]) > c_2) 
< p_2 .
\end{multline*} 

Now, suppose $\norm{\gamma-\eta}_{\infty;[0,\tau_A]} < \varepsilon$. Apply the above with $p_2 = \wt c^2 p_{\mathrm{N}}^2$, and let $c_{\mathrm{N}} = \wt c^{-1}c_2$. We claim that if
\[ \nu_{\bbD;1\to 0}(\Cont(\eta[\tau_A,\tau_{\partial B(0,1/128)}]) > c_2 \mid \eta[0,\tau_A] ) < p_2^{1/2} , \]
then $D \setminus f^{-1}(\eta[0,\tau_A])$ is $(p_{\mathrm{N}},r_{\mathrm{N}},c_{\mathrm{N}})$-nice with our choices of $p_{\mathrm{N}},c_{\mathrm{N}}$.

Indeed, conditionally on $\eta[0,\tau_A]$, the curve $\wt\eta = f_{\tau_A} \circ \eta\big|_{[\tau_A,\infty)}$ is an independent \slek{} in $\bbD$. Since the capacity of $\eta[0,\tau_{\partial B(0,1/128)}]$ is much larger than of $\eta[0,\tau_A]$, we must have $f_{\tau_A}^{-1}(\wt\eta[0,\sigma_{r_1}]) \subseteq \eta[\tau_A,\tau_{\partial B(0,1/128)}]$ (there is no loss of generality assuming $r_1$ is small). Moreover, by \cref{le:no_return} (the conditions are satisfied due to $\norm{\gamma-\eta}_{\infty;[0,\tau_A]} < \varepsilon < r_0/4$), 
we have $f_{\tau_A}^{-1}(B(1,r_1)\cap \bbD) \subseteq B(0,1-r_0/4)$. Hence, by \cref{le:nice_tip_in_D}, $\wt D$ is $(p_{\mathrm{N}},r_{\mathrm{N}},c_{\mathrm{N}})$-nice.
\end{proof}

\begin{lemma}\label{le:initial_nice_tip}
Let $r_{\mathrm{N}}>0$ be as in \cref{le:next_tip}. For any $p_{\mathrm{N}}>0$ there exist $c_{\mathrm{N}} > 0$ and $p_1 > 0$ such that the following is true. Let $(D,a,u)$ be as in \eqref{eq:Da} with $\abs{a} \ge 1$. Then
\[ 
\nu_{D;a\to\infty;u}\left(
\text{the domain } 
\hatC \setminus \eta[0,\tau_{\abs{a}+1}] \text{ is } (p_{\mathrm{N}}, r_{\mathrm{N}}, c_{\mathrm{N}})\text{-nice}
\right)
\ge p_1 .
\]
In particular,
\[
\nu_{\hatC;0\to\infty;0}\left(
\text{the domain } 
\hatC \setminus \eta[0,\tau_R] \text{ is } (p_{\mathrm{N}}, r_{\mathrm{N}}, c_{\mathrm{N}})\text{-nice}
\right)
\ge p_1
\]
for $R \ge 2$.
\end{lemma}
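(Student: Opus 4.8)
The plan is to derive the first assertion from \cref{le:next_tip}, and then obtain the second from the first via the domain Markov property. For the latter reduction, fix $R\ge 2$ and condition on $\eta|_{[0,\tau_{R-1}]}$: since $\eta$ is a whole-plane \slek{}$(2)$ from $0$ with force point at $0$, the conditional law of $\eta|_{[\tau_{R-1},\infty)}$ is $\nu_{D';a'\to\infty;0}$, where $D'$ is the connected component of $\hatC\setminus\eta[0,\tau_{R-1}]$ containing $\infty$ and $a'=\eta(\tau_{R-1})$. As $\eta[0,\tau_{R-1}]\subseteq\overline{B(0,R-1)}$ contains $0$, the triple $(D',a',0)$ is as in \eqref{eq:Da} with $|a'|=R-1\ge 1$; the first assertion applied to it (with $\tau_{|a'|+1}=\tau_R$) gives, with conditional probability $\ge p_1$, that the domain left after reaching radius $R$ — which is the $\infty$-component of $\hatC\setminus\eta[0,\tau_R]$ — is $(p_{\mathrm N},r_{\mathrm N},c_{\mathrm N})$-nice, and taking expectations yields the claim. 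Note that the force point is never swallowed since $\rho=2\ge\kappa/2-2$ for $\kappa\le 8$, so every domain that arises is of the required form.

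For the first assertion, fix $(D,a,u)$ as in \eqref{eq:Da} with $|a|\ge 1$; since being $(p_{\mathrm N},r_{\mathrm N},c_{\mathrm N})$-nice follows from being $(p,r_{\mathrm N},c_{\mathrm N})$-nice for any $p\le p_{\mathrm N}$, we may assume $p_{\mathrm N}$ is as small as we wish. The one new feature compared to \cref{le:next_tip}, which has no force point, is that $u$ is unconstrained, so in the conformal picture $f\colon D\to\bbD$ the point $f(u)\in\partial\bbD$ may be arbitrarily close to $1=f(a)$, and then the force point cannot be removed via \cref{le:sle_abs_cont}. I would deal with this exactly as in Case~2 of the last lemma of \cref{se:sle_upper}: run $\eta$ for a short deterministic (capacity-)time $t_0$; the image of the force point under the Loewner flow is a Bessel process of positive index started from $|f(u)-1|$, so by monotonicity of Bessel processes in their starting point, together with a lower bound — uniform over $(D,a,u)$ — on the probability that the curve stays in a small ball, there are $q_1,c_1>0$ and a small fixed $\rho_0\in(0,\tfrac12)$ such that with probability at least $q_1$: $f^{-1}(\eta[0,t_0])$ lies in the $\rho_0$-ball about $a$, and in the conformal map of the new domain the force point lies at distance $\ge c_1$ from the image of the new tip. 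Running the curve slightly further — until it first attains modulus $|a|+\rho_0$, which makes the tip the outermost point again while, by comparison of Bessel processes, keeping the force point escaped with positive probability — leaves a triple $(\wt D,\wt a,u)$ again as in \eqref{eq:Da}, with $|\wt a|=|a|+\rho_0$ and $u$ at conformal distance $\gtrsim c_1$ from $\wt a$.

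From $(\wt D,\wt a,u)$ I would re-run the argument of \cref{le:next_tip}. Mapping $\wt D$ to $\bbD$ by $\wt f$, the restarted curve is a radial \slek{}$(2)$ from $1$ to $\wt f(\infty)$ whose force point $\wt f(u)$, having $|\wt f(u)-1|\gtrsim c_1$, sits a fixed distance from the segment $\gamma=[0,1]$. On the event $\{\norm{\gamma-\eta}_{\infty;[0,\tau_A]}<\varepsilon\}$, with $A=\wt f(\hatC\setminus B(0,|a|+1))$ and $\varepsilon$ small, one applies in turn: \cref{co:target_change} to replace the target $\wt f(\infty)$ by $0$ at bounded cost; \cref{le:sle_abs_cont} to discard the force point at bounded cost (legitimate since $\eta[0,\tau_A]$ stays within $\varepsilon$ of $\gamma$, away from $\wt f(u)$); \cref{le:support} to lower-bound by $p_\varepsilon>0$ the $\nu_{\bbD;1\to 0}$-probability of following $\gamma$; and \cref{le:next_tip} — run with its increment $1$ replaced by the fixed amount $1-\rho_0\in(\tfrac12,1)$, which only alters the auxiliary constants $r_0,r_1,r_{\mathrm N},c_{\mathrm N}$ and still permits the value of $r_{\mathrm N}$ fixed earlier, provided $r_{\mathrm N}$ was taken small enough — to bound by $p_{\mathrm N}$ the $\nu_{\bbD;1\to 0}$-probability that $\gamma$ is followed but the terminal domain (the $\infty$-component of $\hatC\setminus\eta[0,\tau_{|a|+1}]$) is not $(p_{\mathrm N},r_{\mathrm N},c_{\mathrm N})$-nice. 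Carrying these estimates back through the two absolute continuities and multiplying by $q_1$ produces $\nu_{D;a\to\infty;u}(\hatC\setminus\eta[0,\tau_{|a|+1}]\text{ is }(p_{\mathrm N},r_{\mathrm N},c_{\mathrm N})\text{-nice})\ge p_1$ for a uniform $p_1>0$, with $c_{\mathrm N}$ the constant furnished by \cref{le:next_tip}.

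The main obstacle is the first step: one must make the Bessel estimate genuinely uniform over all admissible $(D,a)$ and all positions of $u$ — in particular a uniform lower bound on $q_1$ and a uniform escape distance $c_1$ — and one must check that the short initial maneuver can be arranged to leave a domain again of the form \eqref{eq:Da} with its tip modulus only slightly perturbed, so that the proof of \cref{le:next_tip} goes through after the cosmetic change of its radius increment. The other inputs — the target change, the removal of the force point, and the support theorem — are available off the shelf from \cref{co:target_change,le:sle_abs_cont,le:support}, so once the first step is in place the estimates combine routinely.
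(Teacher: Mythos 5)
Your proposal is correct and follows the same core strategy as the paper: reduce the second assertion to the first via the domain Markov property applied at $\tau_{R-1}$, handle a force point near the tip by running the curve for a short deterministic capacity time and using monotonicity of the radial Bessel process in its starting point to separate the image of the force point from the tip with uniform probability, and then feed the result into \cref{le:support,le:next_tip,le:sle_abs_cont,co:target_change}. The one place where you genuinely diverge is the second stage: you continue the curve until it first reaches modulus $\abs{a}+\rho_0$, so as to land on a fresh configuration of the form \eqref{eq:Da}, and then re-run \cref{le:next_tip} with radius increment $1-\rho_0$. The paper instead never leaves the disk picture: it considers $\wt\eta^{(t)} = f_t\circ\wt\eta$ run until it hits $A^{(t)} = f_t(A)$, where $A = f(\hatC\setminus B(0,\abs{a}+1))$ is the \emph{original} target set, and extracts niceness of the terminal domain by following the proof of \cref{le:next_tip} with $\wt\eta^{(t)},A^{(t)}$ in place of $\wt\eta,A$, paying only a bounded factor from the derivative of $f_t^{-1}$ (which yields $(p_{\mathrm{N}},r_{\mathrm{N}},2c_{\mathrm{N}})$-niceness, absorbed into the choice of $c_{\mathrm{N}}$). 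The paper's route spares exactly the two verifications you flag as the main obstacle: (i) keeping the force point separated up to the \emph{random} time $\tau_{\abs{a}+\rho_0}$ rather than only up to the deterministic capacity time $t$ — in the paper's version the event $\norm{\gamma-\wt\eta^{(t)}}_{\infty;[0,\tau_{A^{(t)}}]}<\wt\delta/2$ automatically keeps the curve away from $f_t(f(u))$ for the entire remaining run; and (ii) re-deriving the constants $r_0,r_1,r_{\mathrm{N}}$ of \cref{le:dist_1,le:no_return,le:nice_tip_in_D} for a different radius increment, which matters because the lemma must produce niceness at the specific $r_{\mathrm{N}}$ already fixed in \cref{le:next_tip} (niceness is only monotone in the direction of \emph{decreasing} $r_{\mathrm{N}}$, so this requires setting up the constants uniformly over increments in $[1/2,1]$ from the start). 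Both checks in your version are doable, so this is a difference of bookkeeping rather than a gap, but as written your argument still owes them.
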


\begin{proof}
The second assertion follows from the first assertion by considering $D = \hatC\setminus\eta[0,\tau_{R-1}]$, so in the remainder of the proof we will focus only on proving the first assertion.

Let us suppose first that $\abs{f(u)-1} \ge \delta$ for some $\delta > 0$. In that case, the claim follows from \cref{le:support,le:next_tip} and the absolute continuity between SLE variants, cf.\@ \cref{le:sle_abs_cont,co:target_change} (note that $\wt\eta = f\circ\eta$ is an \slek{}$(2)$ from $1$ to $f(\infty)$ with force point at $f(u)$). 

In case $f(u)$ is close to $1$, we do not have uniform control over the density between the SLE variants. But picking a small time $t>0$ there exists some $\wt p_1 > 0$ and $\wt\delta > 0$ such that $\abs{f_t(f(u))-1} \ge \wt\delta$ with probability at least $\wt p_1$ (independent of $f(u)$). This is because $t \mapsto \arg(f_t(f(u)))$ is a radial Bessel process of positive index started at $\arg(f(u))$ (cf.\@ \cite[Section 2.1]{zhan-hoelder}), and by the monotonicity of Bessel processes in the starting point it suffices to compare to the case when it starts at $0$. But a Bessel process stopped at a deterministic time is almost surely positive.

This allows us to consider $\wt\eta^{(t)} = f_t \circ \wt\eta$, stopped at hitting $A^{(t)} = f_t(A)$. On the event $\norm{\gamma-\wt\eta^{(t)}}_{\infty;[0,\tau_{A^{(t)}}]} < \wt\delta/2$, we now do have bounded density between the SLE variants (with a bound depending on $\wt\delta$). In order to show $\hatC \setminus \eta[0,\tau_{\abs{a}+1}] = D \setminus f^{-1}(\wt\eta[0,\tau_A]) = D \setminus f^{-1}(f_t^{-1}(\wt\eta^{(t)}_{[0,\tau_{A^{(t)}}]}))$ is nice with sufficiently positive probability, we can follow the proof of \cref{le:next_tip} with minor modifications. Instead of $\wt\eta$ and $A$, we consider $\wt\eta^{(t)}$ and $A^{(t)}$. We write $f^{(t)}_s = f_{t+s} \circ f_t^{-1}$ for the mapping-out function of $\wt\eta^{(t)}$.

Following the proof of \cref{le:next_tip}, we get that conditionally on $\wt\eta[0,t]$ and $\abs{f_t(f(u))-1} \ge \wt\delta$, with probability at least $p_{\varepsilon,\delta} > 0$ we have
\[ \norm{\gamma-\wt\eta^{(t)}}_{\infty;[0,\tau_{A^{(t)}}]} < \wt\delta/2 \wedge \varepsilon \]
and
\[ \nu_{\bbD;1\to 0}(\Cont(\wt\eta^{(t)}[\tau_{A^{(t)}}, \tau_{\partial B(0,1/128)}]) > \wt c\,c_{\mathrm{N}} \mid \wt\eta^{(t)}[0, \tau_{A^{(t)}}] ) < \wt c\,p_{\mathrm{N}} . \]
We claim that on this event, $D \setminus f^{-1}(\wt\eta[0,\tau_A])$ is $(p_{\mathrm{N}},r_{\mathrm{N}},2c_{\mathrm{N}})$-nice.

Conditionally on $\wt\eta^{(t)}[0, \tau_{A^{(t)}}]$, consider an independent \slek{} $\dbtilde\eta$ from $1$ to $0$, stopped at $\sigma_{r_1}$. As in the proof of \cref{le:next_tip}, $(f^{(t)}_{\tau_{A^{(t)}}})^{-1} \circ \dbtilde\eta$ has the same law as a subsegment of $\wt\eta^{(t)}[\tau_{A^{(t)}}, \tau_{\partial B(0,1/128)}]$, and therefore
\[ \nu_{\bbD;1\to 0}(\Cont((f^{(t)}_{\tau_{A^{(t)}}})^{-1}(\dbtilde\eta[0,\sigma_{r_1}])) > \wt c\,c_{\mathrm{N}} ) < \wt c\,p_{\mathrm{N}} . \]
We need to map $\dbtilde\eta$ back by $f_{\tau_A}^{-1} = f_t^{-1} \circ (f^{(t)}_{\tau_{A^{(t)}}})^{-1}$. Since $(f^{(t)}_{\tau_{A^{(t)}}})^{-1}(\dbtilde\eta[0,\sigma_{r_1}]) \subseteq B(0,1-r_0/4)$ by \cref{le:no_return} and $t$ is small, the derivative $\abs{(f_t^{-1})'}$ is bounded on $(f^{(t)}_{\tau_{A^{(t)}}})^{-1}(\dbtilde\eta[0,\sigma_{r_1}])$ by $2$, say. By the transformation rule for Minkowski content, this implies
\[ \nu_{\bbD;1\to 0}(\Cont(f_{\tau_A}^{-1}(\dbtilde\eta[0,\sigma_{r_1}])) > 2\wt c\,c_{\mathrm{N}}) < \wt c\,p_{\mathrm{N}} , \]
and \cref{le:nice_tip_in_D} implies the claim.
\end{proof}

\begin{proof}[Proof of \cref{le:nice_tip}]
We proceed as outlined below the statement of the lemma. First observe that it suffices to show the statement for radial \slek{} from $1$ to $0$. Indeed, on the event $\{\norm{\gamma-\eta}_{\infty;[0,\tau_A]}\} < \varepsilon$, the laws of the corresponding SLEs are absolutely continuous with density bounded by a constant depending on $\varepsilon$. This is because $f(\infty)$ and $f(u)$ are both at distance at least $2\varepsilon$ from $\gamma[0,\tau_A]$, due to \cref{le:inf_loc,le:inf_dist} and the definition of niceness of $D$, and we may apply \cref{le:sle_abs_cont} to get the desired absolute continuity.

Pick $r_0,r_1,r_{\mathrm{N}} > 0$ as in \cref{le:dist_1,le:no_return,le:nice_tip_in_D}. Then pick $\varepsilon < r_{\mathrm{N}}/2$, and $T$ the capacity of the straight line from $1$ to $1/64$. For this choice of $\varepsilon$, let $p_\varepsilon$ as in \cref{le:support}. Then \cref{it:follow} occurs with probability at least $p_\varepsilon$.

Since the Minkowski content of radial \slek{} (stopped before entering $B(0,1/64)$) is almost surely finite, we can find $c_1 > 0$ such that \ref{it:cont} fails with probability at most $p_\varepsilon/4$.

Next, pick $p_{\mathrm{N}} < p_\varepsilon/4$. Supposing $D$ is $(p_{\mathrm{N}},r_{\mathrm{N}},c_{\mathrm{N}})$-nice, the probability of \ref{it:nice_base} failing is at most $p_{\mathrm{N}} < p_\varepsilon/4$.

Finally, given $r_{\mathrm{N}}$, $\varepsilon$, and $p_{\mathrm{N}}$, we pick $c_{\mathrm{N}}$ according to \cref{le:next_tip}. This will imply \ref{it:nice_tip} fails with probability at most $p_{\mathrm{N}} < p_\varepsilon/4$.
\end{proof}

\begin{proof}[Proof of \cref{pr:nice_tip_iter}]
Pick the constants as in \cref{le:nice_tip}. As already observed in the proof of \cref{le:next_tip}, we have $B(0,1/64) \subseteq A$ and $A \cap B(1,r_0) = \varnothing$. Note that $f \circ \eta$ is an \slek{}$(2)$ in $\bbD$ from $1$ to $f(\infty)$ with force point $f(u)$, with probability at least $p$ all the \cref{it:follow,it:cont,it:nice_base,it:nice_tip} occur for $f \circ \eta$. In particular, \ref{it:nice_tip} means $D\setminus \eta[0,\tau_{\abs{a}+1}]$ is $(p_{\mathrm{N}},r_{\mathrm{N}},c_{\mathrm{N}})$-nice.

To bound $\Cont(\eta[0,\tau_{\abs{a}+1}])$, note that due to \ref{it:follow} and $B(0,1/64) \subseteq A$, $f \circ \eta$ hits $A$ before reaching the capacity of $\gamma$. Moreover, also due to \ref{it:follow}, it does not leave $B(0,1-r_{\mathrm{N}}/4)$ after $\sigma_{r_{\mathrm{N}}}$. Recalling from the proof of \cref{le:nice_tip} that $f(\infty)$ is at distance at least $2\varepsilon$ from $\gamma[0,\tau_A]$, we see that $\abs{(f^{-1})'}$ is bounded on the points of $(f \circ \eta)[\sigma_{r_{\mathrm{N}}},\tau_A]$ by a constant depending on $r_{\mathrm{N}}$ and $\varepsilon$ (by applying Koebe's distortion theorem on a subdomain avoiding $f(\infty)$).
Therefore, combining \cref{it:nice_base,it:cont} and the transformation rule for Minkowski content \eqref{eq:cont_transf}, $\Cont(\eta[0,\tau_{\abs{a}+1}])$ is bounded.
\end{proof}

\begin{proof}[Proof of \cref{le:cont_tail_lower1}]
It suffices to show this for large integers $r$. By \cref{le:initial_nice_tip}, $\hatC \setminus \eta[0,\tau_2]$ is $(p_{\mathrm{N}},r_{\mathrm{N}},c_{\mathrm{N}})$-nice with positive probability. Then, by \cref{pr:nice_tip_iter}, with probability at least $p^r$ we have
\[ \Cont(\eta[\tau_2,\tau_{2+r}]) < rc . \]
Since the Minkowski content of two-sided whole-plane \slek{} and therefore also of whole-plane \slek{}$(2)$ is almost surely finite, $\Cont(\eta[0,\tau_2])$ is also bounded above by some large constant with probability close to $1$.
This finishes the proof.
\end{proof}

The proof shows also the following statement.

\begin{proposition}\label{pr:cont_ltail_cond}
There exist finite and positive constants $p_{\mathrm{N}},r_{\mathrm{N}},c_{\mathrm{N}},c,\wt c_2$ such that the following is true. Let $(D,a,u)$ be as in \eqref{eq:Da}, and $r,\ell>0$ such that $\ell \le r^d$. Let $\lambda = \ell^{-1/(d-1)}r^{1/(d-1)}$. If $\lambda D$ is $(p_{\mathrm{N}},r_{\mathrm{N}},c_{\mathrm{N}})$-nice and $\lambda\abs{a} \ge 1$, then
\[ \nu_{D;a\to\infty;u}( \Cont(\eta[0,\tau_{\abs{a}+r}]) < c\ell ) \ge \exp(-\wt c_2 \ell^{-1/(d-1)} r^{d/(d-1)}) . \]
\end{proposition}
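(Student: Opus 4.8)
The plan is to reduce \cref{pr:cont_ltail_cond} to the scale-invariant iteration statement \cref{pr:nice_tip_iter}, in exactly the same way as \cref{le:cont_tail_lower1} was proved, except that the initial nice-tip input (there produced by \cref{le:initial_nice_tip} after crossing from radius $1$ to radius $2$) is now supplied directly by the hypothesis that $\lambda D$ is $(p_{\mathrm{N}},r_{\mathrm{N}},c_{\mathrm{N}})$-nice.

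First I would record how the relevant objects scale. For fixed $\lambda>0$ the map $z\mapsto\lambda z$ is a conformal automorphism of $\hatC$ fixing $\infty$, so by conformal invariance of SLE$_\kappa(2)$ with its force point the pushforward of $\nu_{D;a\to\infty;u}$ is $\nu_{\lambda D;\lambda a\to\infty;\lambda u}$; the hitting time $\tau_{\abs a+r}$ is mapped to the hitting time of radius $\lambda(\abs a+r)=\abs{\lambda a}+\lambda r$; and $\Cont(\lambda K)=\lambda^d\Cont(K)$ by the transformation rule \eqref{eq:cont_transf} with $\abs{f'}\equiv\lambda$. Writing $\tilde D=\lambda D$, $\tilde a=\lambda a$, $\tilde u=\lambda u$ and $\tilde r=\lambda r$, the probability in the statement therefore equals
\[ \nu_{\tilde D;\tilde a\to\infty;\tilde u}\bigl(\Cont(\eta[0,\tau_{\abs{\tilde a}+\tilde r}])<c\lambda^d\ell\bigr). \]
Since $\lambda=\ell^{-1/(d-1)}r^{1/(d-1)}$ satisfies $\lambda^{d-1}\ell=r$, we have $\lambda^d\ell=\lambda r=\tilde r$ and $\tilde r=\ell^{-1/(d-1)}r^{d/(d-1)}\ge 1$ by the hypothesis $\ell\le r^d$. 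So it suffices to show
\[ \nu_{\tilde D;\tilde a\to\infty;\tilde u}\bigl(\Cont(\eta[0,\tau_{\abs{\tilde a}+\tilde r}])<c\tilde r\bigr)\ge\exp(-\tilde c_2\tilde r) \]
whenever $\tilde D$ is $(p_{\mathrm{N}},r_{\mathrm{N}},c_{\mathrm{N}})$-nice and $\abs{\tilde a}\ge 1$; note that the scaling factor $\lambda$ is exactly the one for which niceness is assumed, so no adjustment of the niceness constants is needed.

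Then I would iterate \cref{pr:nice_tip_iter}. Put $n=\lceil\tilde r\rceil$. Because hitting radii only increase, for each $j=0,\dots,n-1$ the set $\tilde D\setminus\eta[0,\tau_{\abs{\tilde a}+j}]$ is again a domain of the form \eqref{eq:Da}, with distinguished boundary point $\eta(\tau_{\abs{\tilde a}+j})$ of modulus $\abs{\tilde a}+j\ge 1$ (here one uses $\fill(\eta[0,\tau_\rho])\subseteq\overline{B(0,\rho)}$), and with the same force point $\tilde u$. Conditionally on $\eta|_{[0,\tau_{\abs{\tilde a}+j}]}$, the domain Markov property of SLE$_\kappa(2)$ identifies the conditional law of the remaining curve with $\nu_{\tilde D\setminus\eta[0,\tau_{\abs{\tilde a}+j}];\,\eta(\tau_{\abs{\tilde a}+j})\to\infty;\,\tilde u}$, so on the good event of step $j$ (on which the domain is nice) \cref{pr:nice_tip_iter} applies at step $j+1$: it gives, with conditional probability at least $p$, that $\Cont(\eta[\tau_{\abs{\tilde a}+j},\tau_{\abs{\tilde a}+j+1}])<c$ and that the new domain is again $(p_{\mathrm{N}},r_{\mathrm{N}},c_{\mathrm{N}})$-nice. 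Chaining these bounds over $j=0,\dots,n-1$ and using additivity of Minkowski content over curve segments, with probability at least $p^n$ we obtain
\[ \Cont(\eta[0,\tau_{\abs{\tilde a}+\tilde r}])\le\Cont(\eta[0,\tau_{\abs{\tilde a}+n}])<cn\le 2c\tilde r , \]
where the last step uses $n\le\tilde r+1\le 2\tilde r$. Since $p^n\ge p^{2\tilde r}=\exp(-\tilde c_2\tilde r)$ with $\tilde c_2=2\log(1/p)$, relabelling $2c$ as $c$ and unwinding the scaling yields the claim.

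I do not expect a genuine obstacle here: all the substantive estimates live in \cref{pr:nice_tip_iter} and the lemmas feeding it, and what remains is the scaling bookkeeping above together with the routine verification that the intermediate domains stay of the form \eqref{eq:Da} so the iteration is legitimate. The only point needing a line of care is that the conditional probabilities in successive applications of \cref{pr:nice_tip_iter} may be multiplied, which is just the domain Markov property invoked one step at a time.
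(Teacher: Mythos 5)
Your proposal is correct and matches the paper's proof, which likewise obtains the result by iterating \cref{pr:nice_tip_iter} (with the hypothesis that $\lambda D$ is nice replacing the initial input from \cref{le:initial_nice_tip}) and then invoking the same scaling argument as in the proof of \cref{pr:cont_tail_lower}. Your write-up simply spells out the scaling bookkeeping and the domain Markov chaining that the paper leaves implicit.
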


\begin{proof}
By iterating \cref{pr:nice_tip_iter} as in the proof of \cref{le:cont_tail_lower1}, if $D \subset \hatC$ is $(p_{\mathrm{N}},r_{\mathrm{N}},c_{\mathrm{N}})$-nice and $\abs{a} \ge 1$, then
\[ \nu_{D;a\to\infty;u}( \Cont(\eta[0,\tau_{\abs{a}+r}]) < rc ) \ge p^r \]
for any $r \ge 1$. For the general statement, we use the same scaling argument as in the proof of \cref{pr:cont_tail_lower}.
\end{proof}

\subsection{Diameter lower bound for space-filling SLE}
\label{sec:diam-lower-spf}

In this section we will prove the counterparts of Propositions \ref{pr:cont_tail_lower} and \ref{pr:cont_ltail_cond} in the setting of space-filling SLE, which are stated as Propositions \ref{pr:area_tail_lower} and \ref{pr:area_ltail_cond} below. The proofs follow the exact same structure as in the non-space-filling case and we will therefore be brief and only highlight the differences. 

The following is the counterpart of Proposition \ref{pr:cont_tail_lower}. Note in particular that the estimate takes exactly the same form as in the non-space-filling case with $d=2$.
\begin{proposition}\label{pr:area_tail_lower}
	Let $\eta$ be a whole-plane space-filling \slek{} from $-\infty$ to $\infty$, $\kappa > 4$, satisfying $\eta(0)=0$. For some $\tilde c_2 > 0$ we have
	\[ \bbP( \Cont(\eta[0,\tau_r]) < \ell ) \ge \exp(-\tilde c_2 \ell^{-1} r^{2}) \]
	for any $r,\ell>0$ with $\ell \le r^2$.
\end{proposition}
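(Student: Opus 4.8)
The proof follows the same architecture as that of \cref{pr:cont_tail_lower}, so I will only describe the modifications. As a first step, exactly as \cref{le:cont_tail_lower1} was used to derive \cref{pr:cont_tail_lower}, I would reduce \cref{pr:area_tail_lower} by the self-similarity of space-filling \slek{} (index $1/2$, so $d=2$) and the fact that $\Cont$ is just Lebesgue area to the special case
\[ \bbP(\Cont(\eta[0,\tau_r]) < c_1 r) \ge \exp(-c_2 r) \quad\text{for integers } r\ge 1 , \]
using the identical scaling argument (with $\lambda = c_1\ell^{-1}r$). The heuristic is unchanged: when $\eta$ crosses from radius $k$ to radius $k+1$ it has a chance, bounded below uniformly over the past and a suitable ``niceness'' of the current tip, to do so while adding at most $c$ area and leaving behind a tip that is again nice; iterating $r$ times gives $p^r\ge\exp(-c_2 r)$.

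The essential change is the notion of niceness. Recall from \cref{se:prelim_spf} that the conditional law of $\eta|_{[\tau_r,\infty)}$ given $\cF_r$ is $\wh\nu_{D;a\to\infty;\u}$, which depends on $(D,a)$ together with the three marked boundary points $\u=(u_1,u_2,u_3)$ carrying the flow-line boundary data of $\wh h$. Accordingly I would call $(D,a,\u)$ $(p_{\mathrm{N}},r_{\mathrm{N}},c_{\mathrm{N}})$-nice if, with $f\colon D\to\bbD$ the map below \eqref{eq:Da}, the points $f(u_1),f(u_2),f(u_3),f(\infty)$ all have distance at least $2r_{\mathrm{N}}$ from $1=f(a)$, and a reference space-filling \slek{} in $\bbD$ started at $1$ with its three marked points in a fixed position bounded away from $1$ creates at most $c_{\mathrm{N}}$ area inside $f^{-1}(B(1,r_{\mathrm{N}})\cap\bbD)$ with probability at least $1-p_{\mathrm{N}}$. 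With this definition the analog of \cref{pr:nice_tip_iter} reads: there exist $p_{\mathrm{N}},r_{\mathrm{N}},c_{\mathrm{N}},p,c>0$ such that if $(D,a,\u)$ with $\abs{a}\ge 1$ is nice then, with $\wh\nu_{D;a\to\infty;\u}$-probability at least $p$, one has $\Cont(\eta[0,\tau_{\abs{a}+1}])<c$ and the domain $\hatC\setminus\fill(\eta[0,\tau_{\abs{a}+1}])$ with its three new marked points is again nice; the analog of \cref{le:initial_nice_tip} gives that the genuine space-filling \slek{} produces a nice domain at radius $2$ with positive probability. Propositions \ref{pr:area_tail_lower} and \ref{pr:area_ltail_cond} then follow by iteration and scaling as before.

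To prove these, the purely conformal-geometry lemmas \cref{le:dist_1,le:inf_loc,le:inf_dist,co:target_change,le:no_return,le:nice_tip_in_D} carry over essentially verbatim, since they concern only the map $f$ attached to $(D,a)$ as in \eqref{eq:Da} and do not see the curve law. Two inputs must be swapped out. First, the support statement \cref{le:support} is replaced by a positive-probability confinement estimate for space-filling \slek{}: since $\fill(\eta[0,\tau])$ is determined by its two outermost GFF flow lines, the support theorem for flow lines (equivalently for SLE$_{16/\kappa}(\rho)$ curves, cf.\ \cite{ms-ig4}) lets us trap $\fill(f\circ\eta[0,\tau_A])$ in an $\varepsilon$-tube around the straight segment $\gamma$ from $1$ to $1/64$ with probability at least some $p_\varepsilon>0$. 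Second, the absolute-continuity inputs (\cref{le:sle_abs_cont,co:target_change}) are replaced by the corresponding statements for the field $\wh h$: by \cref{le:localset} and the coordinate change \eqref{eq:ig-coord-ch} the law of $\wh h$ modulo $2\pi\chi$ depends only on the positions of $f(u_1),f(u_2),f(u_3),f(\infty)$, so restricting $\wh h$ to a subdomain bounded away from these points yields a Radon--Nikodym derivative confined to $[c',1/c']$ between different configurations — precisely what is needed to change the target and marked points as in \cref{co:target_change} and to reduce the near-tip area estimate to the reference space-filling \slek{}. With these substitutions, \cref{le:next_tip,le:initial_nice_tip,le:nice_tip}, then \cref{pr:nice_tip_iter}, and finally \cref{le:cont_tail_lower1} all transfer. (For $\kappa\in(4,8)$ one uses, as recorded in \cref{se:prelim_spf}, that $\eta(\tau_r)\in\partial\fill(\eta[0,\tau_r])$ and that $\eta|_{[\tau_r,\infty)}$ stays in $\hatC\setminus\fill(\eta[0,\tau_r])$, so the hull picture is the same as for $\kappa\ge 8$.)

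The main obstacle is the niceness-propagation step in the presence of the field. After crossing one annulus one must check not only that the new tip is geometrically nice — this is the content of \cref{le:no_return,le:nice_tip_in_D} and is unchanged — but also that the three new marked points $\u'$, which are the intersection points of the outer flow lines with $\partial\fill(\eta[0,\tau_{\abs{a}+1}])$ and are therefore random and field-dependent, land at distance at least $2r_{\mathrm{N}}$ from the new tip in the new disk coordinate, and that the reference-field area estimate still holds there. Both follow from the confinement statement above — the flow lines, trapped in the tube, exit $\partial\fill$ well away from the tip — together with the absolute continuity for $\wh h$, but tracking the three marked points through the successive changes of coordinate \eqref{eq:ig-coord-ch} is the technical heart of the argument and the place where the space-filling case genuinely differs from \cref{sec:diam-lower-nonspf}.
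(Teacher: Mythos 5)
Your overall architecture matches the paper's: reduce by scaling to the case $\ell\asymp r$, define a niceness property for triples $(D,a,\u)$, and iterate a one-step crossing estimate. However, there is one step in your proposal that fails as stated. You claim that because the law of $\wh h$ modulo $2\pi\chi$ depends only on the positions of $f(u_1),f(u_2),f(u_3),f(\infty)$, restricting to a subdomain bounded away from these points ``yields a Radon--Nikodym derivative confined to $[c',1/c']$ between different configurations.'' This is not true: two such GFF variants differ by a harmonic function $g$ that is bounded on the relevant region, but the Radon--Nikodym derivative between $h$ and $h+g$ restricted to a subdomain is of the form $\exp((h,g)_\nabla-\tfrac12\norm{g}_\nabla^2)$, which has finite moments but is \emph{not} uniformly bounded. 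The paper's replacement for \cref{co:target_change} (its \cref{le:target_change_ig}) accordingly proves only the weaker one-sided comparison $\nu_1(E)\le c'\,\nu_2(E)^{1/2}$ via the second-moment argument of \cite[Lemma 2.1]{hs-mating-eucl}, and then checks that this square-root loss suffices everywhere the two-sided density bound was used in the non-space-filling argument. Your proof needs the same substitution; as written, the uniform density bound you invoke does not exist.

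Two further points where your sketch glosses over genuine content. First, your confinement step argues that $\fill(\eta[0,\tau_A])$ is ``determined by its two outermost GFF flow lines,'' so the flow-line support theorem traps the hull; but the left/right boundaries of $\eta[0,t]$ are the flow lines started from the \emph{moving tip} $\eta(t)$, so confining two fixed flow lines does not control the space-filling trajectory or the hull at the hitting time. The paper instead proves the support statement by iterating a counterflow-line estimate (\cref{le:support-spfill-nbh}, adapted from \cite[Lemma 2.3]{mw-cut-double}) using the branching counterflow-line representation of $\eta[0,\tau_z]$. Second, the analogue of \cref{le:initial_nice_tip} requires a separate argument when $f(u_1)$ or $f(u_3)$ is close to $1$ (the space-filling replacement for the Bessel-process step), which the paper handles by growing short flow lines from $u_1,u_3$ and bounding harmonic measure; your proposal does not address this degenerate configuration. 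Finally, a simplification you miss: since $\Cont$ is Lebesgue measure, the second niceness condition can be taken to be the deterministic bound $\Cont(f^{-1}(B(1,r_{\mathrm{N}})\cap\ol{\bbD}))\le c_{\mathrm{N}}$, so no reference-curve probability is needed there.
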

One difference between the space-filling and non-space-filling settings is that in the space-filling case, $\eta|_{[0,\infty]}$ (viewed as a curve in $\C$) is not an SLE$_\kappa(\rho)$ for any vector $\rho$, and therefore the curve does not satisfy the domain Markov property, which plays a crucial role in the argument in the previous section. However, via the theory of imaginary geometry the curve does satisfy a counterpart of the domain Markov property if we also condition on a particular triple of marked points on the boundary of the trace, see Section \ref{se:prelim_spf}. While most of the argument in the non-space-filling case carries through using this observation, we need to modify some parts of the argument, e.g.\ the various absolute continuity arguments comparing different variants of SLE and the proof of Lemma \ref{le:initial_nice_tip}. There are also some parts of the proof that simplify since Cont$(\cdot)$ is equal to Lebesgue area measure, so the natural measure of the curve while staying in a domain is deterministically bounded by the Lebesgue area measure of the domain.

Following the proof in Section \ref{sec:diam-lower-nonspf}, we start by giving the definition of nice for space-filling SLE. This property is now defined for tuples $(D,a,\u)$, $\u=(u_1,u_2,u_3)\in(\partial D)^3$, satisfying
\eqb
\text{$(D,a)$ is as in \eqref{eq:Da}; $a,u_1,u_2,u_3$ are distinct and ordered counterclockwise}.
\label{eq:Daubold}
\eqe
For $r_{\mathrm{N}},c_{\mathrm{N}} > 0$ we say that $(D,a,\u)$ is $(r_{\mathrm{N}},c_{\mathrm{N}})$-nice if
\[ \abs{f(u_1)-1}\wedge\abs{f(u_3)-1} \ge 2r_{\mathrm{N}}, 
\]
and
\eqb
\Cont(f^{-1}( B(1,r_{\mathrm{N}}) \cap \ol{\bbD} )) \le c_{\mathrm{N}} .
\label{eq:nice-spfill2}
\eqe
The second condition is simplified as compared to the non-space-filling case since Cont$(\cdot)$ is equal to Lebesgue measure.

The space-filling counterpart of Proposition \ref{pr:cont_ltail_cond} is the following.
\begin{proposition}\label{pr:area_ltail_cond}
	There exist finite and positive constants $r_{\mathrm{N}},c_{\mathrm{N}},c,\tilde c_2$ such that the following is true. Let $(D,a,\u)$ be as in \eqref{eq:Daubold}, and $r,\ell>0$ such that $\ell \le r^2$. Let $\lambda = \ell^{-1}r$. If $\lambda D$ is $(r_{\mathrm{N}},c_{\mathrm{N}})$-nice and $\lambda\abs{a} \ge 1$, then
	\[ \wh\nu_{D;a\to\infty;\u}( \Cont(\eta[0,\tau_{\abs{a}+r}]) < c\ell ) \ge \exp(-\tilde c_2 \ell^{-1} r^{2}) . \]
\end{proposition}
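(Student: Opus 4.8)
The plan is to mirror the proof of \cref{pr:cont_ltail_cond} step by step, replacing each non-space-filling input by its imaginary-geometry counterpart. The backbone is the space-filling analogue of \cref{pr:nice_tip_iter}: there should exist constants $r_{\mathrm{N}},c_{\mathrm{N}},p,c>0$ such that whenever $(D,a,\u)$ as in \eqref{eq:Daubold} with $\abs{a}\ge 1$ is $(r_{\mathrm{N}},c_{\mathrm{N}})$-nice, one has with $\wh\nu_{D;a\to\infty;\u}$-probability at least $p$ that $\Cont(\eta[0,\tau_{\abs{a}+1}])<c$ and that the tuple $(D\setminus\fill(\eta[0,\tau_{\abs{a}+1}]),\,\eta(\tau_{\abs{a}+1}),\,\u')$ is again $(r_{\mathrm{N}},c_{\mathrm{N}})$-nice, where $\u'$ is the triple of points on $\partial\fill(\eta[0,\tau_{\abs{a}+1}])$ separating the four arcs traced by the relevant flow lines, as in \cref{se:prelim_spf}. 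Granting this, \cref{pr:area_ltail_cond} follows as before: iterating the statement $r$ times — using at each step the imaginary-geometry Markov property (\cref{le:localset} and the discussion following it), which says that the conditional law of the future trace is again of the form $\wh\nu$ with the updated marked points — gives $\wh\nu_{D;a\to\infty;\u}(\Cont(\eta[0,\tau_{\abs{a}+r}])<rc)\ge p^r$ whenever $D$ is nice with $\abs{a}\ge 1$ and $r\ge 1$; then the scaling argument used in the proofs of \cref{pr:cont_tail_lower,pr:area_tail_lower}, now with $d=2$ and $\lambda=\ell^{-1}r$ — using the self-similarity of index $1/2$ of space-filling SLE, the scale covariance of $\wh\nu$, and the hypotheses that $\lambda D$ is nice and $\lambda\abs{a}\ge 1$ — converts this into the bound $\exp(-\tilde c_2\ell^{-1}r^2)$, noting that $\lambda r=\lambda^2\ell=\ell^{-1}r^2$.

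So the work is to set up the space-filling analogues of \cref{le:dist_1,le:inf_loc,le:inf_dist,le:no_return,le:nice_tip_in_D,le:support,le:next_tip} and \cref{le:nice_tip}. The purely conformal statements — \cref{le:dist_1,le:inf_loc,le:inf_dist,le:no_return,le:nice_tip_in_D} and the derivative estimates for $f$ — only concern the deterministic maps $f\colon D\to\bbD$ attached to domains as in \eqref{eq:Da}, so they carry over verbatim; in the analogue of \cref{le:nice_tip_in_D} the hypothesis becomes the \emph{deterministic} area bound $\Cont(f_\alpha^{-1}(B(1,r_1)\cap\bbD))\le\tilde c\,c_{\mathrm{N}}$, because $\Cont(\cdot)$ is Lebesgue area measure, which also removes the Markov-inequality step in \cref{le:next_tip}. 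The probabilistic changes are: \textbf{(i)} the $\SLE_\kappa(\rho)$ domain Markov property is replaced throughout by the conditional law $\wh\nu$ of \cref{se:prelim_spf}, so that after the coordinate change \eqref{eq:ig-coord-ch} the image of $\eta|_{[\tau_r,\infty)}$ is a space-filling SLE in $\bbD$ governed by the boundary data of $\wh h$ (four arc-constants plus $\chi$ times winding, plus an $\arg$-singularity at $z_\infty=f(\infty)$); \textbf{(ii)} the comparisons \cref{le:sle_abs_cont,co:target_change} are replaced by GFF absolute continuity: on the event that the filled curve stays in a fixed subdomain $U$ with positive distance to $z_\infty$ and to the marked points $\wh\u$, the field $\wh h|_U$ is mutually absolutely continuous, with Radon--Nikodym derivative in a fixed interval $[\tilde c,\tilde c^{-1}]$, with respect to a reference GFF whose boundary data is fixed, hence so is the law of the space-filling curve restricted to $U$ — this is what lets us ignore the actual positions of $z_\infty$ and $\wh\u$; \textbf{(iii)} \cref{le:support} is replaced by a support statement for the space-filling curve, equivalently for its boundary-tracing flow lines $\eta_a^{\mathrm{L}},\eta_a^{\mathrm{R}}$ (which are $\SLE_{16/\kappa}(2-16/\kappa)$ curves) together with the chordal space-filling pieces between them, giving probability at least $p_\varepsilon>0$ that $\fill((f\circ\eta)[0,\tau_A])$ lies in an $\varepsilon$-tube around the straight segment $\gamma$ from $1$ to $1/64$; \textbf{(iv)} the analogue of \cref{le:initial_nice_tip} (needed for \cref{pr:area_tail_lower} but not for \cref{pr:area_ltail_cond}) must additionally push the initial marked points a definite distance from $1$, via the radial-Bessel monotonicity argument as in the non-space-filling proof. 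With (i)--(iv) in hand, the space-filling \cref{pr:nice_tip_iter} is assembled exactly as \cref{le:nice_tip} and \cref{pr:nice_tip_iter}: on the localization event the content $\Cont(\eta[0,\tau_{\abs{a}+1}])$ is now the \emph{deterministic} sum of the area of the $\varepsilon$-tube around $\gamma$ and the niceness area bound \eqref{eq:nice-spfill2} for $D$, and niceness of the new tuple follows from the space-filling \cref{le:no_return,le:nice_tip_in_D} together with the observation that on the localization event the new marked points $\u'$ land at a definite distance from $\eta(\tau_{\abs{a}+1})$.

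The main obstacle is step (ii): obtaining absolute-continuity comparisons with constants uniform over the a priori arbitrary positions of $z_\infty=f(\infty)$ and of $\wh\u$, which can be close to $1$ or to each other — exactly the situations that the niceness conditions $\abs{f(u_1)-1}\wedge\abs{f(u_3)-1}\ge 2r_{\mathrm{N}}$ and, via \cref{le:inf_loc,le:inf_dist}, the separation of $z_\infty$ from $\gamma[0,\tau_A]$, are designed to rule out. The delicate points are the careful book-keeping of the $\arg$-singularity and of the choice of branch cut for $\wh h$ in \eqref{eq:ig-coord-ch} when transporting these comparisons, and the very first step where a marked point may start near $1$, which forces the small-time Bessel argument of (iv).
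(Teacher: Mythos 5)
Your outline matches the paper's proof almost exactly: the paper establishes \cref{pr:area_ltail_cond} by running the space-filling analogues of \cref{le:nice_tip,pr:nice_tip_iter,le:cont_tail_lower1} and then the same scaling argument with $d=2$, $\lambda=\ell^{-1}r$; the purely conformal lemmas (\cref{le:dist_1,le:inf_loc,le:inf_dist,le:no_return}) are reused verbatim, the niceness condition becomes the deterministic area bound \eqref{eq:nice-spfill2}, and the iteration is driven by the conditional law $\wh\nu$ with the updated triple of marked points from \cref{se:prelim_spf}. Your points (i), (iii), (iv) and the simplifications coming from $\Cont$ being Lebesgue measure are all as in the paper.

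There is, however, one concrete error, precisely at the step you yourself single out as the main obstacle. In (ii) you assert that on the localization event the relevant GFF laws are mutually absolutely continuous \emph{with Radon--Nikodym derivative in a fixed interval $[\tilde c,\tilde c^{-1}]$}. This is false: the two fields differ by a nonzero harmonic function $g$ that is bounded on the region of interest, and the Radon--Nikodym derivative between the laws of $h$ and $h+g$ is of the form $\exp((h,g)_\nabla-\tfrac12\|g\|_\nabla^2)$, a log-normal random variable that is neither bounded above nor bounded below away from zero. A pointwise two-sided density bound of the kind available for $\SLE_\kappa(\rho)$ weight changes (\cref{le:sle_abs_cont,co:target_change}) simply does not exist here. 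The paper's replacement (\cref{le:target_change_ig}) is deliberately weaker: for events $E$ contained in the localization event it proves only $\nu_1(E)\le c'\,\nu_2(E)^{1/2}$, obtained from the Cameron--Martin form of the density together with Cauchy--Schwarz as in \cite[Lemma 2.1]{hs-mating-eucl}. This weaker, two-sided square-root comparison is still enough for every use you make of (ii): a good event with reference probability $\ge p_\varepsilon$ gets actual probability $\ge(p_\varepsilon/c')^2>0$, and a bad event with small reference probability stays small. So the architecture of your proof survives, but as written step (ii) is not a valid statement and must be replaced by the square-root estimate (with the attendant bookkeeping of squaring/square-rooting the various small probabilities in the analogue of \cref{le:nice_tip}).
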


We now go through Section \ref{sec:diam-lower-nonspf} in chronological order and point out what changes we need to make for the case of space-filling SLE.
The counterparts of Lemma \ref{le:cont_tail_lower1}, Proposition \ref{pr:nice_tip_iter}, and Lemma \ref{le:nice_tip} in the space-filling case are identical as before, except that we consider whole-plane space-filling SLE$_\kappa$ (restricted to the time interval $[0,\infty)$) and $(D,a,\u)$, respectively, instead of SLE$_\kappa(2)$ and $(D,a,u)$. Proposition \ref{pr:cont_tail_lower} is deduced from Lemma \ref{le:cont_tail_lower1} as before via scaling. 
Lemmas \ref{le:dist_1}, \ref{le:inf_loc}, \ref{le:inf_dist}, and \ref{le:no_return} are used in precisely the same form in the space-filling case as in the non-space-filling case; note that these results only concern conformal maps and not SLE. For the counterpart of Corollary \ref{co:target_change}, on the other hand, we modify the statement and the proof as follows. In particular, we do not prove a uniform bound on the Radon-Nikodym derivative in this case. 
\begin{lemma}\label{le:target_change_ig}
	There exist finite constants $c',\varepsilon > 0$ such that the following is true.
	Let $(D,a,\u)$ be as in \eqref{eq:Daubold} with $\abs{a} \ge 1$ and $|f(u_1)-1|\wedge|f(u_3)-1|>2\eps$, and set $\u_0 \defeq (-i,-1,i)$. Let $A = f(\hatC \setminus B(0,\abs{a}+1))$ and let $\gamma$ be the straight line from $1$ to $0$. Then,  
	for any event $E\subset \{ \norm{\gamma-\eta}_{\infty;[0,\tau_A]} < \varepsilon \}$ measurable with respect to the curve until time $\tau_A$ and for 
	$\nu_1,\nu_2\in\{ \wh\nu_{\bbD;1\to 0;\u_0}, 
	\wh\nu_{\bbD;1\to f(\infty);\u_0},
	\wh\nu_{\bbD;1\to 0;f(\u)}, 
	\wh\nu_{\bbD;1\to f(\infty);f(\u)} \}$ 
	it holds that $\nu_1(E)\leq c' \nu_2(E)^{1/2}$.
\end{lemma}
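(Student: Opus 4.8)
The plan is to follow the scheme of the proof of \cref{co:target_change}, but since in the space-filling setting the relevant curve is not an $\SLE_\kappa(\rho)$ and so the four laws cannot be compared through a driving-function (Girsanov) argument, I would instead compare the Gaussian free fields $\wh h$ on $\bbD$ from which the four measures are built via imaginary geometry (\cref{se:prelim_spf}). The square-root loss relative to \cref{co:target_change} will come from controlling the Radon--Nikodym derivative between the relevant field laws only in $L^2$ — not in $L^\infty$ — and combining this with Cauchy--Schwarz.

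\textbf{Step 1 (a good deterministic region).} Fix $\varepsilon>0$ small. On the event $\{\norm{\gamma-\eta}_{\infty;[0,\tau_A]}<\varepsilon\}$ the trace $\eta[0,\tau_A]$ lies in the deterministic region $N\defeq B(\gamma[0,\tau_A],\varepsilon)\cap\barD$, which meets $\partial\bbD$ only in a short arc around $1$. Using $B(0,1/64)\subseteq A$ (Koebe's $1/4$-theorem) and $A\cap B(1,r_0)=\varnothing$ (\cref{le:dist_1}), $N$ stays a definite distance from $0$; by \cref{le:inf_loc,le:inf_dist}, a definite distance from $f(\infty)$; and by the hypothesis $\abs{f(u_1)-1}\wedge\abs{f(u_3)-1}>2\varepsilon$ together with the observation that $f(u_2)$ lies on the boundary arc between $f(u_1)$ and $f(u_3)$ \emph{not} containing $1$ — so that $\abs{f(u_2)-1}\ge 2\varepsilon$ automatically by the cyclic ordering of $1,f(u_1),f(u_2),f(u_3)$ — $N$ stays at distance $\gtrsim\varepsilon$ from each of $f(u_1),f(u_2),f(u_3)$ and from $-i,-1,i$. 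Hence a fixed enlargement $N^+\supseteq N$ still stays, for a universal $\rho_0>0$, at distance $\ge\rho_0$ from $\{0,f(\infty),f(u_1),f(u_2),f(u_3),-i,-1,i\}$, meets $\partial\bbD$ only in an arc avoiding these points, and $\bbD\setminus N^+$ is connected and contains $0$, $f(\infty)$ and a boundary arc.

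\textbf{Step 2 (field comparison) and conclusion.} Recall from \cref{se:prelim_spf} that each of the four measures is produced by running the imaginary-geometry flow-line construction from a field $\wh h$ on $\bbD$ (mod $2\pi\chi$) of the form $\wh h=\wh h_0+\mathfrak h_\bullet$, where $\wh h_0$ is a zero-boundary GFF on $\bbD$ (the \emph{same} field for all four setups) and $\mathfrak h_\bullet$ is a harmonic ``mean'' with piecewise boundary data (arc-dependent constant plus $\chi$ times the winding of $\partial\bbD$, with jumps at the three marked points) plus an $\arg$-singularity at the target ($f(\infty)$ resp.\ $0$), whose branch cut I route inside $\bbD\setminus N^+$. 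Since the two boundary arcs adjacent to $1$ are the traces $f(\ul A^{\mathrm L}),f(\ul A^{\mathrm R})$ of the $\pm\pi/2$-flow lines, their boundary data is universal, so the boundary data of $\wh h$ on $N^+\cap\partial\bbD$ is identical for all four measures; consequently each difference $\mathfrak h_i-\mathfrak h_j$ is harmonic on $\bbD$, vanishes on $N^+\cap\partial\bbD$, and is bounded on $N^+$ by a universal constant (maximum principle; boundedness of the arc-constants, of the winding of the circle, and of $\arg$ away from its singularity). Multiplying $\mathfrak h_i-\mathfrak h_j$ by a fixed smooth cutoff supported in $N^+$ and $\equiv 1$ on $N$ yields an element of $H^1_0(\bbD)$ agreeing with $\mathfrak h_i-\mathfrak h_j$ on $N$ and of Dirichlet energy $\le C_0$, with $C_0$ universal (interior gradient estimates using $\rho_0$). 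By the Cameron--Martin theorem, the laws of $\wh h|_{N^+}$ under any two of the four setups are mutually absolutely continuous, with Radon--Nikodym derivative $Z$ satisfying $\bbE[Z^2]\le e^{C_0}$ uniformly in $(D,a,\u)$. Finally, by the locality of the imaginary-geometry flow lines (\cite{ms-ig4}; cf.\ the local-set property \cref{le:localset}), on the event $\{\eta[0,\tau_A]\subseteq N\}$ the curve $\eta|_{[0,\tau_A]}$, and hence the event $E$, is $\sigma(\wh h|_{N^+})$-measurable, say $1_E=g(\wh h|_{N^+})$ with $g\in\{0,1\}$; writing $\cL_i$ for the law of $\wh h|_{N^+}$ under $\nu_i$ and $Z=d\cL_1/d\cL_2$,
\[ \nu_1(E)=\bbE_{\cL_2}\bigl[Z\,g(\wh h|_{N^+})\bigr]\le\bbE_{\cL_2}[Z^2]^{1/2}\,\bbE_{\cL_2}\bigl[g(\wh h|_{N^+})^2\bigr]^{1/2}\le e^{C_0/2}\,\nu_2(E)^{1/2}, \]
using $g^2=g$. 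Taking $c'=e^{C_0/2}\vee 1$ proves the lemma.

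\textbf{The main difficulty} lies in Step 2: one must verify that the boundary data of $\wh h$ near the starting point $1$ coincides exactly for all four measures — the key point being that the two arcs adjacent to $1$ are the $\pm\pi/2$-flow-line traces, whose boundary data is purely local — and handle the $\arg$-branch-cut ambiguity so that the cut avoids $N^+$; and one must make the energy bound $\norm{\mathfrak h_i-\mathfrak h_j}_\nabla^2\le C_0$ \emph{uniform} over all admissible $(D,a,\u)$, for which the one non-obvious input is that $f(u_2)$, on which the hypothesis is silent, is automatically bounded away from $1$. A secondary technical point is the measurability/locality claim used in the last step, which is standard in imaginary geometry but should be invoked with care.
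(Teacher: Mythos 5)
Your proposal is correct and follows essentially the same route as the paper: the paper's (two-sentence) proof likewise couples the relevant GFF variants as $h_1=h_2+g$ with $g$ harmonic, vanishing on a neighbourhood of $1$ in $\partial\bbD$ and bounded on the region near $\gamma$, and then invokes the Cameron--Martin/Cauchy--Schwarz argument of \cite[Lemma 2.1]{hs-mating-eucl}, which is exactly what your Step 2 reconstructs (including the square-root loss from the $L^2$ bound on the Radon--Nikodym derivative). Your additional observations (that $f(u_2)$ is automatically $2\varepsilon$-far from $1$ by the cyclic ordering, and the locality/measurability point) are correct fillings-in of details the paper leaves implicit.
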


\begin{proof}
	Let $h_1,h_2$ be the variants of the GFF associated with the measures $\nu_1,\nu_2$. Then $h_1,h_2$ can be coupled together such that $h_1=h_2+g$ for $g$ a harmonic function that is zero along the $2\varepsilon$-neighbourhood of $1$ on $\partial\bbD$, and bounded by a constant depending only on $\eps$ on $\{ z\not\in A\,:\,\op{dist}(z,\gamma)<1.5\eps \}$. The lemma is now immediate by the argument in \cite[Lemma 2.1]{hs-mating-eucl}.
\end{proof}

The space-filling counterpart of Lemma \ref{le:nice_tip_in_D} is the following, with the same proof as before. 

\begin{lemma}\label{le:nice_tip_in_D_spf}
Suppose we have the setup of \cref{le:nice_tip_in_D}. If
$\Cont(f_\alpha^{-1}(B(1,r_{1}) \cap \ol{\bbD})) \le \wt c\,c_{\mathrm{N}}, $
then $(D \setminus f^{-1}(\alpha),\ f^{-1}(\alpha(1)),\ \u)$ is $(r_{\mathrm{N}},c_{\mathrm{N}})$-nice.
\end{lemma}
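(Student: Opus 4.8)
The plan is to run the proof of \cref{le:nice_tip_in_D} essentially verbatim on the conformal-geometry side, observing that the verification of the second niceness condition now becomes purely deterministic (no comparison of SLE variants is needed) because in the space-filling setting $\Cont(\cdot)$ is Lebesgue area. First I would set $\wt D = D \setminus f^{-1}(\alpha)$ and let $\wt f\colon \wt D \to \bbD$ be the associated conformal map, and decompose $\wt f = h^{-1} \circ f_\alpha \circ f$, where $h$ is the conformal self-map of $\bbD$ with $h(\wt f(z_D)) = 0$ and $h(1) = 1$, and $z_D, z_{\wt D}$ are the points of $\partial B(0,\abs{a}+2)$, $\partial B(0,\abs{a}+3)$ closest to $a$, $f^{-1}(\alpha(1))$ respectively. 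The argument in \cref{le:nice_tip_in_D} (Koebe's distortion theorem applied to $z \mapsto f(1/z)$ when $\abs{a}$ is large, a trivial diameter bound when $\abs{a} \le R$) shows $\abs{z_D - z_{\wt D}}$ is bounded in terms of $r_0$ only; hence $\wt f(z_D)$ stays at positive distance from $\partial\bbD$ and $\abs{h'} \asymp 1$ on $\bbD$, with all constants depending only on $r_0$. After possibly shrinking $r_{\mathrm{N}}$ this gives $h(B(1,2r_{\mathrm{N}}) \cap \bbD) \subseteq B(1,r_1) \cap \bbD$, which is the only property of $h$ that is used.

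Next I would check the two niceness conditions. For the first, $\wt f(u_j) = h^{-1}(f_\alpha(f(u_j)))$ for $j\in\{1,3\}$, and since $u_1,u_3 \in \partial D$ we have $f(u_1),f(u_3) \in \partial\bbD$, whereas $f_\alpha^{-1}(h(B(1,2r_{\mathrm{N}})\cap\bbD)) \subseteq f_\alpha^{-1}(B(1,r_1)\cap\bbD) \subseteq B(0,1-r_0/4)$ by the standing hypothesis of \cref{le:nice_tip_in_D}, and this set is at positive distance from $\partial\bbD$; therefore $f(u_j)$ lies outside it, i.e. $\abs{\wt f(u_j)-1} \ge 2r_{\mathrm{N}}$. (One also checks that $f^{-1}(\alpha(1)),u_1,u_2,u_3$ remain counterclockwise-ordered on $\partial\wt D$, which is immediate from the geometry since $f^{-1}(\alpha)$ separates $a$ from $\infty$ in $D$ without touching the $u_j$.) For the second, since $\wt f^{-1} = f^{-1}\circ f_\alpha^{-1}\circ h$ and $h(B(1,r_{\mathrm{N}})\cap\ol\bbD) \subseteq B(1,r_1)\cap\ol\bbD$, I would write
\[ \wt f^{-1}(B(1,r_{\mathrm{N}}) \cap \ol\bbD) \subseteq f^{-1}\bigl(f_\alpha^{-1}(B(1,r_1)\cap\ol\bbD)\bigr), \]
and use that the inner set is contained in $B(0,1-r_0/4)$ and, for $r_1$ small, in $f(D \cap B(0,\abs{a}+2))$, so that Koebe's distortion theorem for $z\mapsto 1/f^{-1}(z)$ gives $\abs{(f^{-1})'} \lesssim 1$ there with implicit constant depending only on $r_0$. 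The Minkowski content transformation rule \eqref{eq:cont_transf} then yields
\[ \Cont\bigl(\wt f^{-1}(B(1,r_{\mathrm{N}}) \cap \ol\bbD)\bigr) \lesssim \Cont\bigl(f_\alpha^{-1}(B(1,r_1)\cap\ol\bbD)\bigr) \le \wt c\,c_{\mathrm{N}} \le c_{\mathrm{N}}, \]
once $\wt c$ is chosen small enough depending on $r_0$. This establishes $(r_{\mathrm{N}},c_{\mathrm{N}})$-niceness of $(D\setminus f^{-1}(\alpha), f^{-1}(\alpha(1)), \u)$.

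Since every step above is either quoted from \cref{le:nice_tip_in_D} or elementary, I do not expect a genuine obstacle: the space-filling case is strictly easier, because all randomness has been removed from the niceness condition. The one bookkeeping point to be careful about is the containment $f_\alpha^{-1}(B(1,r_1)\cap\ol\bbD) \subseteq f(D\cap B(0,\abs{a}+2))$ needed for the Koebe bound on $\abs{(f^{-1})'}$; this is handled exactly as in \cref{le:nice_tip_in_D} by noting that $f_\alpha^{-1}(B(1,r_1)\cap\bbD)$ is a small neighbourhood of $\alpha(1)$ in $D_\alpha$ while $f^{-1}(\alpha(1))$ has modulus $\abs{a}+1 < \abs{a}+2$, so that shrinking $r_1$ keeps the set in the desired region.
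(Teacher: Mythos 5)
Your proposal is correct and follows essentially the same route as the paper, which proves this lemma simply by rerunning the proof of \cref{le:nice_tip_in_D} (the decomposition $\wt f = h^{-1}\circ f_\alpha\circ f$, the bound on $\abs{z_D-z_{\wt D}}$, the choice of $r_{\mathrm{N}}$ with $h(B(1,2r_{\mathrm{N}})\cap\bbD)\subseteq B(1,r_1)\cap\bbD$, and the Koebe bound on $\abs{(f^{-1})'}$ combined with \eqref{eq:cont_transf}), with the probabilistic SLE comparison dropping out because the space-filling niceness condition \eqref{eq:nice-spfill2} is a deterministic area bound. No gaps.
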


Lemma \ref{le:support} still holds in the space-filling setting, with the only modification being that we replace $\nu_{\bbD;1\to 0}$ by $\wh\nu_{\bbD;1\to 0;\u}$ for fixed $\u=(u_1,u_2,u_3)$ such that $1,u_1,u_2,u_3$ are ordered counterclockwise. 
The proof of the lemma in the space-filling setting will follow by iterative applications of Lemma \ref{le:support-spfill-nbh} right below. 
Notice that in this lemma we do not rule out scenarios where $\eta$ oscillates back and forth while staying close to $\gamma$, while do not want such behavior in Lemma \ref{le:support} as we consider the $L^\infty$ distance.

\begin{lemma}\label{le:support-spfill-nbh}
Let $\u=(u_1,u_2,u_3)$ be distinct points of $(\partial\bbD)\setminus\{1 \}$ such that $1,u_1,u_2,u_3$ are ordered clockwise and let $\gamma\colon[0,T]\to \bbD \setminus \{0\}$ be a simple curve with $\gamma(0) = 1$ for some $T>0$. For $\delta>0$ let $A(\delta)$ denote the $\delta$-neighborhood of $\gamma([0,T])$. 
Define stopping times
\[
\sigma_1=\inf\{t\geq 0\,:\,|\eta(t)-\gamma(T)|<\delta \},\qquad
\sigma_2 = \inf\{ t\geq 0\,:\,\eta(t)\not\in A(\delta) \}.
\]
Then for any $\delta>0$  we have $\wh\nu_{\bbD;1\to 0;\u}[\sigma_1<\sigma_2]>0$.
\end{lemma}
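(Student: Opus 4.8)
The plan is to reduce to small $\delta$, then to an event about the two flow lines bounding the ``past'' of the point $\gamma(T)$, and finally to invoke the support theorem \cref{le:support}.

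\emph{Reduction to small $\delta$.} Writing $\sigma_1^{(\delta)},\sigma_2^{(\delta)}$ to record the dependence on $\delta$, one checks directly that $\delta'\le\delta$ implies $\sigma_1^{(\delta')}\ge\sigma_1^{(\delta)}$ and $\sigma_2^{(\delta')}\le\sigma_2^{(\delta)}$, hence $\{\sigma_1^{(\delta')}<\sigma_2^{(\delta')}\}\subseteq\{\sigma_1^{(\delta)}<\sigma_2^{(\delta)}\}$. So it suffices to treat $\delta$ small; in particular we may assume $\delta<\tfrac13\dist(\gamma([0,T]),\{0\})$ and that $A(\delta)$ is a simply connected neighbourhood of $\gamma([0,T])$ in $\bbD$ avoiding $0$. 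Next, let $t_*=t_{\gamma(T)}$ be the a.s.\ finite first time at which the space-filling curve reaches the interior point $\gamma(T)$, so $\eta(t_*)=\gamma(T)$. It suffices to show $\wh\nu_{\bbD;1\to0;\u}(\eta[0,t_*]\subseteq A(\delta/2))>0$, because on that event $\sigma_1^{(\delta)}\le t_*$ while $\eta[0,\sigma_1^{(\delta)}]\subseteq A(\delta/2)$, so $\eta$ has not exited $A(\delta)$ by time $\sigma_1^{(\delta)}$ and therefore $\sigma_2^{(\delta)}>\sigma_1^{(\delta)}$.

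\emph{Identification of $\eta[0,t_*]$.} By the imaginary-geometry description of $\wh\nu_{\bbD;1\to0;\u}$ recalled in \cref{se:prelim_spf} (cf.\ \cite[Section~4.3]{ms-ig4}), the outer boundary of $\eta[0,t_*]$ is the union of the two flow lines $\eta^{\mathrm L}_{\gamma(T)},\eta^{\mathrm R}_{\gamma(T)}$ of $\wh h$ of angles $\pm\pi/2$ emanating from $\gamma(T)$, together with the boundary arc of $\partial\bbD$ adjacent to $1$ that they cut off; here both flow lines run from $\gamma(T)$ to $\partial\bbD$, and $\eta[0,t_*]$ is contained in the region that this loop cuts off on the side of $1$ (the side not containing $0$). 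Since $A(\delta/2)$ is simply connected and contains $\gamma(T)$ and $1$, it is therefore enough to show that with positive probability both $\eta^{\mathrm L}_{\gamma(T)}$ and $\eta^{\mathrm R}_{\gamma(T)}$ are contained in $A(\delta/4)$ and hit $\partial\bbD$ within distance $\delta/8$ of $1$: then the loop they form with the short boundary arc near $1$ lies in $A(\delta/2)$, hence so does the region it cuts off, hence so does $\eta[0,t_*]$.

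\emph{Steering the flow lines.} The curves $\eta^{\mathrm L}_{\gamma(T)},\eta^{\mathrm R}_{\gamma(T)}$ are $\SLE_{16/\kappa}(\rho)$-type curves started from the deterministic point $\gamma(T)$; while such a curve stays inside $A(\delta/4)$ — a region at positive distance from $0$ and, after the initial-segment modification below, from the boundary force points — its law is mutually absolutely continuous with respect to that of a standard $\SLE_{16/\kappa}$-type curve in a subdomain, and $16/\kappa<8$, so the support theorem \cref{le:support} (and its chordal/whole-plane analogues, transferred by conformal invariance and absolute continuity) applies. Fix disjoint simple curves $\gamma^{\mathrm L},\gamma^{\mathrm R}$ inside $A(\delta/8)$, each from $\gamma(T)$ to a point of $\partial\bbD$ within $\delta/8$ of $1$, with $\gamma^{\mathrm R}$ to the right of $\gamma^{\mathrm L}$ (small parallel displacements of the time-reversal of $\gamma$). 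With positive probability $\eta^{\mathrm L}_{\gamma(T)}$ follows $\gamma^{\mathrm L}$ to within $\delta/8$; conditionally on this, the flow-line interaction rules of \cite{ms-ig4} make $\eta^{\mathrm R}_{\gamma(T)}$ an $\SLE_{16/\kappa}(\rho)$-type curve in the component of $\bbD\setminus\eta^{\mathrm L}_{\gamma(T)}$ to its right, which by the same argument follows $\gamma^{\mathrm R}$ with positive conditional probability. On the intersection of these two positive-probability events the required containment holds. The one point needing extra care is the behaviour of the flow lines near their endpoints on $\partial\bbD$ close to $1$, where marked points of $\u$ near $1$ can spoil the uniform absolute continuity; this is handled exactly as in the proof of \cref{le:initial_nice_tip}, by first letting $\eta$ run for a small amount of time so that, after conformally mapping out the initial segment (which with positive probability stays in $A(\delta)$), the relevant marked points are a definite distance from the new starting point (using that the argument of each marked point evolves as a Bessel-type process of positive index, a.s.\ positive at a fixed time), and then applying the above in the updated domain.

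I expect the main obstacle to be precisely this boundary bookkeeping near $1$, together with making the identification of $\eta[0,t_*]$ via the flow lines $\eta^{\mathrm L}_{\gamma(T)},\eta^{\mathrm R}_{\gamma(T)}$ fully rigorous in the regime $\kappa\in(4,8)$, where the two flow lines may bounce off $\partial\bbD$ and off one another so that the complement has many components and $\eta[0,t_*]$ need not be simply connected.
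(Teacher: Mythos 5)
Your route is genuinely different from the paper's. The paper disposes of this lemma in one paragraph: it notes that $\eta[0,\tau_z]$ is a union of branches of a branching counterflow line (\cite[Theorem 1.13]{ms-ig4}) and then invokes \cite[Lemma 2.3]{mw-cut-double}, which is precisely a ``follow a prescribed curve with positive probability'' statement for $\SLE_\kappa(\rho)$ processes, applied here to the counterflow line with $\kappa>4$. You instead work with the dual picture: steer the two angle-$\pm\pi/2$ flow lines emanating from $\gamma(T)$ back to $\partial\bbD$ near $1$ so as to trap $\fill(\eta[0,t_*])$, hence $\eta[0,t_*]$, inside $A(\delta/2)$. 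The geometric part of your argument is essentially sound, except that the $\delta$-neighbourhood of an arbitrary simple curve need \emph{not} be simply connected for small $\delta$ (a simple curve that nearly closes up on itself, e.g.\ a spiral, produces holes at every scale), so ``we may assume $A(\delta)$ is simply connected'' is false as stated; you must first replace $\gamma$ by a polygonal approximation inside $A(\delta/100)$ whose thin tubular neighbourhood is simply connected. Without simple connectivity the Jordan-curve step (``the region the loop cuts off lies in $A(\delta/2)$'') fails.

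The genuine gap is in ``Steering the flow lines''. The only support statement available in the paper, \cref{le:support}, is \cite[Proposition 1.4]{ty-support} transferred to radial $\SLE_\kappa$: it concerns the trace started from a \emph{boundary} point with no force points, up to a fixed capacity time, and it says nothing about where the curve terminates on $\partial\bbD$. What your argument needs is a support theorem for a whole-plane $\SLE_{16/\kappa}(2-16/\kappa)$-type flow line of $\wh h$ started from the \emph{interior} point $\gamma(T)$, including the requirement that it exits at a prescribed location on $\partial\bbD$ (within $\delta/8$ of $1$). Absolute continuity with a force-point-free SLE holds only strictly before the curve reaches the boundary, so it cannot by itself pin down the terminal point; and the behaviour at termination (hitting/reflecting on the arcs determined by $\u$, or merging with the flow lines from $u_1,u_3$ — in which case the pocket boundary would contain arcs far from $A(\delta)$) is exactly where the $\rho$-structure matters. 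This missing steering lemma is of the same depth as \cite[Lemma 2.3]{mw-cut-double}, which is the citable input the paper actually uses; as written, your proof defers the entire technical content of the statement to it. The boundary bookkeeping near $1$ that you propose to handle ``as in \cref{le:initial_nice_tip}'' also does not obviously apply, since there the issue is a force point adjacent to the \emph{starting} point of the curve being steered, whereas here the delicate points $u_1,u_3$ sit where your flow lines must \emph{terminate}.
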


\begin{proof}
The proof is identical to the proof of \cite[Lemma 2.3]{mw-cut-double}. The only difference is that the lemma treats chordal curves and we consider a radial curve, but the argument is the same. More precisely, we need the argument in the lemma for $\kappa>4$, corresponding to the case where the curve in question is a counterflow line. It is enough to prove the claim for counterflow lines since by \cite[Theorem 1.13]{ms-ig4}, for every rational $z$, the set $\eta[0,\tau_z]$ (with $\tau_z$ the first hitting time of $z$) is the union of branches of a branching counterflow line. (Strictly speaking, \cite[Theorem 1.13]{ms-ig4} is stated for GFF with fully branchable boundary condition, but we can perform an absolutely continuous change of measure such that the law of $h\big|_{A(\delta)}$ becomes the law of the restriction of a GFF with fully branchable boundary condition.)
\end{proof}

\begin{proof}[Proof of Lemma \ref{le:support} with $\wh\nu_{\bbD;1\to 0;\u}$ instead of $\nu_{\bbD;1\to 0}$]
Note that Lemma \ref{le:support-spfill-nbh} can also be applied to simply connected domains not equal to $\bbD$ by applying an appropriate conformal change of coordinates to the GFF and the curves $\eta$ and $\gamma$. Set $j_0=10/\eps+1$, assuming without loss of generality that $j_0$ is an integer. We apply Lemma \ref{le:support-spfill-nbh} iteratively for $j=1,2,\dots,j_0$ with domain $D=\bbD\setminus \eta[0,\tau_{j-1}]$, $\gamma$ the straight line from $\eta(\tau_{j-1})$ to $1-j\eps/10$, $\delta\ll\eps$, and $\tau_{j}$ defined as the stopping time $\sigma_1$ in the lemma (with $\tau_0=0$). 
\end{proof}

The statement and proof of \cref{le:next_tip} simplify as follows, since for suitable $c_{\mathrm{N}}$ the condition of \cref{le:nice_tip_in_D_spf} is trivially satisfied as $f_{\alpha}^{-1}$ maps into $\bbD$.
\begin{lemma}
Suppose that we have the setup of \cref{le:next_tip}. If $\norm{\gamma-\eta}_{\infty;[0,\tau_A]} < \varepsilon$, then $(D \setminus f^{-1}(\eta[0,\tau_A]),\ f^{-1}(\eta(\tau_A)),\ \u)$ is $(r_{\mathrm{N}}, c_{\mathrm{N}})$-nice.
\end{lemma}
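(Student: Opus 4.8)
The plan is to read this off from \cref{le:nice_tip_in_D_spf}, applied with $\alpha \defeq \eta[0,\tau_A]$ reparametrised as a curve on $[0,1]$, so that $\alpha(0)=1$ and $\alpha(1)=\eta(\tau_A)$. Since niceness of $(D\setminus f^{-1}(\eta[0,\tau_A]),\,f^{-1}(\eta(\tau_A)),\,\u)$ is a deterministic property of the path segment $\eta[0,\tau_A]$, it suffices to verify, for any curve $\eta$ in $\ol\bbD$ starting from $1$ with $\norm{\gamma-\eta}_{\infty;[0,\tau_A]}<\varepsilon$, that the triple $(D,a,\u)$ together with $\alpha$ satisfies the hypotheses of \cref{le:nice_tip_in_D_spf}. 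These consist of the geometric conditions inherited from the setup of \cref{le:nice_tip_in_D}, together with the single Minkowski-content bound $\Cont(f_\alpha^{-1}(B(1,r_1)\cap\ol\bbD))\le\wt c\,c_{\mathrm{N}}$.

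The geometric conditions are verified exactly as in the proof of \cref{le:next_tip}. Because $\tau_A$ is the hitting time of $A=f(\hatC\setminus B(0,\abs{a}+1))$, we have $\abs{f^{-1}(\alpha(t))}<\abs{a}+1$ for $t<1$ and $\abs{f^{-1}(\alpha(1))}=\abs{a}+1$. Using $B(0,1/64)\subseteq A$, $A\cap B(1,r_0)=\varnothing$ (\cref{le:dist_1} and Koebe's $1/4$-theorem), and $\norm{\gamma-\eta}_{\infty;[0,\tau_A]}<\varepsilon\le r_0/4$, the curve $\alpha$ stays in $\{\abs{z}\ge 1/64,\ \Re z>0\}$, does not leave $B(0,1-r_0/4)$ after entering $B(0,1-r_0/2)$, and has endpoint $\alpha(1)$ in $B(0,1-r_0)$ joined to $0$ in $B(0,1-r_0)\setminus\alpha$; in particular $\alpha(1)\in\partial D_\alpha$, where $D_\alpha$ is the component of $\bbD\setminus\alpha$ containing $0$, and (by \cref{le:inf_loc,le:inf_dist}) $f(\infty)\in D_\alpha$, so that $D\setminus f^{-1}(\alpha)$ is again of the form \eqref{eq:Daubold}. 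Thus the hypotheses of \cref{le:no_return} hold, giving $f_\alpha^{-1}(B(1,r_1)\cap\bbD)\subseteq B(0,1-r_0/4)$, which is the last of the geometric hypotheses of \cref{le:nice_tip_in_D_spf}. (As in the non-space-filling argument, one may have to shrink $\varepsilon$ — equivalently, pass to a slightly smaller $r_0$ — to make the displayed inclusions literal, which affects nothing.)

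It remains to check the content bound, and this is where the space-filling case genuinely simplifies. Since here $\Cont(\cdot)$ coincides with the Lebesgue area measure and $f_\alpha^{-1}$ maps $\bbD$ into $D_\alpha\subseteq\bbD$, we get $\Cont(f_\alpha^{-1}(B(1,r_1)\cap\ol\bbD))\le\pi$ trivially. Hence, fixing any $c_{\mathrm{N}}\ge\pi/\wt c$ (with $\wt c$ the constant of \cref{le:nice_tip_in_D_spf}, which depends only on $r_0,r_1$), the content hypothesis holds automatically, and \cref{le:nice_tip_in_D_spf} yields that $(D\setminus f^{-1}(\eta[0,\tau_A]),\,f^{-1}(\eta(\tau_A)),\,\u)$ is $(r_{\mathrm{N}},c_{\mathrm{N}})$-nice. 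I expect no real obstacle here: the step that in the non-space-filling proof of \cref{le:next_tip} required the a.s.\ finiteness of the Minkowski content of radial \slek{} together with a Markov-inequality argument is now replaced by the deterministic area bound $\le\pi$, and the only substantive part of the proof — the conformal-geometry inclusions feeding into \cref{le:no_return} and \cref{le:nice_tip_in_D_spf} — is identical to the non-space-filling case.
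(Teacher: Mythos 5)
Your proof is correct and follows the paper's own route: the paper likewise reduces to the space-filling version of the ``nice tip'' lemma (\cref{le:nice_tip_in_D_spf}), observing that since $\Cont$ is Lebesgue area and $f_\alpha^{-1}$ maps into $\bbD$, the content hypothesis is trivially satisfied for suitable $c_{\mathrm{N}}$, with the geometric hypotheses verified exactly as in the non-space-filling proof of \cref{le:next_tip}. Your write-up is in fact somewhat more explicit than the paper's about which geometric inclusions feed into \cref{le:no_return}, but the argument is the same.
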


The statement and proof of Lemma \ref{le:initial_nice_tip} are identical to the non-space-filling case, except that we consider $\wh\nu_{D;a\to\infty;\u}$ instead of $\nu_{D;a\to\infty;u}$ and that the proof relies on the following lemma (proved at the very end of the proof of \cite[Lemma 3.1]{ghm-kpz-sle}) to treat the case where both $|f(u_1)-1|$ and $|f(u_3)-1|$ are small.\footnote{Note that there is a typo in the published version of \cite{ghm-kpz-sle} which interchanges the role of ``left'' and ``right'' in the second-to-last sentence of the below lemma.} 
A similar lemma is used when only one of $|f(u_1)-1|$ and $|f(u_3)-1|$ is small, with the only difference being that we only grow a flow line from the point in $\{f(u_1),f(u_3)\}$ that is close to 1.

\begin{lemma}
There exist positive constants $\delta, q > 0$ such that the following is true. Let $\u=(u_1,u_2,u_3)$ be such that $1,u_1,u_2,u_3\in\partial\BB D$ are distinct and ordered clockwise. Let $\wh h$ denote the variant of the GFF in $\D$ that is used when defining $\wh\nu_{\D;1\to 0;\u}$ at the very end of \cref{se:prelim_spf}. Suppose that $|u_1-1| \vee|u_3-1| \leq\delta$.
	Let $\eta^{\mathrm{L}}_{u_1}$ (resp.\ $\eta^{\mathrm{R}}_{u_3}$) denote the flow line of $\wh h$ started from $u_1$ (resp.\ $u_3$) with angle $\pi/2$ (resp.\ $-\pi/2$) targeted at $0$, and let $S_1$ (resp.\ $S_3$) be its exit time from $B_{2\delta}(1)$. 
	Let $U$ be the connected component containing $0$ of $\BB D\setminus ( \eta^{\mathrm{L}}_{u_1}([0,S_1]) \cup \eta^{\mathrm{R}}_{u_3}([0,S_3]) )$. Let $E$ be the event that the harmonic measure from 0 in $U$ of each of the right side of $\eta^{\mathrm{L}}_{u_1}([0,S_1])$ and the left side of $\eta^{\mathrm{R}}_{u_3}([0,S_3])$ is at least $q$. 
	Then $\BB P(E) \geq q$. 
\end{lemma}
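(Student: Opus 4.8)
The plan is to first reduce, by an absolute continuity argument of the kind used in the proof of \cref{le:target_change_ig}, to the analogous statement for a GFF with clean boundary data away from $1$, and then to establish that statement using a support theorem for flow lines together with the flow line interaction rules.

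\emph{Step 1: reduction.} Fix $\delta\in(0,1/3)$, to be chosen later. Since $\eta^{\mathrm L}_{u_1},\eta^{\mathrm R}_{u_3}$ are stopped on exiting $B_{2\delta}(1)$, the pair $(\eta^{\mathrm L}_{u_1}|_{[0,S_1]},\eta^{\mathrm R}_{u_3}|_{[0,S_3]})$ is a measurable function of $\wh h|_{B_{3\delta}(1)\cap\bbD}$ by standard locality of flow lines \cite{ms-ig4} (flow lines of the same angle targeted at distinct far points agree until they separate). On $\partial\bbD\cap B_{3\delta}(1)$ the boundary data of $\wh h$ is flow-line boundary data with jumps only at $u_3,1,u_1$, hence bounded by a $\kappa$-dependent constant, whereas the remaining sources of randomness of $\wh h$ (the free values along $\partial B_{3\delta}(1)\cap\bbD$, the marked point $u_2$, and the $\arg$-singularity at $0$) all lie at distance $\ge\delta$ from $B_{2\delta}(1)$. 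I would couple $\wh h$ with a GFF $h_0$ having the \emph{same} flow-line boundary data on $\partial\bbD\cap B_{3\delta}(1)$ and trivial boundary data and no interior singularity otherwise; placing the branch cut of $\arg$ away from $1$, the difference $\wh h-h_0$ is then a harmonic function vanishing on $\partial\bbD\cap B_{3\delta}(1)$ and bounded on $\bbD$ by a $\kappa$-dependent constant, so, exactly as in \cite[Lemma 2.1]{hs-mating-eucl}, the laws of $\wh h|_{B_{2\delta}(1)\cap\bbD}$ and $h_0|_{B_{2\delta}(1)\cap\bbD}$ are mutually absolutely continuous with Radon--Nikodym derivative in $[c,c^{-1}]$ for some $c=c(\kappa,\delta)>0$, uniformly over the positions of $u_1,u_3\in B_\delta(1)\cap\partial\bbD$ and of $u_2$. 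Hence it is enough to bound $\bbP(E)$ from below for the flow lines of $h_0$.

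\emph{Step 2: a favourable configuration and the harmonic measure bound.} Because $u_1$ lies (slightly) clockwise of $1$ and $u_3$ (slightly) counterclockwise, for each $u_1,u_3\in B_\delta(1)\cap\partial\bbD$ I would fix two smooth simple curves $\gamma_{\mathrm L},\gamma_{\mathrm R}\subseteq \overline{B_{2\delta}(1)}\cap\overline{\bbD}$ from $u_1,u_3$ to $\partial B_{2\delta}(1)$, of uniformly bounded length, leaving $u_1,u_3$ into $\bbD$ in the directions prescribed by the flow-line boundary data (so that, as for $\eta^{\mathrm L}_{u_1},\eta^{\mathrm R}_{u_3}$, the point $0$ stays exposed on the right of $\gamma_{\mathrm L}$ and on the left of $\gamma_{\mathrm R}$), disjoint and at mutual distance $\ge\delta/4$ away from their starting points, and at distance $\ge\delta/4$ from $\partial\bbD$ except within $\delta/8$ of their starting points. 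By the support theorem for $\mathrm{SLE}_{16/\kappa}(\rho)$-type flow lines --- which follows from the support theorem for chordal $\mathrm{SLE}_{16/\kappa}$ (cf.\ \cite[Proposition 1.4]{ty-support}), absolute continuity away from force points, and a Girsanov argument for the short initial segments near $u_1^\pm,u_3^\pm$ --- there is $p_0=p_0(\kappa,\delta)>0$, uniform over $u_1,u_3$, so that with probability $\ge p_0$ both $\eta^{\mathrm L}_{u_1}|_{[0,S_1]}$ stays within $\delta/8$ of $\gamma_{\mathrm L}$ and $\eta^{\mathrm R}_{u_3}|_{[0,S_3]}$ stays within $\delta/8$ of $\gamma_{\mathrm R}$, and moreover (by the interaction rules for flow lines of distinct angles \cite{ms-ig4}) the two segments are disjoint simple arcs meeting $\partial\bbD$ only at their starting points; call this event $E_0$. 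On $E_0$ the domain $U$ contains $\bbD$ minus $B_{2\delta}(1)$ together with the $(\delta/8)$-neighbourhoods of $\gamma_{\mathrm L},\gamma_{\mathrm R}$, as well as a fixed channel from $0$ to the right side of $\eta^{\mathrm L}_{u_1}|_{[0,S_1]}$ and another to the left side of $\eta^{\mathrm R}_{u_3}|_{[0,S_3]}$; running a Brownian motion from $0$ through such a channel shows that the harmonic measure from $0$ in $U$ of each of these two arcs is at least some $q_1=q_1(\kappa,\delta)>0$. Taking $q=\min\{q_1,c\,p_0\}$ and using $E_0\subseteq E$ yields $\bbP(E)\ge q$.

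\emph{Main obstacle.} I expect the hard part to be uniformity as $u_1,u_3\to1$ and as $|u_1-u_3|\to0$: one must verify that neither the Radon--Nikodym bound $c$ of Step 1 nor the support-theorem constant $p_0$ of Step 2 degenerates, which holds because all data other than the moving triple $u_3,1,u_1$ remains at macroscopic distance $\ge\delta$ and the target curves $\gamma_{\mathrm L},\gamma_{\mathrm R}$ have bounded length and avoid everything except the force points at their own starting points; and, for $\kappa\in(4,8)$ where flow lines can be self-touching and touch $\partial\bbD$, that the favourable event $E_0$ (on which the two initial segments are tame simple arcs) still has positive probability. This is precisely what is carried out at the very end of the proof of \cite[Lemma 3.1]{ghm-kpz-sle}.
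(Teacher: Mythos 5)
You should first be aware that the paper does not supply its own proof of this lemma: it is stated verbatim as a result ``proved at the very end of the proof of \cite[Lemma 3.1]{ghm-kpz-sle}'' and is simply imported. Your sketch correctly identifies this source and assembles the right ingredients (locality of flow lines, absolute continuity of GFFs differing by a bounded harmonic function, a positive-probability ``good configuration'' for the two flow-line segments, and a Brownian-motion channel argument for the harmonic measure), so the overall shape of the argument is the intended one. Two points, however, are not right as written. First, the claim in Step 1 that the laws of $\wh h|_{B_{2\delta}(1)\cap\bbD}$ and $h_0|_{B_{2\delta}(1)\cap\bbD}$ are mutually absolutely continuous ``with Radon--Nikodym derivative in $[c,c^{-1}]$'' is false: when two GFFs differ by a nonzero harmonic function the Radon--Nikodym derivative is an (unbounded) log-normal random variable. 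The correct and sufficient tool is the second-moment/Cauchy--Schwarz comparison $\nu_1(E)\le c'\nu_2(E)^{1/2}$, exactly as in \cref{le:target_change_ig} of this paper and \cite[Lemma 2.1]{hs-mating-eucl}; applied in the direction $\nu_{h_0}(E_0)\le c'\,\nu_{\wh h}(E_0)^{1/2}$ it still transfers the positive lower bound, so this is repairable but must be restated.

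The second and more serious issue is that the uniformity as $u_1,u_3\to 1$ --- which is the entire content of the lemma, since for $u_1,u_3$ bounded away from $1$ one does not need to grow the flow lines at all --- is asserted rather than proved. The flow line $\eta^{\mathrm{L}}_{u_1}$ is an $\SLE_{16/\kappa}(\rho)$ whose force points include $1$ and $u_3$, both of which may be arbitrarily close to the seed $u_1$; hence ``absolute continuity away from force points'' gives nothing for the initial segment, and the constant $p_0$ in a naive support-theorem argument degenerates in precisely the regime the lemma addresses. Your ``Girsanov argument for the short initial segments'' is the missing step. To close it one needs an argument of the type used elsewhere in the paper (e.g.\ Case~2 in the last lemma of \cref{se:sle_upper}, or the Bessel-process step in \cref{le:initial_nice_tip}): run the flow lines for a small amount of capacity time, use monotonicity of the associated Bessel-type processes in their starting points to show that with uniformly positive probability the images of the force points separate macroscopically from the tips, and only then invoke absolute continuity and the support theorem \cite[Proposition 1.4]{ty-support}. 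Alternatively one argues directly with the conditioned GFF as in \cite[Lemma 3.1]{ghm-kpz-sle}. As it stands, your proposal defers exactly the hard part to the reference, which is what the paper itself does, so the sketch is acceptable as a reconstruction of the cited argument but not as a self-contained proof.
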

Finally, the proofs of the space-filling counterparts of 
Lemma \ref{le:cont_tail_lower1}, 
Proposition \ref{pr:nice_tip_iter}, 
Lemma \ref{le:nice_tip}, and Proposition \ref{pr:cont_ltail_cond} go through precisely as before. The only minor change is when justifying the space-filling counterpart of Lemma \ref{le:nice_tip}\ref{it:nice_base}, where we now use
\[
\Cont(f^{-1}(\eta[0,\sigma_{r_{\mathrm{N}}}])) \le \Cont(f^{-1}(  B(1,r_{\mathrm{N}})\cap\ol{\bbD}  ))\le c_{\mathrm{N}}.
\]

\subsection{Lower bounds on the regularity of SLE}
\label{se:lowerbounds_pf}

In this section we conclude the proofs of \cref{thm:main_lil,thm:main_moc,thm:main_var}. Given Proposition \ref{pr:upperbounds}, it remains to prove that the constants $c_0,c_1$ in the theorems are positive.

A natural approach for proving lower bounds is to find disjoint intervals $[s_k,t_k]$ on which the increment of $\eta$ is exceptionally big with a certain (small) probability. If the sum of these probabilities is infinite and there is sufficient decorrelation of these events, then a variant of the second Borel-Cantelli lemma will imply that infinitely many of these events will occur. In our case, however, the correlations are not easy to control. The probabilities of having exceptionally large increments in the non-space-filling case are given in \cref{pr:cont_tail_lower} or its ``conditional'' version \cref{pr:cont_ltail_cond}. But when conditioning on $\eta[0,s_k]$, \cref{pr:cont_ltail_cond} does not apply to every realisation of $\eta[0,s_k]$ but only the ones that are nice (as defined in \cref{sec:diam-lower-nonspf}). Another attempt would be to use the corresponding upper bound of the probability given by \cref{pr:diam_tail_upper}. But the upper and lower bounds differ not just by a factor but by a power, which is too weak to guarantee sufficient decorrelation. The exact same issues arise in the case of space-filling SLE.

Our idea is to introduce another sequence of events $B_k$ on which the conditional lower bound on the interval $[s_k,t_k]$ is valid again (an example is the event that $\eta[0,s_k]$ is nice). We formulate the argument as an abstract lemma.

\begin{lemma}\label{le:conditional_prob}
Let $A_1,...,A_k$ and $B_1,...,B_k$ be events. Suppose that $\bbP(B_j) \ge p$ for some $p>0$ and all $j$. Let
\[
p_j = \bbP( A_j \mid B_j \cap (A_1 \cap B_1)^c \cap ... \cap (A_{j-1} \cap B_{j-1})^c ) .
\]
Then, if $q\in[0,1]$ satisfies
\begin{equation}\label{eq:cond_p_sum}
\exp\left( -(1-\frac{1-p}{q})(p_1+...+p_k) \right) < q ,
\end{equation}
we have
\[ \bbP( (A_1 \cap B_1) \cup ... \cup (A_k \cap B_k) ) > 1-q . \]
\end{lemma}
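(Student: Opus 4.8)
The plan is to bound the probability of the ``total failure'' event by a telescoping/inductive estimate and then derive a contradiction from assuming it is too large. Write $C_j \defeq A_j \cap B_j$ and set $E_0 \defeq \Omega$, $E_j \defeq E_{j-1} \cap C_j^c = (A_1\cap B_1)^c \cap \cdots \cap (A_j\cap B_j)^c$. The conclusion $\bbP(C_1\cup\cdots\cup C_k) > 1-q$ is equivalent to $\bbP(E_k) < q$, so I would assume toward a contradiction that $\bbP(E_k) \ge q$. Since the $E_j$ are decreasing, this forces $\bbP(E_{j-1}) \ge q$ for every $j \le k$. Note that \eqref{eq:cond_p_sum} forces $q > 1-p$ (otherwise $1-\tfrac{1-p}{q}\le 0$, the left-hand side of \eqref{eq:cond_p_sum} is $\ge 1 > q$, a contradiction), so in particular $q>0$ and, by the union bound $\bbP(B_j\cap E_{j-1})\ge p+q-1>0$, every conditional probability appearing below is well defined.

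\textbf{Key step.} I would lower-bound $\bbP(B_j \mid E_{j-1})$ using only $\bbP(B_j)\ge p$ and $\bbP(E_{j-1})\ge q$. From $\bbP(B_j\cap E_{j-1}) \ge \bbP(B_j)+\bbP(E_{j-1})-1$ one gets
\[ \bbP(B_j \mid E_{j-1}) \ge 1 - \frac{1-p}{\bbP(E_{j-1})} \ge 1 - \frac{1-p}{q} \ge 0 , \]
where the last inequality uses the observation above. Combining this with the chain rule and the definition of $p_j$,
\[ \bbP(C_j \mid E_{j-1}) = \bbP(A_j\mid B_j\cap E_{j-1})\,\bbP(B_j\mid E_{j-1}) = p_j\,\bbP(B_j\mid E_{j-1}) \ge p_j\Bigl(1 - \frac{1-p}{q}\Bigr) , \]
and hence, using $1-x\le e^{-x}$,
\[ \bbP(E_j) = \bbP(E_{j-1})\bigl(1 - \bbP(C_j\mid E_{j-1})\bigr) \le \bbP(E_{j-1})\,\exp\!\Bigl(-p_j\bigl(1 - \tfrac{1-p}{q}\bigr)\Bigr) . \]

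\textbf{Conclusion.} Iterating this recursion from $j=1$ to $k$ with $\bbP(E_0)=1$ gives
\[ \bbP(E_k) \le \exp\!\Bigl(-\bigl(1 - \tfrac{1-p}{q}\bigr)(p_1+\cdots+p_k)\Bigr) < q \]
by \eqref{eq:cond_p_sum}, contradicting $\bbP(E_k)\ge q$. Therefore $\bbP(E_k)<q$ and $\bbP(C_1\cup\cdots\cup C_k) = 1-\bbP(E_k) > 1-q$.

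\textbf{Main obstacle.} There is essentially no analytic difficulty here; the whole content is a short deterministic computation. The only care required is bookkeeping: ensuring all conditionings are legitimate and that the sign $1-\tfrac{1-p}{q}\ge 0$ holds so that the inequality $1-x\le e^{-x}$ is applied in the useful direction. Both of these are handled once one extracts from \eqref{eq:cond_p_sum} the fact that $q>1-p$, which is the one slightly non-obvious bookkeeping point.
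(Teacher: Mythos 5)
Your proof is correct and follows essentially the same route as the paper's: assume $\bbP(E_k)\ge q$, bound $\bbP(B_j\mid E_{j-1})$ from below by $1-\frac{1-p}{q}$ using $\bbP(E_{j-1})\ge\bbP(E_k)\ge q$, multiply by $p_j$, and iterate the resulting product bound to contradict \eqref{eq:cond_p_sum}. Your explicit remarks that \eqref{eq:cond_p_sum} forces $q>1-p$ and that all conditionings are well defined are minor bookkeeping points the paper leaves implicit.
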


\begin{proof}
Suppose the contrary, i.e.
\begin{equation}\label{eq:cond_p_contra}
\bbP( (A_1 \cap B_1)^c \cap ... \cap (A_k \cap B_k)^c ) \ge q .
\end{equation}
Observe this in that case
\[ \bbP( B_j^c \mid (A_1 \cap B_1)^c \cap ... \cap (A_{j-1} \cap B_{j-1})^c ) \le \frac{\bbP(B_j^c)}{\bbP( (A_1 \cap B_1)^c \cap ... \cap (A_{j-1} \cap B_{j-1})^c )} \le \frac{1-p}{q} \]
by our assumptions.
It follows that
\[ \bbP( A_j \cap B_j \mid (A_1 \cap B_1)^c \cap ... \cap (A_{j-1} \cap B_{j-1})^c ) \ge (1-\frac{1-p}{q})p_j \]
and therefore inductively
\[ \bbP( (A_1 \cap B_1)^c \cap ... \cap (A_k \cap B_k)^c ) \le \prod_{j \le k} \left(1-(1-\frac{1-p}{q})p_j\right) \le \exp\left( -(1-\frac{1-p}{q}) \sum_{j \le k} p_j \right) \]
which by \eqref{eq:cond_p_sum} contradicts \eqref{eq:cond_p_contra}.
\end{proof}

We now prove the lower bound in \cref{thm:main_moc}, namely that the constant $c_0$ in the theorem statement must be positive; see Proposition \ref{pr:upperbounds} for the other assertions of the theorem. 
We use \cref{le:conditional_prob} to show that the lower bound is satisfied with positive probability. This will imply the claim.

As in the previous sections, we define $\tau_r=\inf\{t\geq 0\,:\, |\eta(t)|=r \}$ to be the hitting time of radius $r$. 

\begin{lemma}\label{le:mod_lower_p}
There exist $p_0 > 0$ and $b_0 > 0$ such that the following is true. For any $(D,a)$ as in \eqref{eq:Da} with $\abs{a} > 0$ and $u\in\partial D\setminus\{a\}$, we have 
\eqbn
\nu_{D;a \to \infty;u}\left( \sup_{s,t \in [0,\tau_{\abs{a}+r}]} \frac{\abs{\eta(t)-\eta(s)}}{\abs{t-s}^{1/d}(\log \abs{t-s}^{-1})^{1-1/d}} \ge b_0 \right)
\ge p_0
\eqen
for any $r > 0$.

The same holds for space-filling \slek{}, except that we consider $(D,a,\u)$ as in \eqref{eq:Daubold} and $\wh\nu_{D;a\to\infty;\u}$ instead of $(D,a,u)$ and $\nu_{D;a\to\infty;u}$.
\end{lemma}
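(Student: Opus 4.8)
The plan is to deduce the lemma from the \emph{conditional} lower tail estimates for the Minkowski content of SLE segments --- \cref{pr:cont_ltail_cond} in the non-space-filling case and \cref{pr:area_ltail_cond} in the space-filling case --- combined with the abstract conditional second-Borel--Cantelli statement \cref{le:conditional_prob} and the ``a nice tip is reached with positive probability'' estimate \cref{le:initial_nice_tip}. I describe the non-space-filling case; the space-filling case is run in exactly the same way with $d=2$, using tuples $(D,a,\u)$ as in \eqref{eq:Daubold} and the measure $\wh\nu_{D;a\to\infty;\u}$ in place of $(D,a,u)$ and $\nu_{D;a\to\infty;u}$, and invoking \cref{pr:area_ltail_cond} together with the one-sided comparison \cref{le:target_change_ig} instead of \cref{pr:cont_ltail_cond} and the uniform Radon--Nikodym bounds.

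\emph{The building block.} Since $\eta$ is parametrised by Minkowski content, $\{\Cont(\eta[0,\tau_{\abs a+\rho}])<c\ell\}=\{\tau_{\abs a+\rho}<c\ell\}$, and on this event the pair $(s,t)=(0,\tau_{\abs a+\rho})$ lies in $[0,\tau_{\abs a+\rho}]$, has $\abs{\eta(t)-\eta(s)}\ge\rho$ (since $\abs{\eta(0)}=\abs a$, $\abs{\eta(\tau_{\abs a+\rho})}=\abs a+\rho$) and $\abs{t-s}<c\ell$; using that $x\mapsto x^{1/d}(\logp\frac1x)^{1-1/d}$ is increasing near $0$, it follows that
\[ \{\tau_{\abs a+\rho}<c\ell\}\subseteq\Bigl\{\sup_{s,t\in[0,\tau_{\abs a+\rho}]}\frac{\abs{\eta(t)-\eta(s)}}{\abs{t-s}^{1/d}(\log\abs{t-s}^{-1})^{1-1/d}}\ge\frac{\rho}{(c\ell)^{1/d}(\log(c\ell)^{-1})^{1-1/d}}\Bigr\}. \]
Choosing $\ell$ to be the appropriate multiple of $\rho^d(\logp\frac1\rho)^{-(d-1)}$ turns the right-hand threshold into a fixed constant $b_0>0$, and for $\rho$ small the hypotheses of \cref{pr:cont_ltail_cond} ($\ell\le\rho^d$; $\lambda D$ is $(p_{\mathrm N},r_{\mathrm N},c_{\mathrm N})$-nice; $\lambda\abs a\ge1$, where $\lambda=\ell^{-1/(d-1)}\rho^{1/(d-1)}$ is then of order $\rho^{-1}\log\frac1\rho$) are met, so from such a nice $(D,a,u)$ the displayed event has probability at least $\rho^{c_3}$ with $c_3$ a multiple of the constant $\tilde c_2$ of \cref{pr:cont_ltail_cond}; the key point is that $c_3$ can be made $<1$ by taking $b_0$ small.

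\emph{Chaining.} By scaling we may assume $\abs a\ge1$: when $\abs a<1$ one rescales to $\abs a=1$, and since this perturbs the double-logarithmic weight only by a bounded amount it suffices to produce, for the rescaled curve, pairs at scale small compared to $\abs a$ --- which the building block does for $\rho$ small --- at the cost of a slightly smaller $b_0$. Now fix $r>0$, choose $\rho=\rho(r)>0$ small and $N=N(r)\in\bbN$ with (a fixed multiple of) $N\rho$ at most $r$, and set $\tau^{(k)}=\tau_{\abs a+k\rho}$. For $k=1,\dots,N$ let $B_k$ be the event that, after a short preliminary sub-excursion, the domain complementary to $\eta[0,\tau^{(k-1)}]$ as seen from the current tip, rescaled to the scale $\lambda$ required by the building block, is $(p_{\mathrm N},r_{\mathrm N},c_{\mathrm N})$-nice (so $\bbP(B_k\mid\text{past})\ge p_1$ by \cref{le:initial_nice_tip} in rescaled form and the domain Markov property), and let $A_k$ be the event that the supremum in the display above, for the excursion across $[\tau^{(k-1)},\tau^{(k)}]$, is $\ge b_0$ (so $\bbP(A_k\mid B_k,\text{past})\ge\rho^{c_3}$ by the building block). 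Applying \cref{le:conditional_prob} with $p=p_1$ and $p_j=\rho^{c_3}$, we have $\sum_jp_j\asymp N\rho^{c_3}\asymp r\,\rho^{c_3-1}$, which since $c_3<1$ is bounded below by a positive absolute constant once $\rho$ is small enough (depending on $r$); hence there is $q<1$ with $1-q$ bounded below by a positive absolute constant satisfying \eqref{eq:cond_p_sum}, and $\bbP(\bigcup_k(A_k\cap B_k))\ge 1-q=:p_0$. On this event the required pair $s,t\in[0,\tau_{\abs a+r}]$ exists.

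\textbf{Main obstacle.} The technical heart is the bookkeeping that makes $p_0,b_0$ uniform over all $(D,a,u)$ and all $r>0$, in particular as $r\downarrow0$, when $[0,\tau_{\abs a+r}]$ shrinks to a point: one must choose $b_0$ small enough that the per-attempt exponent $c_3$ drops below $1$, so that subdividing the radius budget $r$ into many short attempts genuinely boosts the success probability rather than saturating --- this is what rescues the borderline case $d=2$ (space-filling \slek{}, and \sle8) --- and one must check that the scale at which niceness is produced by \cref{le:initial_nice_tip} at the end of each attempt is exactly the (drifting) scale at which \cref{pr:cont_ltail_cond} requires niceness at the start of the next. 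The remaining delicate points are the reduction to $\abs a\ge1$ through a rescaling that distorts the $\logp\logp$-weight, and, in the space-filling case, carrying the whole argument with only the one-sided comparison \cref{le:target_change_ig}; both are already packaged in the cited statements.
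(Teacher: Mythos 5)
Your proposal is correct and follows essentially the same route as the paper: subdivide $[\abs{a},\abs{a}+r]$ into annuli of width $\varepsilon$, define the small-content crossing events $A_k$ (with conditional probability $\ge\varepsilon^{c_3}$, $c_3<1$, via \cref{pr:cont_ltail_cond} resp.\ \cref{pr:area_ltail_cond}) and the niceness events $B_k$ (with probability $\ge p_1$ via \cref{le:initial_nice_tip}), and conclude with \cref{le:conditional_prob} since $N\varepsilon^{c_3}\asymp r\varepsilon^{c_3-1}\to\infty$. The only cosmetic differences are that the paper avoids your global rescaling to $\abs{a}\ge 1$ by simply taking $\varepsilon$ small enough that $\varepsilon^{-1}(\log\varepsilon^{-1})\abs{a}\ge 1$, keeping the modulus ratio in the original coordinates throughout (and note the weight here is a single logarithm, not a double one).
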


\begin{proof}[Proof of \cref{thm:main_moc} given \cref{le:mod_lower_p}]
By \cref{pr:upperbounds} it remains to show $c_0 > 0$ in \cref{pr:01law_mv}. Applying \cref{le:mod_lower_p} with arbitrarily small $r>0$, we see that with probability at least $p_0$ we have
\[ \lim_{r\downarrow 0} \sup_{s,t \in [0,\tau_{\abs{a}+r}]} \frac{\abs{\eta(t)-\eta(s)}}{\abs{t-s}^{1/d}(\log \abs{t-s}^{-1})^{1-1/d}} \ge b_0 . \]
This implies $S_I \ge b_0$ for some random interval $I$ (with $S_I$ defined in the proof of \cref{pr:01law_mv}). But since $S_I$ is a deterministic constant independent of $I$, the result follows.
\end{proof}

\begin{proof}[Proof of \cref{le:mod_lower_p}]
We will do the proof for \slek$(2)$ and explain at the end what modifications are necessary for the case of space-filling \slek{}.

For $\varepsilon > 0$ and $k=1,...,\lfloor\varepsilon^{-1}r\rfloor$ we define the event
\[ A_{\varepsilon,k} = \{ \Cont(\eta[\tau_{\abs{a}+(k-1)\varepsilon},\tau_{\abs{a}+k\varepsilon}]) \le a_0 \varepsilon^d (\log \varepsilon^{-1})^{-(d-1)} \} \]
where $a_0 > 0$ is a constant whose value will be decided upon later. Note that on the event $A_{\varepsilon,k}$ we have 
\[ \abs{\eta(\tau_{\abs{a}+k\varepsilon})-\eta(\tau_{\abs{a}+(k-1)\varepsilon})} \geq \varepsilon 
\gtrsim \abs{\tau_{\abs{a}+k\varepsilon}-\tau_{\abs{a}+(k-1)\varepsilon}}^{1/d}(\log \abs{\tau_{\abs{a}+k\varepsilon}-\tau_{\abs{a}+(k-1)\varepsilon}}^{-1})^{(d-1)/d} \]
which is the lower bound we want to prove.

Let $p_{\mathrm{N}},r_{\mathrm{N}},c_{\mathrm{N}},\wt c_2$ be as in \cref{pr:cont_ltail_cond}. Let $B_{\varepsilon,k}$ denote the event that $(D_k,a_k,u_k)$ is $(p_{\mathrm{N}},r_{\mathrm{N}},c_{\mathrm{N}})$-nice where
\[
(D_k,a_k,u_k) \defeq 
\left(
\varepsilon^{-1}(\log\varepsilon^{-1})\,(D \setminus \eta[0,\tau_{\abs{a}+(k-1)\varepsilon}]), \ 
\varepsilon^{-1}(\log\varepsilon^{-1})\,\eta(\tau_{\abs{a}+(k-1)\varepsilon}), \ 
\varepsilon^{-1}(\log\varepsilon^{-1}) u
\right).
\]
On the event $B_{\varepsilon,k}$ we have by \cref{pr:cont_ltail_cond}
\begin{equation*}
\nu_{D;a\to\infty;u}(A_{\varepsilon,k} \mid \eta[0,\tau_{\abs{a}+(k-1)\varepsilon}]) \ge \exp\left(-\wt c_2 a_0^{-1/(d-1)} (\log \varepsilon^{-1})\right) = \varepsilon^{\wt c_2 a_0^{-1/(d-1)}} = \varepsilon^{1/2} 
\end{equation*} 
for a suitable choice of $a_0$.

Note that $\varepsilon^{-1}(\log\varepsilon^{-1})\,\eta[0,\tau_{\abs{a}+(k-1)\varepsilon}]$ is a radial \slek{}$(2)$ in $\varepsilon^{-1}(\log\varepsilon^{-1})D$ stopped at hitting radius $\varepsilon^{-1}(\log\varepsilon^{-1})\abs{a}+(k-1)(\log\varepsilon^{-1})$. By \cref{le:initial_nice_tip}, we have $\nu_{D;a\to\infty;u}(B_{\varepsilon,k}) \ge p_1$ where $p_1>0$ does not depend on $D,\varepsilon,k$.

Applying \cref{le:conditional_prob} with any choice of $q \in {(1-p_1,1)}$ and noting that
\[ \exp\left( -(1-\frac{1-p_1}{q})r\varepsilon^{-1}\varepsilon^{1/2} \right) < q \]
for small $\varepsilon > 0$, this implies the result with $p_0 = 1-q$.

In the case of space-filling \slek{}, we instead apply \cref{pr:area_ltail_cond} and the space-filling analogue of \cref{le:initial_nice_tip} as explained at the end of \cref{sec:diam-lower-spf}.
\end{proof}

We finally show the lower bounds in \cref{thm:main_lil}. By \cref{pr:var_lil}, this implies also the lower bound in \cref{thm:main_var}. Given \cref{pr:upperbounds}, it remains to prove that the constants $c_0,c_1$ are positive. By the stationarity of $\eta$, it suffices to show the claim for $t_0=0$.

\begin{proof}[Proof of \cref{thm:main_lil}]
The proof is almost identical for whole-plane \slek$(2)$ ($\kappa\in(0,8]$) and for whole-plane space-filling \slek{} ($\kappa>4$). We begin with the former.

We first prove the $t \downarrow 0$ statement.
Define the sequences
\[ 
r_k = \exp(-k^2),
\quad m_k = a_0 r_k^d (\log\log r_k^{-1})^{-(d-1)} 
\]
where the exact value of $a_0 > 0$ will be decided upon later. Note that if
\[ \Cont(\eta[0,\tau_{r_k}]) < m_k , \]
then for some $t < m_k$ we have
\[ \abs{\eta(t)} = r_k \asymp m_k^{1/d}(\log\log m_k^{-1})^{(d-1)/d} \]
which is exactly the lower bound in the law of the iterated logarithm.

By \cref{pr:cont_tail_lower} we have
\[ \bbP( \Cont(\eta[0,\tau_{r_k}]) < m_k ) \ge \exp(-\wt c_2 m_k^{-1/(d-1)}r_k^{d/(d-1)}) \asymp k^{-\wt c a_0^{-1/(d-1)}} , \]
so $a_0$ can be chosen such that the sum of the probabilities diverges. We would like to argue by \cref{le:conditional_prob} that with positive probability this happens for infinitely many $k$. Then \cref{pr:01law} implies that the probability must actually be $1$.

To show this, we introduce another sequence $r'_k$ with $r_{k+1} < r'_{k+1} < r_k$. Let
\[ r'_{k+1} = 2 r_k (\log k)^{-1} \]
and define the events
\[ 
A_k = \{ \Cont(\eta[\tau_{r'_{k+1}},\tau_{r_k}]) \le m_k \},
\quad A'_k = \{ \Cont(\eta[0,\tau_{r'_{k+1}}]) \le m_k \},
\quad \bar A_k = A_k \cap A'_k .
\]
As noted above, on the event $\limsup_k \bar A_k$ we have
\[ \limsup_{t \downarrow 0} \frac{\abs{\eta(t)}}{t^{1/d}(\log\log t^{-1})^{1-1/d}} \ge b_0 \]
where $b_0$ depends on the choice of $a_0$.

We are left to show $\bbP(\limsup_k \bar A_k) > 0$. Let $B'_k$ denote the event that $\hatC \setminus r_k^{-1}(\log k)\,\eta[0,\tau_{r'_{k+1}}]$ is $(p_{\mathrm{N}},r_{\mathrm{N}},c_{\mathrm{N}})$-nice. On the event $B'_k$ we have by \cref{pr:cont_ltail_cond}
\begin{equation*}
\bbP( \Cont(\eta[\tau_{r'_{k+1}},\tau_{r_k}]) < a_1 m_k \mid \eta[0,\tau_{r'_{k+1}}] ) \ge \exp(-\wt c_2 m_k^{-1/(d-1)}r_k^{d/(d-1)}) \asymp k^{-\wt c a_0^{-1/(d-1)}} = k^{-1} 
\end{equation*}
for suitable choice of $a_0,a_1$.

Since $r_k^{-1}(\log k)\,\eta[0,\tau_{r'_{k+1}}]$ is again a whole-plane \slek{}$(2)$ stopped at hitting radius $2$, we have $\bbP(B'_k) \ge p_1$ by \cref{le:initial_nice_tip} where $p_1 > 0$ does not depend on $k$.

The content $\Cont(\eta[0,\tau_{r'_{k+1}}])$ can be controlled by the negative moments in \cite[Lemma 1.7]{zhan-hoelder}, implying
\eqb \begin{split}
    \bbP( \Cont(\eta[0,\tau_{r'_{k+1}}]) > m_k ) 
    &\le \bbP( \abs{\eta(m_k)} < r'_{k+1} ) \le (r'_{k+1})^{d(1-\varepsilon)} \bbE \abs{\eta(m_k)}^{-d(1-\varepsilon)}\\
    &\lesssim (r'_{k+1})^{d(1-\varepsilon)} m_k^{-(1-\varepsilon)}
    \asymp (\log k)^{-1+\varepsilon} .
\end{split} 
\label{eq:neg-moment}
\eqe
It follows that $\bbP(A'_k \cap B'_k) > p_1/2$.

Write $B_k = A'_k \cap B'_k$. We apply \cref{le:conditional_prob} with the events $A_{k'},...,A_k$ and $B_{k'},...,B_k$. Pick any $q \in {(1-p_1/2,1)}$ and note that for any $k \in \bbN$ we can find $k' > k$ such that
\[ \exp\left( -(1-\frac{1-p_1/2}{q})((k')^{-1}+...+k^{-1}) \right) < q . \]
This implies $\bbP\left( \bigcup_{k' \ge k} \bar A_{k'} \right) \ge \bbP\left( \bigcup_{k' \ge k} (A_{k'} \cap B_{k'}) \right) > 1-q$ for all $k$, and therefore we have $\bbP(\limsup_k \bar A_k) \ge 1-q > 0$.

The proof is identical for space-filling \slek{}, except that the proof of \eqref{eq:neg-moment} is even easier  since $\Cont(\eta[0,\tau_{r'_{k+1}}])\leq \pi(r'_{k+1})^2$ deterministically.

The proof of the $t \to \infty$ statement is identical when we set
\[ 
r_k = \exp(k^2),
\quad r'_{k-1} = 2r_k(\log k)^{-1},
\quad m_k = a_0 r_k^d (\log\log r_k)^{-(d-1)}. 
\]
\end{proof}

\section{$\psi$-variation of chordal SLE}
\label{se:chordal}

In this section, we prove finite $\psi$-variation for chordal SLE. We expect that an analogue of \cref{pr:diam_tail_upper} holds also for chordal SLE, but is is challenging to establish a sharp enough tail due to the lack of exact symmetries when working in a domain. Moreover, there may be an arbitrarily large height difference of the GFF between interior flow lines and the domain boundary for which we have not succeeded in gaining a sufficient control. For this reason, we chose a different approach.

Throughout this section, we let $\eta$ be a chordal \slek{}, $\kappa>0$ (either space-filling or non-space-filling). We will now give a brief proof outline. Let $\wt\eta$ denote a curve as in \cref{it:non-spfill,it:spfill}. The idea of our approach is to use our regularity result for $\wt\eta$ (\cref{thm:main_var}) along with the fact that conditioned on $\wt\eta|_{(-\infty,0]}$, the rest of the curve has the law of $\varphi\circ\eta$, where $\varphi$ is an appropriately chosen conformal map from $\bbH$ to (a component of) $\bbC\setminus\fill(\wt\eta(-\infty,0])$. If we consider segments of $\eta$ away from the boundary of $\bbH$, then the regularity result for $\eta$ follows immediately from the regularity result for $\wt\eta$ (see Lemma \ref{le:var_chordal_int} below). Treating points near the boundary, however, is challenging since $\varphi$ is highly irregular near the domain boundary. The key to our approach for points near the boundary is Proposition \ref{pr:f_incr_dist} which shows that if we consider a conformal map to the complement of an SLE curve, then with positive probability this conformal map does not decrease distances too much. Once we have this estimate, we can bound the increments $|\eta(t_{i+1})-\eta(t_i)|$ in terms of $|\varphi\circ\eta(t_{i+1})-\varphi\circ\eta(t_i)|$, and then we can use our bound for the latter; see the proof of \cref{pr:bdy-fin-var}. The main technical challenge of the section is to establish Proposition \ref{pr:f_incr_dist} and we do this via the reverse Loewner differential equation and It\^o calculus; see \cref{sec:f_incr_dist} for a more detailed outline. 

Throughout the section, we let $\psi(x) = x^d(\logp\logp\frac{1}{x})^{-(d-1)}$. Note that for $u\ge 1$
\begin{equation}\label{eq:psi_scaling}
u^d\psi(x) \le \psi(ux) \le u^{d}\log\log(ue^e)\psi(x) .
\end{equation}

We have the following corollary of \cref{thm:main_var}.
\begin{lemma}\label{le:var_chordal_int}
For any $R>0$, there exists $c>0$ such that
\begin{multline*}
\bbP\left( [\eta]_{\psi\text{-var},[s,t]} > u(1\vee\Cont(\eta[s,t])^{1/d}) \text{ for some $s,t$ with $\eta[s,t]\subseteq[-R,R]\times[R^{-1},R]$} \right) \\
\le c^{-1}\exp(-c u^{d/(d-1)}) .
\end{multline*}
\end{lemma}

\begin{proof}
Let $\wt\eta$ be a two-sided whole-plane \slek{}, $\kappa\le 8$, or whole-plane space-filling \slek{}, $\kappa>4$, as in \cref{it:non-spfill,it:spfill}. Pick a conformal map $\varphi$ from $\bbH$ to (a component of) $\bbC\setminus\fill(\wt\eta(-\infty,0])$ such that $0$ and $\infty$ are sent to the points of $\varphi(\bbH)$ visited first and last, respectively, by $\wt\eta|_{[0,\infty)}$. We can couple $\wt\eta$ and $\eta$ such that $\varphi$ is independent of $\eta$ and $\varphi(\eta)$ is (a segment of) $\wt\eta\big|_{[0,\infty)}$. The scaling of $\varphi$ can be chosen arbitrarily as long as the independence requirement is satisfied. There is a $c_1>0$ such that the following event $E$ occurs with positive probability
\[
E= \{\abs{\varphi'(z)} \in [c_1,c_1^{-1}] \text{ for } z\in[-R,R]\times[R^{-1},R] \}
\cap
\{\varphi([-R,R]\times[R^{-1},R]) \subseteq B(0,c_1^{-1}) \}.
\]
By the independence of $\varphi$ and $\eta|_{[0,\infty)}$,
\[ 
\begin{split}
&\P[E]\cdot \bbP\left( \sup_{\substack{t_0<\dots<t_r\\ \eta[t_0,t_r]\subseteq[-R,R]\times[R^{-1},R]}} \sum_i \psi\left(\frac{\abs{\eta(t_{i+1})-\eta(t_i)}}{u(1\vee\Cont(\eta[t_0,t_r])^{1/d})}\right) > 1 \right) \\
&\qquad \le 
\bbP\left( \sup_{\substack{t_0<\dots<t_r\\ \wt\eta[t_0,t_r]\subseteq B(0,c_1^{-1})}} \sum_i \psi\left(\frac{\abs{\wt\eta(t_{i+1})-\wt\eta(t_i)}}{c u(1\vee\Cont(\wt\eta[t_0,t_r])^{1/d})}\right) > 1 \right) \\
&\qquad = \bbP\left( [\wt\eta]_{\psi\text{-var},[s,t]} > c u(1\vee\abs{t-s}^{1/d}) \text{ for some } s<t \text{ with } \wt\eta[s,t]\subseteq B(0,c_1^{-1}) \right) .
\end{split} 
\]
To bound the latter probability, pick $T=\exp(\delta u^{d/(d-1)})$ with $\delta>0$ sufficiently small. Then by \cref{le:sle_transience}
\[
\bbP( \wt\eta[T,\infty) \cap B(0,c_1^{-1}) \neq\varnothing ) \lesssim T^{-\beta} = \exp(-\beta \delta u^{d/(d-1)}) .
\]
By \cref{thm:main_var}, for any $s<t$,
\[
\bbP\left( [\wt\eta]_{\psi\text{-var},[s,t]} > cu\abs{t-s}^{1/d} \right) 
\lesssim \exp(-\wt{c}u^{d/(d-1)}) .
\]
Summing this up for $s,t \in \bbN$, $s<t \le T$, yields the bound
\[ \begin{split}
\bbP\left( [\wt\eta]_{\psi\text{-var},[s,t]} > cu(1\vee\abs{t-s}^{1/d}) \text{ for some } s<t\le T \right) 
&\lesssim T^2 \exp(-\wt{c}u^{d/(d-1)}) \\
&= \exp(-(\wt{c}-2\delta)u^{d/(d-1)}) .
\end{split} \]
\end{proof}

To transfer the results to segments close to the boundary, we use the following estimate that roughly says that the conformal map into the complement of an SLE does not decrease distances too much with positive probability. 
We will prove the proposition in the next subsection.

\begin{proposition}\label{pr:f_incr_dist}
Let $\eta$ be a chordal \slek{} in $\bbH$ from $0$ to $\infty$ with the capacity parametrization and let $f\colon \bbH \to \bbH\setminus\fill(\eta[0,1])$ be the conformal map with $f(0)=\eta(1)$, $f(z) = z+O(1)$ as $z\to\infty$. 
For any $\lambda>0$ and $R>0$, there exists $c=c(\kappa,\lambda,R)>1$ such that
\[ 
\bbE\left[ \abs{f(z_2)-f(z_1)}^\lambda \,1_{E_{R,c}} \right] \ge c^{-1}\abs{z_2-z_1}^{\lambda}
\]
for any $z_1,z_2 \in [R^{-1},R]\times[0,R]$ where $E_{R,c}$ denotes the event that $f(z) \in [-c,c]\times[c^{-1},c]$ for all $z \in [-2R,2R]\times[0,R]$.
\end{proposition}

\begin{proposition}\label{pr:bdy-fin-var}
Let $R>0$. If $\eta$ is a non-space-filling chordal SLE$_\kappa$ in $(\bbH,0,\infty)$, $\kappa\in(0,8)$, then
\[
\bbP\left( [\eta]_{\psi\text{-var},[s,t]} > u \text{ for some $s,t$ with $\eta[s,t]\subseteq[R^{-1},R]\times[0,R]$} \right) 
= o^\infty(u^{-1})
\]
where $o^\infty(u^{-1})$ denotes a function that decays faster than any power of $u^{-1}$. If $\eta$ is a space-filling chordal SLE$_\kappa$ in $(\bbH,0,\infty)$, $\kappa>4$, then 
\[
\bbP\left( [\eta]_{\psi\text{-var},[s,t]} > u \text{ for some $s,t$ with $\eta[s,t]\subseteq[R^{-1},R]\times[0,R]$} \right) 
\le c^{-1}\exp(-c u^{d/(d-1)}).
\]
for some $c=c(\kappa,R)>0$.
\end{proposition}

We expect, but do not prove, that also in the non-space-filling case $\eta$ satisfies the stronger estimate
\begin{multline*}
\bbP\left( [\eta]_{\psi\text{-var},[s,t]} > u(1\vee\Cont(\eta[s,t])^{1/d}) \text{ for some $s,t$ with $\eta[s,t]\subseteq[R^{-1},R]\times[0,R]$} \right) \\
\lesssim \exp(-c u^{d/(d-1)}) .
\end{multline*}

\begin{proof}
Pick $f\colon \bbH \to \bbH\setminus\fill(\wt\eta[0,1])$ as in \cref{pr:f_incr_dist} independently of $\eta$, and let $\wt\eta\big|_{[1,\infty)} = f(\eta)$.

Let $t_1<t_2<...<t_r$ be such that $\eta[t_0,t_r]\subseteq[R^{-1},R]\times[0,R]$. By the independence of $f$ and $\eta$, \cref{pr:f_incr_dist} with $\lambda=1$ (in fact, any $\lambda<d$ will do) and Jensen's inequality with the function $\psi$,\footnote{Strictly speaking, $\psi$ is not convex everywhere but it is convex near $0$ and at large $x$. Therefore it is comparable to a convex function in the sense that there is a convex function $\wt\psi$ and a constant $c>0$ such that  $c\wt\psi(x)\leq\psi(x)\leq c^{-1}\wt\psi(x)$ for all $x>0$.}
\[ \begin{split}
\sum_i \psi\left(\frac{\abs{\eta(t_{i+1})-\eta(t_i)}}{u}\right) 
&\lesssim \sum_i \bbE\left[ \psi\left(\frac{\abs{f(\eta(t_{i+1}))-f(\eta(t_i))}}{u}\right) 1_{E_{R,c}} \mmiddle| \eta \right] \\
&\le \bbE\left[ \sup_{\substack{s_0<\dots<s_r\\ \wt\eta[s_0,s_r]\subseteq[-c,c]\times[c^{-1},c]}} \sum_i \psi\left(\frac{\abs{\wt\eta(s_{i+1})-\wt\eta(s_i)}}{u}\right) \mmiddle| \eta \right] .
\end{split} \]
Using that
\[
\bbE[X \mid \eta] = \int_0^\infty \bbP(X>y \mid \eta)\,dy
\le 1+\sum_{n\in\bbN_0} 2^n \bbP(X>2^n \mid \eta) ,
\]
we get that for some small constant $\delta>0$,
\[
\{ \bbE[X \mid \eta] > 3 \} \subseteq \bigcup_{n\in\bbN} \left\{ \bbP(X>2^n \mid \eta) > \delta 2^{-n(1+\delta)} \right\}.
\]
It follows that for some $\wt{c}>0$,
\begin{align}
&\bbP\left( \sup_{\substack{t_0<\dots<t_r\\ \eta[t_0,t_r]\subseteq[R^{-1},R]\times[0,R]}} \sum_i \psi\left(\frac{\abs{\eta(t_{i+1})-\eta(t_i)}}{u}\right) > \wt{c} \right) \nonumber\\
&\quad \le \bbP\left( \bbE\left[ \sup_{\substack{t_0<\dots<t_r\\ \wt\eta[t_0,t_r]\subseteq[-c,c]\times[c^{-1},c]}} \sum_i \psi\left(\frac{\abs{\wt\eta(s_{i+1})-\wt\eta(s_i)}}{u}\right) \mmiddle| \eta \right] > 3 \right) \nonumber\\
&\quad \le \sum_{n\in\bbN} \bbP\left( \bbP\left( \sup_{\substack{t_0<\dots<t_r\\ \wt\eta[t_0,t_r]\subseteq[-c,c]\times[c^{-1},c]}} \sum_i \psi\left(\frac{\abs{\wt\eta(s_{i+1})-\wt\eta(s_i)}}{u}\right) > 2^n \mmiddle| \eta\right) > \delta 2^{-n(1+\delta)} \right) \nonumber\\
&\quad \le \sum_{n\in\bbN} \delta^{-1}2^{n(1+\delta)}\bbP\left( \sup_{\substack{t_0<\dots<t_r\\ \wt\eta[t_0,t_r]\subseteq[-c,c]\times[c^{-1},c]}} \sum_i \psi\left(\frac{\abs{\wt\eta(s_{i+1})-\wt\eta(s_i)}}{u}\right) > 2^n \right) . \label{eq:var_chordal_est}
\end{align}
To apply \cref{le:var_chordal_int}, we need to control the Minkowski content of $\wt\eta$. In case of space-filling SLE, since $\Cont_{d=2} = \operatorname{area}$, it is bounded deterministically. For non-space-filling SLE, by \cite[Theorem~1.2]{rz-cont-moments},
\eqb
\bbP\left( \Cont(\wt\eta \cap [-c,c]\times[c^{-1},c]) > T \right) = o^\infty(T^{-1}) .
\label{eq:cont-upper-tail}
\eqe
By \eqref{eq:psi_scaling} and \cref{le:var_chordal_int},
\[ \begin{split}
&\bbP\left( \sup_{\substack{t_0<\dots<t_r\\ \wt\eta[t_0,t_r]\subseteq[-c,c]\times[c^{-1},c]}} \sum_i \psi\left(\frac{\abs{\wt\eta(s_{i+1})-\wt\eta(s_i)}}{u}\right) > 2^n,\ \Cont(\wt\eta \cap [-c,c]\times[c^{-1},c]) \le T \right) \\
&\quad \lesssim \bbP\left( \sup_{\substack{t_0<\dots<t_r\\ \wt\eta[t_0,t_r]\subseteq[-c,c]\times[c^{-1},c]}} \sum_i \psi\left(\frac{\abs{\wt\eta(s_{i+1})-\wt\eta(s_i)}}{u\,2^{n(1-\delta)/d}\,T^{-1/d}(1\vee\Cont(\wt\eta[t_0,t_r])^{1/d})}\right) > 1\right) \\
&\quad \lesssim \exp\left(-cu^{d/(d-1)}T^{-1/(d-1)}2^{n(1-\delta)/(d-1)}\right) .
\end{split} \]
For non-space-filling SLE, applying this and \eqref{eq:cont-upper-tail} with $T=u^\delta 2^{n\delta}$, we see that the sum \eqref{eq:var_chordal_est} is bounded by
\[
o^\infty(u^{-1}) + \exp\left(-cu^{(d-\delta)/(d-1)}\right) .
\]
In case of space-filling SLE, we proceed similarly, but can now use a deterministic $T$. By \eqref{eq:psi_scaling}, the same bound holds with $\wt{c}$ replaced by $1$.
\end{proof}

In the last part of the subsection, we prove \cref{thm:var_cont}. Let us denote
\[ V_\eta(s,t) = \lim_{\delta\downarrow 0} \sup_{\substack{s \le t_0 < ... < t_r \le t\\ \abs{t_{i+1}-t_i}<\delta}} \sum_i \psi(\abs{\eta(t_{i+1})-\eta(t_i)}) . \]

\begin{lemma}\label{le:var_cont_interior}
Let $\eta$ be a chordal \slek{}, $\kappa>0$ (either space-filling or non-space-filling) in a domain $D$. Then almost surely the following holds: For any $s<t$, if $\eta[s,t] \subseteq D$, then $V_\eta(s,t) = c_0 \Cont_d(\eta[s,t])$ where $c_0>0$ is the constant in \cref{thm:main_var}.
\end{lemma}

\begin{proof}
We have seen in \cref{thm:main_var} that this property holds for the SLE variants \ref{it:non-spfill}, \ref{it:spfill}. So we only need to show that this property is preserved under conformal maps. Let $f\colon D \to D'$, and $\eta$ any curve in (the interior of) $D$ such that $V_\eta(s,t) = \Cont_d(\eta[s,t])$ for all $s<t$. Since $V_{f(\eta)}$ is finitely additive, we have
\[ V_{f(\eta)}(s_0,s_r) = \sum_i V_{f(\eta)}(s_i,s_{i+1}) . \]
We can pick the mesh fine enough so that $\abs{f'} = \abs{f'(\eta(s_i))}+o(1)$ near each segment $\eta[s_i,s_{i+1}]$. Then
\[ V_{f(\eta)}(s_i,s_{i+1}) = (\abs{f'(\eta(s_i))}+o(1))^d\, V_{\eta}(s_i,s_{i+1}) = (\abs{f'(\eta(s_i))}+o(1))^d \Cont_d(\eta[s_i,s_{i+1}]) . \]
By the transformation rule for Minkowski content \eqref{eq:cont_transf} we have
\[ \Cont_d(f(\eta[s_i,s_{i+1}])) = (\abs{f'(\eta(s_i))}+o(1))^d \Cont_d(\eta[s_i,s_{i+1}]) . \]
Altogether this implies
\[ V_{f(\eta)}(s_0,s_r) = \sum_i (1+o(1)) \Cont_d(f(\eta[s_i,s_{i+1}])) = (1+o(1)) \Cont_d(f(\eta[s_0,s_r])) . \]
\end{proof}

\begin{proposition}
Let $\eta$ be a chordal \slek{}, $\kappa>0$ (either space-filling or non-space-filling) in a domain $D$ with analytic boundary. Then almost surely $V_\eta(s,t) = c_0 \Cont_d(\eta[s,t])$ for any $0<s<t<\infty$ where $c_0>0$ is the constant in \cref{thm:main_var}.
\end{proposition}

\begin{proof}
It suffices to assume $D=\bbH$ since conformal maps extend smoothly to analytic boundary parts (cf.\@ \cite[Chapter~3]{pom-boundary-book}).

We have seen in \cref{le:var_cont_interior} that $V_\eta(s,t) = c_0 \Cont(\eta[s,t])$ whenever $\eta[s,t] \subseteq \bbH$. It remains to show that the contribution from the segments close to the boundary is negligible. Since $V_\eta$ is finitely additive, we can assume $\eta[s,t] \subseteq [R^{-1},R]\times[0,R]$ for some $R>0$ (the case when $\eta[s,t] \subseteq [-R,-R^{-1}]\times[0,R]$ is the same due to symmetry).

Let $\varepsilon > 0$. Given a segment $\eta[s,t]$, divide it into finitely many subsegments $\eta[\sigma_1,\tau_1]$, $\eta[\tau_1,\sigma_2]$, $\eta[\sigma_2,\tau_2]$, $\eta[\tau_2,\sigma_3]$, ..., so that $\eta[\sigma_j,\tau_j] \subseteq \{\Im z < 2\varepsilon\}$ and $\eta[\tau_j,\sigma_{j+1}] \subseteq \{\Im z \ge \varepsilon\}$. We already know $V_\eta(\tau_j,\sigma_{j+1}) = c_0\Cont(\eta[\tau_j,\sigma_{j+1}])$. The lower bound $V_\eta(s,t) \ge c_0 \Cont(\eta[s,t])$ follows since
\[
\sum_j \Cont(\eta[\sigma_j,\tau_j]) \le \Cont(\eta \cap [-R,R]\times[0,2\varepsilon]) \searrow 0 \]
as $\varepsilon \searrow 0$ because
\[
\bbE[ \Cont(\eta \cap [-R,R]\times[0,2\varepsilon]) ] = \int_{[-R,R]\times[0,2\varepsilon]} G(z)\,dz 
\to 0 
\]
where $G$ denotes the SLE Green's function (cf.\@ \cite{lr-minkowski-content}).

To see the upper bound $V_\eta(s,t) \le c_0 \Cont(\eta[s,t])$, it remains to show that $\sum_j V_\eta(\sigma_j,\tau_j) \to 0$ as $\varepsilon\searrow 0$.

Consider $f\colon \bbH \to \bbH\setminus\fill(\wt\eta[0,1])$ and let $\wt\eta\big|_{[1,\infty)} = f(\eta)$ as in the previous proof. By \cite{lr-minkowski-content}, for almost every instance of $\wt\eta[0,1]$,
\[ 
\bbE\left[ \Cont(f(\eta\cap[-R,R]\times[0,2\varepsilon])) \mmiddle| \wt\eta[0,1] \right] 
= \int_{[-R,R]\times[0,2\varepsilon]} \abs{f'(z)}^d G(z)\,dz 
\to 0 \quad\text{as } \varepsilon\searrow 0 .
\]
Since the integrand is monotone, this convergence actually holds almost surely, i.e.
\begin{equation}\label{eq:bdry_cont_small}
\Cont(f(\eta\cap[-R,R]\times[0,2\varepsilon])) \searrow 0
\end{equation}
for almost every $\wt\eta[0,1]$ and $\eta$.

On the event that $f(\eta[\sigma_j,\tau_j])$ are away from the boundary, we know already that
\[
\sum_j V_{f(\eta)}(\sigma_j,\tau_j) = \sum_j c_0\Cont(f(\eta[\sigma_j,\tau_j])) \le c_0\Cont(f(\eta\cap[-R,R]\times[0,2\varepsilon])) .
\]
Moreover, for every $j$,
\[
V_{f(\eta)}(\sigma_j,\tau_j) = \lim_{\delta\downarrow 0} \sup_{\substack{\sigma_j \le t_0 < ... < t_r \le \tau_j\\ \abs{t_{i+1}-t_i}<\delta}} \sum_i \psi(\abs{f(\eta(t_{i+1}))-f(\eta(t_i))}) .
\]
Denoting the supremum on the right-hand side by $V^\delta_{f(\eta)}$, we have
\[
V^\delta_{f(\eta)}(\sigma_j,\tau_j) - V_{f(\eta)}(\sigma_j,\tau_j) \searrow 0 
\quad\text{as } \delta\searrow 0 .
\]
By \cref{pr:f_incr_dist} with $\lambda=1$ (in fact, any $\lambda<d$ will do) and Jensen's inequality with the function $\psi$,
\[ \begin{split}
\sum_i \psi(\abs{\eta(t_{i+1})-\eta(t_i)}) 
&\lesssim \sum_i \bbE\left[ \psi(\abs{f(\eta(t_{i+1}))-f(\eta(t_i))}) \,1_{E_{R,c}} \mmiddle| \eta \right] \\
&\le \bbE\left[ V^\delta_{f(\eta)}(\sigma_j,\tau_j) \,1_{E_{R,c}} \mmiddle| \eta \right] 
\end{split} \]
and hence
\[ V^\delta_{\eta}(\sigma_j,\tau_j) \lesssim \bbE\left[ V^\delta_{f(\eta)}(\sigma_j,\tau_j) \,1_{E_{R,c}} \mmiddle| \eta \right] . \]
Taking the $\delta\searrow 0$ limit yields
\[
V_{\eta}(\sigma_j,\tau_j) 
\lesssim \bbE\left[ V_{f(\eta)}(\sigma_j,\tau_j) \,1_{E_{R,c}} \mmiddle| \eta \right] 
= c_0\bbE\left[ \Cont(f(\eta[\sigma_j,\tau_j])) \,1_{E_{R,c}} \mmiddle| \eta \right] .
\]
Finally,
\[
\sum_j V_{\eta}(\sigma_j,\tau_j) 
\lesssim \bbE\left[ \Cont(f(\eta\cap[-R,R]\times[0,2\varepsilon])) \mmiddle| \eta \right]
\searrow 0 \quad\text{as } \varepsilon\searrow 0 
\]
where we concluded from \cref{eq:bdry_cont_small}.
\end{proof}

\subsection{Proof of \cref{pr:f_incr_dist}}
\label{sec:f_incr_dist}

In this subsection we prove \cref{pr:f_incr_dist}. Our proof uses the backward Loewner differential equation which provides an explicit description of the conformal map. We distinguish the case when the points are close to the real line ($y_1,y_2\lesssim |x_2-x_1|$ for $z_j=x_j+iy_j$, $j=1,2$) and the case when they are sufficiently away from the real line ($y_1 \vee y_2\gtrsim |x_2-x_1|$). The latter case amounts to lower bounding the derivative $\abs{f'(z)}$. This has been done in \cite{law-reverse-sle,vl-sle-hoelder} for points on the imaginary axis, and we extend the result to arbitrary points $z$ (\cref{le:hp_x_lower}). In the former case the main work is to consider points on the real line (\cref{le:bdry_dist}). We prove this by considering the times when $x_1$ resp.\ $x_2$ are swallowed, and concatenating the conformal maps. The primary tools we are using are martingale arguments, scaling invariance, and distortion estimates for conformal maps.

Let $(h_t)_{t\ge 0}$ denote the reverse \slek{} flow, i.e.
\[ dh_t(z) = \frac{-2}{h_t(z)}-\sqrt{\kappa}\,dB_t , \quad h_0(z)=z . \]
Recall that the law of the map $h_1$ agrees with the law of $f$ from \cref{pr:f_incr_dist}.

For $s\le t$, we write $h_{s,t} = h_t\circ h_s^{-1}$. We will also write $h_t$ resp.\@ $h_{s,t}$ when referring to their extension to the boundary or to their Schwarz reflection. 
For $x \in \bbR$, write $T_x = \inf\{ t\ge 0 \mid h_t(x)=0 \}$. Further, write $I_t = \{ x\in\bbR \mid T_x \le t\}$, and recall that the Schwarz reflection of $h_t$ is a conformal map defined on $\hatC\setminus I_t$.

For $r\in\bbR$, let
\begin{align*}
    \lambda &= r\left(1+\frac{\kappa}{4}\right)-r^2\frac{\kappa}{8} , \\
    \zeta &= r-r^2\frac{\kappa}{8} .
\end{align*}

The following is proved in \cite[Section~5]{vl-sle-hoelder}.
\begin{lemma}\label{le:hp_moment}
For $r < \frac{1}{2}+\frac{4}{\kappa}$ and $\lambda,\zeta$ as above, there exists $c=c(\kappa,r) > 1$ such that
\[ \bbE\left[ \abs{h_t'(iy)}^\lambda \,1_{h_t(iy) \in [-ct^{1/2},ct^{1/2}]\times[c^{-1}t^{1/2},3t^{1/2}]} \right] \asymp t^{-\zeta/2} y^\zeta \]
for $0 < y \le t^{1/2}$.
\end{lemma}

The next lemma is a variant of the previous one for points which are not on the positive imaginary axis. We only give the upper bound here; the matching lower bound will be given below in \cref{le:hp_x_lower}.
\begin{lemma}\label{le:hp_x_upper}
For $r\in\bbR$ and $R>1$, there exists $c=c(\kappa,r,R)$ such that
\[ \bbE\left[ \abs{h_t'(x+iy)}^\lambda \,1_{h_t(x+iy) \in [-Rt^{1/2},Rt^{1/2}]\times[R^{-1}t^{1/2},Rt^{1/2}]} \right] \le c\, t^{-\zeta/2} y^\zeta \left(1+\frac{x^2}{y^2}\right)^{r/2} \]
for $x+iy\in\bbH$ and $t\ge 0$.
\end{lemma}

\begin{proof}
This follows from the fact that
\[ M_t(z) = \abs{h_t'(z)}^\lambda (\Im h_t(z))^\zeta (\sin\arg h_t(z))^{-r} \]
is a martingale (cf.\@ \cite[Proposition~2.1]{law-reverse-sle}).
\end{proof}

For $x>0$ and $r\in\bbR$ the process\begin{equation}\label{eq:martingale_bessel}
M_t(x) = \abs{h_t'(x)}^{\lambda+\zeta-r}h_t(x)^r
\end{equation}
is a martingale when stopped before $\dist(x,I_t) \le \delta$ for any fixed $\delta>0$; see e.g.\ \cite[Proposition~2.1]{law-reverse-sle} and \cite[Remark 7]{sw-sle-coordinate-change} for the $z\in\bbR$-variant of the martingale. By Girsanov's theorem (\cite[Chapter~VIII]{ry-stochastic-book}), there is a probability measure $\bbP^*$ such that for every $\delta>0$ and finite stopping time $\tau$ with $\dist(x,I_\tau)\ge\delta$, we have $d\bbP^*\big|_{\cF_\tau} = \frac{M_\tau(x)}{M_0(x)}\,d\bbP\big|_{\cF_\tau}$. The law of the process $t \mapsto h_t(x)$ under $d\bbP^*$ is
\[ dh_t(x) = \frac{-2+r\kappa}{h_t(x)}-\sqrt{\kappa}\,dB^*_t  \]
for $B^*$ a standard Brownian motion, i.e.\ $(h_t(x))_{t\geq 0}$ has the law of a (time-changed) Bessel process of index $r-\frac{2}{\kappa}-\frac{1}{2}$. 
Note also that 
\eqb
\abs{h_t'(x)} \asymp \frac{\dist(h_t(x),h_t(I_t))}{\dist(x,I_t)} \le \frac{h_t(x)}{\dist(x,I_t)}
\label{eq-derivative-It-dist}
\eqe
for $x \notin I_t$ by Koebe's $1/4$-theorem and the Schwarz reflection principle.

We begin by showing \cref{pr:f_incr_dist} for points on the boundary.
\begin{lemma}\label{le:bdry_dist}
For $0 \le r < \frac{1}{2}+\frac{2}{\kappa}$ and $R>0$, there exists $c=c(\kappa,r,R)$ such that
\[ 
\bbE\left[ \abs{h_1(x_2)-h_1(x_1)}^\lambda \,1_{h_1(x_1) \in [-c,c]\times[c^{-1},2]} \right] \gtrsim x_2^r \abs{x_2-x_1}^{\lambda+\zeta-r} \ge x_2^\zeta \abs{x_2-x_1}^{\lambda}
\]
for any $0 \le x_1 < x_2 \le R$.
\end{lemma}

\begin{proof}
Recall the notation $T_{x_i} = \inf\{t \mid h_t(x_i) = 0\}$. By \eqref{eq:koebe_hp}, we have $\abs{h_{T_{x_2},1}'(h_{T_{x_2}}(x_1))} \gtrsim \abs{h_{T_{x_2},1}'(i\Im h_{T_{x_2}}(x_1))} (\sin\arg h_{T_{x_2}}(x_1))^4$, hence \cref{le:hp_moment} implies that on the event $\{ 1-T_{x_2} \ge 1/2 \ge (\Im h_{T_{x_2}}(x_1))^2 \}$ we have
\[
\bbE\left[ \abs{h_{T_{x_2},1}'(h_{T_{x_2}}(x_1))}^\lambda \,1_{h_1(x_1) \in [-c,c]\times[c^{-1},2]} \mmiddle| \cF_{T_{x_2}} \right] \gtrsim (\Im h_{T_{x_2}}(x_1))^\zeta (\sin\arg h_{T_{x_2}}(x_1))^{4\lambda} .
\]
Moreover, note that on the event $T_{x_2}\leq 1$, by the Koebe $1/4$ theorem,
\[ 
\abs{h_1(x_2)-h_1(x_1)} \gtrsim \Im h_{T_{x_2}}(x_1) \abs{h_{T_{x_2},1}'(h_{T_{x_2}}(x_1))} ,
\]
and thus
\[
\bbE\left[ \abs{h_1(x_2)-h_1(x_1)}^\lambda \mmiddle| \cF_{T_{x_2}} \right] 
\gtrsim (\Im h_{T_{x_2}}(x_1))^{\lambda+\zeta} (\sin\arg h_{T_{x_2}}(x_1))^{4\lambda} \,1_{1-T_{x_2} \ge 1/2 \ge (\Im h_{T_{x_2}}(x_1))^2} .
\]
Next, by scale invariance (and noting that $\lambda+\zeta \ge 0$ which guarantees the expectation being finite),
\[
\bbE\left[ (\Im h_{T_{x_1},T_{x_2}}(0))^{\lambda+\zeta} (\sin\arg h_{T_{x_1},T_{x_2}}(0))^{4\lambda} \,1_{T_{x_2}-T_{x_1} \le \frac{1}{32R^2}h_{T_{x_1}}(x_2)^2} \mmiddle| \cF_{T_{x_1}} \right] \asymp h_{T_{x_1}}(x_2)^{\lambda+\zeta} .
\]
Using the martingale \eqref{eq:martingale_bessel} along with \eqref{eq-derivative-It-dist} we get
\[ \begin{split}
\bbE\left[ h_{T_{x_1}}(x_2)^{\lambda+\zeta} \,1_{T_{x_1} \le 1/4}\,1_{h_{T_{x_1}(x_2)} \le 2R} \right] 
&\gtrsim \abs{x_2-x_1}^{\lambda+\zeta-r} \,\bbE\left[ M_{T_{x_1}}(x_2) \,1_{T_{x_1} \le 1/4}\,1_{h_{T_{x_1}(x_2)} \le 2R} \right] \\
&= \abs{x_2-x_1}^{\lambda+\zeta-r} x_2^r \,\bbP^*\left(T_{x_1} \le 1/4,\, h_{T_{x_1}(x_2)} \le 2R\right) .
\end{split} \]
When $r < \frac{1}{2}+\frac{2}{\kappa}$, the $\bbP^*$-law of $h_t(x_2)$ is a Bessel process of negative index which hits $0$ in finite time, hence $\bbP^*(T_{x_1} \le 1/4,\, h_{T_{x_1}(x_2)} \le 2R) \ge \bbP^*(T_{x_2} \le 1/4,\, \max_t h_t(x_2) \le 2R) \gtrsim 1$ since we are assuming $x_1 < x_2 \le R$. Furthermore,
\[
\left\{ T_{x_1} \le 1/4;\ 
h_{T_{x_1}(x_2)} \le 2R;\
T_{x_2}-T_{x_1} \le \frac{1}{32R^2}h_{T_{x_1}}(x_2)^2\right\} 
\subseteq \{1-T_{x_2} \ge 1/2 \ge (\Im h_{T_{x_2}}(x_1))^2\},
\]
where we use in particular that on the event on the left side we have $T_{x_2}-T_{x_1}\leq 1/8$. 
Combining everything, we get
\[ \begin{split}
&\bbE\left[ \abs{h_1(x_2)-h_1(x_1)}^\lambda \,1_{h_1(x_1) \in [-c,c]\times[c^{-1},2]} \right] \\
&\quad \gtrsim \bbE\left[ (\Im h_{T_{x_2}}(x_1))^{\lambda+\zeta} (\sin\arg h_{T_{x_2}}(x_1))^{4\lambda} \,1_{1-T_{x_2} \ge 1/2 \ge (\Im h_{T_{x_2}}(x_1))^2} \right]\\
&\quad \gtrsim \bbE\left[ h_{T_{x_1}}(x_2)^{\lambda+\zeta} \,1_{T_{x_1} \le 1/4}\,1_{h_{T_{x_1}(x_2)} \le 2R} \right] \\
&\quad \gtrsim x_2^r \abs{x_2-x_1}^{\lambda+\zeta-r} .
\end{split} \]
\end{proof}

\begin{lemma}\label{le:bdry_dist_hp}
For $0 \le r < \frac{1}{2}+\frac{2}{\kappa}$ and $R>0$, there exists $c=c(\kappa,r,R)$ such that
\[ 
\bbE\left[ \abs{h_2(x_2)-h_2(x_1)}^\lambda \,1_{E_{R,c}} \right] \gtrsim x_2^r \abs{x_2-x_1}^{\lambda+\zeta-r} \ge x_2^\zeta \abs{x_2-x_1}^{\lambda}
\]
for any $0 \le x_1 < x_2 \le R$ where $E_{R,c}$ denotes the event that $h_2(z) \in [-c,c]\times[c^{-1},c]$ for all $z \in [-2R,2R]\times[0,1]$.
\end{lemma}

\begin{proof}
We first apply \cref{le:bdry_dist}. By using a harmonic measure estimate, we see that $\diam(h_1([-2R,2R])) \le b$ for some deterministic $b>0$, and therefore we have $h_1([-2R,2R]) \subseteq [-2b,2b]\times[0,1]$ on the event from \cref{le:bdry_dist}. With positive probability, the map $h_{1,2}$ maps $[-2b,2b]$ strictly into $\bbH$, hence also into $[-c,c]\times [c^{-1},1]$ for some large $c$. On the intersection of both events, we see that $E_{R,c}$ holds with some possibly different $c$, and $\abs{h_{1,2}'(h_1(x_1))} \asymp 1$.
\end{proof}

The following lemma extends the previous one to points that are not on the real line but satisfy $y_1,y_2\leq c^{-1}|x_2-x_1|$. It concludes the proof of \cref{pr:f_incr_dist} for such points.
\begin{lemma}
For $0 \le r < \frac{1}{2}+\frac{2}{\kappa}$ and $R>0$, there exists $c=c(\kappa,r,R)>1$ such that
\[ 
\bbE\left[ \abs{h_2(x_2+iy_2)-h_2(x_1+iy_1)}^\lambda \,1_{E_{R,c}} \right] \gtrsim x_2^r \abs{x_2-x_1}^{\lambda+\zeta-r} \ge x_2^\zeta \abs{x_2-x_1}^{\lambda}
\]
for any $0 \le x_1 < x_2 \le R$ and $y_1,y_2 \le c^{-1}\abs{x_2-x_1}$ where $E_{R,c}$ denotes the event that $h_2(z) \in [-c,c]\times[c^{-1},c]$ for all $z \in [-2R,2R]\times[0,1]$.
\end{lemma}

\begin{proof}
We have
\[
\abs{h_2(x_2)-h_2(x_1)} \le \abs{h_2(x_2+iy_2)-h_2(x_1+iy_1)}+\abs{h_2(x_2+iy_2)-h_2(x_2)}+\abs{h_2(x_1+iy_1)-h_2(x_1)} .
\]
By Koebe's distortion theorem,
\[ \begin{split}
\abs{h_2(x_1+iy_1)-h_2(x_1)}^\lambda 
&\le \left( \int_0^{y_1} \abs{h_2'(x_1+iv)}\,dv \right)^\lambda \\
&\asymp \left( \sum_{n\in\bbN} y_1 2^{-n}\abs{h_2'(x_1+iy_1 2^{-n})} \right)^\lambda \\
&\lesssim y_1^\lambda \sum_{n\in\bbN} n^{2(\lambda-1)\vee 0} 2^{-n\lambda}\abs{h_2'(x_1+iy_1 2^{-n})}^\lambda ,
\end{split} \]
where we use H\"older's inequality in the last step. 
With \cref{le:hp_x_upper} (and using that $\lambda+\zeta-r>0$), we get
\[
\bbE\left[ \abs{h_2(x_1+iy_1)-h_2(x_1)}^\lambda \,1_{E_{R,c}} \right] \lesssim y_1^{\lambda+\zeta} \left(1+\frac{x_1^2}{y_1^2}\right)^{r/2} 
= y_1^{\lambda+\zeta-r}(x_1^2+y_1^2)^{r/2} ,
\]
and the same holds for $x_2+iy_2$. 
Hence, if $y_1,y_2 \le c^{-1}\abs{x_2-x_1}$, we get with \cref{le:bdry_dist_hp}
\[ \begin{split}
x_2^r \abs{x_2-x_1}^{\lambda+\zeta-r} 
&\lesssim \bbE\left[ \abs{h_2(x_2)-h_2(x_1)}^\lambda \,1_{E_{R,c}} \right] \\
&\lesssim \bbE\left[ \abs{h_2(x_2+iy_2)-h_2(x_1+ix_1)}^\lambda \,1_{E_{R,c}} \right] +  y_1^{\lambda+\zeta-r}(x_1^2+y_1^2)^{r/2} \\
&\lesssim \bbE\left[ \abs{h_2(x_2+iy_2)-h_2(x_1+ix_1)}^\lambda \,1_{E_{R,c}} \right] + x_2^r (c^{-1}\abs{x_2-x_1})^{\lambda+\zeta-r} .
\end{split} \]
The claim follows for sufficiently large $c$.
\end{proof}

As mentioned above, the previous lemma shows \cref{pr:f_incr_dist} in the case $y_1,y_2 \le c^{-1}\abs{x_2-x_1}$. To treat the case when $y_1$ or $y_2 \gtrsim \abs{x_2-x_1}$, we prove the following which essentially follows from the arguments in \cite{law-reverse-sle}.

\begin{lemma}\label{le:hp_x_lower}
For $r < \frac{1}{2}+\frac{2}{\kappa}$ and $R>0$, there exists $c=c(\kappa,r,R)$ such that
\[ 
\bbE\left[ \abs{h_1'(x+iy)}^\lambda \,1_{h_1(x+iy) \in [-c,c]\times[c^{-1},c]} \right] \gtrsim y^\zeta \left(1+\frac{x^2}{y^2}\right)^{r/2} 
\]
for any $x+iy \in [-R,R]\times{]0,1]}$.
\end{lemma}

The high-level idea of the proof is as follows. As in \cite[Section~8]{law-reverse-sle}, let
\[ M_t(z) = \abs{h_t'(z)}^\lambda (\Im h_t(z))^\zeta (\sin\arg h_t(z))^{-r} \]
and define the probability measure $\bbP^*$ by reweighting $\bbP$ according to $M$, in a similar fashion as in the proof of \cref{le:hp_x_upper}. 
Then
\[ \begin{split}
\bbE\left[ \abs{h_1'(z)}^\lambda \,1_{h_1(z) \in [-c,c]\times[c^{-1},c]} \right] 
&\asymp \bbE\left[ M_1(z) \,1_{h_1(z) \in [-c,c]\times[c^{-1},c]} \right] \\
&= y^\zeta \left(1+\frac{x^2}{y^2}\right)^{r/2} \bbP^*\left( h_1(z) \in [-c,c]\times[c^{-1},c] \right) .
\end{split} \]
Therefore, in order to conclude the proof it is sufficient to bound the $\bbP^*$-probability on the right-hand side from below.

\begin{proof}
By symmetry, we can assume $x \ge 0$. We follow the setup in \cite{law-reverse-sle} and let
\[
\sigma(s) = \inf\{ t \ge 0 \mid \Im h_t(z) = e^{2s} \} ,
\quad s \ge s_0 = \frac{1}{2}\log y .
\]
(Note that \cite{law-reverse-sle} uses a different time parametrisation $t \mapsto t/\kappa$ and $s \mapsto s/\kappa$.) 
Under the law $\bbP^*$, the process
\[ 
J_s = \sinh^{-1}\left(\frac{\Re h_{\sigma(s)}(z)}{\Im h_{\sigma(s)}(z)}\right) = \cosh^{-1}\left(\frac{1}{\sin\arg h_{\sigma(s)}(z)}\right)
\]
satisfies (cf.\@ \cite[eq.~(27)]{law-reverse-sle})
\eqb
dJ_s = -\left(4+\frac{\kappa}{2}-r\kappa\right) \tanh J_s \,ds + \sqrt{\kappa}\,dW^*_s
\label{eq:J-sde}
\eqe
where $W^*$ is a standard Brownian motion under the measure $\bbP^*$. 
Further (cf.\@ \cite[Lemma~5.1 and eq.~(24)]{law-reverse-sle}),
\begin{equation}\label{eq:sigma_int}
\sigma(s) = \int_{s_0}^s e^{4s'}\cosh^2 J_{s'} \,ds' .
\end{equation}
Fix $\delta>0$ small enough so that $(1-\delta)(4+\frac{\kappa}{2}-r\kappa) > 2$ (which is possible since $r < \frac{1}{2}+\frac{2}{\kappa}$). Let 
\[ S_\delta = \inf\{ s\ge s_0 \mid \tanh J_s \le 1-\delta \} . \]
Our next goal is to upper bound $\sigma(S_\delta)$. For this, define the event
\[
E_b \defeq \left\{ W^*_s-W^*_{s_0} \le 2\sqrt{(s-s_0)\log(b+s-s_0)} \text{ for all } s\ge s_0 \right\} ,
\]
so that, since $\sup_{s\in [0,1]}W^*_{s+k}-W^*_{k}$ has a Gaussian tail,
\[ \bbP^*(E_b) = 1-O(b^{-1}) > \frac{1}{2} \]
for a suitable choice of $b$. 
By \eqref{eq:J-sde} and $J_{s_0}=\sinh^{-1}(x/y)$, on the event $E_b$ we must have
\[
J_s \le \sinh^{-1}(x/y)-\left(4+\frac{\kappa}{2}-r\kappa\right)(1-\delta)(s-s_0) + \sqrt{\kappa}2\sqrt{(s-s_0)\log(b+s-s_0)}
\]
for $s \in [s_0,S_\delta]$, and hence by \eqref{eq:sigma_int} (and using that $4-2(4+\frac{\kappa}{2}-r\kappa)(1-\delta) < 0$ due to our choice of $\delta$)
\[ \begin{split}
\sigma(S_\delta) 
&\le \int_{s_0}^{S_\delta} e^{4s'}\exp\bigg( 2\sinh^{-1}(x/y) - 2\left(4+\frac{\kappa}{2}-r\kappa\right)(1-\delta)(s'-s_0) \\
&\hphantom{\le \int_{s_0}^{S_\delta} e^{4s'}\exp\bigg( 2\sinh^{-1}(x/y)} 
+\sqrt{\kappa}2\sqrt{(s'-s_0)\log(b+s'-s_0)} \bigg) \,ds' \\
&= \exp\left(2\sinh^{-1}(x/y)\right) e^{4s_0} \int_{s_0}^{S_\delta} \exp\bigg( 4(s'-s_0) - 2\left(4+\frac{\kappa}{2}-r\kappa\right)(1-\delta)(s'-s_0) + \dots \bigg) \,ds' \\
&\lesssim \left(1+\frac{x^2}{y^2}\right) e^{4s_0} \\
&\le \wt{c}\abs{z}^2 
\end{split} \]
for some $\wt{c} > 0$.
Given this, it follows from \cref{le:hp_moment} (applied to $h'_{\sigma(S_\delta),\wt{c}\abs{z}^2+1}$ conditionally on $\cF_{\sigma(S_\delta)}$) and \eqref{eq:koebe_hp} that 
\[ \begin{split}
\bbE\left[ \abs{h_{\tilde{c}\abs{z}^2+1}'(z)}^\lambda \,1_{h_{\tilde{c}\abs{z}^2+1}(z) \in [-c,c]\times[c^{-1},c]} \right] 
&\gtrsim \bbE\left[ \abs{h_{\sigma(S_\delta)}'(z)}^\lambda (\Im h_{\sigma(S_\delta)}(z))^\zeta \,1_{\sigma(S_\delta) \le \tilde{c}\abs{z}^2} \right] \\
&\asymp \bbE\left[ M_{\sigma(S_\delta)} \,1_{\sigma(S_\delta) \le \tilde{c}\abs{z}^2} \right] \\
&= y^\zeta \left(1+\frac{x^2}{y^2}\right)^{r/2} \bbP^*(\sigma(S_\delta) \le \tilde{c}\abs{z}^2) \\
&\gtrsim y^\zeta \left(1+\frac{x^2}{y^2}\right)^{r/2} .
\end{split} \]
Since $\abs{z} \le R+1$, the statement of the lemma (with time $1$ in place of $\wt{c}\abs{z}^2+1$) follows by scaling.
\end{proof}

\begin{lemma}
For $r < \frac{1}{2}+\frac{2}{\kappa}$ and $R>0$, there exists $c=c(\kappa,r,R)$ such that
\[ 
\bbE\left[ \abs{h_2'(x+iy)}^\lambda \,1_{E_{R,c}} \right] \gtrsim y^\zeta \left(1+\frac{x^2}{y^2}\right)^{r/2} 
\]
for any $x+iy \in [-R,R]\times{]0,1]}$ where $E_{R,c}$ denotes the event that $h_2(z) \in [-c,c]\times[c^{-1},c]$ for all $z \in [-2R,2R]\times[0,1]$.
\end{lemma}

\begin{proof}
This follows from \cref{le:hp_x_lower} in the same way as \cref{le:bdry_dist_hp} follows from \cref{le:bdry_dist}.
\end{proof}

This completes the proof of \cref{pr:f_incr_dist} in the case when $y_1$ or $y_2 \gtrsim \abs{x_2-x_1}$. Indeed, write $z_j=x_j+iy_j$ and note that we must have $y_j \gtrsim \abs{z_2-z_1}$ for $j=1$ or $j=2$ as well (since in case $\abs{x_2-x_1} < \abs{z_2-z_1}/2$ we can pick the larger $y_j$). Koebe's $1/4$-theorem (applied to a ball of radius $\abs{z_2-z_1}\wedge y_j$) then implies $\abs{h_2(z_1)-h_2(z_2)} \gtrsim \abs{h_2'(z_j)}\abs{z_1-z_2}$, hence
\[ \begin{split}
\bbE\left[ \abs{h_2(z_1)-h_2(z_2)}^\lambda \,1_{E_{R,c}} \right] 
&\gtrsim \abs{z_1-z_2}^\lambda \bbE\left[ \abs{h_2'(z_j)}^\lambda \,1_{E_{R,c}} \right] \\
&\gtrsim \abs{z_1-z_2}^\lambda y_j^{\zeta-r}(x_j^2+y_j^2)^{r/2} \\
&\gtrsim \abs{z_1-z_2}^\lambda
\end{split} \]
since we are assuming $\abs{z_j} \ge R^{-1}$.

\bibliographystyle{alpha}

\begin{thebibliography}{GHM20b}

\bibitem[ABV16]{abv-boundary-collisions}
Tom Alberts, Ilia Binder, and Fredrik Viklund.
\newblock A dimension spectrum for {SLE} boundary collisions.
\newblock {\em Comm. Math. Phys.}, 343(1):273--298, 2016.

\bibitem[AKM]{akm-lbm83}
Sebastian Andres, Naotaka Kajino, and Jason Miller.
\newblock Two-sided heat kernel bounds for {$\sqrt{8/3}$}-{L}iouville
  {B}rownian motion.
\newblock in preparation.

\bibitem[AM22]{am-sle8x}
Valeria Ambrosio and Jason Miller.
\newblock A continuous proof of the existence of the {SLE$_8$} curve.
\newblock {\em ArXiv e-prints}, 2022.

\bibitem[Bed07]{Bed07}
Witold Bednorz.
\newblock H\"{o}lder continuity of random processes.
\newblock {\em J. Theoret. Probab.}, 20(4):917--934, 2007.

\bibitem[Bef08]{bef-sle-dimension}
Vincent Beffara.
\newblock The dimension of the {SLE} curves.
\newblock {\em Ann. Probab.}, 36(4):1421--1452, 2008.

\bibitem[BK00]{bk-markov-lil}
Richard~F. Bass and Takashi Kumagai.
\newblock Laws of the iterated logarithm for some symmetric diffusion
  processes.
\newblock {\em Osaka J. Math.}, 37(3):625--650, 2000.

\bibitem[BS09]{bs-aims-sle}
D.~Beliaev and S.~Smirnov.
\newblock Harmonic measure and {SLE}.
\newblock {\em Comm. Math. Phys.}, 290(2):577--595, 2009.

\bibitem[Con95]{con-complex2-book}
John~B. Conway.
\newblock {\em Functions of one complex variable. {II}}, volume 159 of {\em
  Graduate Texts in Mathematics}.
\newblock Springer-Verlag, New York, 1995.

\bibitem[CS12]{cs-ising}
Dmitry Chelkak and Stanislav Smirnov.
\newblock Universality in the 2{D} {I}sing model and conformal invariance of
  fermionic observables.
\newblock {\em Invent. Math.}, 189(3):515--580, 2012.

\bibitem[DGLZ22]{li-et-al-sharp-arm-percolation}
Hang Du, Yifan Gao, Xinyi Li, and Zijie Zhuang.
\newblock Sharp asymptotics for arm probabilities in critical planar
  percolation, 2022.

\bibitem[DMS21]{dms-mating-trees}
Bertrand Duplantier, Jason Miller, and Scott Sheffield.
\newblock Liouville quantum gravity as a mating of trees.
\newblock {\em Ast\'{e}risque}, (427):viii+257, 2021.

\bibitem[DV14]{dv-relcap}
Vladimir~N. Dubinin and Matti Vuorinen.
\newblock Ahlfors-{B}eurling conformal invariant and relative capacity of
  compact sets.
\newblock {\em Proc. Amer. Math. Soc.}, 142(11):3865--3879, 2014.

\bibitem[FL15]{fl-sle-transience}
Laurence~S. Field and Gregory~F. Lawler.
\newblock Escape probability and transience for {SLE}.
\newblock {\em Electron. J. Probab.}, 20:no. 10, 14, 2015.

\bibitem[FT17]{ft-sle-regularity}
Peter~K. Friz and Huy Tran.
\newblock On the regularity of {SLE} trace.
\newblock {\em Forum Math. Sigma}, 5:e19, 17, 2017.

\bibitem[FV10]{fv-rp-book}
Peter~K. Friz and Nicolas~B. Victoir.
\newblock {\em Multidimensional stochastic processes as rough paths}, volume
  120 of {\em Cambridge Studies in Advanced Mathematics}.
\newblock Cambridge University Press, Cambridge, 2010.
\newblock Theory and applications.

\bibitem[GHM20a]{ghm-kpz-sle}
Ewain Gwynne, Nina Holden, and Jason Miller.
\newblock An almost sure {KPZ} relation for {SLE} and {B}rownian motion.
\newblock {\em Ann. Probab.}, 48(2):527--573, 2020.

\bibitem[GHM20b]{ghm-dim-conformal}
Ewain Gwynne, Nina Holden, and Jason Miller.
\newblock Dimension transformation formula for conformal maps into the
  complement of an {SLE} curve.
\newblock {\em Probab. Theory Related Fields}, 176(1-2):649--667, 2020.

\bibitem[GHS19]{ghs-mot-survey}
Ewain Gwynne, Nina Holden, and Xin Sun.
\newblock Mating of trees for random planar maps and {L}iouville quantum
  gravity: a survey.
\newblock {\em ArXiv e-prints}, 2019.

\bibitem[GMS18]{gms-sle-multifractal}
Ewain Gwynne, Jason Miller, and Xin Sun.
\newblock Almost sure multifractal spectrum of {S}chramm-{L}oewner evolution.
\newblock {\em Duke Math. J.}, 167(6):1099--1237, 2018.

\bibitem[HLS22]{hls-sle6}
Nina Holden, Xinyi Li, and Xin Sun.
\newblock Natural parametrization of percolation interface and pivotal points.
\newblock {\em Ann. Inst. Henri Poincar\'{e} Probab. Stat.}, 58(1):7--25, 2022.

\bibitem[HS18]{hs-mating-eucl}
Nina Holden and Xin Sun.
\newblock S{LE} as a mating of trees in {E}uclidean geometry.
\newblock {\em Comm. Math. Phys.}, 364(1):171--201, 2018.

\bibitem[JVL11]{vl-sle-hoelder}
Fredrik Johansson~Viklund and Gregory~F. Lawler.
\newblock Optimal {H}\"older exponent for the {SLE} path.
\newblock {\em Duke Math. J.}, 159(3):351--383, 2011.

\bibitem[JVL12]{vl-tip-multifractal}
Fredrik Johansson~Viklund and Gregory~F. Lawler.
\newblock Almost sure multifractal spectrum for the tip of an {SLE} curve.
\newblock {\em Acta Math.}, 209(2):265--322, 2012.

\bibitem[KMS21]{kms-sle48x}
Konstantinos Kavvadias, Jason Miller, and Lukas Schoug.
\newblock Regularity of the {${\rm SLE}_4$} uniformizing map and the {${\rm
  SLE}_8$} trace.
\newblock {\em ArXiv e-prints}, 2021.

\bibitem[Law05]{law-conformal-book}
Gregory~F. Lawler.
\newblock {\em Conformally invariant processes in the plane}, volume 114 of
  {\em Mathematical Surveys and Monographs}.
\newblock American Mathematical Society, Providence, RI, 2005.

\bibitem[Law09]{law-reverse-sle}
Gregory~F. Lawler.
\newblock Multifractal analysis of the reverse flow for the {S}chramm-{L}oewner
  evolution.
\newblock In {\em Fractal geometry and stochastics {IV}}, volume~61 of {\em
  Progr. Probab.}, pages 73--107. Birkh\"{a}user Verlag, Basel, 2009.

\bibitem[Law20]{law-twosided-lerw}
Gregory~F. Lawler.
\newblock The infinite two-sided loop-erased random walk.
\newblock {\em Electron. J. Probab.}, 25:Paper No. 87, 42, 2020.

\bibitem[LR15]{lr-minkowski-content}
Gregory~F. Lawler and Mohammad~A. Rezaei.
\newblock Minkowski content and natural parameterization for the
  {S}chramm-{L}oewner evolution.
\newblock {\em Ann. Probab.}, 43(3):1082--1120, 2015.

\bibitem[LS11]{ls-natural-parametrisation}
Gregory~F. Lawler and Scott Sheffield.
\newblock A natural parametrization for the {S}chramm-{L}oewner evolution.
\newblock {\em Ann. Probab.}, 39(5):1896--1937, 2011.

\bibitem[LSW04]{lsw-lerw-ust}
Gregory~F. Lawler, Oded Schramm, and Wendelin Werner.
\newblock Conformal invariance of planar loop-erased random walks and uniform
  spanning trees.
\newblock {\em Ann. Probab.}, 32(1B):939--995, 2004.

\bibitem[LV21]{lv-lerw-nat}
Gregory~F. Lawler and Fredrik Viklund.
\newblock Convergence of loop-erased random walk in the natural
  parameterization.
\newblock {\em Duke Math. J.}, 170(10):2289--2370, 2021.

\bibitem[MS16]{ms-ig1}
Jason Miller and Scott Sheffield.
\newblock Imaginary geometry {I}: interacting {SLE}s.
\newblock {\em Probab. Theory Related Fields}, 164(3-4):553--705, 2016.

\bibitem[MS17]{ms-ig4}
Jason Miller and Scott Sheffield.
\newblock Imaginary geometry {IV}: interior rays, whole-plane reversibility,
  and space-filling trees.
\newblock {\em Probab. Theory Related Fields}, 169(3-4):729--869, 2017.

\bibitem[MW17]{mw-cut-double}
Jason Miller and Hao Wu.
\newblock Intersections of {SLE} paths: the double and cut point dimension of
  {SLE}.
\newblock {\em Probab. Theory Related Fields}, 167(1-2):45--105, 2017.

\bibitem[OT74]{ot-bm-fast}
Steven Orey and S.~James Taylor.
\newblock How often on a {B}rownian path does the law of iterated logarithm
  fail?
\newblock {\em Proc. London Math. Soc. (3)}, 28:174--192, 1974.

\bibitem[Pom92]{pom-boundary-book}
Ch. Pommerenke.
\newblock {\em Boundary behaviour of conformal maps}, volume 299 of {\em
  Grundlehren der Mathematischen Wissenschaften [Fundamental Principles of
  Mathematical Sciences]}.
\newblock Springer-Verlag, Berlin, 1992.

\bibitem[Rez18]{rez-hausdorff}
Mohammad~A. Rezaei.
\newblock Hausdorff measure of {SLE} curves.
\newblock {\em Stochastic Process. Appl.}, 128(3):884--896, 2018.

\bibitem[RS05]{rs-sle}
Steffen Rohde and Oded Schramm.
\newblock Basic properties of {SLE}.
\newblock {\em Ann. of Math. (2)}, 161(2):883--924, 2005.

\bibitem[RY99]{ry-stochastic-book}
Daniel Revuz and Marc Yor.
\newblock {\em Continuous martingales and {B}rownian motion}, volume 293 of
  {\em Grundlehren der Mathematischen Wissenschaften [Fundamental Principles of
  Mathematical Sciences]}.
\newblock Springer-Verlag, Berlin, third edition, 1999.

\bibitem[RZ17]{rz-cont-moments}
Mohammad~A. Rezaei and Dapeng Zhan.
\newblock Higher moments of the natural parameterization for {SLE} curves.
\newblock {\em Ann. Inst. Henri Poincar\'{e} Probab. Stat.}, 53(1):182--199,
  2017.

\bibitem[Sch00]{sch-sle}
Oded Schramm.
\newblock Scaling limits of loop-erased random walks and uniform spanning
  trees.
\newblock {\em Israel J. Math.}, 118:221--288, 2000.

\bibitem[Sch20]{sch-mf-boundary}
Lukas Schoug.
\newblock A multifractal boundary spectrum for {${\rm SLE}_\kappa(\rho)$}.
\newblock {\em Probab. Theory Related Fields}, 178(1-2):173--233, 2020.

\bibitem[Smi01]{smi-cardy}
Stanislav Smirnov.
\newblock Critical percolation in the plane: conformal invariance, {C}ardy's
  formula, scaling limits.
\newblock {\em C. R. Acad. Sci. Paris S\'{e}r. I Math.}, 333(3):239--244, 2001.

\bibitem[Smi10]{smi-ising}
Stanislav Smirnov.
\newblock Conformal invariance in random cluster models. {I}. {H}olomorphic
  fermions in the {I}sing model.
\newblock {\em Ann. of Math. (2)}, 172(2):1435--1467, 2010.

\bibitem[SS09]{ss-dgff}
Oded Schramm and Scott Sheffield.
\newblock Contour lines of the two-dimensional discrete {G}aussian free field.
\newblock {\em Acta Math.}, 202(1):21--137, 2009.

\bibitem[SS13]{ss-gff-contour}
Oded Schramm and Scott Sheffield.
\newblock A contour line of the continuum {G}aussian free field.
\newblock {\em Probab. Theory Related Fields}, 157(1-2):47--80, 2013.

\bibitem[SW05]{sw-sle-coordinate-change}
Oded Schramm and David~B. Wilson.
\newblock S{LE} coordinate changes.
\newblock {\em New York J. Math.}, 11:659--669, 2005.

\bibitem[Tay72]{tay-bm-variation}
S.~J. Taylor.
\newblock Exact asymptotic estimates of {B}rownian path variation.
\newblock {\em Duke Math. J.}, 39:219--241, 1972.

\bibitem[TY20]{ty-support}
Huy Tran and Yizheng Yuan.
\newblock A support theorem for {SLE} curves.
\newblock {\em Electron. J. Probab.}, 25:Paper No. 18, 18, 2020.

\bibitem[Wer12]{wer-sle-pvar}
Brent~M. Werness.
\newblock Regularity of {S}chramm-{L}oewner evolutions, annular crossings, and
  rough path theory.
\newblock {\em Electron. J. Probab.}, 17:no. 81, 21, 2012.

\bibitem[Yua25]{yuan-psivarx}
Yizheng Yuan.
\newblock Refined regularity of {SLE}.
\newblock {\em Ann. Inst. Henri Poincar\'e{} Probab. Stat.}, 61(1):599--628,
  2025.

\bibitem[Zha19]{zhan-hoelder}
Dapeng Zhan.
\newblock Optimal {H}\"{o}lder continuity and dimension properties for {SLE}
  with {M}inkowski content parametrization.
\newblock {\em Probab. Theory Related Fields}, 175(1-2):447--466, 2019.

\bibitem[Zha21]{zhan-loop}
Dapeng Zhan.
\newblock S{LE} loop measures.
\newblock {\em Probab. Theory Related Fields}, 179(1-2):345--406, 2021.

\end{thebibliography}

\end{document}